\DeclareMathOperator{\col}{col}
\newcommand{\thickhline}{%
    \noalign {\ifnum 0=`}\fi \hrule height 1.25pt
    \futurelet \reserved@a \@xhline
}
\newtheorem{theorem}{Theorem}[section]
\newtheorem{lemma}[theorem]{Lemma}
\newtheorem{proposition}[theorem]{Proposition}
\newtheorem{corollary}[theorem]{Corollary}
  \newtheorem{remark}[theorem]{Remark}
  \newtheorem{definition}[theorem]{Definition}
  \newtheorem{example}[theorem]{Example}
\numberwithin{equation}{section}
\newcommand{\la}{\lambda}
\newcommand{\lam}{\lambda}
\newcommand{\lm}{\lambda/\mu}
\newcommand{\SSYT}{\mathrm{SSYT}}
\newcommand{\RPP}{\mathrm{RPP}}
\newcommand{\E}{\mathcal{E}}
\newcommand{\Br}{\mathrm{Br}}
\newcommand{\z}{\mathbf{z}}
\newcommand{\NI}{\mathcal{NI}}
\newcommand{\nOOT}{\mathrm{OOT}}
\newcommand{\cOOT}{\mathcal{OOT}}
\newcommand{\NIP}{\mathcal{NIP}}
\newcommand{\dd}{\mathsf{d}}
\newcommand{\ED}{\mathcal{E}}
\newcommand{\nED}{\mathrm{ED}}
\newcommand{\LR}{\mathrm{LR}}
\newcommand\qbinom[2]{{\begin{bmatrix} #1\\ #2 \end{bmatrix}_q}}
\DeclareMathOperator{\tr}{tr}
\newcommand{\HG}{HG}
\title[Minimal skew semistandard tableaux and the Hillman--Grassl correspondence]{Minimal skew semistandard tableaux and the Hillman--Grassl correspondence}
\author[Morales]{Alejandro H. Morales}
\address[A. H. Morales]{LACIM, Département de Mathématiques, Universit\'e du Qu\'ebec \`a Montr\'eal, Canada} 
\address{Department of Mathematics and Statistics, UMass Amherst, Amherst, U.S.A.}
\email{morales\_borrero.alejandro@uqam.ca}
\urladdr{https://ahmorales.combinatoria.co}
\author[Panova]{Greta Panova}
\address[G. Panova]{Department of Mathematics, University of Southern California, Los Angeles, U.S.A.}
\email{gpanova@usc.edu}
\urladdr{https://sites.google.com/usc.edu/gpanova/home}
\author[Park]{GaYee Park}
\address[G. Park]{LACIM, Universit\'e du Qu\'ebec \`a Montr\'eal, Canada}
\email{park.ga\_yee@uqam.ca}
\urladdr{https://sites.google.com/view/gayeepark/home}
\thanks{A. H. Morales was partially supported by NSF grant DMS-1855536 and DMS-2154019, G. Park was partially supported by NSF grant DMS-1855536, and G. Panova was partially supported by the NSF grant DMS-1939717.}
\date{\today}
\begin{document}

\maketitle
\begin{abstract}
   Standard tableaux of skew shape are fundamental objects in enumerative and algebraic combinatorics and no product formula for the number is known. In 2014, Naruse gave a formula (NHLF) as a positive sum over excited diagrams of products of hook-lengths. Subsequently, Morales, Pak, and Panova gave a $q$-analogue of this formula in terms of skew semistandard tableaux (SSYT). They also showed, partly algebraically, that the Hillman--Grassl map, restricted to skew semistandard tableaux, is behind their $q$-analogue. We study the problem of circumventing the algebraic part and proving the bijection completely combinatorially, which we do for border strips. For a skew shape, we define minimal semistandard Young tableaux, that are in correspondence with excited diagrams via a new description of the Hillman--Grassl bijection and have an analogue of excited moves. Lastly, we relate the minimal skew SSYT with the terms of the Okounkov-Olshanski formula (OOF) for counting standard tableaux of skew shape. Our construction immediately implies that the summands in the NHLF are less than the summands in the OOF and we characterize the shapes where both formulas have the same number of summands.
\end{abstract}


\section{Introduction}

Standard and semistandard tableaux are fundamental objects in
enumerative and algebraic combinatorics. Standard Young Tableaux (SYT) are fillings
of the Young diagram of $\lambda$ with numbers $1,2,\ldots,n$
increasing in the rows and columns. The number of SYTs of shape $\lambda$ is $f^\lambda$, and can also be interpreted as counting linear
extension of a  certain poset associated to $\lambda$. Counting linear
extensions of posets is in general computationally hard; however, the number $f^{\lambda}$ is given by the famous {\em
  hook-length formula} of Frame-Robinson-Thrall \cite{FRT} 1954.

\begin{theorem}[Frame-Robinson-Thrall \cite{FRT}]
For $\lambda$ a partition of $n$ we have  
\begin{equation} \label{eq:hlf}
f^{\lambda} = \frac{n!}{\prod_{(i,j) \in [\lambda]} h(i,j)},
\end{equation}
where $[\lambda]$ is the Young diagram of $\lambda$ and $h(i,j)=\lambda_i-i+\lambda'_j-j+1$ is the {\em hook-length} of the
square $(i,j)$: 
the number of cells directly to the right and directly below $(i,j)$
including $(i,j)$. 
\end{theorem}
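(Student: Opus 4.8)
The plan is to argue by induction on $n=|\lambda|$. The number $f^\lambda$ satisfies the \emph{corner recurrence}: deleting the cell containing the largest entry $n$ from an SYT of shape $\lambda$ produces an SYT of shape $\lambda\setminus c$ for a uniquely determined corner (removable) cell $c$ of $\lambda$, so $f^\lambda=\sum_{c}f^{\lambda\setminus c}$, the sum over all corner cells $c$ of $\lambda$, with $f^{\emptyset}=1$. Writing $g(\lambda):=n!\big/\prod_{(i,j)\in[\lambda]}h(i,j)$, we have $g(\lambda)=f^{\lambda}$ whenever $|\lambda|\le 1$, so it suffices to show that $g$ obeys the same recurrence, i.e.\ $g(\lambda)=\sum_{c}g(\lambda\setminus c)$.

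Dividing by $g(\lambda)$ turns this into $\sum_{c}g(\lambda\setminus c)/g(\lambda)=1$. Fix a corner $c=(r,s)$. One checks directly that removing $c$ decreases by exactly $1$ the hook length of every cell $(r,j)$ with $j<s$ and of every cell $(i,s)$ with $i<r$, leaves all other hook lengths unchanged, while $h(r,s)=1$. Hence
\begin{equation*}
\frac{g(\lambda\setminus c)}{g(\lambda)}=\frac{(n-1)!}{n!}\cdot\frac{\prod_{(i,j)\in[\lambda]}h(i,j)}{\prod_{(i,j)\in[\lambda\setminus c]}h(i,j)}=\frac{1}{n}\prod_{j=1}^{s-1}\frac{h(r,j)}{h(r,j)-1}\prod_{i=1}^{r-1}\frac{h(i,s)}{h(i,s)-1},
\end{equation*}
so the claim reduces to the purely numerical identity
\begin{equation*}
\sum_{c=(r,s)\ \text{corner of}\ \lambda}\ \prod_{j=1}^{s-1}\frac{h(r,j)}{h(r,j)-1}\ \prod_{i=1}^{r-1}\frac{h(i,s)}{h(i,s)-1}=n.
\end{equation*}

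To prove this I would use the hook walk (Greene--Nijenhuis--Wilf): choose a cell $u_{0}\in[\lambda]$ uniformly at random; given $u_{k}$ with $h(u_{k})>1$, let $u_{k+1}$ be uniform among the $h(u_{k})-1$ cells lying strictly to the right of $u_{k}$ in its row or strictly below $u_{k}$ in its column; stop upon reaching a corner. Both coordinates weakly increase along the walk, so it terminates at some corner $c=(r,s)$. The key assertion is
\begin{equation*}
\Pr[\,\text{the walk ends at }(r,s)\,]=\frac{1}{n}\prod_{j=1}^{s-1}\frac{h(r,j)}{h(r,j)-1}\prod_{i=1}^{r-1}\frac{h(i,s)}{h(i,s)-1},
\end{equation*}
since then, summing over all corners $c$, the left sides add to $1$, which is exactly the identity above after multiplying by $n$. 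To prove the assertion, condition on the starting cell $(a,b)$ and on the set of rows $a=a_{0}<\dots<a_{k}=r$ and columns $b=b_{0}<\dots<b_{l}=s$ that the walk visits; the heart of the argument is the factorization lemma
\begin{equation*}
\Pr\big[\,\text{walk from }(a,b)\text{ visits exactly rows }a_{0},\dots,a_{k}\text{ and columns }b_{0},\dots,b_{l}\,\big]=\prod_{i=0}^{k-1}\frac{1}{h(a_{i},s)-1}\ \prod_{j=0}^{l-1}\frac{1}{h(r,b_{j})-1}.
\end{equation*}
Given the lemma, one sums over all admissible $(a,b)$ and over all such row/column sets: the sum over subsets $A'\subseteq\{1,\dots,r-1\}$ of $\prod_{i\in A'}\frac{1}{h(i,s)-1}$ factors as $\prod_{i=1}^{r-1}\!\big(1+\tfrac{1}{h(i,s)-1}\big)=\prod_{i=1}^{r-1}\tfrac{h(i,s)}{h(i,s)-1}$, similarly for the columns, and the factor $\tfrac1n$ comes from the uniform start.

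The main obstacle is the factorization lemma. I would prove it by induction on $(r-a)+(s-b)$: the base case $(a,b)=(r,s)$ is the empty product $=1$, and for the inductive step one expands over the first move of the walk — to $(a,b+t)$ for $1\le t\le \mathrm{arm}(a,b)$, or to $(a+t,b)$ for $1\le t\le\mathrm{leg}(a,b)$, each with probability $\tfrac{1}{h(a,b)-1}$ — substitutes the inductive formula for each resulting sub-probability, and verifies that the terms telescope, the crucial cancellation being $h(a,b)-1=\mathrm{arm}(a,b)+\mathrm{leg}(a,b)$; everything else is bookkeeping. For context, fully bijective proofs of \eqref{eq:hlf} also exist, such as the Novelli--Pak--Stoyanovsky jeu-de-taquin bijection between the $n!$ bijective fillings of $[\lambda]$ by $1,\dots,n$ and $\SYT(\lambda)\times\prod_{(i,j)\in[\lambda]}\{1,\dots,h(i,j)\}$, and \eqref{eq:hlf} can also be extracted from the Hillman--Grassl identity $\sum_{\pi\in\RPP(\lambda)}q^{|\pi|}=\prod_{(i,j)\in[\lambda]}\bigl(1-q^{h(i,j)}\bigr)^{-1}$ together with the $P$-partition expansion $\sum_{\pi\in\RPP(\lambda)}q^{|\pi|}=\bigl(\sum_{T\in\SYT(\lambda)}q^{\mathrm{maj}(T)}\bigr)\big/\prod_{k=1}^{n}(1-q^{k})$ by letting $q\to1$; the hook-walk route above is the most self-contained.
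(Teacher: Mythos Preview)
Your argument is the Greene--Nijenhuis--Wilf hook-walk proof, carried out correctly: the corner recurrence, the hook-ratio computation for $g(\lambda\setminus c)/g(\lambda)$, the factorization lemma, and the subset-sum telescoping are all right. One small point worth making explicit in your write-up is the bijection you are implicitly using when you ``sum over all admissible $(a,b)$ and over all such row/column sets'': the pair $(R,C)$ with $R\subseteq\{1,\dots,r-1\}$ and $C\subseteq\{1,\dots,s-1\}$ determines the starting cell via $a=\min(R\cup\{r\})$, $b=\min(C\cup\{s\})$, and conversely; this is what lets the double sum factor cleanly into $\prod_i\bigl(1+\tfrac{1}{h(i,s)-1}\bigr)\prod_j\bigl(1+\tfrac{1}{h(r,j)-1}\bigr)$.

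As for comparison with the paper: the paper does \emph{not} give its own proof of this theorem. It is quoted in the introduction as the classical Frame--Robinson--Thrall result, with a citation, to set the stage for the skew-shape analogues \eqref{eq:nhlf} and \eqref{eq:OO} that are the paper's actual subject. The one proof route the paper touches on indirectly is the Hillman--Grassl approach you mention at the end: the identity $\sum_{\pi\in\RPP(\lambda)}q^{|\pi|}=\prod_{u\in[\lambda]}(1-q^{h(u)})^{-1}$ is used throughout (see \eqref{eq:qhlf} and Section~\ref{sec: HG background}), and combining it with the $P$-partition expansion and letting $q\to1$ recovers \eqref{eq:hlf}. That route is more in the spirit of the paper than the hook walk, since the Hillman--Grassl map is the paper's main tool; but your GNW argument is self-contained and perfectly valid.
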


The number of standard tableaux of
skew shape $f^{\lambda/\mu}$ gives the dimension of irreducible representations of
affine Hecke algebras \cite{Ram}. Unlike for straight shapes, there is no  product formula for $f^{\lambda/\mu}$. There are determinantal
formulas for $f^{\lambda/\mu}$ like the {\em Jacobi-Trudi identity}. There is also a classical positive formula for $f^{\lambda/\mu}$ involving the {\em Littlewood-Richardson  coefficients} $c^{\lambda}_{\mu,\nu}$. These formulas, however,  are generally not practical for asymptotic estimates.

Central to this paper are two other positive formulas for $f^{\lambda/\mu}$ coming from equivariant Schubert calculus or more explicitly from evaluations of {\em factorial Schur functions}: the Okounkov--Olshanski formula \cite{OO} from 1998 and the Naruse hook-length formula \cite{naruse2014} from 2014. We start with the latter since it resembles \eqref{eq:hlf}.

\begin{theorem}[Naruse \cite{naruse2014}, \cite{MPP1}] \label{thm:IN}
For a skew shape $\lambda/\mu$ of size $n$ we have
\begin{equation} \label{eq:nhlf} \tag{NHLF}
f^{\lambda/\mu}  = \frac{n!}{\prod_{u \in [\lambda]} h(u)} \sum_{D \in \mathcal{E}(\lambda/\mu)} \prod_{u \in D} h(u) \,=\, n! \sum_{D \in \mathcal{E}(\lambda/\mu)}
 \prod_{u\in [\lambda]\backslash D} \frac{1}{h(u)},
\end{equation}
where $\mathcal{E}(\lambda/\mu)$ is
the set of {\em excited diagrams} of $\lambda/\mu$. 
\end{theorem}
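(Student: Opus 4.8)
The plan is to derive \eqref{eq:nhlf} as the $q\to 1$ limit of a $q$-refinement for the principal specialization of the skew Schur polynomial. Write $n=|\lambda/\mu|$ and recall the classical fact that $\lim_{q\to1}(1-q)^n\,s_{\lambda/\mu}(1,q,q^2,\dots)=f^{\lambda/\mu}/n!$, which follows from the principal specialization of the quasisymmetric expansion of $s_{\lambda/\mu}$, namely
\[
s_{\lambda/\mu}(1,q,q^2,\dots)=\frac{1}{(1-q)(1-q^2)\cdots(1-q^n)}\sum_{T\in\SYT(\lambda/\mu)}q^{\operatorname{maj}(T)},
\]
together with $\prod_{k=1}^n\frac{1-q^k}{1-q}\to n!$. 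The first step is then to prove the $q$-analogue
\[
s_{\lambda/\mu}(1,q,q^2,\dots)=\sum_{D\in\E(\lambda/\mu)}\ \prod_{u\in[\lambda]\setminus D}\frac{q^{c(u)}}{1-q^{h(u)}},
\]
where $c(u)\ge 0$ is an explicit offset statistic on the cells of $[\lambda]\setminus D$. Granting this, observe that $\#\bigl([\lambda]\setminus D\bigr)=|\lambda|-|\mu|=n$ for every excited diagram $D$, so multiplying by $(1-q)^n$ and letting $q\to1$ sends each factor $\dfrac{q^{c(u)}(1-q)}{1-q^{h(u)}}=\dfrac{q^{c(u)}}{1+q+\cdots+q^{h(u)-1}}$ to $1/h(u)$, giving $f^{\lambda/\mu}/n!=\sum_{D}\prod_{u\in[\lambda]\setminus D}h(u)^{-1}$; this is the right-hand form of \eqref{eq:nhlf}, and the left-hand form follows by multiplying and dividing by $\prod_{u\in[\lambda]}h(u)$.

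To prove the $q$-analogue I would route through \emph{factorial Schur functions} $s_\nu(x_1,\dots,x_d\mid a)$, as in the proofs of Naruse and \cite{MPP1}. Two inputs are needed. First, a specialization identity expressing the principal specialization $s_{\lambda/\mu}(1,q,q^2,\dots)$ — up to an explicit power of $q$ — in terms of evaluations of the factorial Schur functions $s_\lambda(\cdot\mid a)$ and $s_\mu(\cdot\mid a)$ at the ``$q$-content'' point $a_i=q^{\,i-1}$; this is a manipulation of the combinatorial (tableau) definition of $s_\nu(x\mid a)$ together with a Cauchy-type identity. Second, the Ikeda--Naruse evaluation formula: for $\mu\subseteq\lambda$, the evaluation $s_\mu^*(\lambda):=s_\mu(a_{\lambda_1+d},\,a_{\lambda_2+d-1},\,\dots,\,a_{\lambda_d+1}\mid a)$ equals $\sum_{D\in\E(\lambda/\mu)}\prod_{(i,j)\in D}\bigl(a_{r(i,j)}-a_{s(i,j)}\bigr)$ for explicit index functions $r,s$, which one obtains by expanding the restriction of an equivariant Schubert class on the Grassmannian to a torus-fixed point — equivalently, by iterating isobaric divided-difference operators and tracking how the support of the class moves by \emph{excited moves}. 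Substituting $a_i=q^{\,i-1}$ into these two inputs and matching numerators against denominators yields exactly the asserted sum over $\E(\lambda/\mu)$, with the hook lengths of $[\lambda]$ appearing in the denominators.

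The main obstacle is the second input: the excited-diagram expansion of the factorial Schur evaluation is the heart of the matter, and making it self-contained means reproving Ikeda--Naruse — either geometrically, or combinatorially by inducting on $|\mu|$: remove an outer corner of $\mu$, apply the corresponding Pieri/divided-difference step, and verify that the resulting sum reindexes over $\E(\lambda/\mu)$ via excited moves while the product of factors transforms as required (this is precisely where excited diagrams, and not some other family of subdiagrams of $[\lambda]$, are forced). A second, more elementary-looking approach — and the one in the spirit of this paper — would avoid factorial Schur functions and instead run the Hillman--Grassl correspondence on $[\lambda]$, proving directly that $\sum_{\pi\in\RPP(\lambda/\mu)}q^{|\pi|}=\bigl(\prod_{u\in[\lambda]}(1-q^{h(u)})\bigr)^{-1}\sum_{D\in\E(\lambda/\mu)}q^{(\dots)}$ by showing that reverse plane partitions supported on $\lambda/\mu$ are exactly those carried to nonnegative integer arrays whose support avoids an excited diagram, with weights matching via $|\pi|=\sum_{u}h(u)\,a_u$, and then transferring from $\RPP$ to $\SYT$. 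However, controlling the support of the Hillman--Grassl image for a general skew shape is exactly the difficulty that forces the ``partly algebraic'' step of \cite{MPP1}, so I would expect this route to go through unconditionally only in special cases such as border strips.
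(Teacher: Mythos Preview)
The paper does not give its own proof of Theorem~\ref{thm:IN}; it is quoted from \cite{naruse2014} and \cite{MPP1} as a foundational result on which the rest of the paper builds, so there is no in-paper proof to compare against.

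That said, your outline is essentially the route taken in the cited references. Your $q$-analogue is exactly Theorem~\ref{thm:qNHLF skew} (the exponent you call ``$c(u)$'' is $\lambda'_j-i$ for $u=(i,j)$), and your $q\to 1$ limit argument is correct: since every $D\in\E(\lambda/\mu)$ has $|[\lambda]\setminus D|=n$, distributing the $(1-q)^n$ over the product and sending $q\to 1$ turns each factor into $1/h(u)$. Your ``second input'' --- the excited-diagram expansion of the factorial Schur evaluation $s_\mu^*(\lambda)$ --- is precisely the Ikeda--Naruse theorem, and you rightly flag it as the substantive step.

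One imprecision: your ``first input'' is not quite right as stated. The principal specialization $s_{\lambda/\mu}(1,q,q^2,\dots)$ is not obtained from a Cauchy-type identity as a ratio of two factorial-Schur evaluations. In Naruse's argument the link comes from the equivariant localization/integration formula on the Grassmannian, and in \cite{MPP1} the $q$-analogue \eqref{eq:qnhlf} is derived from a separate algebraic identity (a Jacobi--Trudi/Lindstr\"om--Gessel--Viennot manipulation of $s_{\lambda/\mu}$ in finitely many variables) into which the Ikeda--Naruse expansion is substituted; the denominator $\prod_{u\in[\lambda]}(1-q^{h(u)})$ arises from the Weyl-type product, not from evaluating $s_\lambda$. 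Your alternative Hillman--Grassl route and your assessment of its difficulty match the paper's own discussion in Section~\ref{sec: final remarks}.
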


The excited diagrams of shape $\lambda/\mu$, denoted by $\mathcal{E}(\lambda/\mu)$, are certain subsets of size $|\mu|$ of the Young diagram of $\lambda$ obtained from the Young diagram of $[\mu]$ by recursively doing local moves.  Excited diagrams are in correspondence with certain semis standard Young tableaux (SSYT) of shape $\mu$ that are {\em flagged}, i.e. with certain bounds on the entries in each row. The NHLF has been actively studied since 2015 by Morales--Pak--Panova \cite{MPP1,MPP2,MPP3,MPP4}, Konvalinka \cite{konvalinka1,konvanlinka2}, Naruse--Okada \cite{NO}  and has applications and extensions in \cite{MR4349569,jiradilok2019roots,park2021naruse}.

The other positive formula by Okounkov-Olshanski is a also a sum over certain SSYT of shape $\mu$ with entries at most $\ell(\lambda)$ called {\em Okounkov--Olshanski tableaux} $\cOOT(\lambda/\mu)$.

\begin{theorem}[{Okounkov--Olshanskii \cite{OO}}]
For a skew shape $\lambda/\mu$ of size $n$ we have
\begin{equation} \label{eq:OO} \tag{OOF}
f^{\lambda/\mu} \, = \, \frac{n!}{\prod_{u\in [\lambda]} h(u)} \,
\sum_{T \in \cOOT(\lambda/\mu)} \, \prod_{(i,j)\in [\mu]} (\lambda_{d+1-T(i,j)} +i-j),
\end{equation}
where $d=\ell(\lambda)$ and $\cOOT(\lm)$ is the set of SSYT $T$ of shape $\mu$ with entries $\leq d$ such that $j-i<\lambda_{d+1-T(i,j)}$ for all $(i,j)\in [\mu]$.
\end{theorem}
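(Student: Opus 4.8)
The plan is to derive \eqref{eq:OO} by evaluating a factorial Schur polynomial in two different ways. Write $s_\mu(x\mid a)$ for the factorial Schur polynomial in $d$ variables $x=(x_1,\dots,x_d)$ with parameter sequence $a=(a_1,a_2,\dots)$; it admits the tableau expansion
\[
s_\mu(x\mid a)=\sum_{T\in\SSYT(\mu,d)}\ \prod_{(i,j)\in[\mu]}\bigl(x_{T(i,j)}-a_{T(i,j)+j-i}\bigr)
\]
and the bialternant formula $s_\mu(x\mid a)=\det\bigl[(x_i\mid a)^{\mu_j+d-j}\bigr]_{i,j=1}^{d}\big/\prod_{1\le i<j\le d}(x_i-x_j)$, where $(y\mid a)^m:=\prod_{k=1}^{m}(y-a_k)$. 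First I would specialize $a_k=k-1$ and $x_i=\lambda_{d+1-i}+i-1$ for $1\le i\le d$, with $d=\ell(\lambda)$. Since every entry of an $\SSYT$ of shape $\mu$ satisfies $T(i,j)\ge i$, the index $T(i,j)+j-i$ is at least $j\ge1$, and a one-line computation gives $x_{T(i,j)}-a_{T(i,j)+j-i}=\lambda_{d+1-T(i,j)}+i-j$. Thus the tableau side becomes $\sum_{T\in\SSYT(\mu,d)}\prod_{(i,j)\in[\mu]}\bigl(\lambda_{d+1-T(i,j)}+i-j\bigr)$, which is the OOF sum except that it ranges over all of $\SSYT(\mu,d)$ rather than over $\cOOT(\lambda/\mu)$.

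The next step is the combinatorial lemma that any $T\in\SSYT(\mu,d)\setminus\cOOT(\lambda/\mu)$ contributes $0$. Put $\phi(i,j):=j-i-\lambda_{d+1-T(i,j)}$, so the factor at $(i,j)$ equals $-\phi(i,j)$ and $T\in\cOOT(\lambda/\mu)$ precisely when $\phi<0$ throughout $[\mu]$. Going down a column $T(i,j)$ strictly increases, so $\lambda_{d+1-T(i,j)}$ weakly increases; in particular $T(i,1)\ge i$ together with $\lambda_d\ge1$ forces $\phi(i,1)\le-i<0$ at the left end of every row of $\mu$. Going rightward along a row $T(i,j)$ weakly increases, so $\lambda_{d+1-T(i,j)}$ weakly increases and $\phi$ grows by at most $1$ per step. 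Hence if $\phi(i,j)\ge0$ somewhere in row $i$, the first cell of that row with $\phi\ge0$ must in fact have $\phi=0$, producing a zero factor. Therefore the specialized tableau sum equals $\sum_{T\in\cOOT(\lambda/\mu)}\prod_{(i,j)\in[\mu]}\bigl(\lambda_{d+1-T(i,j)}+i-j\bigr)$.

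Finally I would evaluate the bialternant at the same point. Here $(x_i\mid a)^m=\prod_{k=1}^{m}(\lambda_{d+1-i}+i-k)=(\lambda_{d+1-i}+i-1)!\big/(\lambda_{d+1-i}+i-1-m)!$, with the convention $1/(\text{negative})!=0$, which is consistent since the product vanishes exactly when one of its factors is $0$. Taking $m=\mu_j+d-j$, pulling $(\lambda_{d+1-i}+i-1)!$ out of row $i$, and reversing the row order (contributing the sign $(-1)^{\binom d2}$) turns the numerator determinant into $\prod_{i=1}^{d}(\lambda_i+d-i)!\cdot(-1)^{\binom d2}\det\bigl[1/(\lambda_i-\mu_j-i+j)!\bigr]$, and the last determinant equals $f^{\lambda/\mu}/n!$ by the Jacobi--Trudi/Aitken formula. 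The denominator is $\prod_{1\le i<j\le d}(x_i-x_j)=(-1)^{\binom d2}\prod_{1\le q<p\le d}(\lambda_q-\lambda_p+p-q)$. Dividing and invoking the hook identity $\prod_{u\in[\lambda]}h(u)=\prod_{i=1}^{d}(\lambda_i+d-i)!\big/\prod_{1\le q<p\le d}(\lambda_q-\lambda_p+p-q)$ gives $s_\mu(x\mid a)=\tfrac{1}{n!}\prod_{u\in[\lambda]}h(u)\cdot f^{\lambda/\mu}$ at this specialization, which together with the previous paragraph is exactly \eqref{eq:OO}. The one genuinely substantive step is the combinatorial lemma, which explains why the \emph{a priori} larger positive sum over $\SSYT(\mu,d)$ collapses onto $\cOOT(\lambda/\mu)$; the remaining determinant bookkeeping — matching the Vandermonde factor against the hook product and tracking the row-reversal sign — is routine but must be handled carefully.
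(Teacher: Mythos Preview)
The paper does not give its own proof of this theorem: it is stated in the introduction as a known result of Okounkov and Olshanski, with a citation to \cite{OO}, and is used thereafter without further justification. So there is no in-paper argument to compare against.

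That said, your proposal is a correct outline of the standard derivation via factorial Schur functions, which is indeed the approach behind the original Okounkov--Olshanski result. The specialization $a_k=k-1$, $x_i=\lambda_{d+1-i}+i-1$ is the right one; the check that $x_{T(i,j)}-a_{T(i,j)+j-i}=\lambda_{d+1-T(i,j)}+i-j$ is fine; and the vanishing lemma reducing the sum from $\SSYT(\mu,d)$ to $\cOOT(\lambda/\mu)$ is argued correctly (the key point being that along a row $\phi(i,j)=j-i-\lambda_{d+1-T(i,j)}$ increases by at most one per step while starting negative in column $1$, so the first nonnegative value must be zero). The bialternant side is handled correctly as well: the row reversal and sign bookkeeping match, and the identifications with the Aitken determinant $f^{\lambda/\mu}=n!\det[1/(\lambda_i-\mu_j-i+j)!]$ and the hook product $\prod_{u\in[\lambda]}h(u)=\prod_i(\lambda_i+d-i)!\big/\prod_{q<p}(\lambda_q-\lambda_p+p-q)$ are standard. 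One minor stylistic point: the sentence about columns (``going down a column $T(i,j)$ strictly increases \ldots'') is not actually used in establishing $\phi(i,1)<0$; all that is needed there is $T(i,1)\ge i$ and $\lambda_d\ge1$.
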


In \cite{MZ}, Morales--Zhu did a similar study of \eqref{eq:OO} as Morales--Pak--Panova did for \eqref{eq:nhlf}. In particular, they gave in \cite[Cor. 5.7]{MZ} a reformulation of \eqref{eq:OO} in terms of the following flagged tableaux: SSYT of shape $\lambda/\mu$ with entries in row $i$ are at most $i$ whose set we denote by $\mathcal{SF}(\lambda/\mu)$. These flagged tableaux  correspond to {\em reverse excited diagrams}, certain subsets of size $|\lambda/\mu|$ of the Young diagram {\em shifted shape} $\lambda^*$ similar to excited diagrams.

\begin{corollary}[Flagged tableaux formulation of \eqref{eq:OO}] \label{cor:flag-form}
For a skew shape $\lambda/\mu$ of size $n$ we have
\begin{equation} 
f^{\lm} = \frac{n!}{\prod_{u \in [\lambda]} h(u)} \sum_{T \in \mathcal{SF}(\lambda/\mu)} \prod_{(j, t) \in M(T)} (\lambda_t-t+ i(j,t)-j+1),
\end{equation}
where $M(T)$ is a certain set of labels on the horizontal edges of $T$ between the cells  $(i(j,t),j)$ and $((i(j,t)+1,j)$ so that column $j$ of $T$ has all the numbers from $1$ to $\lambda'_j$ from top to bottom.
\end{corollary}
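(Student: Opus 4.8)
The plan is to derive Corollary~\ref{cor:flag-form} directly from \eqref{eq:OO} (this is \cite[Cor.~5.7]{MZ}) by constructing a weight-preserving bijection
\[
\Phi\colon \cOOT(\lambda/\mu)\longrightarrow \mathcal{SF}(\lambda/\mu),
\]
built column by column, so that summing \eqref{eq:OO} over $\cOOT(\lambda/\mu)$ becomes a sum over $\mathcal{SF}(\lambda/\mu)$. Conceptually this is the route through \emph{reverse excited diagrams}: a flagged tableau $T\in\mathcal{SF}(\lambda/\mu)$ together with its edge labels $M(T)$ is exactly the data of a size-$|\lambda/\mu|$ subset of $\lambda^*$, and the tableaux in $\cOOT(\lambda/\mu)$, of shape $\mu$, index these subsets. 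To define $\Phi$, fix a column $j$ and let $U(1,j)<\cdots<U(\mu'_j,j)$ in $\{1,\dots,d\}$ be the entries of $U\in\cOOT(\lambda/\mu)$ in that column. Set $b_k:=d+1-U(\mu'_j+1-k,\,j)$ for $1\le k\le \mu'_j$; one shows (see below) that $B_j:=\{b_1<\cdots<b_{\mu'_j}\}\subseteq\{1,\dots,\lambda'_j\}$. Put $A_j:=\{1,\dots,\lambda'_j\}\setminus B_j$, fill the cells of $[\lambda/\mu]$ in column $j$ — which occupy rows $\mu'_j+1,\dots,\lambda'_j$ — in increasing order by the elements of $A_j$, and record each $b\in B_j$ as a label on a horizontal edge of column $j$ so that reading column $j$ from top to bottom returns $1,2,\dots,\lambda'_j$. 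A short count gives the edge position: if $b=b_k$ then $i(j,b)=\mu'_j+|\{a\in A_j: a<b\}| = \mu'_j+b_k-k = b_k+i-1$ where $i:=\mu'_j+1-k$, so the label $(j,b_k)\in M(T)$ is naturally paired with the cell $(i,j)\in[\mu]$.

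Next I would check that $\Phi$ is a well-defined bijection. For $B_j\subseteq\{1,\dots,\lambda'_j\}$ one uses the defining inequality of $\cOOT(\lambda/\mu)$ on $(1,j)$, namely $j-1<\lambda_{d+1-U(1,j)}$, together with column-strictness $U(i,j)\ge U(1,j)$ and the monotonicity of $\lambda$, to get $\lambda_{d+1-U(i,j)}\ge\lambda_{d+1-U(1,j)}\ge j$, i.e. $b_k\le\lambda'_j$. The resulting filling of $\lambda/\mu$ is column-strict by construction, its entry in row $i$ is at most $i$ automatically (the $m$-th smallest element of $A_j$ is at most $\mu'_j+m$), and one must verify that rows are weakly increasing, i.e. $T(i,j)\le T(i,j+1)$, by comparing $A_j$ with $A_{j+1}$ and tracing this back to the row condition $U(i,j)\le U(i,j+1)$ of the semistandard tableau $U$. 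The inverse of $\Phi$ simply reads off, in each column of $T\in\mathcal{SF}(\lambda/\mu)$, the labels $M(T)$ lying in that column as the set $B_j$, and recovers $U$ from the $b_k$. The step I expect to be the main obstacle is precisely this equivalence of conditions in both directions — showing that the $\cOOT$ constraint $j-i<\lambda_{d+1-U(i,j)}$ together with semistandardness of $U$ is equivalent to $T$ being a well-defined skew SSYT with the flag $\le i$ — since the cross-column comparison mixes the order-reversal $k\mapsto \mu'_j+1-k$ and the shift by $d+1$, and is where the genuine bookkeeping of reverse excited moves is needed.

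Finally, the weight transports term by term. Under $\Phi$, using $i(j,b_k)=b_k+i-1$ and $b_k=d+1-U(i,j)$ with $i=\mu'_j+1-k$,
\[
\lambda_{b_k}-b_k+i(j,b_k)-j+1\;=\;\lambda_{b_k}+i-j\;=\;\lambda_{d+1-U(i,j)}+i-j,
\]
which is exactly the factor attached to $(i,j)$ in \eqref{eq:OO}. Multiplying over all columns $j$ and all $k$ yields
\[
\prod_{(j,t)\in M(T)}\bigl(\lambda_t-t+i(j,t)-j+1\bigr)\;=\;\prod_{(i,j)\in[\mu]}\bigl(\lambda_{d+1-U(i,j)}+i-j\bigr),
\]
so that \eqref{eq:OO} becomes the claimed identity after the substitution $T=\Phi(U)$ and using that $\Phi$ is a bijection. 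One should also double-check small cases (e.g. $\lambda/\mu=(2,1)/(1)$) to confirm the indexing conventions for $M(T)$ and $i(j,t)$.
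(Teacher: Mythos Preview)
Your proposal is correct and follows exactly the route the paper indicates: the paper does not prove Corollary~\ref{cor:flag-form} itself but cites it as \cite[Cor.~5.7]{MZ}, and in the background (Proposition~\ref{bij:OOT and flagged skew tableaux}) it records precisely your column-complement bijection $\vartheta^*\colon\mathcal{OOT}(\lm)\to\mathcal{SF}(\lm)$, sending $U$ to the skew tableau whose $j$th column consists of those $k\in\{1,\dots,\lambda'_j\}$ with $d+1-k$ absent from column $j$ of $U$. Your map $\Phi$ is this $\vartheta^*$, your weight computation $\lambda_{b_k}-b_k+i(j,b_k)-j+1=\lambda_{d+1-U(i,j)}+i-j$ is the correct term-by-term match, and the step you flag as the main obstacle (that semistandardness of $U$ together with the $\cOOT$ inequality is equivalent to $T$ being a well-defined element of $\mathcal{SF}(\lm)$) is exactly what is packaged in \cite[Cor.~4.16]{MZ} and what the paper takes for granted.
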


\subsection{Minimal skew tableaux and new reformulation of \texorpdfstring{\eqref{eq:nhlf}}{(NHLF)}}

A natural question which is the start of our investigation is to  find a reformulation of \eqref{eq:nhlf} in terms of certain SSYT of skew shape $\lambda/\mu$. We define the set $\SSYT_{\min}(\lambda/\mu)$ of {\em minimal skew SSYT}  obtained from the SSYT $T_0$ of shape $\lambda/\mu$ with entries $\{0,1,\ldots,\lambda'_i-\mu'_i-1\}$ by recursively applying certain local moves. Our first result is a bijection $\Phi$ between excited diagrams and tableaux $\SSYT_{\min}(\lambda/\mu)$ that commutes with the respective local moves (Lemma~\ref{lemma: beta and delta}).

\begin{restatable*}[]{theorem}{bijectionEDandSSYTmin}
\label{thm: bijection ED and SSYTmin}
For a skew shape $\lambda/\mu$ the map $\Phi:\E(\lambda/\mu) \to \SSYT_{\min}(\lambda/\mu)$ is a bijection that commutes with the respective local moves, that is $\Phi \circ \beta = \delta \circ \Phi$.
\end{restatable*}

As a corollary we obtain a new reformulation of \eqref{eq:nhlf}. Let $(\theta_1,\ldots,\theta_k)$ be the {\em Lascoux--Pragacz} decomposition of the shape $\lambda/\mu$ into border strips (see Section~\ref{sec:LP decomp}).

\begin{restatable*}[minimal tableaux reformulation of \eqref{eq:nhlf}]{corollary}{reformulationNHLFminSSYT}
\label{cor: reformulation NHLF min tableaux}
For a skew shape $\lambda/\mu$ of size $n$ we have
\begin{equation} \label{eq:reformulate NHLF}
f^{\lambda/\mu} \,=\, \frac{n!}{\prod_{u \in [\lambda]} h(u)} \sum_{T \in \SSYT_{\min}(\lambda/\mu)} \prod_{(i,j)\in [\mu]} h(i+\alpha,j+\alpha),
\end{equation}
where $\alpha$ is the number strips $\theta_k$ such that $\overline{T}(\theta_k(j))>\mu'_j-i$ and $\overline{T}=T-T_0$.
\end{restatable*}

We give an explicit non-recursive description of the tableaux in $\SSYT_{\min}(\lambda/\mu)$  in terms of the Lascoux--Pragacz \cite{LP} decomposition of $\lambda/\mu$ into border strips (see Theorem~\ref{thm: direct char min SSYT}) that immediately shows they are a subset of the skew flagged tableaux $\mathcal{SF}(\lambda/\mu)$ from \eqref{eq:cor:flag-form}. As an application, we obtain that the Naruse formula has fewer terms than the Okounkov--Olshanski formula and characterize the skew shapes where equality is attained. We denote the number of terms of each formula by $\nED(\lambda)$ and $\nOOT(\lambda/\mu)$, respectively.

\begin{restatable*}[]{theorem}{comparenumberterms}
\label{thm: num ED vs OOF}
For a connected skew shape $\lambda/\mu$ with $d=\ell(\lambda)$ and $r=\max\{i\mid \mu_1=\mu_i\}$ we have that $\nED(\lambda/\mu) \leq \nOOT(\lambda/\mu)$ with equality if and only if  $\lambda_d\geq \mu_r+d-r$.
\end{restatable*}

\subsection{Relation with the Hillman--Grassl correspondence}

The hook-length formula \eqref{eq:hlf} has a $q$-analogue by Littlewood that is a special case of Stanley's hook-content formula for the generating series of SSYT of shape $\lambda$.

\begin{theorem}[Littlewood]
For a partition $\lambda$ we have  
\begin{equation} \label{eq:qhlf}
s_{\lambda}(1,q,q^2,\ldots) \,=\, q^{b(\lambda)}\cdot \prod_{u\in \lambda} \frac{1}{1-q^{h(u)}}.
\end{equation}
\end{theorem}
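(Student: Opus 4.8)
The plan is to derive \eqref{eq:qhlf} from the combinatorial description of the principal specialization together with the Hillman--Grassl correspondence, which also fits the framework of this paper. First I would use the definition of the Schur function as a generating function over semistandard Young tableaux: substituting $x_i=q^{i-1}$ gives
\begin{equation*}
s_\lambda(1,q,q^2,\ldots)\;=\;\sum_{T\in\SSYT(\lambda)} q^{\sum_{(i,j)\in[\lambda]}(T(i,j)-1)},
\end{equation*}
where $T$ ranges over SSYT of shape $\lambda$ with positive integer entries, since the exponent of $q$ collects $i-1$ for each cell containing the entry $i$.

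Next I would apply the standard weight-shifting bijection between $\SSYT(\lambda)$ and the set $\RPP(\lambda)$ of reverse plane partitions of shape $\lambda$ (nonnegative entries, weakly increasing along rows and columns): send $T$ to $\pi$ defined by $\pi(i,j)=T(i,j)-i$, with inverse $T(i,j)=\pi(i,j)+i$. Strict increase down the columns of $T$ corresponds exactly to weak increase down the columns of $\pi$, and nonnegativity of $\pi$ follows because $T(i,j)\ge i$; hence this is a bijection. It transforms the exponent as
\begin{equation*}
\sum_{(i,j)\in[\lambda]}\bigl(T(i,j)-1\bigr)\;=\;\sum_{(i,j)\in[\lambda]}\pi(i,j)\;+\;\sum_{(i,j)\in[\lambda]}(i-1)\;=\;|\pi|+b(\lambda),
\end{equation*}
using $\sum_{(i,j)\in[\lambda]}(i-1)=\sum_i(i-1)\lambda_i=b(\lambda)$. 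Therefore $s_\lambda(1,q,q^2,\ldots)=q^{b(\lambda)}\sum_{\pi\in\RPP(\lambda)}q^{|\pi|}$.

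The only substantive ingredient left is the identity $\sum_{\pi\in\RPP(\lambda)}q^{|\pi|}=\prod_{u\in[\lambda]}(1-q^{h(u)})^{-1}$. This is precisely the content of the Hillman--Grassl correspondence: it is a bijection between $\RPP(\lambda)$ and arrays $a=(a_u)_{u\in[\lambda]}$ of nonnegative integers satisfying $|\pi|=\sum_{u\in[\lambda]}h(u)\,a_u$, and summing $q^{\sum_u h(u)a_u}$ over all such arrays factors into one geometric series per cell, $\prod_{u\in[\lambda]}\sum_{a_u\ge 0}q^{h(u)a_u}=\prod_{u\in[\lambda]}(1-q^{h(u)})^{-1}$. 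Combining the three displays yields \eqref{eq:qhlf}.

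The only genuine obstacle is having the Hillman--Grassl correspondence with its weight statistic on hand; everything else is bookkeeping, chiefly the observation that the shift $T(i,j)\mapsto T(i,j)-i$ contributes exactly the factor $q^{b(\lambda)}$. If one prefers to bypass Hillman--Grassl, an alternative is to substitute $x_i=q^{i-1}$ into the bialternant formula for $s_\lambda(x_1,\ldots,x_N)$, evaluate the resulting ratio of $q$-Vandermonde-type determinants to get the finite principal specialization $s_\lambda(1,q,\ldots,q^{N-1})=q^{b(\lambda)}\prod_{u\in[\lambda]}\frac{1-q^{N+c(u)}}{1-q^{h(u)}}$, where $c(u)=j-i$ is the content of $u=(i,j)$, and then let $N\to\infty$ so that each factor $1-q^{N+c(u)}$ tends to $1$; this route trades the bijective input for the determinantal hook--content computation, of comparable difficulty.
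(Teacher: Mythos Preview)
Your proof is correct and matches the paper's own argument: the paper states \eqref{eq: gf SSYT RPP lambda} (the shift bijection contributing $q^{b(\lambda)}$) and then remarks after Theorem~2.14 that \eqref{eq:qhlf} follows by combining this with the Hillman--Grassl bijection and its hook-weight property \eqref{eq: HG hook weight}. Your write-up simply makes these steps explicit, and the alternative hook--content route you sketch is a valid independent derivation.
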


 Hillman and Grassl give a bijective proof of this via a correspondence $HG(\cdot)$ between reverse plane partitions of shape $\lambda$ and arrays of nonnegative integers of shape $\lambda$. The correspondence $HG(\cdot)$ is related to the famous RSK correspondence (see \cite{MPP1}) and has recent connections to {\em quiver representations} \cite{GPT}.

In \cite{MPP1}, Morales--Pak--Panova gave a $q$-analogues of \eqref{eq:nhlf} for the generating functions of SSYT of skew shape $\lambda/\mu$. 

\begin{theorem}[\cite{MPP1}] \label{thm:qNHLF skew}
For a skew shape $\lambda/\mu$ we have 
\begin{equation} \label{eq:qnhlf}
s_{\lambda/\mu}(1,q,q^2,\ldots) \,=\, \sum_{D \in \mathcal{E}(\lambda/\mu)} \prod_{(i,j)\in [\lambda]\setminus D} \frac{q^{\lambda'_j-i}}{1-q^{h(i,j)}}. 
\end{equation}
\end{theorem}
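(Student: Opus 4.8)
The plan is to deduce \eqref{eq:qnhlf} from a Jacobi--Trudi-type determinant for $s_{\lambda/\mu}$ together with the Lindström--Gessel--Viennot (LGV) lemma, reading off excited diagrams from the non-intersecting lattice-path systems that appear. Under the principal specialization $x_k\mapsto q^{k-1}$ the left-hand side is $\sum_T q^{|T|-n}$, the sum over semistandard tableaux $T$ of shape $\lambda/\mu$ with entries $\ge 1$, where $n=|\lambda/\mu|$. First I would write $s_{\lambda/\mu}$ via the Lascoux--Pragacz identity as the $k\times k$ determinant of the border-strip Schur functions $s_{\theta_i\#\theta_j}$, where $(\theta_1,\dots,\theta_k)$ is the Lascoux--Pragacz decomposition of $\lambda/\mu$ into border strips; under the specialization each $s_{\theta_i\#\theta_j}$ is the $q$-weighted generating function $W_q(A_i\to B_j)$ of a single lattice path between the two diagonal endpoints $A_i,B_j$ of the $i$-th and $j$-th strips, confined to (a shift of) $[\lambda]$. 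By the LGV lemma the determinant $\det[W_q(A_i\to B_j)]_{i,j=1}^k$ collapses to the positive sum over $k$-tuples $(p_1,\dots,p_k)$ of \emph{non-intersecting} paths $p_i\colon A_i\to B_i$ --- the nesting of the Lascoux--Pragacz strips rules out all permutations but the identity --- so $s_{\lambda/\mu}(1,q,q^2,\dots)=\sum_{(p_1,\dots,p_k)}\prod_{i}W_q(p_i)$.

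Next I would invoke the bijection (as in \cite{MPP1}) between non-intersecting tuples in $[\lambda]$ and excited diagrams $D\in\E(\lambda/\mu)$: the cells of $[\lambda]$ lying strictly northwest of $\bigcup_i p_i$ form $D$, and $[\lambda]\setminus D$ is exactly the region swept by the paths. Fixing $D$ and summing $\prod_i W_q(p_i)$ over all tuples compatible with $D$, one checks that the contribution factors over the cells of $[\lambda]\setminus D$: the cell $(i,j)$ contributes $q^{\lambda'_j-i}$ from the minimal passage of the local path segment through column $j$, times a geometric series $\sum_{a\ge 0}q^{a\,h(i,j)}=\tfrac1{1-q^{h(i,j)}}$ recording how far that segment may slide, the sliding distance being precisely the hook length $h(i,j)$ of $(i,j)$ in $[\lambda]$. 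Summing over $D\in\E(\lambda/\mu)$ then yields \eqref{eq:qnhlf}.

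The main obstacle is this last weight computation: one must verify that the principal specialization of a border-strip Schur function, once conditioned on the confinement region coming from a fixed $D$, genuinely factors into the advertised hook-length product --- in particular that the minimal passages assemble to the exponent $\sum_{(i,j)\in[\lambda]\setminus D}(\lambda'_j-i)$ and that the sliding freedom at each cell is measured by its $\lambda$-hook. An alternative, and the one this paper ultimately refines, keeps $[\lambda]$ fixed and applies the Hillman--Grassl correspondence to reverse plane partitions of $\lambda$ directly: one identifies skew semistandard tableaux of $\lambda/\mu$ with a suitable family of such RPP and tracks how the size statistic and the support of the associated array encode $D$ and the hook product, after which \eqref{eq:qnhlf} follows from Littlewood's formula \eqref{eq:qhlf} region by region. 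The difficulty there is the delicate description of the restricted Hillman--Grassl map, which Section~\ref{sec:LP decomp} and the minimal-tableau machinery of this paper are designed to handle.
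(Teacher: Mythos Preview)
First, a framing point: the paper does not itself prove Theorem~\ref{thm:qNHLF skew}; it is quoted from \cite{MPP1}, where the identity is established algebraically (via a Chevalley-type recursion for factorial Schur functions), and then separately interpreted through the restricted Hillman--Grassl map (the present paper's Theorem~\ref{thm: HG skew SSYT}). So your proposal should be measured against those two arguments, not against anything in this paper.

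Your outline has a genuine gap at precisely the step you flag, but the problem is structural rather than merely computational. In the Kreiman model the correspondence between excited diagrams and tuples of non-intersecting paths in $[\lambda]$ is a \emph{bijection}: once $D$ is fixed, there is exactly one tuple $(\gamma_1(D),\ldots,\gamma_k(D))$, namely the one whose support is $[\lambda]\setminus D$ (see the discussion after Proposition~\ref{prop:phi bijection ED and flagged tableaux}). There is therefore nothing to ``sum over tuples compatible with $D$'', and no geometric series can appear this way. Relatedly, $D$ is not ``the cells strictly northwest of $\bigcup_i p_i$'' --- cells of $D$ can sit between the paths --- so the picture you describe is not the Kreiman picture. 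If instead you mean the LGV paths coming from the Jacobi--Trudi or Lascoux--Pragacz determinant for $s_{\lambda/\mu}$, those paths live in $\mathbb{Z}^2$ (encoding the entries of the SSYT) and are not paths of cells in $[\lambda]$; identifying them with complements of excited diagrams, and then extracting a per-cell factor $q^{\lambda'_j-i}/(1-q^{h(i,j)})$, is exactly the missing content of the theorem, not a routine check.

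Where the hook denominators actually come from in the known bijective story is Hillman--Grassl: for $T\in\SSYT(\lambda/\mu)$ the array $A=\HG(T)$ has support in $[\lambda]\setminus D$ for a unique $D\in\ED(\lambda/\mu)$, each cell $u$ of $[\lambda]\setminus D$ may carry any nonnegative integer (contributing $\sum_{a\ge0}q^{a\,h(u)}=(1-q^{h(u)})^{-1}$ to $|T|$ by \eqref{eq: HG hook weight}), and the broken-diagonal cells are forced to be positive, producing the numerator $q^{\lambda'_j-i}$ (cf.\ \cite[\S7]{MPP1} and Theorem~\ref{thm: HG skew SSYT} here). Proving that this restricted map is surjective onto $\bigcup_D\mathcal{A}^*_D$ is the hard part; in \cite{MPP1} surjectivity is deduced from the algebraic identity, and making that step fully bijective is precisely the program of the present paper (completed here only for border strips and for the ``minimal'' layer).
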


This identity corresponds to restricting the Hillman--Grassl bijection to SSYT of shape $\lambda/\mu$ \cite{MPP1}. The resulting arrays have support on the complement of excited diagrams and with certain forced nonzero entries on {\em broken diagonals}.
In \cite{MPP1} two of the authors with Pak proved equation~\eqref{eq:qnhlf} algebraically, and showed that the inverse Hillman-Grassl map is an injection to SSYT of shape $\lm$. This implied that the restricted Hillman-Grassl map is a bijection between skew SSYT and excited arrays.

The proof of this connection with the Hillman--Grassl map is partially deduced from the algebraic identity and remains mysterious.

Our second main result is to show that the bijection between minimal tableaux and excited diagrams coincides with the  Hillman--Grassl map.

\begin{restatable*}[]{theorem}{PhivsHG}
 \label{thm: phi vs hg}
 The map $\Phi^{-1}$ is equivalent to the Hillman-Grassl map $\HG$ on the minimal SSYT of shape $\lm$. That is, for $T$ in $\SSYT_{\min}(\lm)$ we have 
 \[
 HG(T) = A_D \text{ if an only if } \Phi^{-1}(T)=D,
 \]
 where the array $A_D$ in $\mathcal{A}^*(\lambda/\mu)$ corresponds to $D$ in $\ED(\lm)$; i.e. $\HG^{-1}(A_D) \,=\, \Phi(D)$.
 \end{restatable*}

We obtain a bijective proof of the following identity, between the {\em leading terms} of each summand on the right-hand-side of~\eqref{eq:qnhlf}.

\begin{restatable*}[]{corollary}{leadtermsqNHLF}
 \label{cor:identity leading terms}
 For a skew shape $\lambda/\mu$ we have 
 \begin{equation} \label{eq:lead term qnhlf}
\sum_{T \in \SSYT_{\min}(\lambda/\mu)} q^{|T|} = \sum_{D \in
  \mathcal{E}(\lambda/\mu)}  q^{\sum_{(i,j)\in D} (\lambda'_j-i)}.
 \end{equation}
\end{restatable*} 
Special cases of this identity give $q$-binomial coefficients and $q$-Catalan numbers (see Section~\ref{sec:qnumED}).

We further investigate the Hillman--Grassl bijection restricted to SSYT of shape $\lambda$ and $\lambda/\mu$. We give a fully combinatorial proof of \eqref{eq:qhlf} for border strips (see Theorem~\ref{thm: bijection border strips}).  For straight shapes $\lambda$, we show the following remarkable additivity property of the Hillman--Grassl correspondence. Let $T_{\min}(\lambda)$ be the SSYT of shape $\lambda$ with entries $i-1$ in row $i$.

\begin{restatable*}[]{theorem}{additivityHGstraight}
\label{thm:additivity straight shape}
Let $T$ be in $\RPP(\lambda)$ and let $T_{\min}=T_{\min}(\lambda)$ be the minimum SSYT of shape $\lambda$ 
\[
HG(T) + HG(T_{\min}) = HG(T+T_{\min}).
\]
\end{restatable*}

The additivity fails for the case of skew shapes when $T_{\min}$ is replaced by minimal skew SSYT.

\begin{table}
    \centering
    \begin{tabular}{lll} \hline 

formulation  &  NHLF & OOF \\  \hline
shape $\mu$  &  flagged tableaux of shape $\mu$ \cite{Kre1,MPP1} & Okounkov--Olshanski tableaux \cite{MZ} \\  & excited diagrams \cite{naruse2014}  & \\ 
skew shape $\lm$ &\textcolor{blue}{minimal skew SSYT}&  flagged tableaux of shape $\lm$ \cite{MZ} \\ &  & reverse excited diagrams \cite{MZ}  \\ \hline 
    \end{tabular}
    \caption{The different formulations of the Naruse hook-length formula (NHLF) and the Okounkov--Olshanski formula (OOF).}
    \label{fig:summary}
\end{table}

\section*{Outline}
The paper is divided as follows. In Section~\ref{sec: background},
we give background and definitions. In Section~\ref{sec: minimal SSYT}, we introduce minimal SSYT of shape $\lambda/\mu$ and show the bijection $\Phi$ between the minimal SSYT and the excited diagram of shape $\lm$. We also give a non-recursive definition of the minimal SSYT and the description of the inverse of $\Phi$. In Section~\ref{sec: reformulation NHLF}, we give the reformulation of the Naruse hook formula in terms of minimal SSYT and in Section~\ref{sec: comparison OOF} we give the comparison between the minimal SSYT and Okounkov-Olshanski tableaux. 

In Section~\ref{sec: HG bijection on min SSYT}, we show the relation between the bijection $\Phi$ and the Hillman--Grassl bijection and (Theorem~\ref{thm: phi vs hg}) and give consequence of this result. We give a bijective proof of \eqref{eq:qnhlf} for the case of border-strips in Section~\ref{sec: bijective proof for borderstrips}. In Section~\ref{sec: additivity}, we prove the additivity property of Hillman--Grassl on straight shapes and give counter-examples of such additivity for skew shapes. Finally we end with the final remarks in Section~\ref{sec: final remarks} which includes a comparison between \eqref{eq:nhlf} and \eqref{eq:OO} with the classical formula \eqref{eq:LR} for $f^{\lm}$ involving Littlewood--Richardson coefficients.

\section{Background}\label{sec: background}

\subsection{Young diagrams and tableaux}

Let $\lambda=(\lambda_1,\ldots,\lambda_{\ell})$ denote an integer partition of lengths $\ell=\ell(\lambda)$ and size $|\lambda|=\sum_{i=1}^\ell \lambda_i$. We denote by $[\lambda]:=\{(i,j) \mid 1\leq j \leq \lambda_i, 1\leq i \leq \ell\}$ the {\em Young diagram} of $\lambda$. For $(i,j) \in [\lambda]$, we denote by $h(i,j)=\lambda_i+\lambda'_j -i-j+1$ the {\em hook length} of $(i,j)$ in $[\lambda]$, and $c(i,j)=j-i$ the {\em content} of $(i,j)$. A {\em skew shape} is denoted by $\lambda/\mu$ and its size is $|\lambda/\mu|=|\lambda|-|\mu|$. We assume that $\lambda/\mu$ is connected. Given a skew shape $\lm$ of length $d$, we denote $\lambda^*/\mu^*$ the {\em shifted skew shape} $(\lambda_1+d-1,\lambda_2+d-2,\ldots,\lambda_d)/(\mu_1+d-1,\mu_2+2-2,\ldots,\mu_d)$. We denote the staircase partition by $\delta_n = (n-1,n-2,\ldots,1)$.

A {\em reverse plane partition} of shape $\lambda/\mu$ is an array $\pi$ of shape $\lambda/\mu$ filled with nonnegative integers that is weakly increasing in rows and columns. We denote the set of such plane partitions by $\RPP(\lambda/\mu)$ and the size of a plane partition $\pi$ is the sum of its entries and is denoted by $|\pi|$. A {\em semistandard Young tableaux} of shape $\lambda/\mu$ is a reverse plane partition of shape $\lambda/\mu$ that is strictly increasing in columns. We denote the set of such tableaux by $\SSYT(\lambda/\mu)$. Similarly, $\SSYT(\lm,m)$ denotes the set of SSYT of shape $\lm$ and entries $\leq m$. A {\em standard tableaux} of shape $\lambda/\mu$ of size $n$ is an array $T$ of shape $\lambda/\mu$ with the numbers $1,\ldots,n$, each number appearing once, that is increasing in rows and columns. We denote the number of such tableaux by  $f^{\lambda/\mu}$.

Note that for straight shapes $\lambda$, the generating functions of $\RPP(\lambda)$ and $\SSYT(\lambda)$ are proportional given by the direct bijection $T(i,j) \to T(i,j) -j$:
\begin{equation} \label{eq: gf SSYT RPP lambda}
\sum_{T \in \SSYT(\lambda)} q^{|T|} \,=\, q^{b(\lambda)} \sum_{T \in \RPP(\lambda)} q^{|T|},
\end{equation}
where $b(\lambda)=\sum_i (i-1)\lambda_i$.

\subsection{Schur functions} \label{sec: Schur functions}
Given a skew partition $\lambda/\mu$ and variables ${\bf x} = (x_1, x_2, \ldots)$, let 
\[
s_{\lambda/\mu}({\bf x}) = \sum_{T \in SSYT(\lambda/\mu)} {\bf x}^T,
\]
where ${\bf x}^T = x_1^{\#\text{1s in }(T)}x_2^{\#\text{2s in }T}\cdots$. The generating function for $\SSYT(\lambda/\mu)$ by  size (volume) is obtained by evaluating $s_{\lambda/\mu}$ at $x_i=q^{i-1}$.

\subsection{Lascoux--Pragacz and Kreiman decomposition of \texorpdfstring{$\lm$}{lambda/mu}}
\label{sec:LP decomp}

A \emph{border-strip} is a connected skew shape with no $2 \times 2$ box. 
The starting point and ending point of a strip are its
southwest and northeast endpoints. Given $\lambda$, the {\em outer border strip}  is the  strip
containing all the boxes inside $[\lambda]$ sharing a vertex with the boundary of
$\lambda$., i.e. $\lambda/(\lambda_2-1,\lambda_3-1,\ldots)$.

We introduce two different decompositions of a Young diagram of shape $\lambda/\mu$, the \emph{Lascoux--Pragacz decomposition} and the \emph{Kreiman decomposition}. See Figure~\ref{exLascouxPragacz} for an example. We follow the definitions and notations in \cite{KimYoo}.

For a skew shape $\lm$, let $cont(\lm) = \{c(u) \,|\, u\in \lm\}$ be the set of contents of the cells of $[\lm]$. A \emph{cutting strip} of $\lm$ is a border-strip $\rho$ such that $cont(\rho) = cont(\lm)$. We can decompose $\lm $ into border strips $(\rho_1,\dots, \rho_k)$ by sliding the cutting strip $\rho$ along the diagonal and taking an intersection with $[\lm]$. Denote by $q_i$ the content of the top right-most element in each border strip $\rho_i$. Note that different border strips have different values of $q_i$. We index $\rho_i$ such that $q_1>q_2>\dots>q_k$. See Figure~\ref{fig:cutting edge}. The {\em diagonal distance} of each $\rho_i$ is the number of diagonal slides from the cutting strip $\rho$.

\begin{figure}
    \centering
    \includegraphics[height=4cm]{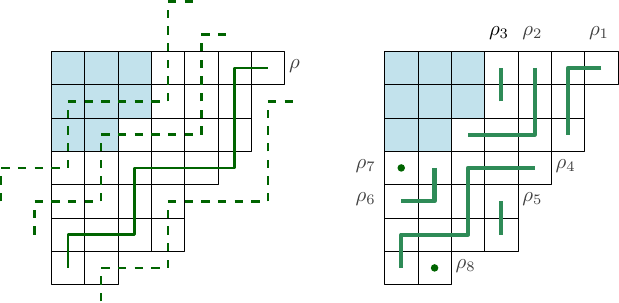}
    \caption{A decomposition of shape $\lambda/\mu$ into the border strips $(\rho_1,\dots,\rho_8)$ using the cutting strip $\rho$.}
    \label{fig:cutting edge}
\end{figure}

\begin{definition}[\cite{LP}]
A Lascoux--Pragacz decomposition of $\lambda/\mu$ is a tuple of non-intersecting lattice paths $(\theta_1,\dots, \theta_k)$, where the cutting strip $\rho=\theta_1$ is the outer borderstrip of $\lm$ (wrapping around $\lambda$). 

\end{definition}

\begin{definition}[height of $\theta_i$]\label{def:height}
Given a skew shape $\lambda/\mu$, let $(\theta_1,\dots,\theta_k)$ be its Lascoux--Pragacz decomposition. Denote by $\mathsf{f}(\theta_r)$ the row number of the final element of $\theta_r$ (top right-most element). We define the $\theta_r$-height of row $i$, denoted as $ht_{\theta_r}(i)$ to be $i - \mathsf{f}(\theta_r)$. 
\end{definition}
\begin{figure}
    \centering
    \includegraphics[height=4cm]{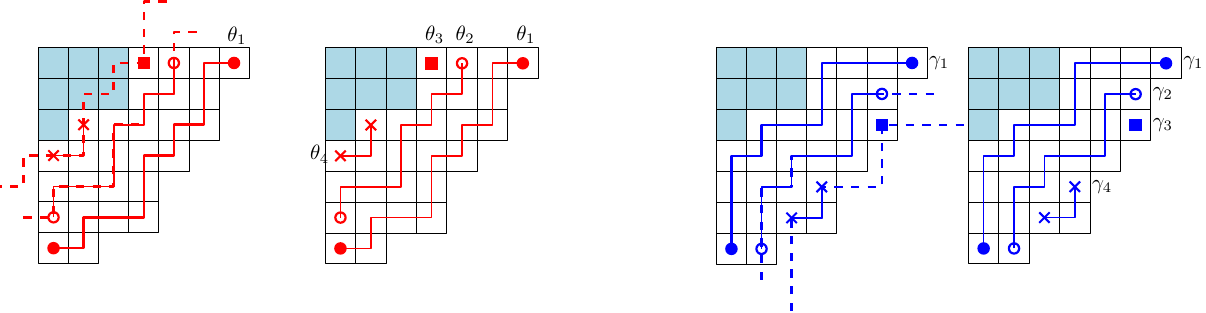}
    \caption{On the left we have  the diagonal slidings using the outer ($\theta_1$) border strip of $\lambda/\mu$ as the cutting strip and the Lascoux--Pragacz (red) decomposition of $\lambda/\mu$ and on the right we have the inner ($\gamma_1$) border-strip of $\lambda/\mu$ as the cutting edge and the Kreiman decomposition (blue) of  $\lambda/\mu$.}
    \label{exLascouxPragacz}
\end{figure}
\begin{example}
For the skew shape $7665442/331$ in  Figure~\ref{exLascouxPragacz}, the height of the element $(7,2)$ in $\theta_1$ is $6$ whereas the height of $(4,1)$ in $\theta_4$ is $1$.
\end{example}

\begin{definition}[\cite{Kre1}]
A \emph{Kreiman decomposition} of $\lambda/\mu$ is a tuple of non-intersecting lattice paths $(\gamma_1,\dots, \gamma_k)$, where the cutting strip $\rho=\gamma_1$ is the inner border strip of $\lambda/\mu$ (wrapping around $\mu$).
\end{definition}

Next, we give some results about the relationship between these two decompositions of $\lm$. To see how this works, let $(r_1,\ldots,r_m)$ be the lengths of the diagonals within $\lm$, enumerated from the upper right corner to the lower left. For example, in Figure~\ref{exLascouxPragacz} we have $(1,1,2,3,2,2,2,3,3,3,2,2,1)$. Now observe that cutting with an outer or inner border strip is removing a square from the lower or top of each diagonal, respectively, the remaining skew shape has diagonal lengths $(\max\{r_1-1,0\},\max\{r_2-1,0\},\ldots, \max\{r_m-1,0\})$. When there is an $r_i=0$ it signals the beginning of a new strip, these are numbered from top to bottom, or in the sequence $r$ setting from left to right.

\begin{proposition}\label{prop: cutting strip distance}
Given a skew shape $\lm$ and $(\theta_1,\dots, \theta_k)$ and $(\gamma_1,\dots, \gamma_k)$ be its Lascoux--Pragacz and Kreiman decompositions, respectively. Then for all $i = 1,\dots, k$, the strips $\gamma_i$ and $\theta_i$ have the same diagonal distance with respect to $\gamma_1$ and $\theta_1$, respectively.
\end{proposition}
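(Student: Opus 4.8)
The plan is to track the combinatorial data that both decompositions are built from — namely the diagonal-length sequence $(r_1,\dots,r_m)$ of $\lambda/\mu$ — and to show that the strip index and the diagonal distance are each determined by this sequence alone, independently of whether we cut with the outer or the inner border strip. First I would set up coordinates on the diagonals: index the diagonals of $[\lambda/\mu]$ by their content, so diagonal $c$ has length $r$ equal to the number of cells $u \in [\lambda/\mu]$ with $c(u)=c$. As recorded in the paragraph before the statement, cutting with the outer border strip $\theta_1$ removes the bottom (southwest-most) cell of each nonempty diagonal, while cutting with the inner border strip $\gamma_1$ removes the top (northeast-most) cell; in both cases the new shape has diagonal-length sequence obtained from $(r_1,\dots,r_m)$ by replacing each $r_i$ with $\max\{r_i-1,0\}$. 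Crucially this operation on the sequence is the \emph{same} for both cuttings. Iterating the slide $t$ times (i.e. going to diagonal distance $t$) therefore yields, in both cases, the shape whose diagonal-length sequence is $(\max\{r_i-t,0\})_i$.

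Next I would make precise how the strip index $i$ is read off. A new strip begins exactly at a position where, after some number of slides, a diagonal length drops to $0$ while a later diagonal is still nonzero — equivalently, the strips are indexed by the "descent blocks" of the sequence $r$, numbered from left to right. Concretely, the $i$-th strip $\theta_i$ (resp. $\gamma_i$) consists precisely of those cells lying on diagonals $c$ for which exactly $i-1$ earlier-or-equal-content diagonals have strictly smaller length in the appropriate sense — the point being that this indexing rule is phrased purely in terms of $(r_1,\dots,r_m)$ and hence produces the same assignment of diagonals to strip-index for the Lascoux--Pragacz and the Kreiman decomposition. Then the diagonal distance of $\theta_i$ (from $\theta_1$) is, by definition, the number of slides needed before strip $i$ first appears, which by the previous paragraph equals the number of slides needed before $\gamma_i$ first appears, i.e. the diagonal distance of $\gamma_i$ from $\gamma_1$. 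Combining: $\theta_i$ and $\gamma_i$ lie on the same set of diagonals after the same number of slides, so they have equal diagonal distance. I would illustrate the bookkeeping on the running example $7665442/331$ from Figure~\ref{exLascouxPragacz}, whose sequence is $(1,1,2,3,2,2,2,3,3,3,2,2,1)$, to confirm the indices match.

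The main obstacle I anticipate is purely notational: one must carefully justify that "cutting the outer strip" and "cutting the inner strip" induce \emph{literally the same} transformation $(r_i) \mapsto (\max\{r_i-1,0\})$ of the diagonal-content sequence — this is intuitively clear (one peels the bottom of each diagonal, the other the top, but a diagonal's \emph{length} decreases by one either way, and a diagonal empties out at the same slide count in both cases) but needs a clean argument that the \emph{combinatorial position} of the emptied diagonal within the sequence is identical, so that "the $i$-th new strip" refers to the same diagonals on both sides. Once that is established, together with the definition of diagonal distance as "number of slides from the cutting strip," the proposition follows immediately. I would cite \cite{KimYoo} for the interchangeability of the two decompositions if they phrase it equivalently, and otherwise carry out the sequence-bookkeeping argument above in full.
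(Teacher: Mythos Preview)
Your proposal is correct and follows essentially the same approach as the paper's proof. Both arguments reduce the problem to the diagonal-length sequence $(r_1,\dots,r_m)$, observe that cutting with either the outer or inner border strip yields the identical transformation $r_i \mapsto \max\{r_i-1,0\}$, and conclude that the strips at distance $\epsilon$ are determined by the intervals between zeros in $(\max\{r_i-\epsilon,0\})_i$, indexed left to right, independently of which border strip was used; even the running example $7665442/331$ with sequence $(1,1,2,3,2,2,2,3,3,3,2,2,1)$ is the same.
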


\begin{proof}
We use the interpretation via diagonal lengths above to explain this phenomenon, as it makes it apparent. We proceed by induction on the distance $\epsilon$ from the original cutting strips. Let $(r_1,\ldots, r_m)$ be the diagonal lengths of $\lm$ as above. When $\epsilon=0$ we have $\gamma_1$ and $\theta_1$ respectively. The new diagonal lengths are $r(1):=(\max\{r_1-1,0\},\max\{r_2-1,0\},\ldots, \max\{r_m-1,0\})$. The strips at diagonal distance $\epsilon=1$ consist of the intervals between zeros in $r(1)$ and are numbered accordingly. In general, the strips at distance $\epsilon$ result from the intervals between zeros in $r(\epsilon):= (\max\{r_1-\epsilon,0\},\max\{r_2-\epsilon,0\},\ldots, \max\{r_m-\epsilon,0\})$ and are numbered consecutively from left to right. this procedure is independent of whether it follows the inner or outer border strip. Thus, the strips in $\gamma$ and $\theta$ at the same distance $\epsilon$ will have the same indices.

For example, in Figure~\ref{exLascouxPragacz} we have $r(0) = (1,1,2,3,2,2,2,3,3,3,2,2,1)$  giving both $\theta_1$ and $\gamma_1$, shifting one up/down we have $r(1) = (0,0,1,2,1,1,1,2,2,2,1,1,0)$ which results in one new strips (between the zeros) -- $\gamma_2$ and $\theta_2$ at distance 1 from the originals. At distance 2 we have $r(2)=(0,0,0,1,0,0,0,1,1,1,0,0,0)$ resulting in two strips -- $\theta_3,\theta_4$ and $\gamma_3,\gamma_4$ corresponding to the intervals between zero $(1)$ and $(1,1,1)$.

\end{proof}

We denote by $\epsilon_i$  the diagonal distance for both the border strips $\gamma_i$ and $\theta_i$ in the Kreiman and Lascoux--Pragacz decomposition of $\lm$.

Let $\NI(\lambda/\mu)$ be the set of  $k$-tuples of non-intersecting lattice paths contained in $[\lambda]$ where the $i$th path has endpoints $(a_i,b_i)$ and $(c_i,d_i)$.

We have the following correspondence between the Kreiman decomposition and the Lascoux--Pragacz decomposition of shape $\lambda/\mu$.

\begin{proposition}[{\cite[Lemma 3.8]{MPP1}}] \label{prop: corr gammas and thetas}
Given a shape $\lambda/\mu$, let  $(\theta_1,\dots, \theta_k)$ and $(\gamma_1,\dots, \gamma_k)$  be its Lascoux--Pragacz and Kreiman decompositions, respectively. For each $i \in 1,\dots, k$, the endpoints of $\gamma_i$ and $\theta_i$ have the same contents and therefore the two strips have the same length.
\end{proposition}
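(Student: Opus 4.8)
The claim is a correspondence between the Lascoux--Pragacz strips $\theta_i$ (cut by the outer border strip $\theta_1$) and the Kreiman strips $\gamma_i$ (cut by the inner border strip $\gamma_1$): namely the endpoints of $\gamma_i$ and $\theta_i$ share the same pair of contents, hence the two strips have equal length. Since this is quoted as \cite[Lemma 3.8]{MPP1}, the natural approach is to reprove it here using the diagonal-length bookkeeping already set up for Proposition~\ref{prop: cutting strip distance}, so that the two propositions fit together. The key observation is that a border strip is completely determined, as a subset of $[\lambda]$, by the set of diagonals (contents) it occupies together with its ``vertical offset'' along those diagonals; cutting along the outer strip repeatedly pushes cells \emph{down} each diagonal, while cutting along the inner strip pushes cells \emph{up}, but in both cases the set of occupied diagonals after $\epsilon$ slides is exactly the set of indices where $r(\epsilon)>0$, with $\epsilon=\epsilon_i$ the common diagonal distance of $\theta_i$ and $\gamma_i$ established in Proposition~\ref{prop: cutting strip distance}.

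First I would fix $i$ and let $\epsilon=\epsilon_i$ be the common diagonal distance. The strip $\theta_i$ occupies exactly the maximal run of consecutive diagonals on which $r(\epsilon)>0$ that corresponds to the $i$th interval between zeros in $r(\epsilon)$; likewise $\gamma_i$ occupies the same run of diagonals — this is precisely what was proved in Proposition~\ref{prop: cutting strip distance}. So $\theta_i$ and $\gamma_i$ live on the same set of contents, say the contents range over $c_{\min}\le c\le c_{\max}$. The endpoints of a border strip sit at the two extreme diagonals: the starting (southwest) endpoint on diagonal $c_{\min}$ and the ending (northeast) endpoint on diagonal $c_{\max}$. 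Hence the endpoints of $\theta_i$ and of $\gamma_i$ have contents $c_{\min}$ and $c_{\max}$ respectively — they match as claimed. Finally, a border strip with smallest content $c_{\min}$ and largest content $c_{\max}$ has length $c_{\max}-c_{\min}+1$ (it has exactly one cell on each intermediate diagonal, since a connected skew shape with no $2\times 2$ box meets each of its diagonals in a single cell), so $\theta_i$ and $\gamma_i$ have the same length. I would phrase this last step by noting that the number of cells of a border strip equals the number of diagonals it meets.

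The one point that needs care — and which I expect to be the main obstacle to a fully rigorous write-up — is justifying that after $\epsilon$ slides along the \emph{outer} strip the cell remaining on a surviving diagonal is its \emph{bottom} cell, while after $\epsilon$ slides along the \emph{inner} strip it is the \emph{top} cell, and that in \emph{both} cases the resulting configuration on each run of surviving diagonals is again a single connected border strip. The connectedness is the subtle part: a priori, pushing cells to opposite ends of their diagonals could disconnect the strip. Here one uses that $[\lambda/\mu]$ is connected (assumed throughout) together with the fact that $r(\epsilon)$ has no internal zeros within the run defining $\theta_i$ (resp. $\gamma_i$); within such a run, consecutive diagonals overlap in the ambient shape, and the extremal cells on adjacent diagonals remain edge-adjacent because the boundary of $[\lambda]$ (resp. of $[\mu]$) is itself a lattice path with unit steps. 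I would make this precise by an induction on $\epsilon$, carrying the invariant ``the $\epsilon$-slid copy of the outer (resp. inner) cutting strip meets each surviving diagonal in its bottom (resp. top) cell and is connected on each maximal run,'' the inductive step being a direct check that one diagonal slide of a border strip preserves this property. Given Proposition~\ref{prop: cutting strip distance}, the remainder is then the short diagonal-counting argument above.
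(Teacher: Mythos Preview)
Your argument is correct. Note, though, that the paper does not actually give its own proof of this proposition: it is simply quoted from \cite[Lemma 3.8]{MPP1}. What you have written is a clean deduction from the diagonal-length bookkeeping already set up in Proposition~\ref{prop: cutting strip distance}: since both $\theta_i$ and $\gamma_i$ correspond to the \emph{same} maximal interval of nonzero entries in $r(\epsilon_i)$, they occupy the same set of contents, and hence their endpoint contents $c_{\min},c_{\max}$ and their lengths $c_{\max}-c_{\min}+1$ coincide. The connectedness worry you flag is real but is already absorbed into the definition: each $\theta_i$ (resp.\ $\gamma_i$) is by construction a connected component of the intersection of the slid cutting strip with $[\lm]$, and the interval description of $r(\epsilon)$ in the proof of Proposition~\ref{prop: cutting strip distance} is precisely the decomposition into those components. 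So no separate inductive invariant is needed.

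For comparison, the argument in \cite{MPP1} (alluded to in the proof of the Corollary immediately following this proposition) proceeds differently: one peels off $\theta_1$ and $\gamma_1$, checks that the residual shapes $[\lm]\setminus\theta_1$ and $[\lm]\setminus\gamma_1$ coincide as skew shapes, and then inducts on the number of strips. That approach yields the extra information that the two residual shapes agree at every stage (used later in the paper), whereas your diagonal-counting approach is shorter and dovetails directly with Proposition~\ref{prop: cutting strip distance}.
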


This correspondence between the border strips of the Lascoux–Pragacz and the Kreiman decomposition described in Proposition~\ref{prop: cutting strip distance} and \ref{prop: corr gammas and thetas} gives the following property.

\begin{corollary}
For each $i = 1\dots, k$, $\gamma_i$ and $\theta_i$ have the same length, number of columns, number of rows.
\end{corollary}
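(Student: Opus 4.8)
The plan is to combine the two preceding propositions. By Proposition~\ref{prop: cutting strip distance}, the border strips $\gamma_i$ and $\theta_i$ sit at the same diagonal distance $\epsilon_i$ from their respective cutting strips $\gamma_1$ and $\theta_1$; and by Proposition~\ref{prop: corr gammas and thetas}, the endpoints of $\gamma_i$ and $\theta_i$ have the same contents, hence the two strips have the same length (number of cells). So the length statement is immediate, and it remains only to deduce equality of the number of rows and the number of columns.

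First I would set up notation: for a border strip $\sigma$ with starting point (southwest endpoint) of content $a$ and ending point (northeast endpoint) of content $b$, the number of cells is $b - a + 1$, the number of columns spanned is (northeast column) $-$ (southwest column) $+1$, and the number of rows spanned is (southwest row) $-$ (northeast row) $+1$. Writing the southwest endpoint as $(i_{\mathrm{sw}}, j_{\mathrm{sw}})$ and northeast endpoint as $(i_{\mathrm{ne}}, j_{\mathrm{ne}})$, and using that content is column minus row, one gets that $(\text{\#rows}) + (\text{\#cols}) - 1 = (\text{\#cells})$, i.e. $\#\text{rows} = \#\text{cells} - \#\text{cols} + 1$. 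Thus for a border strip, knowing the length together with either the number of rows or the number of columns determines the other. So it suffices to show that $\gamma_i$ and $\theta_i$ have the same number of columns (equivalently, the same number of rows).

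For this last point I would again use the diagonal-lengths picture from the proof of Proposition~\ref{prop: cutting strip distance}. The strip at diagonal distance $\epsilon$ (whether in the Kreiman or Lascoux--Pragacz decomposition) corresponds to a maximal run of positive entries in the sequence $r(\epsilon) = (\max\{r_1-\epsilon,0\},\ldots,\max\{r_m-\epsilon,0\})$, where $(r_1,\ldots,r_m)$ are the diagonal lengths of $\lambda/\mu$ indexed from upper-right to lower-left. The number of diagonals a strip occupies is exactly the length of that run, and since $r(\epsilon)$ does not depend on whether we slide the inner or the outer cutting strip, the $i$-th run of $r(\epsilon_i)$ has the same length for both decompositions. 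Now the number of diagonals a border strip occupies equals its number of columns plus its number of rows minus one, which is precisely its number of cells — consistent with the formula above — but more usefully: within a fixed diagonal of $[\lambda/\mu]$, both $\gamma_i$ and $\theta_i$ contribute exactly one cell to each of the diagonals in their common run, so their projections onto the set of diagonals coincide. Hence I would argue directly that the number of columns of $\gamma_i$ equals the number of distinct diagonals it meets that also lie in the "lower" half relative to the turning structure — but cleanly, it is easiest to just invoke: for a border strip, $\#\text{diagonals} = \#\text{cells}$, so $\#\text{cells}$ is already forced equal, and then since Proposition~\ref{prop: corr gammas and thetas} pins the contents of \emph{both} endpoints (not just their difference), the southwest content $a$ and northeast content $b$ agree between $\gamma_i$ and $\theta_i$. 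Combined with the diagonal-distance agreement, one checks the absolute row/column coordinates of the endpoints are determined, giving equality of the number of rows and columns.

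The main obstacle I anticipate is precisely this last deduction: Proposition~\ref{prop: corr gammas and thetas} as quoted gives equal \emph{contents} of endpoints, which fixes column-minus-row but not column and row separately, so one genuinely needs the diagonal-distance information of Proposition~\ref{prop: cutting strip distance} to nail down the absolute position of, say, the southwest endpoint, and hence the row/column counts. Concretely, I would show that if a strip is at diagonal distance $\epsilon$ from the outer (resp. inner) cutting strip, then its cells occupy exactly the diagonals of content in $\{a, a+1, \ldots, b\}$ that survive $\epsilon$ rounds of corner-removal, and the shape of the strip on those diagonals (whether it goes right or up at each step) is determined by which adjacent diagonal is longer — data that is the same in $r(\epsilon)$ for both decompositions. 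This makes the row and column counts a function of $(a, b, \epsilon_i)$ and the sequence $r$ alone, all of which $\gamma_i$ and $\theta_i$ share, completing the proof.
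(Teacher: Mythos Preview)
Your diagonal-length approach is different from the paper's and can be made to work, but one step as written is wrong. The claim that ``whether the strip goes right or up at each step is determined by which adjacent diagonal is longer'' is false: the step pattern of $\gamma_i$ is a diagonal translate of that of $\gamma_1$ (governed by how the topmost row $p_c$ on each diagonal varies with $c$), while that of $\theta_i$ is a translate of $\theta_1$'s (governed by the bottommost row $q_c$), and these need not agree. For instance, with $\lambda=(3,3,3)$ and $\mu=(1)$, both $\gamma_1$ and $\theta_1$ run from $(3,1)$ to $(1,3)$ with identical diagonal-length data $r=(1,2,2,2,1)$, yet $\gamma_1$ goes up--right--up--right while $\theta_1$ goes right--right--up--up.

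The weaker claim you retreat to---that the row and column \emph{counts} depend only on $(a,b,\epsilon_i,r)$---is correct, but needs one observation you do not supply: since $p_c$ and $q_c$ each change by $0$ or $1$ as $c$ varies, consecutive diagonal lengths satisfy $|r_{c+1}-r_c|\le 1$, so the boundary conditions $r_{a-1}\le\epsilon_i<r_a$ and $r_{b+1}\le\epsilon_i<r_b$ force $r_a=r_b=\epsilon_i+1$. Writing the cell of $\gamma_i$ on diagonal $c$ as $(p_c+\epsilon_i,\,p_c+\epsilon_i+c)$ and that of $\theta_i$ as $(q_c-\epsilon_i,\,q_c-\epsilon_i+c)$, the column counts are $(p_b-p_a)+(b-a)+1$ and $(q_b-q_a)+(b-a)+1$ respectively, and $r_a=r_b$ gives $p_b-p_a=q_b-q_a$. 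By contrast, the paper's proof sidesteps all of this by quoting from the proof of Proposition~\ref{prop: corr gammas and thetas} (in \cite{MPP2}) that the residual shapes $(\lambda/\mu)\setminus\bigcup_{j<i}\gamma_j$ and $(\lambda/\mu)\setminus\bigcup_{j<i}\theta_j$ coincide; then $\gamma_i$ and $\theta_i$ are the inner and outer border strips of one and the same skew shape, hence share actual endpoints (not just endpoint contents), and the row and column counts follow at once.
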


\begin{proof}
The fact that $\gamma_i$ and $\theta_i$ have the same length follows immediately from Proposition~\ref{prop: corr gammas and thetas} since the  endpoints of $\gamma_i$ and $\theta_i$ are on the same diagonal.

We show the other two equalities in the statement inductively. The statement is true for $\gamma_1$ and $\theta_1$ because they have the same endpoints and the number of horizontal and vertical steps are determined by the endpoints. We follow the construction of $\theta_i$ and $\gamma_i$ from the proof of Proposition~\ref{prop: corr gammas and thetas} in \cite[Lemma 3.8]{MPP2}. From the proof we know that for each $i = 2,\dots, k$, the shape $\lambda/\mu \setminus (\cup_{j=1}^{i-1}\gamma_j)$ is equal to the shape $\lambda/\mu \setminus (\cup_{j=1}^{i-1} \theta_j)$. Denote the shape $\eta^{(i)}/\mu$. Then the border-strips $\theta_i$ and $\gamma_i$ starts and ends at the same points of $\eta_i/\mu$ and the number of columns and rows of the two border-strips are equivalent.
\end{proof}

We will also need the following other technical relation between the two decompositions.

\begin{lemma}\label{lemma: gamma-theta}
Given a skew shape $\lambda/\mu$ and let $(\gamma_1,\dots,\gamma_k)$ and $(\theta_1,\dots,\theta_k)$ be its Kreiman and Lascoux--Pragacz decomposition. For each $i\in \{1,\dots,k\}$, let $\gamma_i:(f_i,g_i)\to(c_i,d_i)$ and $\theta_i:(s_i,t_i)\to (u_i,v_i)$. Then $g_i-t_i= \epsilon_i$, i.e. the column difference of the starting point of $\gamma_i$ and $\theta_i$ is $\epsilon_i$.
\end{lemma}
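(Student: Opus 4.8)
The plan is to leverage the diagonal-length interpretation introduced in the proof of Proposition~\ref{prop: cutting strip distance}, together with the content-matching of endpoints from Proposition~\ref{prop: corr gammas and thetas}. Recall that $\gamma_i$ starts at $(f_i,g_i)$ and $\theta_i$ starts at $(s_i,t_i)$, and both $\gamma_i$ and $\theta_i$ are obtained by sliding their respective cutting strips ($\gamma_1$ inner, $\theta_1$ outer) a common number $\epsilon_i$ of diagonals and intersecting with $[\lambda/\mu]$. The key observation is that the starting point (southwest endpoint) of each border strip lies on the lowest cell of the leftmost diagonal occupied by that strip. For $\theta_i$, sliding the outer strip $\theta_1$ by $\epsilon_i$ diagonals pushes its southwest endpoint \emph{inward} (up and to the right), while for $\gamma_i$ the inner strip $\gamma_1$ slid by $\epsilon_i$ pushes its southwest endpoint \emph{outward} (down and to the left), and on the same diagonal $c = g_i - f_i = t_i - s_i$ (since by Proposition~\ref{prop: cutting strip distance} both strips are at the same diagonal distance, hence occupy the same set of diagonals within $[\lambda/\mu]$ — their leftmost diagonals coincide). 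Wait — more carefully: the starting points of $\gamma_i$ and $\theta_i$ need \emph{not} be on the same diagonal, since $\gamma_i$ hugs $\mu$ from outside and $\theta_i$ hugs $\lambda$ from inside; rather, their \emph{endpoints} share contents by Proposition~\ref{prop: corr gammas and thetas}. So the right approach is as follows.

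First I would set up coordinates using the diagonal-length sequence $r=r(0)=(r_1,\dots,r_m)$ of $\lambda/\mu$. After sliding by $\epsilon_i$ diagonals we get the sequence $r(\epsilon_i)$ whose maximal runs of positive entries between zeros, numbered left to right, correspond exactly to the strips $\gamma_i$ and $\theta_i$ at distance $\epsilon_i$ (this is precisely what Proposition~\ref{prop: cutting strip distance} established). Fix the particular run — say it occupies diagonal positions $a,a+1,\dots,b$ in the sequence $r$ (equivalently, contents in some interval). I would then argue separately for $\gamma$ and $\theta$: the strip $\gamma_i$ obtained from sliding the inner strip $\gamma_1$ by $\epsilon_i$ has, as its southwest endpoint, the \emph{lowest} cell of $[\lambda/\mu]$ on the leftmost diagonal of the run that survives after removing $\epsilon_i$ layers from the inner side; concretely this cell sits $\epsilon_i$ rows below (and $\epsilon_i$ columns left of, staying on the same anti-diagonal is wrong) — I need to track the actual row/column. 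The cleanest formulation: on the leftmost diagonal $c^{(i)}$ of the run, let $(p,q)$ with $q - p = c^{(i)}$ be the topmost cell of $[\lambda/\mu]$ on that diagonal in the \emph{original} shape. Then $\theta_i$'s start $(s_i,t_i)$ is obtained from the outer strip's passage through this diagonal region: $\theta_1$ passes along the bottom boundary of $\lambda$, and after $\epsilon_i$ inward slides its southwest endpoint on the relevant diagonal has column $t_i = q + \epsilon_i$ (moving inward increases the column at a fixed relative position), whereas $\gamma_i$'s start $(f_i,g_i)$ comes from $\gamma_1$ hugging $\mu$, and after $\epsilon_i$ outward slides has column $g_i = q + 2\epsilon_i$ on that same diagonal — no: let me instead just compute both in terms of the original diagonal data and subtract.

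The most robust route, and the one I would actually write up, is this: use the Corollary just proved (that $\gamma_i$ and $\theta_i$ have the same length, same number of rows, same number of columns) and the fact from the inductive construction that $\lambda/\mu \setminus (\cup_{j<i}\gamma_j) = \lambda/\mu \setminus (\cup_{j<i}\theta_j) =: \eta^{(i)}/\mu$. Within $\eta^{(i)}/\mu$, the strip $\theta_i$ is the \emph{outer} border strip of its connected component and $\gamma_i$ is the \emph{inner} border strip of the same component (this follows from the cutting-strip descriptions: removing the outer strips $\theta_1,\dots,\theta_{i-1}$ leaves the outer strip of the remainder as $\theta_i$, and symmetrically for $\gamma$). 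For a single connected skew shape $\eta/\mu$, the inner border strip $\gamma$ and outer border strip $\theta$ have southwest endpoints on the same \emph{row} (the bottom row of the component) but differing columns: if the bottom row is row $u$ with cells in columns $[\mu_u+1,\eta_u]$, then $\theta$ starts at $(u,\mu_u+1)$ and $\gamma$ starts at $(u, \eta_u')$-ish — again I must be careful. Rather than the component's own local data, observe: the southwest endpoint of $\theta_i$ in $\eta^{(i)}/\mu$ is the leftmost-lowest cell, i.e. on column $\mu'$-boundary, while the southwest endpoint of $\gamma_i$ is on the outer boundary of $\mu$ within that component; the horizontal gap between the inner boundary of $\lambda$-remainder and the outer boundary of $\mu$ along the bottom row of the component equals the diagonal distance, because each of the $\epsilon_i$ slides that produced this component moved one step along that bottom row. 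This is the heart of the matter and where I expect the main difficulty: pinning down exactly why the column difference accumulated over $\epsilon_i$ slides is exactly $\epsilon_i$ rather than $0$ or $2\epsilon_i$, which forces a careful bookkeeping of how a diagonal slide of a border strip translates the southwest endpoint. I would handle this by induction on $\epsilon_i$: for $\epsilon_i=0$, $\gamma_1$ and $\theta_1$ share all endpoints so $g_1 - t_1 = 0$; for the inductive step, passing from distance $\epsilon$ to $\epsilon+1$ removes one cell from the bottom of each diagonal of the current remainder, which shifts the southwest endpoint of the outer remainder-strip $\theta$ up-and-right by exactly one column on the bottom row while shifting that of the inner remainder-strip $\gamma$ up-and-right by exactly zero columns relative to where $\mu$'s boundary now sits — net column-difference gain of exactly $1$, giving $g_i - t_i = \epsilon_i$. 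Combining the base case and the per-slide increment yields the claim.

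The main obstacle, to restate, is the precise local analysis of a single diagonal slide: showing that it changes $g - t$ (the column-difference of the two southwest endpoints of the corresponding inner/outer strips of the current remainder) by exactly $+1$. I would dispatch this by reducing to the bottom row of the connected component containing the strips: in that row, a slide deletes precisely the cell on the outermost ($\lambda$-side) column, advancing $\theta$'s start by one column toward $\mu$, while $\gamma$'s start (already adjacent to $\mu$) either stays or, when the bottom row of the component shrinks to nothing, the analysis passes to the next row up — and in all cases the invariant $g_i - t_i = \epsilon_i$ is maintained. With Propositions~\ref{prop: cutting strip distance} and~\ref{prop: corr gammas and thetas} and the Corollary in hand, this local step is the only new content, and everything else is bookkeeping.
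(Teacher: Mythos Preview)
Your inductive approach can in principle be made to work, but it is considerably more involved than necessary and the sketch leaves real gaps. The inductive step asserts that a single diagonal slide increases the column difference $g-t$ by exactly one, but this presupposes that you can track \emph{the same} pair of strips from level $\epsilon$ to level $\epsilon+1$. When a diagonal length drops to zero and the remaining shape splits into several components, the strips at level $\epsilon+1$ are not in one-to-one correspondence with those at level $\epsilon$, and your parenthetical ``the analysis passes to the next row up'' does not actually establish how the southwest endpoints of the newly created pieces relate to the old one. Filling this in would require exactly the case analysis you were hoping to avoid.

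The paper's argument is a direct observation that bypasses the induction entirely. The key fact you circled around but never isolated is: the southwest endpoint $(s_i,t_i)$ of $\theta_i$ lies on $\gamma_1$, and symmetrically the southwest endpoint $(f_i,g_i)$ of $\gamma_i$ lies on $\theta_1$. (Reason: the strips in each decomposition are pairwise disjoint, so for $i\geq 2$ the endpoints of $\theta_i$, which necessarily lie on the boundary of $[\lambda/\mu]$, cannot lie on $\theta_1$ and hence lie on $\gamma_1$; dually for $\gamma_i$.) By Proposition~\ref{prop: corr gammas and thetas} the two southwest endpoints sit on the same diagonal. On that diagonal, $(s_i,t_i)$ is the $\gamma_1$-cell while $(f_i,g_i)$ is the $\gamma_i$-cell, and by the very definition of diagonal distance these are exactly $\epsilon_i$ steps apart. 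Hence $g_i-t_i=\epsilon_i$ immediately, with no induction required.
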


\begin{proof}
By construction the endpoints of $\theta_i$ are on the outer strip $\gamma_1$ and the endpoints of $\gamma_i$ are on the outer strip $\theta_1$. Since by Proposition~\ref{prop: corr gammas and thetas} the endpoints of $\theta_i$ and $\gamma_i$ are on the same diagonals, then the distance between $g_i-t_i$ between them is the same and equal to $\epsilon_i$.
\end{proof}

\subsection{Combinatorial objects of the Naruse hook length formula}

We go over some of the combinatorial objects that index the terms of \eqref{eq:nhlf} (see Figure~\ref{fig: objects Naruse}). We fix a shape $\lambda/\mu$.

\subsubsection*{Excited diagrams}
Excited diagrams were defined by Naruse and Ikeda \cite{IN} in the context of equivariant Schubert calculus and also independently by Kreiman \cite{Kre1} and Knutson--Miller--Yong \cite{KMY}. To introduce we need to define the notion of the following local move.

\begin{definition}[excited move $\beta$]
Given a subset $D$ of $[\lambda]$, a cell $(i,j)$ of $D$ is {\em active} if the cells $(i+1,j),(i,j+1),(i+1,j+1)$ are not in $D$ but are in $[\lambda]$. Given an active cell $(i,j)$ of $D$, let $\beta_{(i,j)}:D\to D'$ be the map that replaces cell $(i,j)$ in $D$ by $(i+1,j+1)$. We call such $\beta_{(i,j)}$ an \emph{excited move} (see Figure~\ref{fig:excited move}). 
\end{definition}

Then, we define excited diagrams of $\lm$ iteratively.

\begin{definition}
An \emph{excited diagram} of $\lambda/\mu$ to be any set of $|\mu|$ cells obtained by starting with the cells of $[\mu] \subseteq [\lambda]$ and applying any number of excited moves. We let $\mathcal{E}(\lambda/\mu)$ be the set of excited diagrams of $\lambda/\mu$. 
\end{definition}

Given an active cell $(i,j)$ in $D$, we denote by $\mu(i,j)$ the column of the original cell in $\mu$ where $(i,j)$ came from.

There is the following determinantal formula for the number of excited diagrams. Given a skew shape $\lm$, consider the diagonal that passes through the box $(i,\mu_i)$ and let $\vartheta_i$ be the row where the diagonal intersects the boundary of $[\lambda]$.

\begin{theorem}[{\cite[\S 3]{MPP1}}] \label{thm:det formula nED}
In the notation above, we have
\[
\nED(\lm) \,=\, \det \left[\binom{\vartheta_i + \mu_i -i+j-1}{\vartheta_i-1} \right]_{i,j=1}^{\ell(\mu)}.
\]
\end{theorem}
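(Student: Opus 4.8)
\textbf{Proof proposal for Theorem~\ref{thm:det formula nED}.}

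The plan is to identify the excited diagrams of $\lambda/\mu$ with a non-intersecting lattice path system and then apply the Lindström--Gessel--Viennot lemma. Recall that an excited diagram $D\in\E(\lambda/\mu)$ is obtained from $[\mu]$ by excited moves; it is a classical fact (going back to Kreiman and to Ikeda--Naruse, and reproved in \cite{MPP1}) that the cells of $D$ in diagonal $i$ (the cells that descend from the $i$-th column of $[\mu]$, i.e.\ from the cells $(1,i),(2,i),\dots$) stay in that structure, and that $D$ is equivalently encoded by a family of $\ell(\mu)$ lattice paths. Concretely, I would set up the bijection as follows: the cell $(k,i)\in[\mu]$ (for $1\le k\le \mu'_i$, reading the $i$-th column) is pushed along its diagonal, and an excited diagram corresponds to choosing, for each such original cell, a target cell on the same diagonal, subject to the non-crossing constraint imposed by the excited moves. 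Packaging the $\mu'_i$ cells of the $i$-th column of $\mu$ into a single monotone lattice path, one gets $\ell(\mu)$ non-intersecting paths; the $i$-th path runs from a fixed source on the $i$-th column of $[\mu]$ to a sink constrained by where the relevant diagonal meets $\partial[\lambda]$, and the number of such paths is governed by $\vartheta_i$ — the row at which the diagonal through $(i,\mu_i)$ exits $[\lambda]$.

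The key computational step is to count single paths. Once the source and sink of the $i$-th path are fixed and the ambient region is the staircase-type strip of height controlled by $\vartheta_i$, the number of monotone lattice paths from the source of path $i$ to the sink of path $j$ is a single binomial coefficient; a direct count of steps (the path must make $\vartheta_i-1$ vertical steps and a number of horizontal steps determined by $\mu_i$, $i$, and $j$) yields exactly $\binom{\vartheta_i+\mu_i-i+j-1}{\vartheta_i-1}$. I would verify this by checking the two extreme cases ($\mu$ a single row, where the determinant collapses to one binomial, and $\mu$ a single column, where it becomes a product) against the known descriptions of $\E(\lambda/\mu)$ in those cases. With the single-path counts in hand, the Lindström--Gessel--Viennot lemma gives
\[
\nED(\lambda/\mu)\,=\,\det\!\left[\binom{\vartheta_i+\mu_i-i+j-1}{\vartheta_i-1}\right]_{i,j=1}^{\ell(\mu)},
\]
provided the path system is such that every non-intersecting family corresponds to an excited diagram and the only permutation contributing to the determinant is the identity (so there are no sign cancellations and the LGV determinant genuinely counts the families).

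The main obstacle I anticipate is precisely this last point: establishing rigorously that (a) excited diagrams are in bijection with the non-intersecting path families — not merely that each excited move preserves non-crossing, but that the full poset of excited diagrams is exactly realized — and (b) that the source/sink configuration is ``compatible'' in the LGV sense, i.e.\ any family of paths (non-intersecting or not) connecting source $i$ to sink $\sigma(i)$ for a non-identity $\sigma$ either does not exist or necessarily intersects, so that only $\sigma=\mathrm{id}$ survives. Part (a) is where the combinatorics of excited moves must be related carefully to the diagonal-by-diagonal description; part (b) typically follows once the sources and sinks are shown to be sorted along a common anti-diagonal, but one must check the boundary constraints coming from $\partial[\lambda]$ do not spoil this. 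I would handle (a) by induction on the number of excited moves, tracking the profile of $D$ on each diagonal, and handle (b) by the standard observation that sources $\{(1,i)\}$ and sinks lie in convex position relative to the lattice order. Everything else — the binomial count and the invocation of LGV — is routine.
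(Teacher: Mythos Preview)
This theorem is not proved in the present paper; it is quoted as a background result from \cite[\S 3]{MPP1}, so there is no in-paper proof to compare against. Your outline is essentially the standard argument from \cite{MPP1}: excited diagrams are identified with flagged SSYT of shape $\mu$ (this is exactly Proposition~\ref{prop:phi bijection ED and flagged tableaux} above), the flagged tableaux are encoded as $\ell(\mu)$ non-intersecting lattice paths, and LGV produces the determinant.

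One correction to your setup: you speak of packaging the $\mu'_i$ cells of the $i$-th \emph{column} of $\mu$ into a path, but that would yield $\mu_1$ paths, not $\ell(\mu)$. The determinant has size $\ell(\mu)$ because the paths are indexed by the \emph{rows} of $\mu$: via the map $\varphi$ of Proposition~\ref{prop:phi bijection ED and flagged tableaux}, the $i$-th row of the flagged tableau records the row-positions of the excited cells coming from row $i$ of $[\mu]$, and this row is what becomes the $i$-th lattice path (a weakly increasing sequence of $\mu_i$ integers in $\{i,i+1,\ldots,\vartheta_i\}$). With that adjustment, the single-path count from source $i$ to sink $j$ is indeed the stated binomial, the source/sink configuration is planar so only the identity permutation survives, and LGV gives the formula. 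Your concerns (a) and (b) are handled cleanly once you pass through the flagged-tableau description rather than trying to build the paths directly from diagonals.
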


Next, we go over three objects in correspondence with excited diagrams. See Figure~\ref{fig:excited diagrams}.

\begin{figure}
    \centering
      \begin{subfigure}[normal]{0.4\textwidth}
\centering
\includegraphics{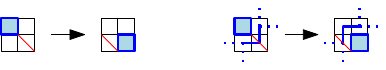}
    \caption{}
    \label{fig:excited move}
    \end{subfigure}
    \begin{subfigure}[normal]{0.5\textwidth}
    \centering   \includegraphics{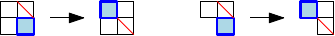}
    \caption{}
    \label{fig:reverse excited move}
    \end{subfigure}
    \caption{Illustration of (a) an excited move on excited diagrams and the corresponding ladder move on non-intersecting lattice paths, and (b) a reverse excited move. The broken diagonals are denoted in \textcolor{red}{red}.}
\end{figure}

\subsubsection*{Flagged tableaux of shape $\mu$}
Excited diagrams are in correspondence with certain SSYT of shape $\mu$.   Let $\mathcal{F}(\lambda/\mu)$ be the set of SSYT of shape $\mu$ with entries in row $i$ at most $\vartheta_i$. Such SSYT are called {\em flagged tableaux}. Given an excited diagram $D$ in $\ED(\lambda/\mu)$, let $\varphi(D):=T$ be the SSYT of shape $\mu$ where $T(x,y)=j_y$ where $(i_x,j_y)$ is the cell of $D$ corresponding to $(x,y)$ in $[\mu]$. 

\begin{proposition}[{\cite[\S3]{MPP1}}] \label{prop:phi bijection ED and flagged tableaux}
For a skew shape $\lm$, the map $\varphi$ is a bijection between $\ED(\lambda/\mu)$ and $\mathcal{F}(\lambda/\mu)$.
\end{proposition}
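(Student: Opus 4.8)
The plan is to exhibit an explicit two-sided inverse of $\varphi$. The key structural fact is that an excited move $\beta_{(i,j)}$ shifts a single cell \emph{along its diagonal}, replacing $(i,j)$ by $(i+1,j+1)$; hence every $D\in\E(\lm)$ carries a canonical labelling $\phi_D\colon[\mu]\to D$, where $\phi_D(x,y)$ is the cell of $D$ into which the original cell $(x,y)$ of $[\mu]$ has travelled. Since $\phi_D(x,y)$ always has content $y-x$, either of its coordinates determines the other, so $T=\varphi(D)$ — which stores one coordinate of $\phi_D(x,y)$ in box $(x,y)$ — determines $\phi_D$, hence $D$; writing $T(x,y)$ for the row of $\phi_D(x,y)$, the recovery formula is $D=\{(T(x,y),\,y+T(x,y)-x):(x,y)\in[\mu]\}$. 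I would define $\psi$ on $\mathcal F(\lm)$ by this same formula (a priori a collection of $|\mu|$ cells, automatically distinct since in an SSYT the entries strictly increase along each content diagonal). The proof then reduces to: \textbf{(i)} $\varphi(D)\in\mathcal F(\lm)$ for every $D\in\E(\lm)$; and \textbf{(ii)} $\psi(T)\in\E(\lm)$ for every $T\in\mathcal F(\lm)$, with the labelling $\phi_{\psi(T)}(x,y)=(T(x,y),y+T(x,y)-x)$. Granting these, $\psi\circ\varphi=\mathrm{id}$ is the recovery formula and $\varphi\circ\psi=\mathrm{id}$ since $\varphi(\psi(T))(x,y)$ is the row of $\phi_{\psi(T)}(x,y)$, namely $T(x,y)$.

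For \textbf{(i)} I would induct on the number of excited moves producing $D$. For $D=[\mu]$, $\varphi([\mu])$ has constant entry $x$ in row $x$, a flagged SSYT because $\vartheta_x\ge x$ (the cell $(x,\mu_x)$ lies in $[\lambda]$). For the step, if $D'=\beta_{(i,j)}(D)$ with $(i,j)=\phi_D(x,y)$, then $\varphi(D')$ differs from $\varphi(D)$ only in box $(x,y)$, whose entry increases by $1$; the only SSYT inequalities and flag bound that could fail are those involving this box, and each is rescued by the activity of $(i,j)$. For example, $(i+1,j)\notin D$ forces, together with the strict column inequality of $\varphi(D)$, that the $(x+1,y)$-entry exceeds the $(x,y)$-entry by at least $2$, so the strict inequality survives the increment; $(i,j+1)\notin D$ similarly protects the row to the right; and $(i+1,j+1)\in[\lambda]$, lying on the diagonal through $(x,\mu_x)$, keeps the new $(x,\mu_x)$-entry at most $\vartheta_x$. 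The same computation also yields $\psi(T)\subseteq[\lambda]$ for $T\in\mathcal F(\lm)$: the cell $(T(x,y),y+T(x,y)-x)$ has row $\le T(x,\mu_x)\le\vartheta_x$ and content $y-x\le\mu_x-x$, and since $(\vartheta_x,\vartheta_x+\mu_x-x)\in[\lambda]$ by definition of $\vartheta_x$ and $\lambda$ is a partition, the cell lies in $[\lambda]$.

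The crux is \textbf{(ii)}, which I would prove by induction on $N(T):=\sum_{(x,y)\in[\mu]}(T(x,y)-x)\ge 0$. If $N(T)=0$ then $T(x,y)\equiv x$ and $\psi(T)=[\mu]$. Otherwise set $d_{x,y}:=T(x,y)-x$ and $d_{\max}:=\max_{(x,y)}d_{x,y}\ge 1$. Using the SSYT inequalities one checks that $S:=\{(x,y)\in[\mu]:d_{x,y}=d_{\max}\}$ is closed under moving one step right or down inside $[\mu]$; being nonempty, $S$ has an \emph{inner corner} $(x,y)$, i.e.\ $d_{x-1,y}<d_{\max}$ and $d_{x,y-1}<d_{\max}$ whenever the corresponding cell lies in $[\mu]$. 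Let $T'$ be $T$ with its $(x,y)$-entry decreased by $1$; then $T'\in\mathcal F(\lm)$ (the inequalities to check amount exactly to $d_{x-1,y}<d_{\max}$, $d_{x,y-1}<d_{\max}$, $d_{\max}\ge 1$) and $N(T')=N(T)-1$, so by induction $\psi(T')\in\E(\lm)$ with $\phi_{\psi(T')}(x',y')=(T'(x',y'),y'+T'(x',y')-x')$. Now $\psi(T)$ agrees with $\psi(T')$ except that it contains $(i+1,j+1)$ in place of $(i,j):=\phi_{\psi(T')}(x,y)$, and I would verify this is the excited move $\beta_{(i,j)}$: the cells $(i+1,j)$ and $(i,j+1)$ lie in $[\lambda]$ because $(i+1,j+1)$ is a cell of $\psi(T)\subseteq[\lambda]$ and $[\lambda]$ is closed to the left and above; and none of $(i+1,j),(i,j+1),(i+1,j+1)$ lies in $\psi(T')$ — for $(i+1,j+1)$ this is immediate, and a cell of $\psi(T)$ occupying $(i+1,j)$ or $(i,j+1)$ would, by the formula for $\psi$, have to be the image of a cell of $[\mu]$ with displacement exceeding $d_{\max}$, or else be $(x,y-1)$ with $d_{x,y-1}=d_{\max}$ (resp.\ $(x-1,y)$ with $d_{x-1,y}=d_{\max}$), all excluded by the choice of $(x,y)$. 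This closes the induction, and $\phi_{\psi(T)}(x,y)=(i+1,j+1)=(T(x,y),y+T(x,y)-x)$ while the other boxes are inherited from $T'$, as required.

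Combining \textbf{(i)}, \textbf{(ii)} and the two composition identities shows $\varphi$ is a bijection with inverse $\psi$. The routine part is \textbf{(i)}; the delicate step, and the main obstacle, is \textbf{(ii)}: choosing which entry of the flagged tableau to decrement so that the induced set-operation is a genuine inverse excited move, and translating the three activity conditions into the maximality and inner-corner properties of the chosen cell. (This recovers the bijection of \cite[\S3]{MPP1}.)
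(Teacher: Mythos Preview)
The paper does not prove this proposition; it is quoted from \cite[\S 3]{MPP1} without argument. Your approach is the standard one and is essentially correct: build the inverse $\psi$ explicitly from the diagonal-displacement formula, check that $\varphi$ lands in $\mathcal F(\lm)$ by induction on excited moves, and check that $\psi$ lands in $\E(\lm)$ by induction on the total displacement $N(T)$, peeling off one excited move at an inner corner of the maximal-displacement set $S$.

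There is one small gap in step \textbf{(ii)}, in your verification that $(i+1,j)$ and $(i,j+1)$ do not lie in $\psi(T')$ (equivalently $\psi(T)$). Your dichotomy --- the preimage $(x',y')$ either has displacement exceeding $d_{\max}$, or is the specific neighbour $(x,y-1)$ (resp.\ $(x-1,y)$) with displacement equal to $d_{\max}$ --- only covers the cases $x'\le x$ (resp.\ $x'\le x-1$). Take $(i+1,j)$: a preimage $(x',y')$ on the content $(y{-}x{-}1)$ diagonal with $x'>x$ would have displacement $d_{x',y'}=d_{\max}-(x'-x)<d_{\max}$, which falls outside both of your cases. To dispose of it, note that then $y'=x'+(y-x-1)\ge y$; since $(x',y')\in[\mu]$ and $\mu$ is a partition, the cells $(x,y),(x{+}1,y),\ldots,(x',y),\ldots,(x',y')$ all lie in $[\mu]$, so the closure of $S$ under right/down moves forces $(x',y')\in S$ and hence $d_{x',y'}=d_{\max}$, a contradiction. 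The case $(i,j+1)$ is symmetric. With this patch your argument goes through.
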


\subsubsection*{Non-intersecting lattice paths}
Kreiman showed  in \cite[\S5, \S6]{Kre1} that the support of such paths are the complements of excited diagrams. See Figure~\ref{fig:excited diagrams} for an example. Then excited moves on diagrams correspond to {\em ladder moves} on the non-intersecting lattice paths (see Figure~\ref{fig:excited diagrams}).

\begin{proposition}[\cite{Kre1}] 
The $k$-tuples of paths in $\NI(\lambda/\mu)$ are uniquely determined by their supports in $[\lambda]$ and moreover these supports are exactly the complements of excited diagrams of $\lambda/\mu$.
\end{proposition}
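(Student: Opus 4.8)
The plan is to prove the two assertions separately: first that a family in $\NI(\lambda/\mu)$ is recovered from its support, then that the supports are exactly the complements of excited diagrams. For the recovery statement, fix $P=(P_1,\dots,P_k)\in\NI(\lambda/\mu)$ with support $S=\bigcup_i P_i$. Since the paths are non-intersecting and their sources and sinks are compatibly ordered along the boundary of $[\lambda]$ (as follows from the structure of the Lascoux--Pragacz and Kreiman decompositions, cf.\ Proposition~\ref{prop: corr gammas and thetas}), the paths are linearly ordered by height and $P_1$ joins the topmost source to the topmost sink. I would then show $P_1$ is forced by $S$ alone: starting at the topmost source and repeatedly stepping to the extreme up-right cell of $S$ that continues a lattice path reconstructs $P_1$, because any other legal continuation either leaves $S$ or would have to be covered by a strictly lower path, contradicting non-intersection. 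Deleting $P_1$ leaves $S\setminus P_1$ as the support of $(P_2,\dots,P_k)$ inside $[\lambda]\setminus P_1$, and induction on $k$ finishes this half.

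For the second assertion I would first prove that every complement of an excited diagram is a support, by induction on the number of excited moves used to build $D\in\E(\lambda/\mu)$. The base case $D=[\mu]$ is exactly the statement that $[\lm]$ is tiled by the Lascoux--Pragacz border strips $(\theta_1,\dots,\theta_k)$, which form a non-intersecting family with the endpoints defining $\NI(\lambda/\mu)$. For the inductive step, an excited move $\beta_{(i,j)}$ alters $[\lambda]\setminus D$ only inside the $2\times2$ block at $(i,j)$: there the three cells $(i,j+1),(i+1,j),(i+1,j+1)$, through which a single path of $P$ passes making a right turn, are replaced by the three cells $(i,j),(i,j+1),(i+1,j)$, and this is precisely a ladder move rerouting that corner (Figure~\ref{fig:excited move}). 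The result is a new non-intersecting family with the same endpoints whose support is $[\lambda]\setminus\beta_{(i,j)}(D)$, completing the induction.

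The reverse inclusion --- that every support is a complement of an excited diagram --- is where the real work lies, and I expect the main obstacle here. Given $P\in\NI(\lambda/\mu)$ with support $S$, the plan is a downward induction on the monovariant $\sum_{(a,b)\in[\lambda]\setminus S}(a+b)$: if $P$ is not the Lascoux--Pragacz family I would produce a \emph{reverse} ladder move that keeps the family non-intersecting and inside $[\lambda]$ while strictly decreasing this monovariant; by the $2\times2$ analysis read backwards it corresponds to a reverse excited move on $[\lambda]\setminus S$, so running the moves forward from $[\lm]$ realizes $[\lambda]\setminus S$ as an excited diagram. The crux is the supporting lemma that a non-intersecting family admitting no reverse ladder move must coincide strip-by-strip with the Lascoux--Pragacz decomposition --- intuitively, each path has been pushed as far down and to the left as the others allow, which forces $S=[\lm]$; the difficulty is locating a reverse move near a path that has entered $[\mu]$ without violating non-intersection. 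As an alternative that sidesteps this lemma, one can combine the inclusion of the previous paragraph with the trivial injectivity of $D\mapsto[\lambda]\setminus D$, the recovery result of the first paragraph, and the identification of the Lindström--Gessel--Viennot count $|\NI(\lambda/\mu)|$ with the determinant $\nED(\lambda/\mu)$ of Theorem~\ref{thm:det formula nED}, so that an injection between finite sets of equal cardinality is forced to be a bijection.
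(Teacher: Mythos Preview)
The paper does not supply a proof of this proposition: it is stated with attribution to Kreiman (the sentence immediately before it reads ``Kreiman showed in \cite[\S5, \S6]{Kre1} that the support of such paths are the complements of excited diagrams''), and no \texttt{proof} environment follows. So there is no in-paper argument to compare against; your proposal is being measured against a result the authors simply import.

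On the merits of your sketch, the architecture is the standard one and is essentially what Kreiman does. Two points are worth flagging. First, in your base case for the forward inclusion you invoke the Lascoux--Pragacz strips $(\theta_1,\dots,\theta_k)$, but $\NI(\lambda/\mu)$ in this paper is defined with the \emph{Kreiman} endpoints; by Proposition~\ref{prop: corr gammas and thetas} the $\theta_i$ and $\gamma_i$ have endpoints on the same diagonals but in general at different cells, so the $\theta_i$ are not literally elements of $\NI(\lambda/\mu)$. Replace $\theta_i$ by $\gamma_i$ and the argument goes through unchanged. Second, your alternative route for the reverse inclusion via equal cardinalities is tempting but risks circularity: the proof of Theorem~\ref{thm:det formula nED} in \cite[\S3]{MPP1} goes through flagged tableaux rather than paths, which is fine, but to match that determinant with the Lindstr\"om--Gessel--Viennot determinant for $|\NI(\lambda/\mu)|$ you must identify the single-path counts (lattice paths between Kreiman endpoints constrained to stay inside $[\lambda]$) with the binomial entries $\binom{\vartheta_i+\mu_i-i+j-1}{\vartheta_i-1}$. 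That identification is exactly the statement that the staircase constraint imposed by $[\lambda]$ is encoded by the flag $\vartheta_i$, which is not free and in the literature is usually deduced \emph{from} the Kreiman bijection you are trying to prove. Your primary approach via reverse ladder moves and the monovariant $\sum_{(a,b)\notin S}(a+b)$ is the honest one; the supporting lemma you isolate (no reverse ladder move available forces $S=[\lambda/\mu]$) is the genuine content and can be proved by taking the NW-most cell of $S\cap[\mu]$ and locating an up-right corner of the path through it whose SE neighbour is unoccupied.
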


If an excited diagram $D$ corresponds to the tuple $(\gamma'_1,\ldots,\gamma'_k)$, we denote by $\gamma_i(D)=\gamma'_i$, the path obtained from $\gamma_i$ after applying the corresponding ladder moves to the excited moves where $[\mu]$ is obtained from $D$.

\subsubsection*{Broken diagonals of excited diagrams}
To each excited diagram $D\in
\ED(\lambda/\mu)$ we associate certain cells of $[\lm]\setminus D$. See Figure~\ref{fig:excited diagrams} for an example.

\begin{definition}[{\cite[Def. 7.3]{MPP1}}] \label{def:broken diagonals}
Given an excited diagram $D$ in $\ED(\lm)$, we define the {\em broken diagonals} of $D$ to be the following set of $[\lm]\setminus D$:
\begin{itemize}
\item for the initial diagram $[\lm]$, let $Br(D)= \bigcup_{i=1}^{\ell(\lambda)-1} \dd_i(\mu)$, where $\mathsf{d}_k$ is the diagonal of cells in $[\lm]$ with content $\mu_i-i$. 
\item if $D'=\beta_{(i,j)}(D)$, then $(i+1,j+1)$
is in some $\dd_t(D)$. Define $\Br(D')=\bigcup_{k=1}^{\ell(\lambda)-1} \dd_k(D')$ where $\dd_k(D')=\dd_k(D)$ if $k\neq
t$ and $\dd_k(D')=\{(i+1,j)\} \cup \dd_k(D) \setminus \{(i+1,j+1)\}
$. 
\end{itemize}
Let $\Br(\lambda/\mu) = \{\Br(D) \mid D \in \ED(\lm)\}$ be the set of broken diagonals of the excited diagrams of $\lm$. 
\end{definition}

For a lattice path $\gamma\subseteq [\lambda]$ let $u(\gamma)$ be the cells corresponding to up-steps of $\gamma$, including right-up corners (i.e. outside corners) but excluding up-right corners (i.e. inner corners). The following result gives a non-recursive characterization of broken diagonals.

\begin{proposition} \label{prop:broken diagonals in gamma}
Let $D$ be an excited diagram $D$ in $\ED(\lm)$ corresponding to the tuple $(\gamma'_1,\ldots,\gamma'_k)$ in $\NIP(\lm)$ then the broken diagonals are placed on both the vertical steps or a right-up corners (i.e. outside corners) of the paths $\gamma'_i$:
\[
\Br(D) = \{u(\gamma_i) \mid i=1,\ldots,k\}.
\]
\end{proposition}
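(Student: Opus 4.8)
The plan is to induct on the number of excited moves needed to produce $D$ from $[\mu]$. The observation that makes the bookkeeping uniform is that, since each $\gamma_i(D)$ is a border strip (connected, no $2\times 2$ box, so two of its cells sharing an edge are consecutive in it), a cell $(p,q)\in\gamma_i(D)$ lies in $u(\gamma_i(D))$ exactly when $(p-1,q)\in\gamma_i(D)$, i.e.\ when the step of $\gamma_i(D)$ leaving $(p,q)$ toward its northeast endpoint is vertical; this is precisely the rule "include right-up corners, exclude up-right corners'' applied at the turns, and it also excludes each path's northeast endpoint. Hence
\[
\bigcup_{i=1}^{k} u(\gamma_i(D)) \;=\; \bigl\{\,(p,q) \;:\; \exists\, i \text{ with } (p,q),\,(p-1,q)\in\gamma_i(D)\,\bigr\},
\]
where I stress that a vertically adjacent pair of support cells need \emph{not} lie on a common path. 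Both sides of the asserted identity are obtained by transforming data attached to $[\mu]$ along the excited moves (the left side of the claim by Definition~\ref{def:broken diagonals}, the right side by the corresponding ladder moves), so it suffices to prove the base case $D=[\mu]$ and the inductive step.

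For the inductive step, let $(a,b)$ be active in $D$ and $D'=\beta_{(a,b)}(D)$. Passing to complements, the support gains $(a,b)$ and loses $(a+1,b+1)$, and by Kreiman's correspondence between excited diagrams, non-intersecting lattice paths, and ladder moves \cite{Kre1}, exactly one path is altered: the path $\gamma_m(D)$ through $(a+1,b+1)$ contains the segment $(a+1,b)\to(a+1,b+1)\to(a,b+1)$, and $\gamma_m(D')$ is obtained from it by replacing this segment by $(a+1,b)\to(a,b)\to(a,b+1)$, every other path being unchanged. I would then track membership in $\bigcup_i u(\gamma_i)$: the cell $(a+1,b+1)$ lies in $u(\gamma_m(D))$ (its northeast step is vertical) but leaves the support, so it leaves $\bigcup u$; the cell $(a+1,b)$ is not in $u(\gamma_m(D))$ (its northeast neighbour $(a,b)$ is absent) but in $\gamma_m(D')$ its northeast step is vertical, so it joins $\bigcup u$; and for $(a,b)$, $(a,b+1)$, and every remaining cell nothing changes --- the only cells whose "is the cell above me on my path'' status could flip are those edge-adjacent to the two cells whose support membership changed, and a short case analysis (again using the no-$2\times2$ property) shows none of them flip. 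Thus $\bigcup_i u(\gamma_i(D')) = \bigl(\bigcup_i u(\gamma_i(D))\setminus\{(a+1,b+1)\}\bigr)\cup\{(a+1,b)\}$. On the other hand, Definition~\ref{def:broken diagonals} gives $\Br(D')=\bigl(\Br(D)\setminus\{(a+1,b+1)\}\bigr)\cup\{(a+1,b)\}$, where the inductive hypothesis is used both to know $(a+1,b+1)\in\Br(D)$ and to know $(a+1,b)\notin\Br(D)$ (because $(a,b)$ is not in the support of $D$, so by the displayed formula $(a+1,b)\notin\bigcup_i u(\gamma_i(D))$). The two updates agree, so the identity passes from $D$ to $D'$.

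The base case $\Br([\mu]) = \bigcup_{i=1}^{k} u(\gamma_i)$, with $(\gamma_1,\dots,\gamma_k)$ the Kreiman decomposition, is the heart of the argument. Here I would use that the inner cutting strip $\rho=\gamma_1$ contains the northwestmost cell of each diagonal of $[\lambda/\mu]$, and that the $\gamma_i$ are the nonempty pieces of the translates $\rho+(t,t)$, $t\ge 0$, intersected with $[\lambda/\mu]$. For a cell $(p,q)\in[\lambda/\mu]$ on the diagonal of content $c=q-p$, writing $r_c$ for the topmost row in which that diagonal meets $[\lambda/\mu]$, the cell lies on the translate with $t=p-r_c$, and its northeast neighbour along that path is the image of the content-$(c{+}1)$ cell of $\rho$, which sits in row $r_{c+1}+t$. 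Hence $(p,q)\in\bigcup_i u(\gamma_i)$ if and only if $r_{c+1}+t=p-1$, i.e.\ $r_{c+1}=r_c-1$, a condition depending only on $c$; so each diagonal of $[\lambda/\mu]$ is either made entirely of up-step cells or contains none. Since $r_c-1$ equals the number of cells of $[\mu]$ on the diagonal of content $c$, namely $\max\{i:\mu_i-i\ge c\}$, and $i\mapsto\mu_i-i$ is strictly decreasing, $r_{c+1}=r_c-1$ holds exactly when $c=\mu_i-i$ for some $i$, the relevant range being $1\le i\le\ell(\lambda)-1$ once connectedness of $\lambda/\mu$ is used to guarantee the diagonals of content $c$ and $c+1$ both meet $[\lambda/\mu]$. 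This is exactly the set of diagonals defining $\Br([\mu])$, so the base case follows and the induction closes. The main obstacle is precisely this base-case analysis of the cutting-strip construction; once the description of the $\gamma_i$ as translates of $\rho$ is in hand, it reduces to the monotonicity of $i\mapsto\mu_i-i$, and the inductive step is a routine, if fiddly, local check.
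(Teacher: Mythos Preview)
Your proof is correct and follows the same inductive scheme as the paper's: verify the identity for $D=[\mu]$ and then show both sides transform identically under a single excited/ladder move. The paper's proof is a three-sentence sketch that simply asserts the base case and notes that an excited move ``changes an up-step to an up-step in an adjacent column''; you have filled in precisely the details the paper omits, most notably the base-case analysis via the cutting-strip structure of the Kreiman decomposition.

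One minor imprecision worth flagging: the phrase ``$r_c-1$ equals the number of cells of $[\mu]$ on the diagonal of content $c$'' is not literally correct when $c<0$ (the diagonal then misses the first $|c|$ rows of $[\mu]$). However, your formula $r_c-1=\max\{i:\mu_i-i\ge c\}$ (with the convention $\mu_i=0$ for $i>\ell(\mu)$) \emph{is} correct for all $c$ in range, and this is what you actually use in the monotonicity argument, so the slip is cosmetic and the proof stands.
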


\begin{proof}
The size of the up-steps $\{u(\gamma_i) \mid i=1,\ldots,k\}$ remains constant for all the tuples of paths in $\NIP(\lm)$. The result holds for the initial excited diagram $D=[\mu]$ and an excited move changes an up-step to an up-step in an adjacent column.
\end{proof}

\begin{figure}
 \begin{subfigure}[normal]{0.4\textwidth}
     \centering
      \includegraphics[scale=0.8]{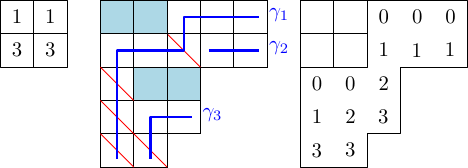}
    \caption{}
    \label{fig: objects Naruse}
    \end{subfigure}    
    \qquad 
     \begin{subfigure}[normal]{0.5\textwidth}
     \centering
      \includegraphics[scale=0.8]{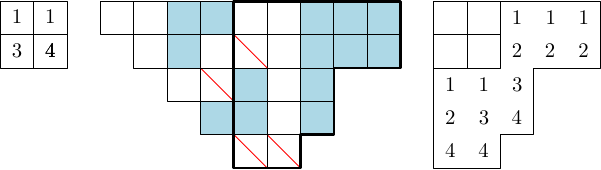}
    \caption{}
    \label{fig: objects OOF}
    \end{subfigure}
    \caption{Example of corresponding objects of a (a) term of \eqref{eq:nhlf}: flagged tableau, excited diagram (with \textcolor{red}{broken diagonals} and non-intersecting \textcolor{blue}{paths}), and minimal SSYT shape $\lm$; and of a (b) term of the \eqref{eq:OO}: Okounkov--Olshanski tableau, reverse excited diagram, and flagged tableau shape $\lm$.}
    \label{fig: objects NHLF and OOF}
\end{figure}

\subsection{Combinatorial objects of the  Okounkov--Olshanski formula}

We go over some of the combinatorial objects that index the terms of \eqref{eq:OO} (see Figure~\ref{fig: objects OOF}). We fix a shape $\lambda/\mu$. 

\subsubsection*{Okounkov--Olshanski tableaux} 
For a skew shape $\lm$ of length $d$, an {\em Okounkov--Olshanski tableau} is a SSYT $T$ of shape $\mu$ with entries in $[d]$ where $d:=\ell(\lambda)$ and with $c(u) < \lambda_{d + 1 - T(u)}$ for all $u \in [\mu]$. Let $\cOOT(\lambda/\mu)$ be the set of such tableaux and let $\nOOT(\lambda/\mu)$ be the size of $\cOOT(\lm)$.

\subsubsection*{Skew flagged tableaux} 
Let $\mathcal{SF}(\lambda/\mu)$ be the set of SSYT of shape $\lambda/\mu$ such that the entries in row $i$ are at most $i$. The authors in \cite{MZ} gave an  explicit bijection between tableaux in $\cOOT(\lambda/\mu)$ and skew tableaux in $\mathcal{SF}(\lambda/\mu)$.

Given $T\in \mathcal{SF}(\lm)$, $M(T)$ is the set of cells $(j,t)$ with $1\leq j\leq \ell(\lambda')$ and $1\leq t \leq \lambda'_j$ such that there is no $i$ with $T(i,j)=t$. Given $(j,t) \in M$, let $i(j,t)$ be the unique integer $i$ with $\mu'_i \leq i \leq \lambda'_j$ so that $T(i',j)<t$ for $i'\leq i$ and $T(i',j)>t$ for $i'>i$. i.e. view $M(T)$ as the set of labels on the horizontal edges of $T$ between the cells  $(i(j,t),j)$ and $((i(j,t)+1,j)$ so that column $j$ of $T$ has all the numbers from $1$ to $\lambda'_j$ from top to bottom.

 \subsubsection*{Reverse excited diagrams}
The following variation of excited diagrams was introduced in \cite{MZ}. 

\begin{definition}[reverse excited move $\beta$]
Given a subset $D$ of $[\lambda^*]$, a cell $(i,j)$ with $i\neq j$  of $D$ is {\em reverse active} if the cells $(i-1,j),(i,j-1),(i-1,j-1)$ are not in $D$ but are in $[\lambda^*]$. Similarly, a cell $(i,i)$ of $D$ is {\em reverse active} if the cells $(i,j-1),(i-1,j-1)$ are not in $D$ but are in $[\lambda^*]$. Given a reverse active cell $(i,j)$ of $D$, let $\beta^*_{(i,j)}:D\to D'$ be the map that replaces cell $(i,j)$ in $D$ by $(i-1,j-1)$. We call such $\beta^*_{(i,j)}$ a \emph{reverse excited move} (see Figure~\ref{fig:reverse excited move}). 
\end{definition}

Then, we define reverse excited diagrams of $\lm$ iteratively.

\begin{definition}
A \emph{reverse excited diagram} of $\lambda/\mu$ to be any set of $|\lambda/\mu|$ cells obtained by starting with the cells of $[\lambda^*/\mu^*] \subseteq [\lambda^*]$ and applying any number of reverse excited moves. We let $\mathcal{RE}(\lambda/\mu)$ be the set of reverse excited diagrams of $\lambda/\mu$. 
\end{definition}

The flagged tableaux in $\mathcal{SF}(\lambda/\mu)$  are in direct correspondence with reverse excited diagrams in $\mathcal{RE}(\lm)$. Given a reverse excited diagram $D$ in $\mathcal{RE}(\lm)$, let $\varphi^*(D):=T$ be the SSYT of shape $\lm$ where $T(x,y)=j_y$ where $(i_x,y_x)$ is the cell of $D$ corresponding to $(x,y)$ in $[\lambda^*/\mu^*]$. 

\begin{proposition}[{\cite[Lem. 4.22]{MZ}}]
For a skew shape $\lm$, the map $\varphi^*$ is a bijection between $\mathcal{RE}(\lm)$ and $\mathcal{SF}(\lm)$.    
\end{proposition}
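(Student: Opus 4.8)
The plan is to follow the pattern of Proposition~\ref{prop:phi bijection ED and flagged tableaux} and realise $\varphi^*$ together with an explicit inverse $\psi^*$ as mutually inverse ``cell-pushing'' maps. Identify each cell $(x,j)$ of $[\lambda/\mu]$ with the cell $(x,\,j+d-x)$ of $[\lambda^*/\mu^*]$; this content-preserving shift carries row $x$ of $\lambda/\mu$ bijectively onto row $x$ of $\lambda^*/\mu^*$ and is the identification implicit in the definition of $\varphi^*$. A reverse excited move $\beta^*_{(i,j)}$ sends $(i,j)$ to $(i-1,j-1)$, hence decreases by one the row index of the cell it tracks; so if $(i_x,\cdot)$ denotes the cell of $D$ that originated in row $x$ of $[\lambda^*/\mu^*]$, then $\varphi^*(D)(x,j):=i_x$ satisfies $1\le i_x\le x$ automatically, since a tracked cell cannot leave $[\lambda^*]$ and cannot increase its row. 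Dually, define $\psi^*(T)$, for $T\in\mathcal{SF}(\lambda/\mu)$, to be the set obtained from $[\lambda^*/\mu^*]$ by pushing the cell sitting in row $x$ with a given content up and to the left through $x-T(x,j)$ diagonal steps.

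The first thing to check is that $\varphi^*$ is well defined, i.e.\ $\varphi^*(D)\in\mathcal{SF}(\lambda/\mu)$. As with ordinary excited diagrams, a reverse excited diagram is the set-theoretic complement inside $[\lambda^*]$ of a $k$-tuple of non-intersecting lattice paths, with reverse excited moves corresponding to reverse ladder moves; equivalently, one verifies directly that for two cells $A,B$ originating in the same row of $[\lambda^*/\mu^*]$ with $A$ to the left of $B$, the cell $B$ can never perform a reverse excited move until $A$ has vacated the cell directly left of $B$, so $A$ is always pushed at least as far as $B$. This gives that $\varphi^*(D)$ is weakly increasing along rows; the analogous tracking for two cells in the same column of $[\lambda/\mu]$ gives strict increase down columns. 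Combined with $1\le i_x\le x$, this is exactly membership in $\mathcal{SF}(\lambda/\mu)$.

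Next, $\psi^*$ is well defined and inverse to $\varphi^*$. That $\psi^*(T)\subseteq[\lambda^*]$ — the pushed cells remain inside the big shifted diagram and do not collide — follows from the flag inequality $T(x,j)\le x$ together with the column-strictness of $T$. The substantive claim is that $\psi^*(T)$ is genuinely a reverse excited diagram, that is, reachable from $[\lambda^*/\mu^*]$ by a sequence of reverse excited moves. I would prove this by induction on the total displacement $\sum_{(x,j)\in[\lambda/\mu]}\bigl(x-T(x,j)\bigr)$: if it is positive, select a pushed cell of $\psi^*(T)$ that has actually moved and is extremal — for instance, in the topmost row of $[\lambda^*]$ meeting a moved cell, take the leftmost such cell — and verify from the semistandardness of $T$ and the flag bound that the two or three cells required for the corresponding reverse excited move (using the modified move at cells on the diagonal of $[\lambda^*]$) are vacant; this exhibits $\psi^*(T)$ as the image under one reverse excited move of $\psi^*(T')$ for a $T'\in\mathcal{SF}(\lambda/\mu)$ of strictly smaller displacement. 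Once $\psi^*$ is known to land in $\mathcal{RE}(\lambda/\mu)$, the identities $\varphi^*\circ\psi^*=\mathrm{id}$ and $\psi^*\circ\varphi^*=\mathrm{id}$ are immediate, since both maps are determined cell by cell by how far the tracked cell has travelled along its diagonal.

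The main obstacle is the inductive step above: showing that the chosen extremal moved cell really is legally the target of a reverse excited move requires using the precise interplay of the row-weak condition, the column-strict condition, and the flag $T(x,j)\le x$, with extra care for the relaxed move on the diagonal. A route that sidesteps this bookkeeping is to factor the bijection through non-intersecting lattice paths: reverse excited diagrams are in bijection with a fixed family of non-intersecting paths in $[\lambda^*]$ (reverse excited moves $\leftrightarrow$ reverse ladder moves), while $\mathcal{SF}(\lambda/\mu)$ is in bijection with the same family by reading a path off each column of the tableau in the manner of Kreiman; $\varphi^*$ is then the composite, and bijectivity is inherited.
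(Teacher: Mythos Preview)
The paper does not give its own proof of this proposition: it is quoted verbatim as \cite[Lem.~4.22]{MZ} and used as a black box, so there is nothing to compare your argument against in the present paper.

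As an independent sketch your proposal is sound in outline and is essentially the standard argument (and the one in \cite{MZ}): build the tableau by recording the row each cell has reached, and build the inverse by pushing each cell along its diagonal by the amount dictated by the tableau entry, then induct on total displacement. One point deserves more care than you give it. Your argument for weak increase along rows is correct: if $A$ sits immediately left of $B$ in the same row of $D$, then $B$ cannot perform $\beta^*$ until $A$ has moved, since $A$ occupies one of the three cells required to be empty. But the ``analogous tracking'' you invoke for column-strictness is not quite analogous. Two cells in the same column of $[\lambda/\mu]$ land on an \emph{anti}-diagonal of $[\lambda^*/\mu^*]$ (their shifted images have contents differing by $2$), and the naive blocking argument does not immediately prevent the lower one from catching up to the same row as the upper one. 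You need to use the full configuration of $D$---in effect the non-intersecting path picture you mention at the end---to rule this out. Your alternative route through reverse ladder moves on non-intersecting paths in $[\lambda^*]$ is in fact the cleanest way to handle both monotonicity conditions and the inductive step simultaneously, and is closer to how \cite{MZ} actually proceeds.
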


The direct correspondence between tableaux in $\mathcal{OOT}(\lm)$ and in $\mathcal{SF}(\lm)$ is as follows. Given $T$ in $\mathcal{OOT}(\lm)$, let $\vartheta^*(T):=U$ be the SSYT of shape $\lm$ whose entries in the $i$th column are the entries $k$ in $\{1,\ldots,\lambda'_i\}$ such that $d+1-k$ is not an entry in the $i$th column of $T$. 

\begin{proposition}[{\cite[Cor. 4.16]{MZ}}] \label{bij:OOT and flagged skew tableaux}
For a skew shape $\lm$, the map $\vartheta^*$ is a bijection between $\mathcal{OOT}(\lm)$ and $\mathcal{SF}(\lm)$.    
\end{proposition}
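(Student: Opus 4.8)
The plan is to show that $\vartheta^*$ acts one column at a time via the elementary operation ``reflect, then take complement,'' and that the defining conditions on the two sides translate into each other under this operation. The first step is to record a reformulation of $\cOOT(\lambda/\mu)$: \emph{an SSYT $T$ of shape $\mu$ with entries in $[d]$ lies in $\cOOT(\lambda/\mu)$ if and only if $d+1-\lambda'_i\le T(j,i)$ for every cell $(j,i)\in[\mu]$.} For the forward direction, fix $(j,i)$ and walk up column $i$: strict increase gives $T(1,i)\le T(j,i)-(j-1)$, hence $d+1-T(1,i)\ge (d+1-T(j,i))+(j-1)$; the content condition at the top cell $(1,i)$, namely $i-1<\lambda_{d+1-T(1,i)}$, forces $d+1-T(1,i)\le\lambda'_i$ (row $d+1-T(1,i)$ of $\lambda$ has length $\ge i$), and combining the two inequalities yields $d+1-T(j,i)\le\lambda'_i$. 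Conversely, if $d+1-T(j,i)\le\lambda'_i$ then $\lambda_{d+1-T(j,i)}\ge i>i-j=c(j,i)$, which is exactly the content condition. Dually, and more easily, an SSYT $U$ of shape $\lambda/\mu$ lies in $\mathcal{SF}(\lambda/\mu)$ if and only if every entry of column $i$ lies in $[1,\lambda'_i]$: ``$U(r,i)\le r$'' gives ``$U(r,i)\le r\le\lambda'_i$,'' and conversely reading column $i$ upward from its bottom cell $(\lambda'_i,i)$ and using strict increase turns ``$U(\lambda'_i,i)\le\lambda'_i$'' into ``$U(r,i)\le r$'' for every $r$.

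Granting these two reformulations, $\vartheta^*$ becomes, column by column, the composition of the order-reversing bijection $k\mapsto d+1-k$ between $[d+1-\lambda'_i,d]$ and $[1,\lambda'_i]$ with complementation inside $[1,\lambda'_i]$; this sends a size-$\mu'_i$ subset of $[d+1-\lambda'_i,d]$ to a size-$(\lambda'_i-\mu'_i)$ subset of $[1,\lambda'_i]$, is visibly invertible, and the resulting column $i$ of $\vartheta^*(T)$ is precisely $[1,\lambda'_i]\setminus\{d+1-t : t\in\mathrm{col}_i(T)\}$ (the reflected entries land inside $[1,\lambda'_i]$ exactly by the $\cOOT$ reformulation), matching the stated definition. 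Shape and entry bounds are then immediate, so the only remaining point is that this column-wise recipe carries the SSYT condition for shape $\mu$ to the SSYT condition for shape $\lambda/\mu$, and back. Since ``weakly increasing rows'' is just a cellwise comparison of consecutive columns listed top-to-bottom, it suffices to treat one pair of adjacent columns $i,i+1$ and to match the corresponding overlap ranges under the reindexing.

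This last step is the crux, and I expect it to be the main obstacle. Write $R_i=\{d+1-t : t\in\mathrm{col}_i(T)\}\subseteq[1,\lambda'_i]$ and $R_{i+1}\subseteq[1,\lambda'_{i+1}]$ for the reflected columns, of sizes $\mu'_i\ge\mu'_{i+1}$, listed increasingly as $\rho$ and $\sigma$; and let $u,w$ list the complements $[1,\lambda'_i]\setminus R_i$ and $[1,\lambda'_{i+1}]\setminus R_{i+1}$ increasingly. The row inequalities $T(j,i)\le T(j,i+1)$ translate into $\rho_{(\mu'_i-\mu'_{i+1})+a}\ge\sigma_a$ for all admissible $a$, while the row inequalities of $U=\vartheta^*(T)$ translate into $u_s\le w_{(\mu'_i-\mu'_{i+1})+s}$ for all admissible $s$. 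I claim both are equivalent to the single counting statement
\[
|R_i\cap[1,v]|-|R_{i+1}\cap[1,v]|\ \le\ \mu'_i-\mu'_{i+1}\qquad\text{for every }v .
\]
For the first, $\rho_b\le v$ forces $\sigma_{b-(\mu'_i-\mu'_{i+1})}\le\rho_b\le v$ and conversely; for the second, one uses $|\{s:u_s\le v\}|=v-|R_i\cap[1,v]|$ and $|\{s:w_s\le v\}|=v-|R_{i+1}\cap[1,v]|$, valid precisely because $R_i\subseteq[1,\lambda'_i]$ and $R_{i+1}\subseteq[1,\lambda'_{i+1}]$ -- which is where the $\cOOT$ reformulation is needed -- together with the trivial checks for $v$ outside $[1,\lambda'_{i+1}]$. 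Putting the pieces together, $\vartheta^*$ restricts to a bijection between $\cOOT(\lambda/\mu)$ and $\mathcal{SF}(\lambda/\mu)$, with inverse given column-wise by the same reflect-and-complement rule; everything outside the displayed finite-set equivalence is either the definitions or the bookkeeping of the two reformulations.
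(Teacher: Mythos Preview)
The paper itself does not supply a proof of this proposition; it is quoted from \cite[Cor.~4.16]{MZ} without argument, so there is nothing in the present paper to compare your attempt against. That said, your proof is correct. The two columnwise reformulations---replacing the content condition in $\cOOT(\lm)$ by the lower bound $T(j,i)\ge d+1-\lambda'_i$, and the row flag in $\mathcal{SF}(\lm)$ by the column upper bound $U(r,i)\le\lambda'_i$---are exactly the right moves to make the reflect-then-complement description of $\vartheta^*$ transparent, and reducing the row compatibility across adjacent columns to the single counting inequality $|R_i\cap[1,v]|-|R_{i+1}\cap[1,v]|\le\mu'_i-\mu'_{i+1}$ is a clean way to handle the only nontrivial step. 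The edge cases (values $v>\lambda'_{i+1}$, and the matching of the admissible range $1\le s\le\lambda'_{i+1}-\mu'_i$ with the overlap rows $\mu'_i+1\le r\le\lambda'_{i+1}$ via $s=r-\mu'_i$) check out as you indicate.
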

 
There is also the following determinantal formula for the number of Okounkov--Olshanski tableaux. 

\begin{theorem}[{\cite[Thm. 1.2]{MZ}}] \label{thm:det formula nOOF}
In the notation above, we have
\[
\nOOT(\lm) \,=\, \det \left[\binom{\lambda_i-\mu_j+j-1}{i-1} \right]_{i,j=1}^{\ell(\lambda)}.
\]
\end{theorem}

 \subsubsection*{Broken diagonals of reverse excited diagrams}
 
To each reverse excited diagram $D\in
\mathcal{RE}(\lambda/\mu)$ we associate certain cells of $[\mu^*]\setminus D$. See Figure~\ref{fig:oof_compare} for an example.

\begin{definition}[{\cite[Def. 4.24]{MZ}}] \label{def:broken diagonals in RE}
Given a reverse excited diagram $D$ in $\mathcal{RE}(\lm)$, we define the {\em broken diagonals} of $D$ to be the following set of $[\lambda^*]\setminus D$:
\begin{itemize}
\item for the initial diagram $[\lambda^*/\mu^*]$, let $\Br^*(D)= \bigcup_{i=1}^{\ell(\mu')} \dd^*_i(\mu)$, where $\dd^*_k$ is the diagonal of cells in $[\mu^*]$ with content $i-\mu'_i+\ell(\lambda)-1$.
\item if $D'=\beta^*_{(i,j)}(D)$, then $(i-1,j)$ is in some $\dd^*_t(D)$. Define $\Br^*(D')=\bigcup_{k=1}^{\ell(\mu')} \dd^*_k(D')$ where $\dd^*_k(D')=\dd^*_k(D)$ if $k\neq
t$ and $\dd^*_k(D')=\{(i,j)\} \cup \dd^*_k(D) \setminus \{(i-1,j)\}
$. 
\end{itemize}
\end{definition}

The reformulation of \eqref{eq:OO} in terms of reverse excited diagrams is simple to state \cite[Cor. 5.6]{MZ}.

\begin{corollary}[Reverse excited diagram formulation of \eqref{eq:OO}] \label{cor:rev excited-form}
For a skew shape $\lambda/\mu$ of size $n$ and length $d=\ell(\lambda)$ we have that
\begin{equation} \label{eq:cor:flag-form}
f^{\lm} = \frac{n!}{\prod_{u \in [\lambda]} h(u)} \sum_{D \in \mathcal{RE}(\lambda/\mu)} \prod_{(i,j) \in \Br^*(D)} (\lambda_i + d-j).
\end{equation}
\end{corollary}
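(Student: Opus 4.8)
The plan is to transport the sum in the flagged-tableaux formulation of Corollary~\ref{cor:flag-form} along the bijection $\varphi^{*}\colon\mathcal{RE}(\lm)\to\mathcal{SF}(\lm)$, so that the real content of the proof is a matching of weights. Precisely, writing $M(T)$ for the edge-label set of $T\in\mathcal{SF}(\lm)$ and setting $T=\varphi^{*}(D)$, it suffices to prove the monomial identity
\[
\prod_{(i,j)\in\Br^{*}(D)}(\lambda_i+d-j)\;=\;\prod_{(j,t)\in M(T)}\bigl(\lambda_t-t+i(j,t)-j+1\bigr)
\]
for every $D\in\mathcal{RE}(\lm)$; summing this over $D$ and substituting into Corollary~\ref{cor:flag-form} then yields \eqref{eq:cor:flag-form}. (One could equally run the argument starting from \eqref{eq:OO} itself, composing with the bijection $\vartheta^{*}$ of Proposition~\ref{bij:OOT and flagged skew tableaux}.)

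First I would exhibit a weight-preserving bijection $\Br^{*}(D)\leftrightarrow M(T)$. Both sides record complementary data: a pair $(j,t)\in M(T)$ marks a value $t$ of $\{1,\dots,\lambda'_j\}$ missing from column $j$ of $T$, while by Definition~\ref{def:broken diagonals in RE} the set $\Br^{*}(D)$ picks out one cell of $[\lambda^{*}]\setminus D$ from each broken diagonal $\dd^{*}_k$. I expect these to correspond cell-for-cell under $\varphi^{*}$, since the columns of $T$ encode, via $\varphi^{*}$, which rows of each diagonal of the shifted shape $[\lambda^{*}]$ are occupied by $D$, so a missing cell on a diagonal translates into a missing value in a column. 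I would make this precise and verify it by induction on the number of reverse excited moves building $D$ from $[\lambda^{*}/\mu^{*}]$: the base case, where $T$ is the minimal element of $\mathcal{SF}(\lm)$ and both sides reduce to the initial diagonals $\dd^{*}_k(\mu)$ of Definition~\ref{def:broken diagonals in RE}, is checked directly; for the inductive step a single reverse excited move $\beta^{*}_{(i,j)}$ on $D$ corresponds, under $\varphi^{*}$, to lowering one entry of $T$ within its column by~$1$, which slides exactly one label of $M(T)$ onto an adjacent horizontal edge in the same way that the move replaces the cell $(i-1,j)$ by $(i,j)$ in the single broken diagonal $\dd^{*}_t$ that it affects.

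The main obstacle is the numerical part: showing that under this bijection the weights agree term by term, i.e.\ that a broken diagonal cell $(i,j)\in[\lambda^{*}]$ is matched to an edge label $(j',t')$ with $\lambda_i+d-j=\lambda_{t'}-t'+i(j',t')-j'+1$. This requires unwinding the coordinate shift between $[\lambda/\mu]$ and the shifted shape $[\lambda^{*}/\mu^{*}]$---the summand $+d$ on the left absorbs the horizontal shift by $d-1$ together with a $+1$---and identifying $i(j',t')$, the row of the horizontal edge of $T$ carrying the missing label $t'$, with the row determined by the corresponding cell of $D$. Since a reverse excited move alters both sides at once (on the left it replaces $(i-1,j)$ by $(i,j)$, hence the factor $\lambda_{i-1}+d-j$ by $\lambda_i+d-j$), the cleanest route is probably to prove the weight identity non-recursively: characterize $\Br^{*}(D)$ as the set of up-steps of the non-intersecting lattice paths associated with $D$ inside $[\lambda^{*}]$ (the analogue of Proposition~\ref{prop:broken diagonals in gamma}), read off the weight of each such up-step from the path geometry, and match it against the edge labels of $T=\varphi^{*}(D)$, whose horizontal edges are the complementary steps of the same paths. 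Combined with the base case this gives the monomial identity, and the corollary follows from Corollary~\ref{cor:flag-form}.
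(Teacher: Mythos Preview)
The paper does not prove this corollary; it is quoted from \cite[Cor.~5.6]{MZ} with the remark that the reformulation ``is simple to state.'' So there is no in-paper argument to compare against, only the citation.

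Your outline is the natural derivation and is essentially correct: start from Corollary~\ref{cor:flag-form}, transport along the bijection $\varphi^{*}$ of Proposition~2.19, and match weights term by term. The cardinality match $|\Br^{*}(D)|=|M(T)|=|\mu|$ is immediate (each column $j$ of $T$ omits exactly $\mu'_j$ values), and the inductive scheme you describe---tracking how a single reverse excited move simultaneously shifts one cell of one $\dd^{*}_t$ and slides one edge label in $M(T)$---is exactly how Definition~\ref{def:broken diagonals in RE} is set up to behave under $\varphi^{*}$. The coordinate bookkeeping you flag (the $+d$ absorbing the shift by $d-1$ plus $1$, and identifying $i(j',t')$ with the row of the corresponding cell of $D$) is the only real content and goes through by a direct check on the base case $D=[\lambda^{*}/\mu^{*}]$ followed by the one-move update. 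Your alternative route via an up-step characterization of $\Br^{*}(D)$ analogous to Proposition~\ref{prop:broken diagonals in gamma} would also work and is arguably cleaner, but is not needed.

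In short: the approach is right, the gaps you identify are genuine but routine, and this is presumably how \cite{MZ} does it too.
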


\subsection{The Hillman--Grassl correspondence} \label{sec: HG background}

\subsubsection*{Definition and properties}

Fix a skew shape $\lambda/\mu$. We denote by $\HG$ the {\em Hillman--Grassl bijection} between reverse plane partitions in $\RPP(\lambda)$ ranked by size and integer arrays of shape $\lambda$ ranked by hook weight. That is, if $\HG:\pi \mapsto A$ then 
\begin{equation} \label{eq: HG hook weight}
|\pi|=\sum_{u \in \lambda} h(u)\cdot A_u.
\end{equation}

We follow the conventions and definition of $\HG$ in \cite[\S 7.22]{EC2}, that we include for completeness.  

\begin{definition}[Hillman--Grassl map]\label{def: HG}
Given $\pi \in \RPP(\lambda)$, the map is obtained via a  sequence of pairs $(\pi_0,A_0):=(\pi,{\bf 0}),(\pi_1,A_1)\ldots,(\pi_k,A_k):=({\bf 0},\HG(\pi))$ where  each $\pi_i$ is a reverse plane partition and $A_i$ is an array of nonnegative integers of shape $\lambda$. 

The HG step $(\pi_i,A_i) \to (\pi_{i+1},A_{i+1})$ is defined as follows. The plane partition $\pi_{i+1}$ is obtained from $\pi_i$ by decreasing by one the following entries along a lattice path on $[\lambda]$ defined as
\begin{itemize}
    \item[(i)] start at the most South-West  nonzero cell of $\pi_i$, 
    \item[(ii)] iterate the following step when the path visits cell $(a,b)$, if $(\pi_i)_{a,b}=(\pi_i)_{a-1,b}$ then the path moves North, otherwise the path moves East,
    \item[(iii)] terminate when this is no longer possible. 
\end{itemize}
Let $u_i=(r,c)$ where $c$ is the column where the path starts and $r$ is the row where it ends.
Note that $|\pi_i|-|\pi_{i-1}|=h(u_i)$. We obtain the array $A_{i+1}$ from $A_i$ by adding one to cell $u_i$. 
We continue until $\pi_k$ has only zero entries. We let $A_k=\HG(\pi)$.  
\end{definition}

Note that by the construction of $\HG$, \eqref{eq: HG hook weight} holds. Hillman--Grassl showed that this map is a bijection. Because of this and \eqref{eq: gf SSYT RPP lambda}, we obtain \eqref{eq:qhlf} as a corollary.

\begin{theorem}[{Hillman--Grassl \cite{HG}}]
The map $\HG$ is a bijection between $\RPP(\lambda)$ and integer arrays of shape $\lambda$ satisfying \eqref{eq: HG hook weight}.
\end{theorem}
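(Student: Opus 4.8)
The theorem states that $\HG$ is a bijection between $\RPP(\lambda)$ and the set of nonnegative integer arrays of shape $\lambda$, with the size/hook-weight relation \eqref{eq: HG hook weight} automatically satisfied by construction.

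\textbf{Overall strategy.} Since the hook-weight relation is immediate from the definition (each HG step removes $h(u_i)$ from $|\pi_i|$ while incrementing $A$ at $u_i$ by one), the content is the bijectivity. The plan is to construct an explicit inverse map $\HG^{-1}$ from nonnegative integer arrays of shape $\lambda$ back to $\RPP(\lambda)$, and show it is a two-sided inverse. First I would observe that the Hillman--Grassl procedure on $\pi$ produces the path-start columns in weakly decreasing order: within a fixed column, once we have recorded all the hook-path contributions starting in column $c$, the remaining nonzero cells of $\pi$ lie strictly to the left, so the columns $c$ at which successive paths originate are weakly decreasing. This lets us reconstruct, from an array $A$, the sequence of ``pushes'' to undo: process the columns of $A$ from right to left, and within a column from bottom to top, and for each positive entry $A_{r,c}$ perform $A_{r,c}$ inverse steps. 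An inverse step for a recorded pair $(r,c)$ must \emph{add} one along a lattice path that starts in row $r$, ends in column $c$, and is the reverse of the greedy path rule; the key is that the greedy rule is reversible because at a cell $(a,b)$ the decision (North vs. East) is determined by comparing $(\pi_i)_{a,b}$ with $(\pi_i)_{a-1,b}$, and there is a dual greedy rule for insertion paths that makes the resulting array again a reverse plane partition and is the exact undo of a forward step.

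\textbf{Key steps, in order.} (1) Prove the monotonicity lemma: in the forward algorithm the start columns $c_1 \ge c_2 \ge \cdots$ are weakly decreasing, and for equal columns the end rows are weakly decreasing; hence $\HG(\pi)$ together with this canonical order determines the multiset of steps and the order in which they were applied. (2) Define the reverse step: given a current reverse plane partition $\sigma$ and a target pair $(r,c)$ to undo, trace the unique lattice path from the box $(r,c')$\,---\,more precisely, start in column $c$ at the lowest row where insertion is legal and walk according to the reverse greedy rule, reach row $r$\,---\,and add one to every cell along it; show the result is still in $\RPP(\lambda)$. (3) Show that a reverse step followed by the matching forward step (and vice versa) is the identity: this is a local check that the greedy path chosen by the forward rule on $\sigma + (\text{path})$ is exactly the path just inserted, using that the inserted values were chosen minimally. (4) Conclude: applying reverse steps to $A$ in the canonical order produces a well-defined $\pi \in \RPP(\lambda)$ with $\HG(\pi) = A$, and starting from any $\pi$ the forward algorithm followed by the reverse algorithm returns $\pi$; therefore $\HG$ is a bijection, and \eqref{eq: HG hook weight} holds by the step-wise accounting.

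\textbf{Main obstacle.} The crux\,---\,and the only genuinely delicate point\,---\,is Step (3): verifying that the forward greedy rule and the reverse insertion rule are mutually inverse \emph{at the level of a single step}, i.e. that the path extracted by the forward rule from $\sigma$ (with start column equal to the most southwest nonzero column) coincides with the path that a reverse step would have inserted to produce $\sigma$ from $\sigma$ minus that path. This requires tracking how decreasing entries along a path affects the North/East comparisons at neighboring cells, and arguing that the ``most southwest nonzero cell'' start condition is compatible with processing $A$ in the right-to-left, bottom-to-top order. I expect to handle it by a careful case analysis on the shape of the path near each corner, or alternatively by citing the standard treatment (e.g.\ \cite[\S7.22]{EC2}) since this is the classical Hillman--Grassl theorem; in the present paper this theorem is used as a black box, so a full re-proof can reasonably be replaced by a reference.
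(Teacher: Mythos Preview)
The paper does not prove this theorem. It is stated in Section~\ref{sec: HG background} as a cited background result from \cite{HG}, immediately after the definition of the map, with the remark ``Hillman--Grassl showed that this map is a bijection.'' No proof is given or sketched; the result is used as a black box throughout. Your own closing remark---that a full re-proof can be replaced by a reference to \cite{HG} or \cite[\S 7.22]{EC2}---is exactly what the paper does.

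Your outline is the classical argument and is correct in spirit. One small caution: the direction of the monotonicity in Step~(1) is stated backwards. With the convention in Definition~\ref{def: HG} (start at the most South-West nonzero cell), the start columns $c_1, c_2, \ldots$ are weakly \emph{increasing}, not decreasing, and within a fixed column the end rows are weakly increasing as well; correspondingly, the inverse should process $A$ from left to right and, within a column, top to bottom (equivalently, in reverse lexicographic order on the recorded pairs). This is a convention issue rather than a gap in the strategy, but if you were to write out the details you would want to fix it so that Step~(3) goes through.
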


The Hillman--Grassl correspondence has the following property that allows to encode different traces of the plane partition. For an integer $k$ with $1-\ell(\lambda)\leq k \leq \lambda_1-1$, a $k$-diagonal of $\pi$ is the sequence of entries $(\pi_{ij})$ with content $i-j=k$. The {\em $k$-trace} of $\pi$ is the sum $\tr_k(\pi)=\sum_{i-j=k} \pi_{ij}$.

For an integer $k$ with $1-\ell(\lambda)\leq k \leq \lambda_1-1$, let $\square_k^{\lambda}$ be the largest $i\times (i+k)$ rectangle that fits inside $[\lambda]$ starting at $(1,1)$. For $k=0$, we have that $\square_0^{\lambda}$ is the {\em Durfee square} of $\lambda$. If $A$ is an array of shape $\lambda$, let $A_k$ be the subarray of $A$ with support in $\square_k^{\lambda}$, and $|A_k|$ denotes the sum of the entries of this subarray.

 \begin{proposition}[{\cite[Thm. 3.2]{G}}] \label{prop:trace and HG}
 Let $\pi$ be a reverse plane partition of shape $\lambda$, and $A=\HG(\pi)$. Then for $k$ with $1-\ell(\lambda)\leq k \leq \lambda_1-1$, we have that $\tr_k(\pi)=|A_k|$.
 \end{proposition}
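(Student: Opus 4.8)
The plan is to track how the $k$-trace and the quantity $|A_k|$ evolve along the chain of pairs $(\pi_0,A_0),(\pi_1,A_1),\dots,(\pi_N,A_N)$ produced by the Hillman--Grassl map in Definition~\ref{def: HG} (I rename the length of the chain to $N$ to avoid clashing with the trace index $k$), and to show that at every single step the two change in lockstep. At the $i$-th step, $\pi_{i+1}$ is obtained from $\pi_i$ by subtracting $1$ along a lattice path $P_i$ whose starting column is $c$ and whose ending row is $r$, where $u_i=(r,c)$ is the recorded cell, and $A_{i+1}=A_i$ with $1$ added in $u_i$. Since each diagonal of $\pi_i$ either loses exactly one unit (if $P_i$ meets it) or is left untouched, the whole proposition reduces to the following per-step claim: \emph{$P_i$ meets the $k$-th diagonal if and only if $u_i\in\square^{\la}_k$, and in that case it meets it exactly once.} Granting this, $\tr_k(\pi_i)-\tr_k(\pi_{i+1})$ equals $1$ when $u_i\in\square^{\la}_k$ and $0$ otherwise, which is exactly $|(A_{i+1})_k|-|(A_i)_k|$; telescoping over $i$ and using $\pi_N=\mathbf 0$, $A_0=\mathbf 0$, $A_N=\HG(\pi)$ then yields $\tr_k(\pi)=|A_k|$.

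To prove the per-step claim I would first record two structural facts about a single Hillman--Grassl path $P$, both read off directly from Definition~\ref{def: HG}. First, each step of $P$ (North or East) increases the content $j-i$ by exactly $1$; hence the contents of the cells of $P$ form a run of consecutive integers, so $P$ meets each diagonal at most once. Second, the extreme cells of $P$ lie on the rim of $[\la]$: its starting cell is the bottommost cell of its column, namely $(\la'_c,c)$, with content $c-\la'_c$ — because the nonzero entries of a column weakly increasing downward form a bottom segment, so the south-westmost nonzero cell sits at the bottom of its column — and, since one checks inductively that $P$ only ever walks onto positive entries, $P$ can never leave through the top and must terminate by attempting an East step out of the diagram, so its last cell is $(r,\la_r)$, with content $\la_r-r$. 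Combining these two facts, $P$ meets the $k$-th diagonal precisely when
\[
c-\la'_c \;\le\; k \;\le\; \la_r-r .
\]

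The last task is to translate this two-sided inequality into membership in $\square^{\la}_k$. Let $m_k$ and $n_k$ be the numbers of rows and columns of $\square^{\la}_k$; then $m_k=\max\{i:\la_i-i\ge k\}$, and transposing the rectangle (so that $\square^{\la}_k$ becomes $\square^{\la'}_{-k}$) gives $n_k=\max\{j:\la'_j-j\ge -k\}$. Since $i\mapsto\la_i-i$ and $j\mapsto\la'_j-j$ are strictly decreasing, the condition $k\le\la_r-r$ is equivalent to $r\le m_k$, and the condition $c-\la'_c\le k$ is equivalent to $c\le n_k$; together these say exactly $u_i=(r,c)\in\square^{\la}_k$. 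This establishes the per-step claim, and the telescoping argument above then completes the proof.

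The main obstacle is the second structural fact — pinning down the two endpoints of a Hillman--Grassl path. One must verify carefully from the path rule that $P$ never steps onto a zero entry, which is what forces its start to the bottom of a column and prevents it from exiting through the top (so that the terminal cell is indeed $(r,\la_r)$); and one should check the degenerate ranges of $k$ near $1-\ell(\la)$ and $\la_1-1$, where $\square^{\la}_k$ may be empty and the displayed inequality range may be vacuous. Everything else is bookkeeping.
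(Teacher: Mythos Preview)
The paper does not supply its own proof of this proposition; it is quoted from Gansner \cite[Thm.~3.2]{G} and used as a black box. Your argument is correct and is essentially Gansner's original one: follow a single Hillman--Grassl step, observe that the path $P_i$ visits a run of consecutive contents from $c-\la'_c$ (its start at the bottom of column $c$) to $\la_r-r$ (its end at the right edge of row $r$), and convert the two-sided inequality $c-\la'_c\le k\le \la_r-r$ into $(r,c)\in\square^{\la}_k$ via the characterization $m_k=\max\{i:\la_i-i\ge k\}$, $n_k=\max\{j:\la'_j-j\ge -k\}$. Telescoping over the steps gives $\tr_k(\pi)=|A_k|$.

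One small remark: the reason the path cannot exit through the top row is simply that at row $1$ the North comparison in Definition~\ref{def: HG}(ii) is vacuous (or, under the convention $\pi_{0,b}=0$, fails because the current entry is positive), so an East step is forced; your inductive positivity check is correct, but its real role is to guarantee that $\pi_{i+1}$ is again a reverse plane partition, not to locate the terminal cell. The degenerate extremes $k=1-\ell(\la)$ and $k=\la_1-1$ cause no trouble, since then $\square^{\la}_k$ is a single row or column and the inequalities specialize correctly.
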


 Denote by $ac_t(A_k)=\max \{ \sum_{(i,j) \in p_1\cup \cdots \cup p_t} A_{i,j}\} $, where $p_1,\ldots,p_t$ are collections of northeast (NE) non-intersecting paths in $A_k$. Similarly, let $dc_t(A_k)$ denote the maximal number of nonzero entries along a collection of strict southeast (SE) non-intersecting paths in $A_k$. The statistic $ac_t(A)$ ($dc_t(A)$) can be viewed as counting the maximum combined length of $t$ {\em ascending (descending) chains} in $A$. We need the following known connection between the Hillman--Grassl bijection and the RSK bijection that can be viewed as an analogue of {\em Greene's theorem} (see \cite[Thm. A1.1.1]{EC2}).

\begin{theorem}[{(i) by Hillman--Grassl \cite{HG}, (ii) by Gansner \cite{G}}] \label{thm: Greene's theorem for HG}
Let $\pi$ be in $\RPP(\lambda)$, $A=HG(\pi)$, and let $k$ be an integer $1-\ell(\lambda) \leq k \leq \lambda_1-1$. Denote by $\nu=\nu^{(k)}$ the partition whose parts are the entries on the $k$-diagonal of $\pi$. Then for al $t\geq 1$ we have:
\begin{itemize}
    \item[(i)] $ac_t(A_k) = \nu_1+\nu_2+\cdots + \nu_t$,
    \item[(ii)] $dc_t(A_k) = \nu'_1+\nu'_2+\cdots + \nu'_t$.
\end{itemize}
\end{theorem}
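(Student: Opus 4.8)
The plan is to reduce the statement to Greene's theorem for the matrix RSK correspondence \cite[Thm.~A1.1.1]{EC2} by extracting, from the Hillman--Grassl process of Definition~\ref{def: HG} applied to $\pi$, exactly the combinatorial data governing the $k$-diagonal. First I would record two elementary facts. \emph{(a)} Along the $k$-diagonal of $\pi$, moving one cell down-and-right is a composition of one rightward and one downward step, each of which weakly increases the entries of a reverse plane partition; hence the entries on the $k$-diagonal, read from the boundary of $[\lambda]$ inward, are weakly decreasing, and their multiset is precisely the partition $\nu=\nu^{(k)}$. \emph{(b)} Every step of an HG lattice path moves North or East, and each such step changes $i-j$ by exactly $-1$; therefore an HG path meets each diagonal $\{i-j=k\}$ in at most one cell, and when the path starting in column $c$ and ending in row $r$ deposits its $+1$ at $u=(r,c)$, the index $r-c$ of $u$ lies in the contiguous interval of diagonal indices the path visits. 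In particular, in a single HG step the trace $\tr_k(\pi)$ drops by $1$ precisely when the path crosses the $k$-diagonal, and is unchanged otherwise; this is the mechanism behind Proposition~\ref{prop:trace and HG}, and I would reprove it in the sharper form needed below: the $+1$ of a crossing path lands inside $\square_k^{\lambda}$, and the $+1$ of a non-crossing path lands outside $\square_k^{\lambda}$.

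The heart of the argument is to encode the HG process, localized to the $k$-diagonal, as an RSK input. Running the algorithm of Definition~\ref{def: HG} and listing its steps in order, let $p^{(1)},p^{(2)},\dots$ be the subsequence of paths that cross the $k$-diagonal, with $p^{(m)}$ starting in column $c_m$ and ending in row $r_m$, so that it deposits a $+1$ at $(r_m,c_m)\in\square_k^{\lambda}$. I would show that, after a fixed relabelling of the rows and columns of $\square_k^{\lambda}$, the two-line array recording the pairs $(r_m,c_m)$ is an input whose matrix-RSK insertion tableau has shape exactly $\nu^{(k)}$: the South-West-most-nonzero rule of the HG process, restricted to the band of diagonals around $k$, performs a bumping that is order-isomorphic to RSK row insertion, so that peeling the entries of $\pi$ off the $k$-diagonal one unit at a time in the HG order reconstructs $\nu^{(k)}$ by the usual insertion, while the recorded cells assemble exactly into the subarray $A_k$.

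With this identification, \cite[Thm.~A1.1.1]{EC2} gives that the maximum total weight of a union of $t$ weakly increasing (NE) lattice paths in $A_k$ equals $\nu_1+\cdots+\nu_t$, and the maximum number of nonzero entries on a union of $t$ strictly decreasing (SE) paths equals $\nu'_1+\cdots+\nu'_t$; by definition these quantities are $ac_t(A_k)$ and $dc_t(A_k)$, which proves (i) and (ii). The case $k<0$ follows by applying the same argument to the transposes of $\pi$ and $A$, which exchanges the roles of rows and columns while sending the $k$-diagonal to the $(-k)$-diagonal and $\nu^{(k)}$ to its conjugate. The main obstacle is the middle paragraph: making precise that the HG process restricted to the $k$-diagonal is order-isomorphic to RSK insertion, and in particular controlling that every crossing path deposits its $+1$ inside $\square_k^{\lambda}$ while every non-crossing path deposits it outside. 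An alternative to the explicit RSK encoding would be an induction on $|\pi|$, peeling one HG path at a time and checking that $\nu^{(k)}$ loses a corner cell while $A_k$ has one entry decremented in a way compatible with the local moves underlying Greene's theorem; but either route has this compatibility between the HG bumping and RSK bumping as its essential content, which is Gansner's insight.
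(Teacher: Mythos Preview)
The paper does not supply its own proof of this theorem: it is quoted as background from \cite{HG} and \cite{G} and used as a black box (see the sentence immediately following the theorem and Corollary~\ref{cor: rel HG and RSK}). So there is no ``paper's proof'' to compare against; the relevant comparison is with Gansner's original argument.

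Your outline is, in broad strokes, exactly Gansner's: localize the Hillman--Grassl process to the rectangle $\square_k^{\lambda}$, observe that the sequence of deposits inside that rectangle carries the same combinatorial data as an RSK insertion whose shape is $\nu^{(k)}$, and then invoke the classical Greene theorem for RSK. Your facts (a) and (b) are correct and are the right set-up. You are also candid that the ``middle paragraph'' is the real work, and that is accurate: the nontrivial content is precisely the claim that the HG bumping, read through its effect on the $k$-diagonal, is order-isomorphic to row insertion. As written, your proposal asserts this but does not prove it; what is missing is the local analysis (which Gansner carries out) showing that a single HG step changes the $k$-diagonal of $\pi$ by removing a corner of $\nu^{(k)}$, and that the deposited cell $(r,c)$ records exactly the bumping datum, so that the recording tableau built from the $(r_m,c_m)$ has shape $\nu^{(k)}$. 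Until that is done you have a plan, not a proof.

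One small correction: your final transpose remark is off. Transposing $\pi$ sends the $k$-diagonal to the $(-k)$-diagonal with the \emph{same} entries, so $\nu^{(k)}(\pi)=\nu^{(-k)}(\pi^{\top})$, not its conjugate; and under transposition NE paths remain NE paths, so $ac_t$ and $dc_t$ are preserved rather than swapped. The case $k<0$ does not need a separate argument---the proof is uniform in $k$---so you can simply drop that sentence.
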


\begin{corollary}[{\cite[Cor. 5.8]{MPP1}}] \label{cor: rel HG and RSK}
Let $\pi$ be in $\RPP(\lambda)$, $A=HG(\pi)$, and let $k$ be an integer $1-\ell(\lambda) \leq k \leq \lambda_1-1$. Denote by $\nu^{(k)}$ the partition obtained from the $k$-diagonal of $\pi$. Then the shape of the tableaux of $RSK(A_k^{\updownarrow})$ and of $RSK(A_k^{\leftrightarrow})$ is $\nu^{(k)}$.
\end{corollary}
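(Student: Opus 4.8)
The plan is to deduce this corollary directly from Theorem~\ref{thm: Greene's theorem for HG} by invoking Greene's theorem for RSK (\cite[Thm. A1.1.1]{EC2}), which characterizes the shape of the RSK output in terms of maximal unions of increasing and decreasing subsequences. First I would fix $k$ and set $\nu = \nu^{(k)}$, the partition whose parts are the entries along the $k$-diagonal of $\pi$. By Theorem~\ref{thm: Greene's theorem for HG}(i), for every $t\geq 1$ the quantity $ac_t(A_k)$—the maximum combined weight of $t$ non-intersecting northeast paths in the subarray $A_k$—equals $\nu_1+\cdots+\nu_t$; dually, part (ii) says $dc_t(A_k) = \nu'_1+\cdots+\nu'_t$.

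Next I would translate $A_k$, an integer array supported on the rectangle $\square_k^\lambda$, into a two-line array / biword in the standard way, and observe that the reading words $A_k^{\updownarrow}$ and $A_k^{\leftrightarrow}$ (scanning the nonzero entries of $A_k$ column-by-column, respectively row-by-row, with multiplicity) are precisely the words whose increasing subsequences correspond to northeast chains in $A_k$ and whose decreasing subsequences correspond to southeast chains. The key bookkeeping point is that a union of $t$ non-intersecting NE lattice paths through the cells of $A_k$, weighted by the entries, is exactly the combinatorial model computing the maximal total length of $t$ disjoint increasing subsequences in $A_k^{\updownarrow}$ (and symmetrically for decreasing subsequences and $A_k^{\leftrightarrow}$). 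Hence $ac_t(A_k)$ equals the Greene invariant $a_t$ of the word $A_k^{\updownarrow}$, and $dc_t(A_k)$ equals the Greene invariant $d_t$ of $A_k^{\leftrightarrow}$.

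Combining these identifications with Theorem~\ref{thm: Greene's theorem for HG}, the Greene invariants of $A_k^{\updownarrow}$ are $a_t = \nu_1+\cdots+\nu_t$ and those of $A_k^{\leftrightarrow}$ are $d_t = \nu'_1+\cdots+\nu'_t$. By Greene's theorem, the shape of the $P$-tableau (equivalently $Q$-tableau) of $RSK(A_k^{\updownarrow})$ is the partition determined by the row-sum data $\{a_t\}$, namely $\nu$; and the shape of $RSK(A_k^{\leftrightarrow})$ is the partition determined by the column-sum data $\{d_t\}$, which is again $\nu$ since $(\nu')' = \nu$. This gives the claim.

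The main obstacle is the careful verification that the chain statistics $ac_t$ and $dc_t$ on the array $A_k$ (defined via non-intersecting NE or SE lattice paths) really do coincide with the Greene invariants of the associated words, keeping track of the orientation conventions: one must be sure that "northeast non-intersecting paths" in the array-with-multiplicities match "increasing subsequences" in the chosen reading order, and likewise that "strict southeast paths counting nonzero entries" match decreasing subsequences—the word "strict" here and the bound $t$ on the number of chains must be reconciled with the multiplicities of entries of $A_k$. Once the conventions are pinned down (as they are in \cite[\S A1.1]{EC2} and \cite{HG,G}), the rest is immediate. I would also remark that one should check $A_k^{\updownarrow}$ and $A_k^{\leftrightarrow}$ are well-defined (the specific scanning orders used in \cite{MPP1}), but this is routine.
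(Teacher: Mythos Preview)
The paper does not supply its own proof of this statement; it is quoted as a background result from \cite[Cor.~5.8]{MPP1}. Your derivation---identifying the chain statistics $ac_t(A_k)$ and $dc_t(A_k)$ of Theorem~\ref{thm: Greene's theorem for HG} with the Greene invariants of the reading words $A_k^{\updownarrow}$, $A_k^{\leftrightarrow}$ and then invoking the classical Greene theorem for RSK---is correct and is exactly the standard route by which this corollary is deduced.
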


\subsubsection*{Extending $\HG$ to skew SSYT}

We extend $\HG$ to $\SSYT(\lambda/\mu)$ by viewing each skew SSYT as a plane partition of shape $\lambda$ with zero entries in $[\mu]$. 

For any excited diagram $D\in \E(\lambda/\mu)$, denote by $A_D$ the $0$-$1$ array of shape $\lambda$ with support on the broken diagonals $\Br(D)$ of $D$ and let $\mathcal{A}^*(\lm)=\{A_D \mid D\in \ED(\lm)\}$. Let $\mathcal{A}^*_D$ be the set of arrays $A$ of nonnegative integers of shape $\lambda$ with support contained in $[\lambda]\setminus D$ and nonzero entries $A_{i,j}>0$ if $(A_D)_{i,j}=1$. 

\begin{theorem}[{\cite[Thm. 7.7]{MPP1}}] \label{thm: HG skew SSYT}
The (restricted) Hillman--Grassl map $\HG$ is a bijection
\[
\HG: \SSYT(\lm) \to \bigcup_{D\in \ED(\lm)} \mathcal{A}^*_D.
\]
\end{theorem}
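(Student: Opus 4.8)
The plan is to reduce the statement to properties of the Hillman--Grassl map on straight shapes that have already been recorded, specifically the trace identity (Proposition~\ref{prop:trace and HG}) and the Greene-type theorem (Theorem~\ref{thm: Greene's theorem for HG}, Corollary~\ref{cor: rel HG and RSK}), together with the combinatorial description of broken diagonals via non-intersecting lattice paths (Proposition~\ref{prop:broken diagonals in gamma}). First I would view a skew SSYT $T \in \SSYT(\lm)$ as a reverse plane partition $\pi$ of the straight shape $\lambda$ whose entries on $[\mu]$ are all $0$, so that $\HG(\pi)$ is a well-defined integer array of shape $\lambda$; the task is to identify exactly which arrays arise. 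Since the map $\HG$ is already known to be a bijection on all of $\RPP(\lambda)$, it suffices to show two things: (1) the image of $\SSYT(\lm)$ under $\HG$ is contained in $\bigcup_{D\in\ED(\lm)} \mathcal A^*_D$, and (2) every array in that union is hit. Because the sets $\mathcal A^*_D$ are disjoint for distinct $D$ (their supports determine $D$ as the complement, by Proposition~\ref{prop:broken diagonals in gamma} the support contains $\Br(D)$ and is contained in $[\lambda]\setminus D$), injectivity is inherited from $\HG$, so really only surjectivity onto this target needs argument once the containment is established.

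For the containment (1), fix $T\in\SSYT(\lm)$ and let $A=\HG(T)$. I would argue in two stages. \emph{Support is off some excited diagram.} The key observation is that column-strictness of $T$ forces the partitions $\nu^{(k)}$ read off the $k$-diagonals of $\pi$ to have a controlled shape: on the $k$-diagonals passing through $[\mu]$ the zero entries in $[\mu]$ cut down the nonzero part, and by Corollary~\ref{cor: rel HG and RSK} the shape of $RSK(A_k^{\updownarrow})$ equals $\nu^{(k)}$, which bounds $dc_t(A_k)$ and $ac_t(A_k)$. Translating these chain bounds into statements about where $A$ can be nonzero, one shows the support of $A$ avoids a set of $|\mu|$ cells that must form an excited diagram $D$ — this is where I would use that excited diagrams are exactly the complements of supports of tuples in $\NI(\lambda/\mu)$, and match the diagonal-by-diagonal constraints against the lattice path picture. \emph{Forced nonzero entries on broken diagonals.} Separately, the trace identity $\tr_k(\pi)=|A_k|$ from Proposition~\ref{prop:trace and HG} applied to each relevant diagonal shows $|A_k|$ is strictly positive for exactly the diagonals indexed by the broken diagonals of $D$, and combining with the support restriction forces $A_{i,j}>0$ whenever $(A_D)_{i,j}=1$. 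Hence $A\in\mathcal A^*_D$.

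For surjectivity (2), I would go in the reverse direction: given $D\in\ED(\lm)$ and $A\in\mathcal A^*_D$, set $\pi=\HG^{-1}(A)$ (well-defined since $\HG$ is a bijection on straight shapes) and show $\pi$ vanishes on $[\mu]$, i.e. $\pi\in\SSYT(\lm)$. Here the same trace and Greene-type identities run backwards: the hypothesis that $A$ is supported on $[\lambda]\setminus D$ with the prescribed positivity on $\Br(D)$ pins down, via Theorem~\ref{thm: Greene's theorem for HG}, the shapes $\nu^{(k)}$ of the diagonals of $\pi$ to be precisely those of a skew SSYT of shape $\lm$, in particular forcing the entries on $[\mu]$ to be zero and the column-strictness on $[\lambda/\mu]$; an induction on the cells of $\mu$ (peeling off from a corner, tracking how an excited move $\beta$ changes both $D$ and $\Br(D)$) makes this bookkeeping manageable. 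The main obstacle I anticipate is precisely this translation between the ``global'' chain/trace data of $\HG$ and the ``local'' cell-by-cell support and positivity conditions defining $\mathcal A^*_D$: one must show the diagonal constraints are not merely necessary but collectively sufficient to force membership in a single $\mathcal A^*_D$, and this likely requires a careful induction over excited moves, using that a single move $\beta_{(i,j)}$ shifts one broken-diagonal cell from $(i+1,j+1)$ to $(i+1,j)$ while leaving the remaining diagonal traces intact. The straight-shape additivity and the border-strip analysis developed later in the paper suggest that the cleanest route may instead be to induct on $k$ in the Lascoux--Pragacz/Kreiman decomposition, but for this theorem the diagonal-trace argument above should suffice.
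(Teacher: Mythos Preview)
The paper does not give its own proof of this statement: Theorem~\ref{thm: HG skew SSYT} is quoted as a background result from \cite[Thm.~7.7]{MPP1}, and the introduction explicitly describes the original argument as partly algebraic---in \cite{MPP1} one proves combinatorially that $\HG^{-1}$ sends each $\mathcal A^*_D$ into $\SSYT(\lm)$ (your direction~(2)), but the forward containment (your direction~(1)) is \emph{not} established directly; instead the $q$-identity~\eqref{eq:qnhlf} is proved algebraically and the bijection is deduced from that identity together with the injectivity coming from direction~(2). The present paper is in part motivated by the absence of a combinatorial proof of direction~(1), and succeeds only in the special cases of minimal SSYT (Theorem~\ref{thm: phi vs hg}) and border strips (Section~\ref{sec: bijective proof for borderstrips}); see Section~\ref{sec: final remarks}.

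Your proposal attempts exactly the missing combinatorial argument for direction~(1), but the two key steps are not carried out. First, ``translating these chain bounds into statements about where $A$ can be nonzero'' so that the support avoids an excited diagram is the entire difficulty, and you have not indicated how to do it: the Greene-type data $ac_t(A_k)$, $dc_t(A_k)$ constrain sums of partition parts diagonal by diagonal, but you give no mechanism by which these constraints force the zero set of $A$ to contain a subset obtainable from $[\mu]$ by excited moves (in particular, the local $2\times 2$ structure of excited diagrams is nowhere connected to the diagonal statistics). Second, your positivity step is too coarse: the trace identity $\tr_k(\pi)=|A_k|$ only tells you that the total mass of $A$ in a rectangle $\square_k^\lambda$ is positive, not that a \emph{specific} cell on a broken diagonal carries a positive entry; extracting cell-by-cell positivity from rectangle sums requires additional structure that you do not supply. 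You yourself flag this ``global-to-local'' translation as the main obstacle, and indeed it is the reason the result is still regarded as only partially understood combinatorially.
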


This property combined with \eqref{eq: gf SSYT RPP lambda} yields \eqref{eq:qnhlf} as a corollary (see \cite[Sec. 7.1]{MPP1}).

\section{Minimimal semistandard tableaux of shape \texorpdfstring{$\lambda/\mu$}{lambda/mu}}\label{sec: minimal SSYT}

In Section~\ref{sec:def min ssyt} we introduce a set of tableaux called \emph{minimal SSYT} of shape $\lambda/\mu$. In Section~\ref{sec: bijection min ssyt to ED} we show it is in correspondence with excited diagrams.

\subsection{Characterization of minimal SSYT by local moves} \label{sec:def min ssyt}

We now give the definition of minimal SSYT of shape $\lm$ and show they are in correspondence to the set of excited diagrams of shape $\lambda/\mu$. We do this by studying the Lascoux--Pragacz decomposition of the SSYT and the Kreiman decomposition of its corresponding excited diagram.

Let $\lambda/\mu$ be a skew partition and $(\theta_1,\dots,\theta_k)$ be its Lascoux--Pragacz decomposition. Given a strip $\theta_k$ and column $j$, let $\theta_k(j)$ the $j$th column segment of $\theta_k$. Let  $(i,j)$ be the top-most cell  and $(i',j)$ be the bottom-most cell of $\theta_k(j)$ (which may agree if the column has size one). 

Recall that $T_0 \in \SSYT(\lambda/\mu)$ is the \emph{minimum SSYT of shape} $\lambda/\mu$, the tableau whose $i$-th column is $\{0,1,\dots,\lambda_i' - \mu_i' -1\}$. A \emph{minimal SSYT} is obtained from $T_0$ by applying a sequence of \emph{excited moves} $\delta$, defined below. 
This move increases by $1$ the entries in a column segment of a strip $\theta_k$ in a tableau when the result is still semistandard and the top value of the segment is at most its distance from the top part of $\theta_k$.

\begin{definition}[excited move $\delta$] \label{def:delta move}
The top most cell $(i,j)$ of a column segment $\theta_k(j)$ of a SSYT $T$ of shape $\lm$ is  called \emph{active} if 
\begin{enumerate}
    \item[(i)] $T(i,j) < ht_{\theta_k}(i)$, 
    \item[(ii)] $T(i,j)< T(i, j+1)$ and $T(i',j)< T(i'+1,j)-1$, where $(i',j)$ is the cell at the bottom of $\theta_k(j)$
\end{enumerate}
Given an active column $\theta_k(j)$ of $T$, the \emph{excited move} $\delta$ adds one to each entry in the column segment $\theta_k(j)$. We denote by $T':=\delta_{(k;j)}(T)$ be the tableau obtained from $T$ by the move $\delta$. Condition~(ii) guarantees that $T'$ is still a SSYT. See Figure~\ref{fig: ssyt min move} for an example. 
\end{definition}

\begin{definition}[minimal skew SSYT]
A {\em minimal SSYT} of shape $\lm$ is any tableau obtained by starting with $T_0$ and applying any number of excited moves $\delta$. We let $\SSYT_{\min}(\lm)$ be the set of minimal SSYT of shape $\lm$.  See Figure~\ref{fig: min SSYT example} for an example.
\end{definition}

\subsection{Bijection between excited diagrams and minimal SSYT} \label{sec: bijection min ssyt to ED}

In this section, we show the correspondence between $\E(\lambda/\mu)$ and $\SSYT_{\min}(\lambda/\mu)$ that commutes with their respective excited moves $\beta$ and $\delta$. First, we define the map $\Phi$ between $\E(\lambda/\mu)$ and $\SSYT(\lambda/\mu)$ and show it is a bijection. We then show the correspondence between the excited moves in $\E(\lambda/\mu)$ and the excited moves in $\SSYT_{\min}(\lambda/\mu)$.

The map $\Phi$ is defined below. Intuitively, it builds a tableau of shape $\lm$ from an excited diagram by filling the entries in the strip $\theta_i$ by counting the number of broken diagonals column by column of $\gamma_i(D)$.

\begin{definition}\label{def: phi* bijection}
Given $D\in \E(\lambda/\mu)$ with Kreiman decomposition $(\gamma_1(D),\ldots,\gamma_k(D))$ and broken diagonals $\Br(D)$, let $T:=\Phi(D)$ be the following tableau of shape $\lambda/\mu$: 

For each $\gamma_i(D)$ and $\theta_i$, start with the left-most column of each path. For each $j$th column of $\gamma_i(D)$ and $\theta_i$, we do the following procedure:

\begin{enumerate}
    \item Let $b_j$ be the number of broken diagonals on the $j$th column of $\gamma_i(D)$.
    \item Denote the bottom-most element of the $j$th column of $\theta_i$ as $(s_j,t_j)$. Let 
    \[
    T(s_j,t_j) = 
    \begin{cases}
    b_j & \text{ if } j=1,\\
    T(s_j,t_j-1) + b_j & \text{ if } j>1.
    \end{cases}
    \]
    \item Fill the rest of the $j$th column of $\theta_i$ so that consecutive entries differ by $1$.
\end{enumerate}
\end{definition}

\begin{example}
Let $D\in \E(\lambda/\mu)$ be the excited diagram shown in Figure~\ref{fig: ssyt min} (b). We apply the correspondence to obtain $T = \Phi(D)$. The first column of $\gamma_1(D)$ has $3$ broken diagonals, so $T(5,1) = 3$. the second column of $\gamma_1(D)$ has no broken diagonals, so $T(5,2)=3$, then the rest of the column of $\theta_1$ is filled with consecutive entries differing by $1$ to maintain column strictness. We continue the algorithm to obtain the final tableau $T$ of skew shape $\lm$.
\begin{figure}[t]
     \begin{subfigure}[normal]{0.4\textwidth}
     \centering
    \includegraphics[height=2.5cm]{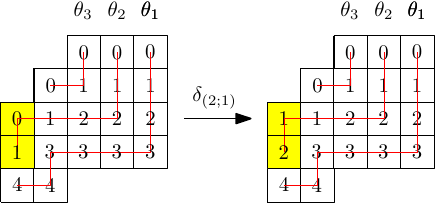}
    \caption{}
    \label{fig: ssyt min move}
    \end{subfigure}
    \quad
    \begin{subfigure}[normal]{0.5\textwidth}
    \centering
    \includegraphics[height=2.5cm]{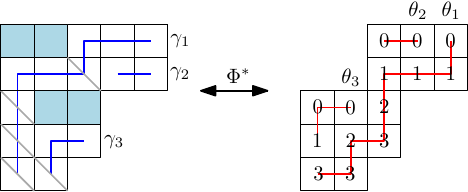}
    \caption{}
    \end{subfigure}
    \caption{(a) Excited move $\delta$ on $\SSYT_{\min}$ (b) Excited diagram to $\SSYT_{\min}$ }
    \label{fig: ssyt min}
\end{figure}
\end{example}

\begin{remark}
See Section~\ref{sec: HG bijection on min SSYT} for a description of $\Phi$ in terms of the Hillman--Grassl correspondence.
\end{remark}

The main result of this section is the fact that $\Phi$ is a bijection between $\ED(\lm)$ and $\SSYT_{\min}(\lm)$.

\bijectionEDandSSYTmin

The proof requires the following lemmas and notation. For the Lascoux--Pragacz decomposition of $\lm$ and the Kreiman paths $(\gamma_1,\ldots,\gamma_k)$ of an excited diagram $D$, we denote by $\col(\theta_i,n)$ and $\col(\gamma_i,n)$ the column on $\lm$ where the $n$th column of $\theta_i$ and $\gamma_i$ is, respectively. Note that $\lm$ will be assumed.

\begin{lemma} \label{lemma: base case}
We have that $\Phi([\mu])=T_0$.
\end{lemma}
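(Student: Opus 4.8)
The plan is to trace through Definition~\ref{def: phi* bijection} when the input is the initial excited diagram $D=[\mu]$, and show that each column of each strip $\theta_i$ gets filled exactly with the entries $0,1,\ldots$ prescribed by $T_0$. First I would record what the Kreiman paths $\gamma_i([\mu])$ and the broken diagonals $\Br([\mu])$ look like for the starting diagram: by definition $\Br([\mu])=\bigcup_{i=1}^{\ell(\lambda)-1}\dd_i(\mu)$ where $\dd_i(\mu)$ is the diagonal of $[\lambda/\mu]$ of content $\mu_i-i$, and by Proposition~\ref{prop:broken diagonals in gamma} these broken diagonals sit precisely on the up-steps (vertical steps and outside corners) of the unexcited Kreiman paths $\gamma_i$. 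So for the initial diagram, counting broken diagonals on the $j$th column of $\gamma_i$ is the same as counting up-steps of $\gamma_i$ in that column.

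Next I would compute $b_j$, the number of broken diagonals (equivalently up-steps of $\gamma_i$) in the $j$th column of $\gamma_i$, and compare it with the increments of $T_0$. Recall $T_0$ has $i$th column equal to $\{0,1,\dots,\lambda'_i-\mu'_i-1\}$, so along a fixed column of $\lambda/\mu$ the entry of $T_0$ at the bottom cell equals (column height $-1$), i.e. $\lambda'_j-\mu'_j-1$, and moving one column to the right the bottom entry of $T_0$ changes by exactly the change in column height. The claim to verify is that $b_j$ equals the number of cells of $\lambda/\mu$ that lie in column $\col(\gamma_i,j)$ but not in column $\col(\gamma_i,j-1)$ — that is, the "new" cells contributed by advancing one column along $\theta_i$. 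This is a direct consequence of the combinatorics of the Kreiman path: the up-steps of $\gamma_i$ in a given column are in bijection with the vertical extent added to the strip in that column, because $\gamma_i$ traces the lower boundary of the cutting-strip slice. Combining this with step (2) of Definition~\ref{def: phi* bijection}, the partial sums $b_1+b_2+\cdots+b_j$ telescope to exactly the bottom entry of $T_0$ in column $\col(\theta_i,j)$, and step (3) then forces the whole column of $\theta_i$ to be filled by consecutive integers ending at that value — which is exactly the corresponding column of $T_0$. Finally I would note that the entries of $\theta_1$ start at $0$ in its leftmost column (since that column of $\gamma_1$ has the same number of up-steps as the height of the column of $[\lambda/\mu]$ minus one, matching $T_0$), so the base value in step (2) with $j=1$ is correct as well; and since the strips $\theta_i$ tile $\lambda/\mu$, the tableau $T=\Phi([\mu])$ agrees with $T_0$ cell by cell.

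The main obstacle I expect is the bookkeeping in the previous paragraph: precisely matching "number of up-steps of $\gamma_i$ in column $j$" with "increase of column height of $\lambda/\mu$ between consecutive columns of $\theta_i$," using Lemma~\ref{lemma: gamma-theta} and Proposition~\ref{prop: cutting strip distance} to align the columns of $\gamma_i$ with those of $\theta_i$ correctly (they are offset by $\epsilon_i$, so one must be careful that "the $j$th column of $\gamma_i$" and "the $j$th column of $\theta_i$" refer to the geometric columns $\col(\gamma_i,j)$ and $\col(\theta_i,j)$, and that the broken-diagonal count is read off in the $\gamma$-column while the fill happens in the $\theta$-column). Once the indexing is pinned down, the equality $\Phi([\mu])=T_0$ is a routine telescoping argument.
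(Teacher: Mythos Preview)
Your overall strategy---identify broken diagonals with up-steps via Proposition~\ref{prop:broken diagonals in gamma}, compute the column counts $b_j$, and verify that Definition~\ref{def: phi* bijection} reproduces $T_0$---is exactly the paper's approach. There is, however, a genuine slip in your telescoping step. You write that ``the partial sums $b_1+b_2+\cdots+b_j$ telescope to exactly the bottom entry of $T_0$,'' but step~(2) of Definition~\ref{def: phi* bijection} sets $T(s_j,t_j)=T(s_j,t_j-1)+b_j$, where $(s_j,t_j-1)$ is the cell \emph{to the left in the same row}, not the bottom cell $(s_{j-1},t_{j-1})$ of the previous column segment. Since the row $s_j$ of the bottom cell moves as $j$ increases (for $\theta_1$, $s_j=\lambda'_j$), there is a row correction: one gets
\[
T(s_j,t_j)=T(s_{j-1},t_{j-1})-(s_{j-1}-s_j)+b_j,
\]
so the bottom entries do \emph{not} accumulate as $b_1+\cdots+b_j$. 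Concretely, for $\theta_1$ the paper computes $b_1=\lambda'_1-\mu'_1-1$ and $b_j=\mu'_{j-1}-\mu'_j$ for $j>1$; the raw sum $b_1+\cdots+b_j=\lambda'_1-\mu'_j-1$, which is not the desired $\lambda'_j-\mu'_j-1$ unless $\lambda'_j=\lambda'_1$. Your later identification of $b_j$ with ``the increase of column height of $\lambda/\mu$'' is a related confusion: $b_j=\mu'_{j-1}-\mu'_j$ measures the drop in $\mu$, not the change in $\lambda'_j-\mu'_j$.

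This is easy to repair once noticed, and the paper's proof does so by computing $b_j$ explicitly and then checking the recursion with the row correction directly. The paper also avoids your anticipated bookkeeping with $\epsilon_i$ altogether by arguing iteratively: it handles $\theta_1$ and $\gamma_1$ first (where $\gamma_1$ wraps around $\mu$, so the up-step count is transparent), observes that $(\lambda/\mu)\setminus\theta_1$ and $(\lambda/\mu)\setminus\gamma_1$ give the same smaller skew shape after a diagonal slide, and recurses. This strip-by-strip reduction is cleaner than aligning all $\gamma_i$ and $\theta_i$ simultaneously via Lemma~\ref{lemma: gamma-theta}.
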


\begin{proof}
 Let $T=\Phi(\mu)$. It suffices to show that $T(i,j)=i-\mu'_j-1$. We show this by iterating on the number $k$ of paths in the Kreiman and Lascoux--Pragacz decomposition. Consider the first paths $\gamma_1$ and $\theta_1$ of the shape $\lambda/\mu$. By Proposition~\ref{prop:broken diagonals in gamma}, the broken diagonals in the excited diagram $[\mu]$ are on the vertical steps and right up corners of $\gamma_i$.  Note that the Kreiman path $\gamma_1$ of the shape $\lambda/\mu$ traces around $[\mu]$. 

The bottom-most value in the $j$th column of $T(\theta_1)$ is $\lambda'_i-\mu'_i-1$. Indeed, by the description of $\Phi$ in Definition~\ref{def: phi* bijection}, the number $b_j$ of broken diagonals on the $j$th column of $\gamma_1$ is $b_1=\lambda'_1-\mu'_1-1$ and $b_j=\mu'_{j-1}-\mu'_j$, for $j>1$. The values on the bottom-most elements determine the rest of $T(\theta_1)$, since the values of the $j$th column of $\theta_1$ differ by $1$. Note that for $(i,j)$ in $\theta_1$, we have that $T(i,j)=i-\mu'_j-1$.

Next, consider the shape $\nu:= (\lambda/\mu) \setminus \theta_1$. Note that $\nu$ is equivalent to $(\lambda/\mu) \setminus \gamma_1$ and sliding up the rest of the Kreiman paths diagonally (see \cite[Lemma 3.8]{MPP2}). Then similarly, the bottom-most value in the $i$th column of $T(\theta_2)$ is $\nu'_i - \mu'_i -1$, which determine the rest of $T(\theta_2)$. Continuing this pattern, we obtain that $T(i,j)=i-\mu'_j-1$ as desired.
\end{proof}

\begin{lemma}\label{lemma: well-defined and inj}
The map $\Phi$ is a well-defined injective map from $\E(\lambda/\mu)$ to $\SSYT(\lambda/\mu)$.
\end{lemma}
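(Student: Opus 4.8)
The goal is to show $\Phi$ is a well-defined injective map $\E(\lambda/\mu)\to\SSYT(\lambda/\mu)$. I would break this into three parts: (1) the output $T=\Phi(D)$ is a well-defined array of shape $\lambda/\mu$; (2) $T$ is a genuine SSYT, i.e., weakly increasing in rows and strictly increasing in columns; (3) the map is injective.

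\textbf{Well-definedness.} For part (1) the only thing to check is consistency: each cell of $[\lambda/\mu]$ lies in exactly one strip $\theta_i$ (this is the defining property of the Lascoux--Pragacz decomposition), so its value is assigned exactly once by Step 2 or Step 3 of Definition~\ref{def: phi* bijection}. Within a fixed column of $\theta_i$ the values are forced to differ by $1$ from the bottom-most entry, so there is no ambiguity. One should also note the columns of $\gamma_i(D)$ and of $\theta_i$ are in bijection with the same set of columns of $[\lambda]$ by Proposition~\ref{prop: corr gammas and thetas} and the subsequent corollary (same number of columns), so the phrase ``the $j$th column of $\gamma_i(D)$ and $\theta_i$'' makes sense.

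\textbf{SSYT property.} For part (2), column-strictness is immediate from Step 3, since consecutive entries in a column of any $\theta_i$ differ by exactly $1$, and distinct strips occupying the same column of $[\lambda]$ are stacked so their segments don't overlap — here I would invoke Lemma~\ref{lemma: base case} for the base diagram $[\mu]$ (where $T_0(i,j)=i-\mu'_j-1$ is manifestly column-strict) and argue that along the whole column of $[\lambda]$ the values are exactly $0,1,\dots,\lambda'_j-\mu'_j-1$ read top to bottom, because the broken-diagonal counts $b_j$ telescope correctly. The harder direction is row-weak-increase: I must show $T(i,j)\le T(i,j+1)$ for adjacent cells. Here the key is the quantity $b_{j+1}\ge 0$ together with the cumulative definition $T(s_{j+1},t_{j+1})=T(s_{j+1},t_{j+1}-1)+b_{j+1}$ on the bottom row of a strip, combined with a careful comparison when the two cells $(i,j)$ and $(i,j+1)$ lie in different strips $\theta_a$, $\theta_{a'}$ — I'd use the non-intersecting-paths structure (the $\gamma_i(D)$ are non-intersecting, Proposition in \S2.4) and Proposition~\ref{prop:broken diagonals in gamma} identifying broken diagonals with up-steps of the $\gamma_i(D)$, so that the number of broken diagonals weakly to the left of a given column, summed over the relevant paths, controls the entries. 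I expect \emph{this row-monotonicity across strip boundaries} to be the main obstacle, requiring one to translate ``number of up-steps in columns $\le j$ of $\gamma_i(D)$'' into a statement about the contents/rows of the cells of $\theta_i$, via Proposition~\ref{prop: cutting strip distance} and Lemma~\ref{lemma: gamma-theta} relating the column offsets by $\epsilon_i$.

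\textbf{Injectivity.} For part (3), I would construct a left inverse on the nose. Given $T$ in the image, one recovers, for each strip $\theta_i$ and each of its columns $j$, the increment $b_j$ as the difference $T(s_j,t_j)-T(s_j,t_j-1)$ (with $b_1$ the bottom entry of the first column). This sequence of column-increments $(b_j)$ for each path is exactly the sequence of numbers of up-steps of $\gamma_i(D)$ in each column; since a lattice path inside $[\lambda]$ from a fixed starting diagonal is determined by its sequence of up-step counts per column (the horizontal extent and the content of the endpoints being fixed by Proposition~\ref{prop: corr gammas and thetas}), this recovers each $\gamma_i(D)$, hence $\Br(D)$ by Proposition~\ref{prop:broken diagonals in gamma}, hence the tuple of non-intersecting paths, hence $D$ itself since excited diagrams are determined by the supports of their lattice-path tuples. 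Thus $\Phi$ is injective. (Surjectivity onto $\SSYT_{\min}(\lambda/\mu)$ is not claimed here and would follow from the commutation with the moves $\beta,\delta$ proved in Lemma~\ref{lemma: beta and delta}.)
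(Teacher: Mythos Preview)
Your injectivity argument (part 3) is essentially the paper's: different excited diagrams give Kreiman tuples that differ in some $\gamma_i$, and since the endpoints are fixed, some column of $\gamma_i$ has a different number of broken diagonals, so the corresponding $b_j$ and hence the entries of $\Phi(D)$ in $\theta_i$ differ. Framing this as ``construct a left inverse from the increments $b_j$'' is fine and equivalent.

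Where you diverge from the paper---and where there is a genuine gap---is the SSYT check. You propose a direct argument, whereas the paper proceeds by \emph{induction on the number of excited moves}: the base case is Lemma~\ref{lemma: base case}, and the inductive step analyzes a single ladder move on $\gamma_i(D)$ and verifies that the resulting increase in one column segment of $\theta_i$ preserves semistandardness at the interface with the adjacent strip $\theta_j$ below it. Your direct argument for column-strictness asserts that ``along the whole column of $[\lambda]$ the values are exactly $0,1,\dots,\lambda'_j-\mu'_j-1$''; this is true for $T_0$ but \emph{false for general} $D$. For instance, in the $4321/21$ example in the paper the last minimal tableau has column~1 equal to $(2,3)$, not $(0,1)$. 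So the column-strictness between two strips in the same column does not reduce to a telescoping identity, and you have not supplied an argument for it.

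You also flag row-weak-increase as ``the harder direction,'' but in fact the delicate point is precisely the column-strict inequality $T'(a+1,b)>T'(a,b)$ where $(a,b)$ is the bottom cell of a $\theta_i$-segment and $(a+1,b)$ lies in the adjacent strip $\theta_j$; this is what the paper's inductive step is devoted to (using Lemma~\ref{lemma: gamma-theta} to show that the $m$th column of $\gamma_j$ sits immediately left of the $n$th column of $\gamma_i$, and tracking a prior excited move that already pushed $T(a+1,b)$ strictly above $T(a,b)+1$). A direct, non-inductive proof of this inequality would require an independent argument comparing cumulative broken-diagonal counts of $\gamma_i(D)$ and $\gamma_j(D)$ up to the relevant columns; you have not given one.
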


\begin{proof}

First, we show that for any $D\in \E(\lambda/\mu)$, the tableau $\Phi(D)$ of shape $\lm$ is semistandard. We use induction on the number of excited moves.

The base case follows by Lemma~\ref{lemma: base case} and the fact that $T_0$ is semistandard. Next, assume that $T:=\Phi(D)$ is semistandard, and $D'=\beta_u(D)$ for some active cell $u=(x,y)$.  We show that $T':=\Phi(D')$ is semistandard.

Let $(\gamma_1,\ldots,\gamma_k)$ be the Kreiman decomposition of $D$, and $\gamma_i$ be the path involved in the corresponding ladder move to $\beta_u$. By Definition~\ref{def: phi* bijection}, $T'$ is semistandard within the path $\theta_i$ and some of its entries were increased. If $i=1$, then we are done since $\theta_1$ is the outer rim of $\lambda$, its entries increased and satisfy the SSYT inequalities with respect to its neighbors. If $i>1$, we show that $T'$ is semistandard between adjacent paths $\theta_i$ and $\theta_j$ for $i>j$, for $i<j$ this follows analogously as for $i=1$.   By construction of the Kreiman paths and Lascoux--Pragacz paths, since $i>j$ then  $\gamma_{i}$ is to the right and below of $\gamma_{j}$ and  $\theta_{i}$ is to the left and above of $\theta_{j}$. Since $\theta_j$ is adjacent to $\theta_i$, then $\epsilon_i-\epsilon_j =1$, and by Proposition~\ref{prop: cutting strip distance}, $\gamma_j$ is also adjacent to $\gamma_i$.

Suppose $\beta_u$ acted on the $n$th column of $\gamma_i$, that is, $\beta_u(D)$ increased by $1$ the size of broken diagonals on the $n$th column of $\gamma_i$. By the construction of $\Phi$, this corresponds to the $n$th column segment of $\theta_i$ increasing by $1$. Let $(a,b)$ be the bottom most cell in the $n$th column segment of $\theta_i$ and $(a+1,b)$ be the cell below in the $m$th column of $\theta_j$. We need to show that $T'(a+1,b) > T'(a,b)$.

In order to do this, we need to track the cell $(a+1,b)$ in the excited diagram. We know that if $(a,b)$ is in the $n$th column of $\theta_i$, then $(a+1,b)$ is in the $m$th column of $\theta_{j}$ where $m\geq n$ since $\theta_{i}$ is to the left and above of $\theta_j$. By the construction of $\Phi$, the $m$th column of $\theta_j$ corresponds to the $m$th column of $\gamma_j$.

We show that the $m$th column of $\gamma_j$ is directly to the left of the $n$th column of $\gamma_i$. Let $g_i$ and $t_i$ be the starting column of $\gamma_i$ and $\theta_i$ respectively. Then $\col(\gamma_i,n) = g_i+n -1$ and $\col(\theta_i,n) = t_i+n-1$. By Lemma~\ref{lemma: gamma-theta} we have that  $g_i-t_i = \epsilon_i$, and so $\col(\theta_i,n) = \col(\gamma_i,n) - \epsilon_i$. Similarly $\col(\theta_j,m) = \col(\gamma_j,m) - \epsilon_j$. Note that the $m$th column of $\theta_j$ is on the same column of $\lm$ as is the $n$th column of $\theta_i$, i.e. $\col(\theta_i,n) = \col(\theta_j,m)$. Then $\col(\gamma_i,n) - \col(\gamma_j,m) = \epsilon_i - \epsilon_j = 1$. This shows that the $m$th column of $\gamma_j$ is directly to the left of $n$th column of $\gamma_i$ with respect to $\lm$.

We now need to analyze the previous excited moves on the $m$th column of $\gamma_j$. Let $u$ be the active cell of the excited move $\beta_u(D)$. Given a sequence of excited move $\beta^{(1)},\cdots ,\beta^{(\ell)}$ such that $\beta^{(\ell)}\circ \cdots \circ \beta^{(1)} ([\mu]) = D$, let $\beta^{(r)}=\beta_{(x-1,y-1)}$ be the excited move that moved $(x-1,y-1)$ to $u=(x,y)$. Such an index $r$ exists, since $u=(x,y)$ in $D$ is not in $[\mu]$, i.e. $u\neq \mu(u)$. Let $D'':= \beta^{(r-1)}\circ \cdots \circ \beta^{(1)}([\mu])$ and $T'' = \Phi(D'')$. Note that by the choice of $r$, the values of cells $(a,b)$ and $(a+1,b)$ remain the same from $\Phi(\beta^{(r)}(D^{''}))$ to $T$. So it is enough to consider the values of these cells in $T^{''}$, $T$, and $T'$, respectively.  By the induction hypothesis, $T''$ and $T$ are semistandard, so 
\[
T''(a+1,b)> T''(a,b), \qquad T(a+1,b)> T(a,b).
\]
Since the excited move $\beta_x$ on $D''$ increased by $1$ the number of the broken diagonals on the $m$th column of $\gamma_j$ and did not change the number of the broken diagonals on the $n$th column of $\gamma_j$. Thus by  definition of $\Phi$, we have that 
\[
T(a+1,b) = T''(a+1,b)+1, \qquad T(a,b) = T''(a,b).
\]
This implies that $T(a+1,b)>T(a,b)+1$. Similarly, the same analysis on the excited move $\beta_u$ on $D$ implies that 
\[
T'(a,b) = T(a,b) + 1, \qquad T'(a+1,b) = T(a+1,b).
\]
Therefore we have $T'(a+1,b) > T'(a,b)$, as desired. This shows that $\Phi$ is well-defined.

Next, we show that $\Phi$ is injective. Note that for $D \neq D'$ in $\E(\lambda/\mu)$, their corresponding Kreiman paths $(
\gamma_1,\ldots,\gamma_k)$ and $(\gamma'_1,\ldots,\gamma'_k)$ in $\NIP(\lm)$ differ.  Thus there is an index $i$ where $\gamma_i\neq \gamma'_i$. Since the endpoints of these paths are the same there exists a column where these paths have a different number of broken diagonals. Thus the tableaux $\Phi(D)$ and $\Phi(D')$ are different too.
\end{proof}

\begin{lemma}[Commutation $\beta$ and $\delta$]\label{lemma: beta and delta}
Given an excited diagram $D$ in $\mathcal{E}(\lambda/\mu)$ and an active cell $u=(a,b)$ of $D$ we have that
\[
\Phi (\beta_u(D)) = \delta_{f(u)}(\Phi(D)),
\]
where $f(a,b)=(i;\mu(a,b))$, $i$ is the index of the Kreiman path $\gamma_i(D)$ modified by $\beta(a,b)$, and $\mu(a,b)$ is the column of the original cell in $\mu$ where $(a,b)$ came from.
\end{lemma}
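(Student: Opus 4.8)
The plan is to trace exactly which entries of $\Phi(D)$ change when we pass from $D$ to $\beta_u(D)$, and to verify that this change is precisely the excited move $\delta_{(i;\mu(a,b))}$ on $\Phi(D)$. First I would recall that by Proposition~\ref{prop:broken diagonals in gamma} the broken diagonals $\Br(D)$ sit on the up-steps (and outer corners) of the Kreiman paths $(\gamma_1(D),\dots,\gamma_k(D))$, and that an excited move $\beta_u$ corresponds to a ladder move on exactly one path $\gamma_i(D)$, shifting one up-step from some column $c$ to the adjacent column $c+1$ of $[\lambda]$ (in the notation of Figure~\ref{fig:reverse excited move} the ladder move on $\gamma_i$). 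Consequently, comparing the column-counts $b_j$ used in Definition~\ref{def: phi* bijection}: exactly one column of $\gamma_i(D)$ loses a broken diagonal and the next column to its right gains one; all other columns of $\gamma_i(D)$ and all columns of every $\gamma_{i'}(D)$ with $i'\neq i$ are unchanged. The first step of the proof is therefore a bookkeeping lemma: if the ladder move transfers a broken diagonal from column $\col(\gamma_i,n)$ to $\col(\gamma_i,n+1)$, then in $T=\Phi(D)$ the effect is that $b_n$ decreases by $1$ and $b_{n+1}$ increases by $1$ while the running partial sums $T(s_j,t_j)$ of Definition~\ref{def: phi* bijection}(2) for $j\leq n-1$ and $j\geq n+1$ are unchanged, and only the $n$th column segment of $\theta_i$ has each of its entries increased by $1$.

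Next I would identify the column $n$ with $\mu(a,b)$. Here I would invoke the same correspondence between the $n$th column of $\gamma_i$ and the $n$th column segment of $\theta_i$ established in the proof of Lemma~\ref{lemma: well-defined and inj} via Lemma~\ref{lemma: gamma-theta} ($\col(\theta_i,n)=\col(\gamma_i,n)-\epsilon_i$), together with the definition of $\mu(a,b)$ as the column of $[\mu]$ from which $(a,b)$ descended; tracking this cell back along the sequence of excited moves shows it lies on column $\mu(a,b)$ of $\gamma_i$ at the initial stage, i.e.\ $n=\mu(a,b)$. So the candidate move on $T$ is $\delta_{(i;\mu(a,b))}$: it increments the $n$th column segment $\theta_i(\col(\theta_i,n))$ of $\theta_i$ by $1$. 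It remains to check that $\delta_{(i;n)}$ is actually applicable to $T$, i.e.\ that the top cell of this column segment is active in the sense of Definition~\ref{def:delta move}.

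For activeness I must verify conditions (i) and (ii) of Definition~\ref{def:delta move}. Condition (i), $T(i,j)<ht_{\theta_i}(i)$ for the top cell of the segment, should follow because the number of broken diagonals stacked above a given row along $\gamma_i$ is bounded by the height statistic; more precisely, the value $T$ assigns to the top of a column segment of $\theta_i$ equals (via the partial-sum description) the number of up-steps of $\gamma_i$ strictly below that diagonal, and the fact that $\beta_u$ is a \emph{legal} excited move — i.e.\ the cells $(a+1,b),(a,b+1),(a+1,b+1)$ lie in $[\lambda]\setminus D$ — translates exactly into the path $\gamma_i$ having room to perform the ladder move, which is the inequality in (i). Condition (ii) is the semistandardness condition: $T(i,j)<T(i,j+1)$ at the top and $T(i',j)<T(i'+1,j)-1$ at the bottom of the segment. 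This is precisely what was proved inside Lemma~\ref{lemma: well-defined and inj} — there we showed $T'(a+1,b)>T'(a,b)$ after the increment, and the argument given there (tracking the earlier excited move $\beta^{(r)}$ that created $u$, which forced the strict gap $T(a+1,b)>T(a,b)+1$ before the move) delivers exactly the "$-1$" slack needed. So I would extract that computation and cite it. The main obstacle I anticipate is the indexing/column-alignment bookkeeping: one must be careful that "the $n$th column of $\gamma_i$" and "the $n$th column segment of $\theta_i$" correspond under $\Phi$ as asserted, and that $\mu(a,b)$ really equals this common index $n$ rather than being off by $\epsilon_i$ or shifted by the wrapping of the outer strip $\theta_1$; once that alignment is pinned down (using Lemma~\ref{lemma: gamma-theta} and Proposition~\ref{prop: cutting strip distance}), the rest is the local analysis already carried out in Lemma~\ref{lemma: well-defined and inj}. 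Finally, since both $\beta$ on $\E(\lambda/\mu)$ and $\delta$ on $\SSYT_{\min}(\lambda/\mu)$ start from corresponding base objects (Lemma~\ref{lemma: base case}: $\Phi([\mu])=T_0$), the commutation relation just proved, applied inductively, will also be what is needed to finish Theorem~\ref{thm: bijection ED and SSYTmin}.
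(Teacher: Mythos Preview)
Your plan is essentially the paper's, with the same two steps: track the broken-diagonal shift under the ladder move, then identify the affected column segment of $\theta_i$ with $\mu(a,b)$. Two corrections are needed, both in the bookkeeping you yourself flagged.

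First, the ladder-move direction is reversed in your write-up. When $\beta_{(a,b)}$ fires, the up-step of $\gamma_i$ moves from column $b{+}1$ to column $b$ of $[\lambda]$ (the broken diagonal shifts from $(a{+}1,b{+}1)$ to $(a{+}1,b)$, cf.\ Definition~\ref{def:broken diagonals}). In internal $\gamma_i$-indexing this is a shift from column $n{+}1$ to column $n$, so $b_n$ \emph{increases} and $b_{n+1}$ \emph{decreases}; the cumulative sums in Definition~\ref{def: phi* bijection}(2) then give exactly the $n$th column segment of $\theta_i$ increasing by $1$. Your conclusion is correct but the intermediate signs are flipped and, as written, inconsistent with it.

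Second, your claim ``$n=\mu(a,b)$'' conflates the internal column index of $\gamma_i$ with an absolute column of $[\lambda]$; these differ by a shift. The paper's computation is $\col_{\lm}(\theta_i,n)=t_i+n-1=b-\epsilon_i$ via Lemma~\ref{lemma: gamma-theta}, combined with the identity $b-\mu(a,b)=\epsilon_i$: the left side is the number of times the cell has been excited, and this equals the number of Kreiman paths crossing its diagonal on the NW side, which is $\epsilon_i$. Hence $\col_{\lm}(\theta_i,n)=\mu(a,b)$. This is precisely the off-by-$\epsilon_i$ alignment you anticipated; it just needs to be made explicit rather than asserted via ``tracking the cell back''.

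Your additional verification that $\delta_{(i;\mu(a,b))}$ is actually applicable (conditions (i)--(ii) of Definition~\ref{def:delta move}) goes beyond what the paper does: the paper only establishes the equality of the resulting tableaux and implicitly defers semistandardness to Lemma~\ref{lemma: well-defined and inj}. That extra care is fine and makes the argument more self-contained, but is not strictly needed for the lemma as stated.
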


\begin{proof}
The excited move $\beta_{(a,b)}$ corresponds to a ladder move on $\gamma_i(D)\to \gamma_i(D')$, for some $i$ and a broken diagonal shift from $(a+1, b+1)$ to $(a+1,b)$. Let $g_i$ and $t_i$ be the starting columns of $\gamma_i(D)$ and $\theta_i$ respectively. Suppose $(a+1,b)$ is in the $j$th column of the path $\gamma_i(D)$. Then the excited move $\beta_{(a,b)}$ shifts a broken diagonal to the $j$th column of $\gamma_i(D')$, where $j = b-g_i+1$. The $j$th column of $\gamma_i(D)$ corresponds to the $j$th column of $\theta_i$, which in turn corresponds  to  column $t_i+j-1$ of shape $\lambda/\mu$. By Proposition~\ref{lemma: gamma-theta} we know that $g_i-t_i = \epsilon_i$. Thus we have that 
\[
\col_{\lm}(\theta_i, j) = t_i + j -1 = (g_i-\epsilon_i) + (b-g_i + 1) -1 = b- \epsilon_i.
\]
Next, number of times the cell $(a,b)$ has been excited is $b-\mu(a,b)$. This quantity is also equal to  the number of paths $\gamma_j(D)$ that cross the diagonal of $(a,b)$ NW from it and thus this number is also $\epsilon_i$. Therefore we have that $\col_{\lm}(\theta_i, j)=\mu(a,b)$. This shows that $\Phi(\beta_u(D))$ has increments of 1 on $\theta_i$ at column $\mu(a,b)$ compared to $\Phi(D)$. This is equivalent to what the move $\delta_{(i;\mu(a,b))}$ does on $\Phi(D)$. Thus $\Phi(\beta_u(D)) = \delta_{(i;\mu(a,b))}(\Phi(D))$, as desired.
\end{proof}

\begin{figure}
    \centering
    \includegraphics[scale=0.8]{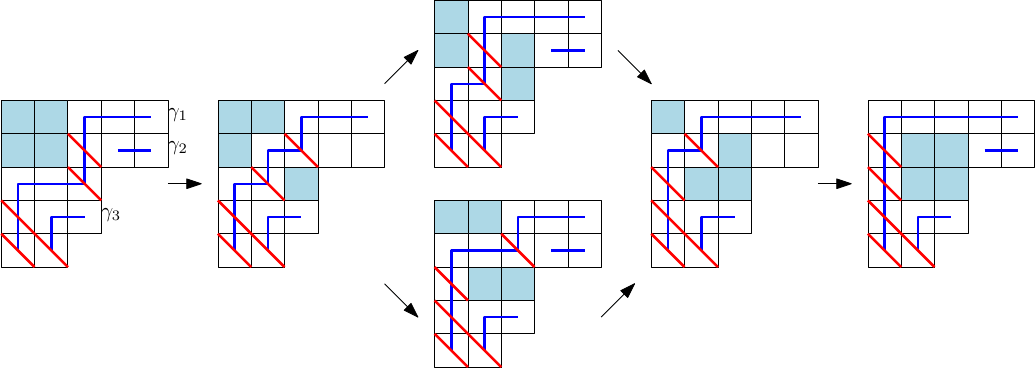}
    \caption{The excited diagrams of shape $55332/22$, their corresponding non-intersecting paths (in \textcolor{blue}{blue}) and broken diagonals (in \textcolor{red}{red}).}
    \label{fig:excited diagrams}
\end{figure}

\begin{figure}
    \centering
    \includegraphics[scale=0.8]{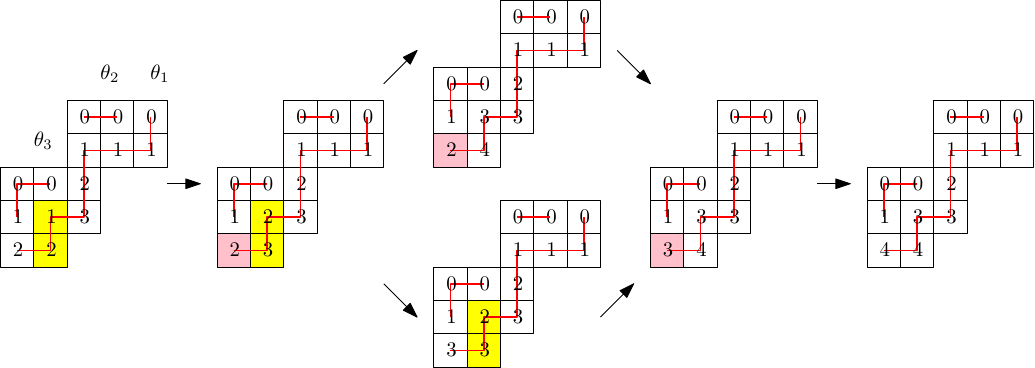}
    \caption{The corresponding minimal SSYT of shape $55332/22$.}
    \label{fig: min SSYT example}
\end{figure}

We are now ready to give the proof of Theorem~\ref{thm: bijection ED and SSYTmin}.

\begin{proof}[Proof of Theorem~\ref{thm: bijection ED and SSYTmin}]
By Lemma~\ref{lemma: beta and delta}, $\Phi$ intertwines the excited moves $\beta$ and $\delta$, as desired. 

Next, by Lemma~\ref{lemma: well-defined and inj}, $\Phi$ is injective. Note that $\Phi([\mu])=T_0$. Given any excited diagram $D$ in $\ED(\lm)$, there exists a sequence $\beta_1,\ldots,\beta_m$ of excited moves such that $D=\beta_m \circ \cdots \circ \beta_1([\mu])$. Iterating Lemma~\ref{lemma: beta and delta} gives that 
\[
\Phi(D) = \delta_m \circ \cdots \circ \delta_1(T_0).
\]
Thus $\Phi(D)$ is in $\SSYT_{\min}(\lm)$ and so $\Phi(\ED(\lm))\subseteq \SSYT_{\min}(\lm)$. It remains to show that $\Phi(D)$ is surjective. Given $T$ in $\SSYT_{\min}(\lm)$, there exists a sequence of $\delta_1,\dots, \delta_k$ of excited moves such that $\delta_k \circ \cdots \circ \delta_1 (T_0) = T$. Again, iterating Lemma~\ref{lemma: beta and delta} we have that for $D'=\beta_k \circ \cdots \circ \beta_1([\mu])$. From this one obtains $\Phi(D')=T$, as desired.  
\end{proof}

\subsection{Non-recursive Characterization of \texorpdfstring{$\SSYT_{\min}(\lambda/\mu)$}{SSYT(lambda/mu)}}

In this section, we give a different non-recursive characterization of the set $\SSYT_{\min}(\lambda/\mu)$ in terms of the height function from Definition~\ref{def:height}. This new characterization will be related to the flagged tableaux $\mathcal{SF}(\lm)$ of the Okounkov--Olshanski formula in Section~\ref{sec: comparison OOF}. Intuitively, the characterization states that a SSYT $T$ of shape $\lm$ is minimal if the values along a strip $\theta_r$ and row $i$ are bounded by the distance from the top in $\theta_k$, and entries in a column of $\theta_r$ differ by $1$.

\begin{theorem} \label{thm: direct char min SSYT}
Given a shape $\lm$, a SSYT $T$ of shape $\lm$ is in $\SSYT_{\min}(\lm)$ if and only if 
\begin{enumerate}[(i)]
    \item For each $(i_r,j_r)$ in $\theta_r$, $T (i_r,j_r) \leq ht_{\theta_r}(i_r)$    \item For any $(i_r,j_r)$ and $(i_r+1,j_r)$ in $\theta_r$, $T (i_r+1,j_r) - T (i_r,j_r) = 1$.
\end{enumerate}
\end{theorem}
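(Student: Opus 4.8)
The plan is to show both implications by relating the two conditions (i)--(ii) to the recursive definition via the excited moves $\delta$, using the characterization of $\SSYT_{\min}(\lm)$ as the image of $T_0$ under sequences of $\delta$-moves together with the bijection $\Phi$ of Theorem~\ref{thm: bijection ED and SSYTmin}.

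First I would prove the ``only if'' direction by induction on the number of excited moves $\delta$ needed to obtain $T$ from $T_0$. For the base case, $T_0$ has $i$-th column equal to $\{0,1,\ldots,\lambda'_i-\mu'_i-1\}$, so condition (ii) is immediate (consecutive entries in any column of any $\theta_r$ differ by $1$), and condition (i) follows because the bottom cell of the $j$-th column segment of $\theta_r$ has value $\lambda'_j-\mu'_j-1$, which one checks equals $ht_{\theta_r}$ of that row minus the height of the column segment, hence the top cell satisfies $T(i_r,j_r)\le ht_{\theta_r}(i_r)$; I would make this precise using that $\theta_1$ is the outer border strip and the inductive structure of the Lascoux--Pragacz decomposition (as in the proof of Lemma~\ref{lemma: base case}). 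For the inductive step, suppose $T=\delta_{(k;j)}(T')$ where $T'$ already satisfies (i)--(ii). The move $\delta$ only increases the entries of one column segment by $1$ and leaves the rest fixed, so (ii) is preserved (the increment is uniform within the segment, and the segment is a full column segment of $\theta_k$). Condition (i) for the cells in the modified segment of $\theta_k$ is exactly the active-cell requirement (i) of Definition~\ref{def:delta move}: the top cell $(i,j)$ of the segment had $T'(i,j)<ht_{\theta_k}(i)$, so after incrementing, $T(i,j)=T'(i,j)+1\le ht_{\theta_k}(i)$; for the lower cells in the segment this follows from (ii) since $ht_{\theta_k}$ increases by $1$ per row. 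Condition (i) for cells in other strips is unaffected since their entries are unchanged.

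For the ``if'' direction, suppose $T$ is an SSYT of shape $\lm$ satisfying (i) and (ii); I want to show $T\in\SSYT_{\min}(\lm)$. The idea is a greedy/descent argument: if $T\neq T_0$, I will produce a reverse excited move, i.e.\ find a column segment $\theta_k(j)$ whose entries I can decrease by $1$ so that the result still satisfies (i)--(ii), is still an SSYT, and has strictly smaller total entry sum; iterating must terminate at a tableau satisfying (i)--(ii) with minimal total size, which one then identifies with $T_0$ (since (ii) forces each column of $\theta_r$ to be an arithmetic progression with step $1$, and minimality of the sum together with column-strictness across adjacent strips pins down the bottom values to be $\lambda'_j-\mu'_j-1$, recovering $T_0$). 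Concretely, take a column segment $\theta_k(j)$ whose entries are not ``forced low'' — formally, pick one where decreasing is legal: this requires that the top cell $(i,j)$ has $T(i,j)>0$ if it is on the boundary contributing to $T_0$, and that decreasing preserves column-strictness with the cell above in the adjacent strip $\theta_{k'}$ (which holds by a counting argument analogous to Lemma~\ref{lemma: well-defined and inj}, using that across adjacent strips the entries differ by more than $1$ precisely when the corresponding cell has been ``excited''). I would show such a segment always exists when $T\ne T_0$ by choosing, among all cells $(i,j)$ with $T(i,j)$ exceeding its $T_0$-value, one that is minimal in a suitable order (e.g.\ leftmost, then bottommost), and checking the legality conditions are met there.

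The main obstacle I anticipate is the ``if'' direction, specifically verifying that a legal reverse move always exists and that decreasing it preserves semistandardness \emph{across adjacent strips} $\theta_k$ and $\theta_{k'}$ — the same subtlety that made Lemma~\ref{lemma: well-defined and inj} delicate (the gap between $T(a+1,b)$ and $T(a,b)$ can be $2$ rather than $1$, and one must track why). I would handle this by porting the $\epsilon_i$-bookkeeping from the proof of Lemma~\ref{lemma: beta and delta}: the column $\col_{\lm}(\theta_k,j)$ equals $\mu(u)$ for the relevant cell $u$, and the discrepancy between adjacent-strip entries is governed by $\epsilon_k-\epsilon_{k'}=1$, so the parity/gap structure is exactly what is needed. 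An alternative, possibly cleaner route that sidesteps this is to argue purely via $\Phi$: show directly that a tableau satisfying (i)--(ii) lies in the image $\Phi(\ED(\lm))$ by reconstructing a candidate excited diagram $D$ from $T$ (reading off, for each column of each $\theta_r$, the increment in the bottom-cell value to recover the broken-diagonal counts $b_j$, hence the Kreiman paths $\gamma_i$), then invoking that any tuple of non-intersecting Kreiman paths in $\NIP(\lm)$ comes from an excited diagram. I would likely present the $\Phi$-based argument as the cleaner proof and remark that the direct descent argument also works.
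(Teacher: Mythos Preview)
Your approach matches the paper's. The ``only if'' direction is the same induction on $\delta$-moves: check $T_0$ satisfies (i)--(ii), then check that a single $\delta$ preserves both conditions (condition (i) at the top of the incremented segment is exactly the active-cell requirement, and (ii) is preserved because the increment is uniform on the segment).

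For the ``if'' direction the paper also runs a greedy descent to $T_0$ via reverse $\delta$-moves, but with a specific rule that you should use in place of your tentative ``leftmost, then bottommost'': set $\overline{T}:=T-T_0$, choose a strip $\theta_k$ of \emph{maximal} diagonal distance $\epsilon_k$ on which $\overline{T}$ is not identically zero, then the \emph{leftmost} column segment $\theta_k(c)$ with $\overline{T}(\theta_k(c))>0$, and subtract $1$ there; iterate until $\overline{T}=0$. The reason this order is the right one is that, reversed into forward $\delta$-moves starting at $T_0$, it increments outer strips before inner ones --- exactly what condition (ii) of Definition~\ref{def:delta move} forces, since the cell just below a column segment of $\theta_k$ lies in a strip of strictly smaller $\epsilon$ and must already have been raised before $\theta_k(c)$ becomes active. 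Your ``leftmost, then bottommost'' order does not in general produce a legal reverse move. The paper is admittedly terse on verifying that each intermediate tableau remains an SSYT satisfying (i)--(ii), so your instinct to write out the cross-strip semistandardness check (using the $\epsilon_k-\epsilon_{k'}=1$ bookkeeping from Lemma~\ref{lemma: beta and delta}) is well placed.

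Your alternative route via $\Phi$ --- reading off the increments $\overline{T}(\theta_r(j))$ to recover broken-diagonal counts, reconstructing the Kreiman paths, and invoking that every such tuple comes from an excited diagram --- is not what the paper does. It is a genuinely different and arguably cleaner argument, trading the local descent verification for a global check that the recovered paths are non-intersecting and stay in $[\lambda]$; however, it relies on Theorem~\ref{thm: bijection ED and SSYTmin}, whereas the descent argument is self-contained.
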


In other words, $\SSYT_{\min}(\lambda/\mu)$ consists of SSYT of shape $\lambda/\mu$ where the values along each path $\theta_r$ are bounded by the height $ht_{\theta_r}$ and the values along entries in the same columns of $\theta_r$ differ by one. 

\begin{example}
For $\lambda/\mu = 4321/21$ the SSYT of this shape satisfying Conditions~(i),(ii) of Theorem~\ref{thm: direct char min SSYT} are  
\[
\ytableausetup{smalltableaux}
\ytableaushort{\none\none00,\none01,01,1}* {4,3,2,1}
* [*(cyan)]{2,1,0,0}  \qquad \ytableaushort{\none\none00,\none11,02,1}* {4,3,2,1}
* [*(cyan)]{2,1,0,0}
\qquad \ytableaushort{\none\none00,\none01,11,2}* {4,3,2,1}
* [*(cyan)]{2,1,0,0}
\qquad \ytableaushort{\none\none00,\none11,12,2}* {4,3,2,1}
* [*(cyan)]{2,1,0,0} 
\qquad \ytableaushort{\none\none00,\none11,22,3}* {4,3,2,1}
* [*(cyan)]{2,1,0,0}.  
\]
\end{example}

\begin{proof}[Proof of Theorem~\ref{thm: direct char min SSYT}]
Denote by $\SSYT'_{\min}(\lm)$ the set of SSYT of shape $\lm$ satisfying conditions (i)~(ii) in the statement above. We show that $\SSYT_{\min}(\lm)=\SSYT_{\min}(\lm)$.

First, we use the definition of $\SSYT_{\min}(\lambda/\mu)$ via excited moves $\delta$ to show that $\SSYT_{\min}(\lambda/\mu) \subseteq \SSYT'_{\min}(\lambda/\mu)$. Indeed, $T_0$ is in $\SSYT'(\lm)$ since $T_0$ has consecutive values in its columns and $T_0(i,j) \leq ht_r(i)$. The latter follows from $T_0(i,j)= i-\mu_{j}' +1$ and the fact that the row number $\mathsf{f}(\theta_r)$ of the final element of $\theta_r$ in Definition~\ref{def:height} satisfies $\mathsf{f}(\theta_r) \geq \mu'_j-1$.
Next, we show that the excited move $\delta$ (Definition~\ref{def:delta move}) preserves Conditions (i),(ii) from the statement. For a column $\theta_k(j)$ of $T$ to be active, $T(a,b)< ht_{\theta_k}(i)$ for all $(a,b) \in \theta_k(j)$. The excited move $\delta$ adds one to each entry in the column segment $\theta_k(j)$. Thus we also have $\delta(T)(a,b) \leq ht_{\theta_k}(i)$, preserving Condition~(i). Also, the column differences $\delta(T)(i+1,j)-\delta(T)(i,j)$ in the column segment $\theta_k(j)$ remain $1$, preserving Condition~(ii).

For the other inclusion, $\SSYT'_{\min}(\lambda/\mu) \subseteq \SSYT_{\min}(\lambda/\mu)$, we need to show that given tableau $T\in \SSYT'_{\min}(\lambda/\mu)$, there exists a sequence of excited moves  $\delta^{(1)},\dots, \delta^{(m)}$ such that $\delta^{(m)} \circ \cdots \circ \delta^{(1)} (T_0) = T$.

Let $T$ be a tableau in $\SSYT'_{\min}(\lambda/\mu)$ and $(\theta_1,\dots,\theta_k)$ be the Lascoux--Pragacz decomposition of shape $\lambda/\mu$.  Consider the element-wise difference $\overline{T} := T-T_0$. We know that $T(i,j) \geq T_0(i,j)$ for all $(i,j) \in [\lambda/\mu]$, so $\overline{T}(i,j) \geq 0$ for all cells.
 Choose a strip $\theta_{k_1}$ with maximal diagonal distance $\epsilon_{k_1}$ such that  $\overline{T}$ has a non-zero element. In $\theta_{k_1}$ choose the left most column $c_1$ with a non-zero element. Subtract one from each entry of that column of $\theta_{k_1}$. Repeat the algorithm until $\overline{T} = 0$ (see Figure~\ref{fig:delta sequence}). Then, we have a finite sequence of strips and columns:  $(\theta_{k_1},c_1),\dots, (\theta_{k_m},c_m)$ such that 
 $\delta^{-1}_{(k_m;b_m)}\circ \dots \circ \delta^{-1}_{(k_1;b_1)}(T) = T_0$. 
Thus $\delta^{(m)} \circ \cdots \circ \delta^{(1)}(T_0) = T$, where $\delta^{(j)} := \delta_{(k_{m-j+1};b_{m-j+1})}$ for $j=1,\ldots,m$ and so $T\in \SSYT_{\min}(\lm)$ as desired. 
\end{proof}

\begin{figure}
    \centering
    \includegraphics[scale=0.9]{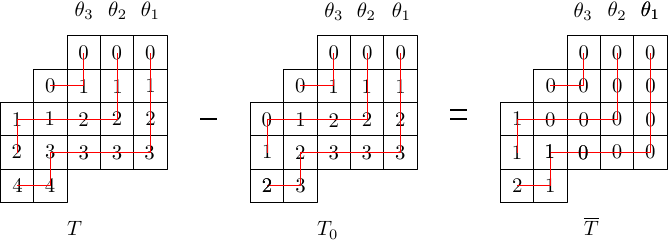}  
    
    \bigskip
    
    \includegraphics[scale=0.9]{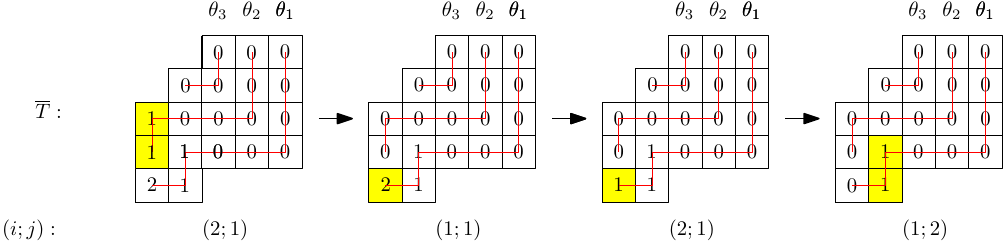}
s    \caption{A sequence of excited moves $\delta_{(k;j)}$ acting on $\overline{T}=T-T_0$.}
    \label{fig:delta sequence}
\end{figure}

\subsection{The inverse of \texorpdfstring{$\Phi$}{Phi}: a bijection between \texorpdfstring{$\SSYT_{\min}(\lm)$}{SSYT(lambda/mu)} and \texorpdfstring{$\E(\lm)$}{E(lambda/mu}}

In this section, we give an explicit description of the inverse of the bijection $\Phi$  that will help us give the reformulation of \eqref{eq:nhlf} in Section~\ref{sec: reformulation NHLF}.

Given any $T\in \SSYT_{\min}(\lm)$, let $\overline{T}= T-T_0$. Note that by Condition~(ii) in Theorem~\ref{thm: direct char min SSYT}, $\overline{T}$ is constant on the column segments of each $\theta_k$. We denote by $\overline{T}(\theta_k(j))$ the value of $\overline{T}$ on the $j$th column segment of $\theta_k$. The next lemma gives a description of the inverse of $\Phi$

\begin{lemma}[Inverse of $\Phi$]\label{lemma: reformulation bijection}
Let $D\in \E(\lm)$ and $T\in \SSYT_{\min}(\lm)$ such that $T = \Phi(D)$. Then 
\[D =  \{(i + \alpha, j+\alpha) \mid (i,j)\in [\mu]\},\]
where $\alpha$ is the number of strips $\theta_k$ such that $\overline{T}(\theta_k(j)) > \mu_j'-i$.
\end{lemma}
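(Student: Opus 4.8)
The plan is to combine the intertwining property of $\Phi$ (Lemma~\ref{lemma: beta and delta}) with the non-recursive characterization of $\SSYT_{\min}(\lm)$ (Theorem~\ref{thm: direct char min SSYT}) and track how the excited diagram evolves under a canonical sequence of moves realizing $T$. First I would recall that $\Phi([\mu])=T_0$ (Lemma~\ref{lemma: base case}), and that $D$ can be written as $\{(i,j)\in[\mu]\}$ with every cell $(i,j)$ displaced diagonally by some amount $a_{(i,j)}\geq 0$, i.e. $D=\{(i+a_{(i,j)},\,j+a_{(i,j)})\mid (i,j)\in[\mu]\}$, where $a_{(i,j)}$ is the number of excited moves applied to the cell originating at $(i,j)$. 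So the whole claim reduces to the identity $a_{(i,j)}=\#\{k : \overline{T}(\theta_k(j)) > \mu'_j-i\}$ for every $(i,j)\in[\mu]$.

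To prove this I would argue by induction on the length of a sequence $\beta_1,\dots,\beta_m$ with $D=\beta_m\circ\cdots\circ\beta_1([\mu])$, using Lemma~\ref{lemma: beta and delta} at each step: an excited move $\beta_u$ that increments the displacement of the cell originating at column $\mu(u)=j$, on the Kreiman path $\gamma_i(D)$, corresponds under $\Phi$ precisely to the move $\delta_{(i;j)}$, which adds $1$ to the column segment $\theta_i(j)$ of the tableau, i.e. it increases $\overline{T}(\theta_i(j))$ by $1$. Thus after the full sequence, $\overline{T}(\theta_k(j))$ equals the total number of excited moves that displaced a $\mu$-cell in column $j$ via the path $\gamma_k$. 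The key geometric input — already implicit in the proof of Lemma~\ref{lemma: beta and delta} — is that the number of times the cell $(i,j)\in[\mu]$ has been excited, which is its current diagonal displacement $a_{(i,j)}$, equals the number of Kreiman paths $\gamma_k$ crossing its diagonal to the northwest; and from the interlacing of excited cells in a fixed column of $[\mu]$ (cells lower in the column are displaced at least as much, since excited moves preserve the SSYT/column structure), the path $\gamma_k$ contributes to the displacement of $(i,j)$ exactly when $\overline{T}(\theta_k(j))$, the total count of moves on column $j$ via $\gamma_k$, exceeds the number of cells in column $j$ of $[\mu]$ strictly below row $i$, which is $\mu'_j-i$. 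Combining these, $a_{(i,j)}=\#\{k:\overline{T}(\theta_k(j))>\mu'_j-i\}=\alpha$, as claimed.

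The main obstacle I anticipate is making the last bookkeeping step fully rigorous: one must show that within a single column $j$ of $[\mu]$, the displacements $a_{(i,j)}$ are weakly decreasing in $i$ (equivalently, the excited moves on that column "stack" from the bottom), and that the threshold at which path $\gamma_k$ starts contributing to row $i$ is exactly $\mu'_j-i$. This requires carefully using that each $\delta_{(k;j)}$ acts on a full column segment $\theta_k(j)$ together with the correspondence between the ordering of the $\theta_k$'s by diagonal distance $\epsilon_k$ (Proposition~\ref{prop: cutting strip distance}) and the order in which paths $\gamma_k$ sweep across a given diagonal; the semistandardness argument from the proof of Lemma~\ref{lemma: well-defined and inj} should supply exactly the monotonicity needed. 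Once that is in place, the formula for $D$ follows immediately by substituting $\alpha$ back into $D=\{(i+a_{(i,j)},j+a_{(i,j)})\mid(i,j)\in[\mu]\}$.
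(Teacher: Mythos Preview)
Your proposal is correct and follows essentially the same approach as the paper: both reduce to computing the diagonal displacement $a_{(i,j)}$ of each cell $(i,j)\in[\mu]$, use the commutation Lemma~\ref{lemma: beta and delta} to identify $\overline{T}(\theta_k(j))$ with the number of ladder moves on column $j$ of $\gamma_k$ (equivalently, the number of cells in column $j$ of $[\mu]$ displaced at least $\epsilon_k$ times), and then read off the threshold condition $\overline{T}(\theta_k(j))>\mu'_j-i$. One small slip: the displacements $a_{(i,j)}$ are weakly \emph{increasing} in $i$ (cells lower in a column of $[\mu]$ must move before those above them), not decreasing---your parenthetical ``stack from the bottom'' shows you have the right picture, and this monotonicity is exactly what makes the threshold argument go through.
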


\begin{proof}

Given  $T\in \SSYT_{\min}(\lambda/\mu)$, in order to recover its corresponding excited diagram, it suffices to find how many times $\alpha$ has each cell $(i,j)$ in $[\mu]$ been excited to obtain cell $(i+\alpha,j+\alpha)$ in the diagram. 

Let $\overline{T} := T-T_0$. For $(i,j)$ in $[\mu]$, $\overline{T}(\theta_t(j))$ indicates the number of times the $j$th column of $\theta_t$ has been excited by $\delta$. In terms of Kreiman paths, by the proof of Lemma~\ref{lemma: beta and delta}, $\overline{T}(\theta_t(j))$ is also the number of times the $j$th column of $\gamma_t$ has been modified by the ladder move corresponding to $\beta$. Given $(a,b) \in D$, let $(a_\mu,b_\mu)$ be the original position from $[\mu]$ the cell $(a,b)$ came from. The value $\overline{T}(\theta_t(j))$ is the number of cells $(a,b)\in D$ such that $b_\mu = j$ that have been excited  at least $\epsilon_t$ times. This means that $\overline{T}(\theta_t(j))$ many cells in the $j$th column of $[\mu]$ have been excited to obtain $D$ with positions $(a_\mu,j)$, where $\mu'_j - \overline{T}(\theta_t(j)) < a_\mu \leq \mu'_j$. Then for any cell $(a,b)\in D$, where $a_\mu = i$ and $b_\mu = j$, the number of times $\alpha$ the cell has been excited is the total number of $\theta_t$ such that $\overline{T}(\theta_t(j)) > \mu_j' - i$.
\end{proof}

\begin{example} \label{ex: inverse bijection}
Let $T$ be the minimal SSYT of shape $55552/21$ in Figure~\ref{fig: bijection min SSYT to excited diagrams}. We calculate $\overline{T}=T-T_0$. For each $(i,j)$ in $[21]$ we calculate $\alpha$: For $(1,1)$ there is one strip $\theta_k$ such that $\overline{T}(\theta_k(1)) > 2-1$, so $\alpha=1$. For $(1,2)$ there is one strip  $\theta_k$ such that $\overline{T}(\theta_k(1)) > 1-1$, so $\alpha=1$. For $(2,2)$ there are two strips $\theta_k$ such that $\overline{T}(\theta_k(1)) > 1-2$, so $\alpha=2$. Thus $T$ corresponds to the following excited diagram:
\[
D = \{(i+\alpha,j+\alpha) \mid (i,j) \in [21]\} = \{(2,2),(2,3),(4,3)\}.
\]
\end{example}

\section{Reformulation of the NHLF and Special cases of minimal SSYT} \label{sec: reformulation NHLF}

In this section we use the bijection $\Phi$ between minimal SSYT and excited diagrams of shape $\lm$ to reformulate the Naruse hook formula \eqref{eq:nhlf}  in terms of minimal SSYT. We also revisit the enumeration of minimal SSYT for general and particular skew shapes.

\subsection{Reformulation of \texorpdfstring{\eqref{eq:nhlf}}{(NHLF)} in terms of minimal SSYT}

\reformulationNHLFminSSYT

\begin{proof}
The result follows by combining the excited diagram formulation \eqref{eq:nhlf} of Naruse's formula and the description of inverse of $\Phi$ from Lemma~\ref{lemma: reformulation bijection}.
\end{proof}

\begin{figure}
    \centering
    \includegraphics[scale=0.8]{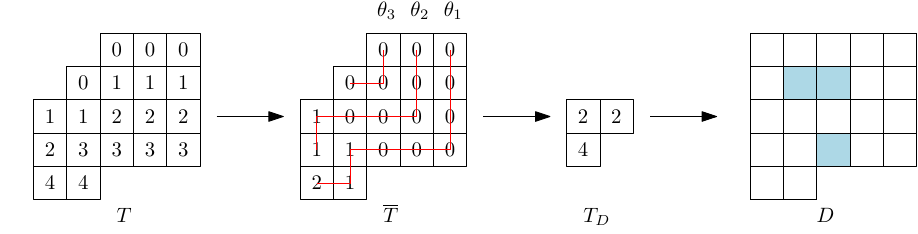}
    \caption{Example of bijection $(\Phi)^{-1}$ between a minimal SSYT and excited diagrams for the shape $\lm=55552/21$.}
    \label{fig: bijection min SSYT to excited diagrams}
\end{figure}

\begin{example} \label{ex: inverse bijection for reformulation}
For the shape $\lm=55332/22$, from each of its minimal tableaux in Figure~\ref{fig: min SSYT example} we find $\overline{T}$:
\[
\ytableausetup{smalltableaux}
\ytableaushort{\none\none000,\none\none000,000,000,00}* {5,5,3,3,2}
* [*(cyan)]{2,2,0,0,0}  \quad \ytableaushort{\none\none000,\none\none000,000,010,01}* {5,5,3,3,2}
* [*(cyan)]{2,2,0,0,0} \quad \ytableaushort{\none\none000,\none\none000,000,010,11}* {5,5,3,3,2}
* [*(cyan)]{2,2,0,0,0} \quad 
\ytableaushort{\none\none000,\none\none000,000,020,02}* {5,5,3,3,2}
* [*(cyan)]{2,2,0,0,0} \quad 
\ytableaushort{\none\none000,\none\none000,000,020,12}* {5,5,3,3,2}
* [*(cyan)]{2,2,0,0,0} \quad 
\ytableaushort{\none\none000,\none\none000,000,020,22}* {5,5,3,3,2}
* [*(cyan)]{2,2,0,0,0}
.  
\]
For each such $\overline{T}$ and $(i,j)$ in $[22]$, we calculate $\textcolor{red}{\alpha}$. Putting everything together using \eqref{eq:reformulate NHLF} gives
\begin{multline*}
f^{\lm} = \frac{14!}{9\cdot 8^2\cdot 7\cdot 6\cdot 5^2\cdot 4^2\cdot 3^2\cdot 2^4}{\big(}
h(1,1)h(1,2)h(2,1)h(2,2)+h(1,1)h(1,2)h(2,1)h(2+\textcolor{red}{1},2+\textcolor{red}{1})\\
+h(1,1)h(1+\textcolor{red}{1},2+\textcolor{red}{1})h(2,1)h(2+\textcolor{red}{1},2+\textcolor{red}{1})+h(1,1)h(1,2)h(2+\textcolor{red}{1},1+\textcolor{red}{1})h(2+\textcolor{red}{1},2+\textcolor{red}{1})+\\+h(1,1)h(1+\textcolor{red}{1},2+\textcolor{red}{1})h(2+\textcolor{red}{1},1+\textcolor{red}{1})h(2+\textcolor{red}{1},2+\textcolor{red}{1})+h(1+\textcolor{red}{1},1+\textcolor{red}{1})h(1+\textcolor{red}{1},2+\textcolor{red}{1})h(2+\textcolor{red}{1},2+\textcolor{red}{1}){\big)},\\
= 445445.
\end{multline*}
\end{example}

\subsection{Enumeration of \texorpdfstring{$\SSYT_{\min}(\lm)$}{SSYT(lambda/mu)}}

Since minimal SSYT are in bijection with excited diagrams, we immediately obtain a determinant formula to count the former.

\begin{corollary}
For a skew shape $\lm$ we have that 
\[
|\SSYT_{\min}(\lm)| \,=\, \det \left[\binom{\vartheta_i + \mu_i -i+j-1}{\vartheta_i-1} \right]_{i,j=1}^{\ell(\mu)},
\]
where $\vartheta_i$ is the row where the diagonal through the cell $(i,\mu_i)$ intersects the boundary of $[\lambda]$. 
\end{corollary}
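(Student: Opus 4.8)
The plan is to reduce the claim immediately to the already-established bijection and the known determinantal count of excited diagrams. By Theorem~\ref{thm: bijection ED and SSYTmin}, the map $\Phi\colon \E(\lambda/\mu)\to \SSYT_{\min}(\lambda/\mu)$ is a bijection, so in particular $|\SSYT_{\min}(\lambda/\mu)| = |\E(\lambda/\mu)| = \nED(\lambda/\mu)$. Then applying Theorem~\ref{thm:det formula nED} gives
\[
|\SSYT_{\min}(\lambda/\mu)| \,=\, \nED(\lambda/\mu) \,=\, \det\left[\binom{\vartheta_i+\mu_i-i+j-1}{\vartheta_i-1}\right]_{i,j=1}^{\ell(\mu)},
\]
which is exactly the asserted formula.

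The only thing to be careful about is that the quantity $\vartheta_i$ appearing in the corollary is the same as the one in Theorem~\ref{thm:det formula nED}: both are defined as the row where the diagonal through $(i,\mu_i)$ meets the boundary of $[\lambda]$, so there is nothing to reconcile. I would state the proof in a single sentence, citing Theorem~\ref{thm: bijection ED and SSYTmin} for the equinumerosity $|\SSYT_{\min}(\lambda/\mu)|=\nED(\lambda/\mu)$ and Theorem~\ref{thm:det formula nED} for the determinant evaluation.

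Since both ingredients are proved earlier in the paper, there is essentially no obstacle here; the ``hard part'' is purely bookkeeping, namely confirming that the normalization conventions for $\vartheta_i$ and for $\ell(\mu)$ versus $\ell(\lambda)$ match between the two cited theorems (they do). Optionally, one could also remark that the same bijection yields a $q$-refinement: pushing forward the statistic from Corollary~\ref{cor:identity leading terms} through $\Phi$ would give a determinantal formula for $\sum_{T\in \SSYT_{\min}(\lambda/\mu)} q^{|T|}$, but this is not needed for the stated corollary.
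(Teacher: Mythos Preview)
Your proposal is correct and matches the paper's own proof essentially verbatim: the paper simply says the result follows by combining Theorem~\ref{thm: bijection ED and SSYTmin} and Theorem~\ref{thm:det formula nED}. Your additional remarks about the $q$-refinement are not in the paper's proof of this corollary, but they are consistent with Corollary~\ref{cor:identity leading terms} and do no harm.
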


\begin{proof}
The result follows by combining Theorem~\ref{thm: bijection ED and SSYTmin} and Theorem~\ref{thm:det formula nED}.
\end{proof}

For certain shapes, the characterization of minimal SSYT of skew shape allows to directly enumerate them and recover nice known formulas.

The following formulas follow directly from the equivalence with excited diagrams, together with the fact that the complements of $\E(\la/\mu)$ are non-intersecting lattice paths. Here we give a direct interpretations from the SSYT point of view.

 Let $C_n=\frac{1}{n+1}\binom{2n}{n}$ be the $n$th Catalan number.

\begin{proposition}[{\cite[Cor. 8.1]{MPP2}}] \label{prop:Catalan case}
For the zigzag shape $\lm=\delta_{n+2}/\delta_n$, we have that $$|\SSYT_{\min}(\delta_{n+2}/\delta_n)| = C_n.$$ 
\end{proposition}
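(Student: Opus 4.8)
The statement to prove is that for the zigzag shape $\lambda/\mu = \delta_{n+2}/\delta_n$, the number of minimal SSYT equals the Catalan number $C_n$. The cleanest route is to \emph{not} go through excited diagrams at all, but instead to use the non-recursive characterization of $\SSYT_{\min}(\lambda/\mu)$ from Theorem~\ref{thm: direct char min SSYT} together with an explicit description of the Lascoux--Pragacz decomposition of the zigzag. First I would describe the shape: $\delta_{n+2}/\delta_n$ consists of $n+1$ rows, where row $i$ occupies columns from roughly $n-i+1$ to $n-i+2$, so it is a thin staircase ribbon of width two; each interior row has exactly two cells, one of which sits directly below a cell of the previous row. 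The Lascoux--Pragacz decomposition of a ribbon (border strip) is trivial: $\theta_1$ is the whole shape and $k=1$. So $ht_{\theta_1}(i) = i - \mathsf f(\theta_1)$, and since the final (top-right) cell of $\theta_1$ is in row $1$, we get $\mathsf f(\theta_1) = 1$ and $ht_{\theta_1}(i) = i-1$.

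Next I would unwind the characterization. The minimum tableau $T_0$ has $i$th column equal to $\{0,1,\dots,\lambda_i'-\mu_i'-1\}$, and for the zigzag each nonempty column has either one or two cells. Since $k=1$, Condition~(ii) of Theorem~\ref{thm: direct char min SSYT} forces consecutive entries in each length-two column of $\theta_1$ to differ by exactly $1$; Condition~(i) says every entry in row $i$ is at most $i-1$. Combined with column-strictness and row-weak-monotonicity of being an SSYT, this reduces the data of $T\in\SSYT_{\min}$ to a single nonnegative integer per row (say $a_i = T$-value of the cell in row $i$ that is \emph{not} below row $i-1$, or equivalently the constant shift $\overline T$ restricted to that portion of the ribbon), subject to: (a) $0 \le a_i \le$ (its height bound), and (b) the SSYT inequalities across the overlap between row $i$ and row $i+1$, which translate into an inequality of the form $a_{i+1}\le a_i + 1$ (so the sequence can go up by at most one at each step) together with weak monotonicity making it a lattice-path-type condition. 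The upshot is that $\SSYT_{\min}(\delta_{n+2}/\delta_n)$ is in bijection with a set of integer sequences $(a_1,\dots,a_{n+1})$ (or $(a_1,\dots,a_n)$ after accounting for the fixed first value) that are counted by $C_n$ — e.g. lattice paths from $(0,0)$ staying weakly below the diagonal, or Dyck-path-type sequences $0\le a_i$, $a_{i+1}\le a_i+1$, $a_i \le i-1$.

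The concrete identification I would aim for: translating the height bound $a_i\le i-1$ and the step condition $a_{i+1}-a_i\le 1$ (with $a_i\ge 0$) gives exactly the sequences counted by the ballot/Catalan numbers — these are the "staircase-bounded" sequences, equinumerous with Dyck paths of semilength $n$. I would verify the bijection explicitly (map $T$ to the sequence of row-shifts $\overline T$, check it lands in the Catalan family, check injectivity and surjectivity using the reconstruction in the $\SSYT'_{\min}\subseteq\SSYT_{\min}$ direction of Theorem~\ref{thm: direct char min SSYT}), and confirm small cases: $n=0$ gives one tableau ($T_0$ itself, $C_0=1$), $n=1$ gives $C_1=1$, $n=2$ gives $C_2=2$, matching the shape $\delta_4/\delta_2 = 3 2 1/ 1$ analyzed in the earlier example. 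Alternatively — and this is the one-line proof if one is willing to cite — one invokes Theorem~\ref{thm: bijection ED and SSYTmin} to get $|\SSYT_{\min}(\delta_{n+2}/\delta_n)| = |\mathcal E(\delta_{n+2}/\delta_n)| = C_n$ by \cite[Cor.~8.1]{MPP2}, which is presumably what the paper does; but the point of the proposition is to give "a direct interpretation from the SSYT point of view," so I would include the lattice-path bijection above.

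\textbf{Main obstacle.} The delicate part is getting the SSYT inequalities \emph{across the two-wide overlap of consecutive rows} to collapse precisely into the single step-inequality $a_{i+1}\le a_i+1$ (plus weak monotonicity) with no slack — one must carefully track which cell of row $i+1$ sits below which cell of row $i$, use Condition~(ii) to propagate the shift value through the length-two columns, and check that the strict-increase-down-columns and weak-increase-along-rows conditions together with the height bound $a_i \le i-1$ yield exactly the Catalan family and nothing larger or smaller. This is a finite, local check but it is where an off-by-one error would sneak in, so I would do it on a picture of $\delta_6/\delta_4$ first and then state it in general.
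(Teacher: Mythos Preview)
Your proposal is correct and takes essentially the same approach as the paper: both observe that the zigzag is a single Lascoux--Pragacz strip so $ht_{\theta_1}(i)=i-1$, apply the non-recursive characterization of Theorem~\ref{thm: direct char min SSYT} to reduce a minimal tableau to a sequence of integers (the paper records the values $w_i$ at the inner corners, read right to left; you record row-shifts $a_i$) satisfying a height bound $w_i\le i$ and a step bound $w_{i+1}\le w_i+1$, and then invoke the standard fact that such sequences are counted by $C_n$ (the paper cites \cite[Exercise~6.19(u)]{EC2}). Your anticipated ``main obstacle'' is exactly the local check the paper does in one line using Condition~(ii), so there is no real gap.
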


\begin{proof}
The zigzag shape $\delta_{n+2}/\delta_n$ has only one Lascoux--Pragacz path. By the characterization of minimal SSYT in Theorem~\ref{thm: direct char min SSYT}, every $T\in \SSYT_{\min}(\delta_{n+2}/\delta_n)$ is determined by the elements of the inner corners. Also, note that the inner corner of the $n$th column is fixed as $0$ because it is always on the first row of the shape. Reading the rest of the inner corners from right to left, we have a word $w_1w_2\dots w_{n-1}$ that determines $T\in \SSYT_{\min}(\delta_{n+2}/\delta_n)$.

Each inner corner $w_i$ is on row $i+1$. Then by condition~(i) from Theorem~\ref{thm: direct char min SSYT} we have $w_i \leq i$ and by condition~(ii), the element below $w_i$ is always $w_i +1$. Thus, $w_{i+1}$ is always to the left of such element, therefore $w_{i+1}\leq w_i + 1$. Words $w_1w_2\cdots w_{n-1}$ satisfying $w_i\leq i$ and $w_{i+1}\leq w_i+1$ are one of the objects counted by the Catalan numbers \cite[Exercise 6.19 u]{EC2}.
\end{proof}

\begin{proposition}[{\cite[Ex. 3.2]{MPP1}}] \label{prop:binomial case}
Let $m$ and $n$ be nonnegative integers. For the reverse hook $\lambda/\mu = (m+1)^{n+1}/m^n$ we have that  
\[|\SSYT_{\min}(\lambda/\mu)| = \binom{n+m}{n}.\]
\end{proposition}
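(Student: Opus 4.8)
The plan is to use the non-recursive characterization of minimal SSYT from Theorem~\ref{thm: direct char min SSYT}, since for the reverse hook shape $\lambda/\mu = (m+1)^{n+1}/m^n$ the Lascoux--Pragacz decomposition is particularly simple. First I would observe that $(m+1)^{n+1}/m^n$ is itself a single border strip: it consists of the cells $(i, m+1)$ for $i = 1, \dots, n+1$ in the last column, together with the cells $(n+1, j)$ for $j = 1, \dots, m+1$ in the last row, and these meet at the corner $(n+1, m+1)$. Hence there is exactly one Lascoux--Pragacz path $\theta_1$, which is this border strip itself, and $\mathsf{f}(\theta_1) = 1$ (the top cell $(1, m+1)$ lies in row $1$), so $ht_{\theta_1}(i) = i - 1$ for every row $i$.

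Next I would apply Theorem~\ref{thm: direct char min SSYT}. Since there is only one strip and it is a border strip, condition~(ii) only constrains the single vertical run of cells $(1, m+1), (2, m+1), \dots, (n+1, m+1)$ in column $m+1$: consecutive entries must differ by exactly $1$. So $T$ is completely determined by its value $a := T(1, m+1)$ on the top cell, which forces $T(i, m+1) = a + i - 1$ and $T(n+1, j) = a + n$ for all $j \le m$. Wait — that cannot be right, because then there would be only finitely many but not $\binom{n+m}{n}$ tableaux; so I need to be more careful: the border strip $\theta_1$ has $m$ horizontal cells in its bottom row in addition to the vertical run, but condition~(ii) links only cells $(i_r, j_r)$ and $(i_r+1, j_r)$ that are both in $\theta_r$, i.e. cells in the same column of the strip, which for the horizontal part are singleton columns and impose no constraint. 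However, semistandardness of $T$ itself (weakly increasing in rows, strictly increasing in columns) still applies to the full skew SSYT, and that is what gives the freedom: the bottom row entries $T(n+1, 1) \le T(n+1, 2) \le \cdots \le T(n+1, m+1)$ can be any weakly increasing sequence, subject to the flag bound $T(n+1,j) \le ht_{\theta_1}(n+1) = n$ from condition~(i) and to meeting the vertical run correctly at the corner.

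Therefore I would set up the count as follows. By condition~(i), every entry in row $n+1$ is at most $n$, and by column-strictness in the single nontrivial column the entries $T(i, m+1)$ for $i = 1, \dots, n+1$ form a strictly increasing sequence in $\{0, 1, \dots, n\}$ of length $n+1$ — hence exactly $T(i, m+1) = i - 1$, forced. So the only freedom is the bottom row: $T(n+1, 1) \le T(n+1, 2) \le \cdots \le T(n+1, m) \le T(n+1, m+1) = n$, with each entry at least $T(n, m+1)$-compatible, i.e.\ at least $0$ and at most $n$; more precisely column-strictness gives no extra constraint on $T(n+1,j)$ for $j \le m$ beyond $0 \le T(n+1,j)$ and the flag $T(n+1,j) \le n$, while row-weak-monotonicity and meeting $T(n+1,m+1)=n$ require $T(n+1,1) \le \cdots \le T(n+1,m) \le n$. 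The number of weakly increasing sequences $0 \le w_1 \le \cdots \le w_m \le n$ is $\binom{n+m}{m} = \binom{n+m}{n}$, which is the claimed count. The main obstacle — and the step I would double-check most carefully — is pinning down exactly which cells lie in the strip $\theta_1$ and hence precisely which pairs condition~(ii) constrains, and confirming via a small example (say $m = n = 1$, giving $\binom{2}{1} = 2$ tableaux) that the interaction between the forced vertical run, the flag bound, and the free bottom row produces $\binom{n+m}{n}$ and not something off by a boundary term.
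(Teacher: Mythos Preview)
Your proposal is correct and follows essentially the same approach as the paper's proof: both observe that the reverse hook is a single Lascoux--Pragacz strip $\theta_1$ with $ht_{\theta_1}(i)=i-1$, deduce that the entries in column $m+1$ are forced to be $0,1,\ldots,n$, and then count the weakly increasing sequences $0\le a_1\le\cdots\le a_m\le n$ in row $n+1$ to obtain $\binom{n+m}{n}$. Your write-up is more exploratory than the paper's, but the underlying argument is the same; you might tighten it by removing the ``wait'' detour and stating directly that condition~(ii) constrains only the vertical run in column $m+1$ while the horizontal cells in row $n+1$ are governed solely by the SSYT row condition and the flag bound from condition~(i).
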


\begin{proof}
The reverse hook shape has only on Lascoux--Pragacz path. By the characterization of minimal SSYT in Theorem~\ref{thm: direct char min SSYT}, in every $T\in \SSYT_{\min}((m+1)^{n+1}/m^n)$  the entries of column $m+1$ are fixed to be $0,1,\ldots,n$. Reading the remaining elements on the $(n+1)$th row  from left to right, gives the sequence $a_1,a_2\dots, a_m$ that determine $T$. By Condition~(ii) in the characterization of minimal SSYT, we have $0\leq a_1\leq a_2 \leq \dots \leq a_m \leq n$. The number of such sequences is given by $\binom{n+m}{n}$, as desired.
\end{proof}

\section{New relations between the Naruse and the Okounkov--Olshanski formula} \label{sec: comparison OOF}

In this section we use the explicit non-recursive description of the minimal tableaux in $\SSYT_{\min}(\lambda/\mu)$ of Theorem~\ref{thm: direct char min SSYT} to relate the number $\nED(\lm)$ of terms of \eqref{eq:nhlf} and the number $\nOOT(\lm)$ of terms of \eqref{eq:OO}.

\subsection{Inequality between the number of terms}

\begin{theorem} \label{thm: ineq ED and OOF}
For a skew shape $\lambda/\mu$ we have that $\nED(\lambda/\mu) \leq \nOOT(\lambda/\mu)$.
\end{theorem}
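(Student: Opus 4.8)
The plan is to exhibit an injection from the set $\SSYT_{\min}(\lambda/\mu)$ into the set $\mathcal{SF}(\lambda/\mu)$ of skew flagged tableaux, and then invoke the known bijections. Since $\nED(\lambda/\mu)=|\E(\lambda/\mu)|=|\SSYT_{\min}(\lambda/\mu)|$ by Theorem~\ref{thm: bijection ED and SSYTmin}, and $\nOOT(\lambda/\mu)=|\cOOT(\lambda/\mu)|=|\mathcal{SF}(\lambda/\mu)|$ by Proposition~\ref{bij:OOT and flagged skew tableaux}, it suffices to show $\SSYT_{\min}(\lambda/\mu)\subseteq \mathcal{SF}(\lambda/\mu)$ as sets of fillings of $\lambda/\mu$. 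Concretely, I would take $T\in\SSYT_{\min}(\lambda/\mu)$ and show that for every cell $(i,j)\in[\lambda/\mu]$ one has $T(i,j)\le i$, which is exactly the defining flag condition of $\mathcal{SF}(\lambda/\mu)$.

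The key step uses the non-recursive characterization in Theorem~\ref{thm: direct char min SSYT}. Write a cell $(i,j)$ as lying on the strip $\theta_r$ (where $(\theta_1,\dots,\theta_k)$ is the Lascoux--Pragacz decomposition). Condition (i) of Theorem~\ref{thm: direct char min SSYT} gives $T(i,j)\le ht_{\theta_r}(i)=i-\mathsf{f}(\theta_r)$, where $\mathsf{f}(\theta_r)$ is the row of the final (top right-most) element of $\theta_r$. Since $\mathsf{f}(\theta_r)\ge 1$ for every strip (the final element of any strip is a cell of $\lambda/\mu$, hence in some row $\ge 1$; for the outer strip $\theta_1$ in fact $\mathsf{f}(\theta_1)=1$), we get $T(i,j)\le i-1<i$. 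This establishes the containment $\SSYT_{\min}(\lambda/\mu)\subseteq\mathcal{SF}(\lambda/\mu)$ and hence the inequality $\nED(\lambda/\mu)\le\nOOT(\lambda/\mu)$.

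I would organize the write-up as: (1) recall $\nED=|\SSYT_{\min}|$ and $\nOOT=|\mathcal{SF}|$ via the cited bijections; (2) prove the pointwise bound $T(i,j)\le i-1$ for $T\in\SSYT_{\min}(\lambda/\mu)$ using Theorem~\ref{thm: direct char min SSYT}(i) together with $\mathsf{f}(\theta_r)\ge 1$; (3) conclude $\SSYT_{\min}(\lambda/\mu)\subseteq\mathcal{SF}(\lambda/\mu)$, so the inclusion of finite sets gives the inequality. The main (really the only) subtlety is justifying $\mathsf{f}(\theta_r)\ge 1$ uniformly over all strips in the decomposition — i.e. being careful that no strip's top right-most element could force a vacuous or off-by-one issue — but this is immediate from the definition of the Lascoux--Pragacz decomposition, where each $\theta_r$ is a lattice path inside $[\lambda]$ whose final element is an honest cell of the skew shape. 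The equality case ($\lambda_d\ge\mu_r+d-r$) is deferred to the refined statement Theorem~\ref{thm: num ED vs OOF} and is not needed here.
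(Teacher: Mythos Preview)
Your proposal is correct and follows essentially the same approach as the paper: the paper also reduces to an inclusion via the bijections $\E(\lambda/\mu)\cong\SSYT_{\min}(\lambda/\mu)$ and $\cOOT(\lambda/\mu)\cong\mathcal{SF}(\lambda/\mu)$, and then uses condition~(i) of Theorem~\ref{thm: direct char min SSYT} together with $\mathsf{f}(\theta_r)\ge 1$ to obtain the bound $T(i,j)\le ht_{\theta_r}(i)\le i-1$. The only cosmetic difference is that the paper packages the inclusion as $\{T+\mathbf{1}_{\lambda/\mu}\mid T\in\SSYT_{\min}(\lambda/\mu)\}\subseteq\mathcal{SF}(\lambda/\mu)$ (Lemma~\ref{lemma: SSYT subset SF}), shifting all entries by one so that they are positive, rather than your direct containment $\SSYT_{\min}(\lambda/\mu)\subseteq\mathcal{SF}(\lambda/\mu)$.
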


The result follows immediately from the following lemma that states that after adding one to each entry in a minimal skew SSYT of shape $\lm$, we obtain a flagged tableaux in $\mathcal{SF}(\lm)$. Let  ${\bf 1}_{\lm}$ denote the tableaux of shape $\lm$ with all $1$s. 

\begin{lemma}\label{lemma: SSYT subset SF}
For a skew shape $\lm$ we have that 
\[ \{T+ {\bf 1}_{\lm} \mid T \in \SSYT_{\min}(\lambda/\mu)\} \subseteq \mathcal{SF}(\lambda/\mu),\]
where $T+{\bf 1}_{\lm}$ denotes entrywise addition.
\end{lemma}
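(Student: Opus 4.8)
The plan is to use the non-recursive characterization of $\SSYT_{\min}(\lambda/\mu)$ given in Theorem~\ref{thm: direct char min SSYT} and show that, for $T \in \SSYT_{\min}(\lambda/\mu)$, the tableau $T + {\bf 1}_{\lm}$ satisfies the defining flag condition of $\mathcal{SF}(\lambda/\mu)$, namely that every entry in row $i$ is at most $i$. Since $T$ is already a SSYT, so is $T + {\bf 1}_{\lm}$ (adding a constant preserves the row-weak/column-strict inequalities), so the only thing to check is the bound on the entries. Equivalently, I must show $T(i,j) \le i - 1$ for every $(i,j) \in [\lambda/\mu]$.

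First I would fix a cell $(i,j) \in [\lambda/\mu]$ and locate it inside the Lascoux--Pragacz decomposition: say $(i,j)$ lies on the strip $\theta_r$. By Theorem~\ref{thm: direct char min SSYT}(i) we have $T(i,j) \le ht_{\theta_r}(i) = i - \mathsf{f}(\theta_r)$, where $\mathsf{f}(\theta_r)$ is the row of the final (top right-most) cell of $\theta_r$. So it suffices to prove $\mathsf{f}(\theta_r) \ge 1$, i.e. that the top endpoint of every Lascoux--Pragacz strip sits in row $1$ or below — which is automatic — but more precisely that $i - \mathsf{f}(\theta_r) \le i-1$, i.e. $\mathsf{f}(\theta_r)\ge 1$. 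This is trivially true since every row index is at least $1$. Thus $T(i,j) \le ht_{\theta_r}(i) \le i - 1$, giving $T(i,j) + 1 \le i$, as needed. The key structural fact doing the work here is exactly condition~(i) of the non-recursive characterization together with the elementary observation that $\mathsf{f}(\theta_r)\ge 1$ for every strip in the Lascoux--Pragacz decomposition.

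The main (minor) obstacle is bookkeeping: one must be sure the height function is defined consistently on all of $\theta_r$ (not just at the endpoint), so that condition~(i) genuinely applies to the cell $(i,j)$ we picked, and one must double-check the normalization in Definition~\ref{def:height} — that $ht_{\theta_r}(i) = i - \mathsf{f}(\theta_r)$ and that $\mathsf{f}(\theta_r)$ counts rows from the top starting at $1$. Once those conventions are pinned down, the inequality $ht_{\theta_r}(i) \le i-1$ is immediate, and the lemma follows. The inequality $\nED(\lambda/\mu) \le \nOOT(\lambda/\mu)$ of Theorem~\ref{thm: ineq ED and OOF} is then a formal consequence: $T \mapsto T + {\bf 1}_{\lm}$ is injective, $\SSYT_{\min}(\lambda/\mu)$ is in bijection with $\E(\lambda/\mu)$ by Theorem~\ref{thm: bijection ED and SSYTmin} (so its cardinality is $\nED(\lambda/\mu)$), and $\mathcal{SF}(\lambda/\mu)$ is in bijection with $\cOOT(\lambda/\mu)$ by Proposition~\ref{bij:OOT and flagged skew tableaux} (so its cardinality is $\nOOT(\lambda/\mu)$).
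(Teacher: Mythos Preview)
Your proposal is correct and follows essentially the same argument as the paper: use condition~(i) of Theorem~\ref{thm: direct char min SSYT} to get $T(i,j)\le ht_{\theta_r}(i)$, then observe from Definition~\ref{def:height} that $ht_{\theta_r}(i)=i-\mathsf{f}(\theta_r)\le i-1$ since $\mathsf{f}(\theta_r)\ge 1$, so $T+{\bf 1}_{\lm}\in\mathcal{SF}(\lambda/\mu)$. The paper's proof is just a terser version of exactly this reasoning.
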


\begin{proof}
Let $T$ be a tableau in $\SSYT_{\min}(\lm)$. By Condition~(i) in Theorem~\ref{thm: direct char min SSYT}, we have that for $(i,j)$ in $\theta_r$, $T(i,j) \leq ht_{\theta_r}(i)$. By Definition~\ref{def:height} we have that  $ht_{\theta_r}(i)\leq i-1$. Thus $T+{\bf 1}_{\lm}$ is a flagged tableau in $\mathcal{SF}(\lm)$. 
\end{proof}

\begin{example}
Adding ${\bf 1}_{\lm}$ to the minimal SSYT in Figure~\ref{fig: objects Naruse} 
  yields a flagged tableau in $\mathcal{SF}(\lm)$, which in fact is the one depicted in Figure~\ref{fig: objects OOF}. 
\end{example}

\begin{proof}[Proof of From Theorem~\ref{thm: ineq ED and OOF}]
By Theorem~\ref{thm: bijection ED and SSYTmin}, we have $\nED(\lambda/\mu) = |\SSYT_{\min}(\lambda/\mu)|$. By Lemma~\ref{lemma: SSYT subset SF}, we have $|\SSYT_{\min}(\lambda/\mu)| \leq |\mathcal{SF}(\lm)|$, and the result follows since by Proposition~\ref{bij:OOT and flagged skew tableaux}, we have that $|\mathcal{SF}(\lm)|=\nOOT(\lm)$.
\end{proof}

\begin{remark}
As illustrated in Figure~\ref{fig: objects NHLF and OOF}, if an excited diagram $D$ and a reverse excited diagram $S$ have equal  (up to adding ${\bf 1}_{\lm}$) minimal SSYT and skew flagged tableau of shape $\lm$, their corresponding flagged tableau and Okounkov--Olshanski tableau of shape $\mu$ may differ. 
\end{remark}

The next result characterizes the skew shapes where the Naruse hook-length formula and the Okounkov--Olshanski formula have the same number of terms.

\comparenumberterms

\begin{proof}
The inequality $\nED(\lm) \leq \nOOT(\lm)$ follows from Theorem~\ref{thm: ineq ED and OOF}. By Lemma~\ref{lemma: SSYT subset SF} we have that equality occurs if and only if all the flagged tableaux in $\mathcal{SF}(\lambda/\mu)$ are minimal. First, define the ``maximal'' tableau $T \in \mathcal{SF}(\lambda/\mu)$ as $T(i,j) =i-1$ for $(i,j) \in [\lambda/\mu]$, and let $\theta_1,\ldots,\theta_k$ be the Lascoux--Pragacz decomposition of $\lambda/\mu$. 

Suppose that there is a border strip $\theta_k$ which does not reach the top row of $[\lambda/\mu]$. By the height condition in the definition of minimal tableaux (Condition~(i) in Theorem~\ref{thm: direct char min SSYT}), it follows that for an entry at $(i,j)$ in $\theta_k$ we have $i-1=T(i,j) \leq ht_{\theta_k}(i) <i-1$, a contradiction. Thus all border strips reach the top row of $[\lm]$. 

Next, suppose that there is a border strip $\theta_k$, which has a column $c$ of height at least 2 for some $c\leq \mu_1$, and let $t$ be the topmost row in that column of $\theta_k$. We then construct another tableaux $T'$ from $T$ by setting $T'(i,j)=T(i,j)-1$ for pairs $(i,j)$ such that $i \leq t $ and $j\leq c$, and set $T'(i,j) = T(i,j)$ otherwise. Note that since $j \leq c \leq \mu_1$ we have that $i\geq 2$ and thus $T'(i,j)= T(i,j)-1=i-2$ is well defined. Note also that $T'$ is still a SSYT in $\mathcal{SF}(\lambda/\mu)$. Finally, note that $T'$ violates the condition on consecutive elements in a column of a border strip for the minimal tableaux since $T'(t+1,c) = t > t-1= T'(t,c) +1$ and $(t+1,c), (t,c) \in \theta_k$. Thus $\SSYT_{\min}(\lambda/\mu) \subsetneq \mathcal{SF}(\lambda/\mu)$ and we do not have equality between $\nED(\lm)$ and $\nOOT(\lm)$. 

Hence we see that the border strips $\theta_i$ are entirely horizontal in the first $\mu_1$ columns of $\lambda/\mu$ and we have $d-r$ of them starting in the first $\mu_1$ columns. Thus $\lambda_d \geq \mu_1$. Also, since these border strips reach the top row, they must all cross the diagonal starting at $(r,\mu_r)$ and thus this diagonal extends to the last row of the shape, so $\lambda_d \geq \mu_r + d-r$, which proves the forward direction. See Figure~\ref{fig:illustration equality terms}.

Now, if $\lambda_d \geq \mu_r+d-r$, then it is easy to see that all border strips  $\theta_i$ reach the top row and they are horizontal in the first $\mu_1$ columns. The tableaux in $\mathcal{SF}(\lambda/\mu)$ consisting of the columns past $\mu_1$ are of straight shape, and since the entries in row $i$ are bounded by $i-1$, the strict increase in the full columns implies that such entries have to be exactly $i-1$. For the first $\mu_1$ columns, since all border strips are horizontal, there are no further restrictions coming from the minimal tableaux. Thus $\SSYT_{\min}(\lambda/\mu)=\mathcal{SF}(\lambda/\mu)$, as desired.
\end{proof}

Motivated by this result, we call the connected skew shapes $\lm$ where we have equality $\nED(\lm)=\nOOT(\lm)$ {\em slim skew shapes}, see \cite[\S 11]{MPP4} and \cite[\S 2.1]{MPP3}\footnote{The convention for slim shapes in the articles \cite[\S 11]{MPP4} and \cite[\S 2.1]{MPP3} matches our condition in the case of  $r=1$.}. Next, we show that for slim skew shapes $\lm$ where there is equality $\ED(\lm)=\nOOT(\lm)$, this number is given by a product.

\begin{corollary} \label{cor: prod formula when ED=OOT}
For a connected skew shape $\lambda/\mu$ satisfying $\lambda_d\geq \mu_r+d-r$ where $d=\ell(\lambda)$ and $r=\max\{i\mid \mu_1=\mu_i\}$, we have that 
\[
\nED(\lm)=\nOOT(\lm) =  \prod_{u \in [\mu]} \frac{d+c(u)}{h(u)}.
\]
\end{corollary}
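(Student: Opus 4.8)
The plan is to prove the product formula for $\nED(\lm) = \nOOT(\lm)$ directly from the characterization established in Theorem~\ref{thm: num ED vs OOF}: under the hypothesis $\lambda_d \geq \mu_r + d - r$, we have $\SSYT_{\min}(\lm) = \mathcal{SF}(\lm)$, and moreover (from the argument there) every border strip $\theta_i$ of the Lascoux--Pragacz decomposition is entirely horizontal within the first $\mu_1$ columns, reaches the top row, and the columns beyond $\mu_1$ force entries $i-1$ in row $i$. So the only freedom in a tableau $T \in \mathcal{SF}(\lm)$ lies in the first $\mu_1$ columns, i.e.\ in the cells of $[\lm]$ lying in columns $1,\ldots,\mu_1$; these cells form the skew shape $[\mu]$ reflected/placed appropriately, and the count reduces to counting SSYT of a genuinely skew-but-horizontal-strip-decomposed region with a row-bound flag.

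First I would set up the following reduction. Since all border strips are horizontal in the first $\mu_1$ columns, the restriction of $[\lm]$ to these columns is a disjoint union of horizontal segments, one per border strip, and by the "reaches the top row" property these segments, read off column by column, are exactly the columns of $[\mu]$ (the cell $(i,j)$ with $j \le \mu_1$ in $\lm$ lies in row $i$ where $i$ ranges over $\{\mu'_j + 1, \ldots, \lambda'_j\}$, and the union over $j \le \mu_1$ recovers $[\mu]$ up to the shift). The flag condition (entries in row $i$ at most $i$) then translates, via the bijection $\varphi\colon\E(\lm)\to\mathcal F(\lambda/\mu)$ of Proposition~\ref{prop:phi bijection ED and flagged tableaux} and the flagged-tableaux picture, into counting SSYT of shape $\mu$ with row-$i$ entries bounded by $\vartheta_i$ — but under our hypothesis the diagonal through $(i,\mu_i)$ reaches all the way to row $d$, so $\vartheta_i = \lambda'_{?}$ is "as large as possible" and the determinant in Theorem~\ref{thm:det formula nED} collapses. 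Concretely I would substitute $\vartheta_i = d + \mu_i - i$ (the diagonal through $(i,\mu_i)$ has content $\mu_i - i$ and, since the shape is wide enough, exits at row $d - (\mu_i - i) \cdots$ — I'd pin down the exact value from the slimness hypothesis) into
\[
\nED(\lm) = \det\left[\binom{\vartheta_i + \mu_i - i + j - 1}{\vartheta_i - 1}\right]_{i,j=1}^{\ell(\mu)}
\]
and evaluate this determinant in closed form.

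The cleanest route for the evaluation is to recognize the resulting matrix as a "shifted" binomial determinant whose entries are $\binom{a_i + j - 1}{b_i}$ for suitable sequences $a_i, b_i$ depending only on $i$; such determinants have a standard product evaluation (a Lindström--Gessel--Viennot / Lorentzen--Hit count of non-intersecting lattice paths, or a Vandermonde-type factorization). Alternatively, and perhaps more transparently, I would count the relevant SSYT of shape $\mu$ with flag $\vartheta_i = d + c(i,\mu_i)$-ish directly by the hook-content formula: SSYT of shape $\mu$ with entries in $\{1,\ldots,N\}$ number $\prod_{u\in[\mu]} \frac{N + c(u)}{h(u)}$ when the flag is the constant $N$, and a short argument (the diagonal-reaching property makes the flag "non-binding" beyond what the column-strictness already imposes, effectively a constant flag $d$) reduces our count to exactly $\prod_{u\in[\mu]}\frac{d + c(u)}{h(u)}$. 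I would verify the flag is genuinely equivalent to the constant bound $d$ by checking that the per-row bounds $\vartheta_i$ are always $\ge d - (\text{something already forced})$, using $\lambda_d \ge \mu_r + d - r$.

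The main obstacle I anticipate is the bookkeeping that shows the flag becomes non-binding — i.e.\ that under $\lambda_d \ge \mu_r + d - r$ the flagged-tableaux count $|\mathcal F(\lambda/\mu)|$ (equivalently $|\SSYT_{\min}(\lm)|$) is literally the number of SSYT of shape $\mu$ with entries in $[d]$, with no further restriction. One must rule out that some $\vartheta_i$ is strictly smaller than what column-strictness alone would allow, across all rows simultaneously; the connectedness of $\lm$ and the explicit inequality should make this a finite case-check on the positions $(i,\mu_i)$, but getting the indices exactly right (especially the interaction between the rows $i \le r$ where $\mu_i = \mu_1$ and the rows $i > r$) is the delicate point. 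Once that is settled, the identity $\prod_{u\in[\mu]}\frac{d+c(u)}{h(u)}$ is immediate from Stanley's hook-content formula specialized at $\mathbf x = (1,\ldots,1)$ ($d$ ones), so no separate determinant evaluation is even needed — though I would include the determinant collapse as a sanity check and as the alternative proof.
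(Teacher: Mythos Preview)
Your second approach (reduce to $\SSYT(\mu,d)$ and apply the hook--content formula) is correct and reaches the same endpoint as the paper, but the paper gets there by a shorter path on the \emph{Okounkov--Olshanski} side rather than the excited-diagram side. From the hypothesis one has $\lambda_d \geq \mu_r + d - r = \mu_1 + d - r \geq \mu_1$; once $\mu_1 \leq \lambda_d$, the OOT constraint $j-i < \lambda_{d+1-T(i,j)}$ is vacuous for every $(i,j)\in[\mu]$ (since $j-i < j \leq \mu_1 \leq \lambda_d \leq \lambda_k$ for all $k\leq d$), so $\mathcal{OOT}(\lm)=\SSYT(\mu,d)$ immediately --- this is cited in the paper as \cite[Prop.~7.14]{MZ}. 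Then $\nOOT(\lm)=s_\mu(1^d)$ and the hook--content formula finishes.

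Your route via $\mathcal{F}(\lambda/\mu)$ also works, but the ``flag is non-binding'' step that you flag as the main obstacle is genuinely more delicate than the OOT analogue: the $\vartheta_i$ for $i<r$ need not equal $d$, and one must instead check $\vartheta_i \geq d-r+i$ (which follows from $\lambda_{d-r+i}\geq\lambda_d\geq\mu_1+d-r$) together with the observation that column-strictness through row $r$ already forces $T(i,j)\leq d-r+i$. This is doable but is precisely the bookkeeping the paper avoids. Your determinant-evaluation alternative is unnecessary once either reduction to $\SSYT(\mu,d)$ is in hand.
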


\begin{proof}
By Theorem~\ref{thm: num ED vs OOF}, for such skew shapes $\lm$ we have the equality $\nED(\lm)=\nOOT(\lm)$. By definition of $r$ we have that $\mu_1=\mu_r$, and thus $\lambda_d \geq \mu_1 + d-r \geq \mu_1$. By \cite[Proposition 7.14]{MZ}, since $\mu_1 \leq \lambda_d$ then $\mathcal{OOT}(\lm)=\SSYT(\mu,d)$ and so $\nOOT(\lm)=s_{\mu}(1^d)$, which is given by the {\em hook-content formula} (see \cite[Cor. 7.21.4]{EC2} and \cite[Cor. 7.15]{MZ}). 
\end{proof}

\begin{figure}
    \centering
    \includegraphics[scale=0.8]{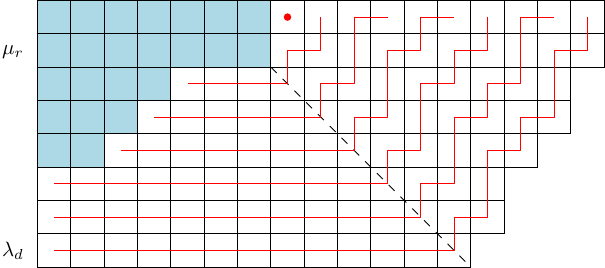}
    \caption{Illustration of skew shapes $\lambda/\mu$ and their Lascoux--Pragacz decomposition with $\lambda_d\geq \mu_r+d-r$ where $\nED(\lambda/\mu)=\nOOT(\lambda/\mu)$.}
    \label{fig:illustration equality terms}
\end{figure}

\subsection{Comparing terms of hook formulas for skew shapes}

A natural question to ask is whether for slim shapes $\lm$, the formulas \eqref{eq:nhlf} and \eqref{eq:OO} not only have the same number of terms but they are also term-by-term equal. As the next example shows, the terms can be different. 

\begin{example}
The shape $\lm=322/11$ is slim with three excited and three reverse excited diagrams, respectively. See Figure~\ref{fig:excited_compare},\ref{fig:oof_compare}. The formulas \eqref{eq:nhlf} and \eqref{eq:OO}, respectively give
\begin{align*}
f^{\lm} &= \frac{5!}{5\cdot 4 \cdot 3 \cdot 2^2\cdot 1}\left(5\cdot 3 + 5 \cdot 1 + 2\cdot 1\right),\\
&= \frac{5!}{5\cdot 4 \cdot 3 \cdot 2^2\cdot 1}\left(4\cdot 2 + 4 \cdot 2 + 3 \cdot 2 \right) = 11.
\end{align*}
\end{example}

\begin{figure}

\begin{subfigure}[b]{0.3\textwidth}
     \centering
      \includegraphics[scale=0.5]{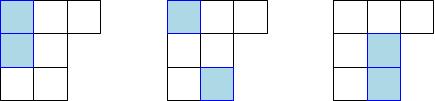} 
     \caption{}
     \label{fig:excited_compare}
     \end{subfigure}
\begin{subfigure}[b]{0.3\textwidth}
     \centering    
   \includegraphics[scale=0.5]{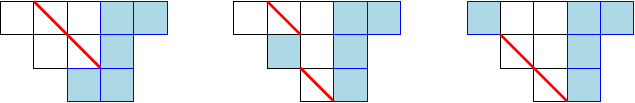} 
   \caption{}
   \label{fig:oof_compare}
   \end{subfigure}
\begin{subfigure}[b]{0.33\textwidth}
     \centering  
      \includegraphics[scale=0.5]{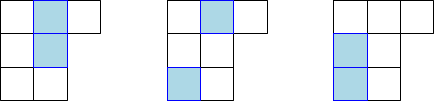}
   \caption{}
   \label{fig:ne_excited_compare}
   \end{subfigure}
\caption{For the shape slim shape $\lambda/\mu=322/11$, an illustration of its (a) Excited diagrams, (b) reverse excited diagrams, and (c) NE excited diagrams.}
\end{figure}

However, there is a reformulation of \eqref{eq:nhlf} from \cite[\S 3.4]{MPP3} that is then term-by-term equal to \eqref{eq:OO} for slim shapes. To see this,  we need the following variation of excited diagrams from \cite[\S 3.4]{MPP3}. Given a slim skew shape $\lm$ with  $\mu=(\mu_1,\ldots, \mu_{\ell})$, we  denote by $\widehat{\mu}=(0^{d-\ell},\mu_{\ell},\mu_{\ell-1},\ldots,\mu_1)$ the horizontal flip of $\mu$. Since $\lm$ is slim then the Young diagram $[\widehat{\mu}]\subset [\lm]$. A \emph{NE-excited diagram} is a subdiagram of $[\lambda]$ obtained from the diagram $[\widehat{\mu}]$ after a sequence of moves $D\mapsto D'$ where $(i,j)$ is replaced by $(i-1,j+1)$ if $(i,j)$ is in the diagram $D$ and all of $(i-1,j), (i-1,j+1), (i,j+1)$ are in $[\lambda]\setminus D$. See Figure~\ref{fig:ne_excited_compare} for an example. The set of these diagrams is denoted by $\ED^{\nearrow}(\lm)$. From \cite[\S 3.4]{MPP3}, flipping horizontally the diagrams gives a bijection between $\ED(\lm)$ and $\ED^{\nearrow}(\lm)$. Moreover, as a consequence of the multivariate identity in \cite[Thm. 3.12]{MPP3} there is a Naruse-type formula for $f^{\lm}$ in terms of the diagrams in $\ED^{\nearrow}(\lm)$ that appears in \cite[\S 9.1]{P}.

\begin{corollary}[{\cite[\S 3.4]{MPP3},\cite[\S 9.1]{P}}]
For a slim skew shape $\lm$ of size $n$ we have that 
\begin{equation} \label{eq:NE_NHLF_slim}
f^{\lm} \,=\,  \frac{n!}{\prod_{u\in [\lambda]} h(u)} \sum_{D \in \ED^{\nearrow}(\lm)} \prod_{(i,j)\in D} h(i,j).
\end{equation}
\end{corollary}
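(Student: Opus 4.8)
The plan is to deduce \eqref{eq:NE_NHLF_slim} from the ordinary Naruse hook-length formula \eqref{eq:nhlf} together with the multivariate refinement of it in \cite[Thm.~3.12]{MPP3}. Since \eqref{eq:nhlf} already says $\sum_{D\in\ED(\lm)}\prod_{u\in D}h(u)=\frac{f^{\lm}}{n!}\prod_{u\in[\lambda]}h(u)$, proving \eqref{eq:NE_NHLF_slim} amounts to the purely enumerative identity
\[
\sum_{D\in\ED^{\nearrow}(\lm)}\ \prod_{(i,j)\in D}h(i,j)\;=\;\sum_{D\in\ED(\lm)}\ \prod_{(i,j)\in D}h(i,j).
\]
The catch is that this is \emph{not} a term-by-term identity: for $\lm=322/11$ the two multisets of summands are $\{15,5,2\}$ and $\{6,8,8\}$ --- equal in total but not as multisets --- so the flip bijection between $\ED(\lm)$ and $\ED^{\nearrow}(\lm)$ from \cite[\S3.4]{MPP3} cannot on its own match weights, and an algebraic lift is needed. (Incidentally $\{6,8,8\}$ is the multiset of summands of \eqref{eq:OO} for this shape, the term-by-term agreement with the Okounkov--Olshanski formula alluded to just before the corollary.)

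The argument I would run is as follows. First, slimness is used only to make everything well-defined: the inequality $\lambda_d\ge\mu_r+d-r$, invoked exactly as in the proof of Theorem~\ref{thm: num ED vs OOF}, guarantees $[\widehat\mu]\subseteq[\lambda]$ and that the horizontal reflection underlying \cite[\S3.4]{MPP3} carries the SE-excited moves on $[\mu]$ to the NE-excited moves on $[\widehat\mu]$, giving the bijection between $\ED(\lm)$ and $\ED^{\nearrow}(\lm)$. Second, I would invoke the multivariate form of \eqref{eq:nhlf} in \cite[Thm.~3.12]{MPP3}: it attaches to each cell of $[\lambda]\setminus D$ a linear factor in a family of ``content'' variables $\{y_c\}$ and identifies $\sum_{D\in\ED(\lm)}\prod_{(i,j)\in[\lambda]\setminus D}(\text{factor})$ with a single product (an evaluation of a factorial Schur function). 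The key observation is that substituting into the $y_c$ the relabelling induced by that same reflection turns the multivariate excited-diagram sum into the corresponding sum over $\ED^{\nearrow}(\lm)$, while it only permutes the factors on the product side; hence the two multivariate sums coincide. Third, I would specialize the $y_c$ to the principal evaluation that sends each linear factor to $1/h(i,j)$ --- the same specialization that recovers \eqref{eq:nhlf} from the multivariate identity --- and read off the displayed enumerative identity, which together with \eqref{eq:nhlf} yields \eqref{eq:NE_NHLF_slim}. (Alternatively one may simply cite \cite[\S9.1]{P}, where this NE-excited formula is recorded.)

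I expect the main obstacle to be making ``the relabelling of the $y_c$ induced by the reflection'' precise: one has to pin down the exact change of variables in \cite[Thm.~3.12]{MPP3} determined by the flip, verify that it merely permutes the factors of the product side (so the product is invariant), and check that it commutes with the hook specialization. A secondary, more routine point is the slimness bookkeeping of the first step, which is where the hypothesis $\lambda_d\ge\mu_r+d-r$ genuinely enters (guaranteeing both that $[\widehat\mu]\subseteq[\lambda]$ and that the reflected moves stay inside $[\lambda]$).
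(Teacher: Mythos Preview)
Your approach is essentially what the paper itself indicates: the paper does not give a detailed proof of this corollary but states it as ``a consequence of the multivariate identity in \cite[Thm.~3.12]{MPP3}'' together with the horizontal-flip bijection $\ED(\lm)\leftrightarrow\ED^{\nearrow}(\lm)$ from \cite[\S3.4]{MPP3}, and cites \cite[\S9.1]{P} where the formula is recorded. Your plan---reduce to the equality of the two hook-weighted sums, lift to the multivariate level of \cite[Thm.~3.12]{MPP3}, observe that the flip corresponds to a relabelling of the content variables that leaves the product side invariant, and then specialize---is precisely the mechanism behind that citation, and your caveats (the relabelling must be made precise and checked compatible with the hook specialization; slimness is needed for $[\widehat\mu]\subset[\lambda]$ and for the reflected moves to stay in $[\lambda]$) are the right ones.
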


Next, we show that for slim shapes $\lm$, there is bijection between the reverse excited diagrams and the NE-excited diagrams of $\lm$ that preserve the weights of \eqref{eq:OO} and \eqref{eq:NE_NHLF_slim}, respectively.

\begin{definition}\label{def:map Psi}
Given an excited diagram $D$ in $\ED^{\nearrow}(\lm)$, let $T:=\Psi(D)$ be the tableau  $\mu$ where $T(x,y)=d+1-i_x$ where $(i_x,j_y)$ is the cell in $D$ corresponding to $(d+1-x,y)$ in $[\widehat{\mu}]$. That is $i_x$ is the row where the cell $(d+1-x,y)\in [\widehat{\mu}]$ ended up in after the excited moves. See Figure~\ref{fig: bijection excited rev excited slim shapes} for an example.
\end{definition}

\begin{proposition} \label{prop:weight preserving bijection between NEED and OOT}
For a slim shape $\lm$, the map $\Psi$ defined above is a weight preserving bijection between  $\ED^{\nearrow}(\lm)$ and $\mathcal{OOT}(\lm)$. That is, if $\Psi(D)=T$, then $$\prod_{(i,j)\in D} h(i,j)=\prod_{(i,j)\in [\mu]} (\lambda_{d+1-T(i,j)} +i-j).$$ 
\end{proposition}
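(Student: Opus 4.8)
The plan is to show that $\Psi$ is a bijection and that it is weight-preserving, treating the two claims separately. For bijectivity, I would first verify that $\Psi$ actually lands in $\mathcal{OOT}(\lm)$: given $D \in \ED^{\nearrow}(\lm)$, the cell $(d+1-x,y) \in [\widehat{\mu}]$ moves to row $i_x$ under the NE-excited moves, and since NE-moves only decrease the row index, $i_x \le d+1-x$, so $T(x,y) = d+1-i_x \ge x \ge 1$; similarly $i_x \ge 1$ gives $T(x,y) \le d$. Column-strictness and row-weak-increase of $T$ should follow from the fact that NE-excited moves preserve the relative NW-SE order of cells (this is the exact analogue of $\varphi$ being a bijection in Proposition~\ref{prop:phi bijection ED and flagged tableaux}, just read through the horizontal flip). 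The flag condition $c(u) < \lambda_{d+1-T(i,j)}$ for $u \in [\mu]$ is precisely the condition that cell $(i_x, j_y)$ lies inside $[\lambda]$, which holds because $D \subseteq [\lambda]$. For injectivity and surjectivity, I would invoke the correspondence: $\ED^{\nearrow}(\lm)$ is in bijection with $\ED(\lm)$ (horizontal flip, cited from \cite[\S3.4]{MPP3}), $\ED(\lm)$ is in bijection with $\mathcal{RE}(\lm)$ and hence with $\mathcal{SF}(\lm)$, and $\mathcal{SF}(\lm)$ is in bijection with $\mathcal{OOT}(\lm)$ via $\vartheta^*$ (Proposition~\ref{bij:OOT and flagged skew tableaux}). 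The cleanest route is to check that $\Psi$ agrees with the composite of these known bijections, or — failing that — simply that $\Psi$ is injective (different $D$ place some flipped cell of $[\widehat{\mu}]$ in a different row, hence give different $T$) and that $|\ED^{\nearrow}(\lm)| = |\mathcal{OOT}(\lm)|$, which follows from the slimness equality $\nED(\lm) = \nOOT(\lm)$ of Theorem~\ref{thm: num ED vs OOF} together with the flip bijection $|\ED^{\nearrow}(\lm)| = |\ED(\lm)| = \nED(\lm)$.

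For the weight-preserving claim, the key is a cell-by-cell identification. Fix $(x,y) \in [\mu]$ and let $(i_x, j_y) \in D$ be the corresponding cell, so $T(x,y) = d+1-i_x$. I want to show the hook length $h(i_x, j_y)$ in $[\lambda]$ equals $\lambda_{d+1-T(x,y)} + x - y = \lambda_{i_x} + x - y$. Since excited moves move cells along diagonals, the content is preserved: $c(i_x,j_y) = c(d+1-x,y)$, i.e.\ $j_y - i_x = y - (d+1-x)$, so $j_y = \lambda'_{?}\ldots$ — more usefully, $i_x - j_y = (d+1-x) - y$. Now $h(i_x,j_y) = \lambda_{i_x} + \lambda'_{j_y} - i_x - j_y + 1$. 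The point is that because $D$ is obtained from $[\widehat{\mu}]$ by NE-moves staying inside $[\lambda]$, the cell $(i_x, j_y)$ sits on the "lowest" occupied spot of its diagonal among cells of $D$ that descend from that column of $[\widehat{\mu}]$ — so its hook arm and leg inside $[\lambda]$ are controlled. Concretely I expect $\lambda'_{j_y} - j_y + 1 = i_x - j_y + \ell$ for an appropriate relation, and substituting the content identity $i_x - j_y = d+1-x-y$ should collapse $h(i_x,j_y)$ to $\lambda_{i_x} + x - y = \lambda_{d+1-T(x,y)} + x - y$. Taking the product over all $(x,y) \in [\mu]$ then gives the stated identity.

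The main obstacle will be the per-cell hook-length computation: pinning down exactly why $h(i_x, j_y)$ in $[\lambda]$ — which a priori depends on the full outline of $\lambda$ through the boxes right of and below $(i_x,j_y)$ — reduces to the simple linear expression $\lambda_{i_x} + x - y$. This requires understanding the precise geometry of how a NE-excited diagram for a slim shape is arranged: the flipped inner shape $[\widehat{\mu}]$ is anchored at the bottom-left, the moves push cells up and to the right, and slimness guarantees there is "enough room." I would handle this by induction on the number of NE-excited moves (the base case $D = [\widehat{\mu}]$ is a direct check using $\widehat{\mu} = (0^{d-\ell}, \mu_\ell, \ldots, \mu_1)$), showing that a single NE-move on cell $(i,j) \mapsto (i-1,j+1)$ changes the hook-length product on the left exactly as it changes the evaluated factorial-Schur product on the right — which is essentially the content of the multivariate identity \cite[Thm. 3.12]{MPP3} specialized appropriately. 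Alternatively, and perhaps more safely, I would route the whole argument through the reverse-excited-diagram side: compose $\Psi$ with the flip $\ED^{\nearrow}(\lm) \to \ED(\lm)$ and with $\varphi^{-1}$, landing in $\mathcal{RE}(\lm)$, and then match the broken-diagonal weight $\prod_{(i,j)\in \Br^*(D)} (\lambda_i + d - j)$ of Corollary~\ref{cor:rev excited-form} against $\prod h(i,j)$ — but I suspect the direct inductive computation is cleaner if the geometry cooperates.
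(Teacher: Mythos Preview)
Your proposal has a concrete error in the weight-preserving step that propagates into the whole computation. A NE-excited move sends $(i,j)\mapsto(i-1,j+1)$, so it preserves the \emph{anti-diagonal} $i+j$, not the content $j-i$. Your displayed equation $j_y-i_x=y-(d+1-x)$ is therefore false; the correct invariant is $i_x+j_y=(d+1-x)+y$. Once you have this, the hook computation becomes
\[
h(i_x,j_y)=\lambda_{i_x}+\lambda'_{j_y}-(i_x+j_y)+1=\lambda_{i_x}+\lambda'_{j_y}-(d+1-x+y)+1,
\]
and what you are missing is the one-line consequence of slimness that finishes it: every cell $(i_x,j_y)\in D$ satisfies $j_y\le \lambda_d$, hence $\lambda'_{j_y}=d$, and the expression collapses to $\lambda_{i_x}+x-y=\lambda_{d+1-T(x,y)}+x-y$. (To see $j_y\le\lambda_d$: pass to the flipped SE-excited diagram, use column-strictness of the associated flagged tableau to bound the shift of $(x,y)$ by that of $(r,y)$, and note the latter is at most $d-r$; then $j_y\le \mu_r+d-r\le\lambda_d$.) This is exactly what the paper does, and it obviates your proposed induction on moves and the detour through reverse excited diagrams.

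There is a second smaller issue in your well-definedness argument: the OOT flag condition $y-x<\lambda_{d+1-T(x,y)}$ is \emph{not} the same as $(i_x,j_y)\in[\lambda]$ (work out both sides using $i_x+j_y=d+1-x+y$ and you will see they differ unless $i_x=d$). The paper avoids this entirely by invoking that for slim shapes $\mathcal{OOT}(\lm)=\SSYT(\mu,d)$ (see the proof of Corollary~\ref{cor: prod formula when ED=OOT}), so one only needs $T\in\SSYT(\mu,d)$, and then exhibits the explicit inverse $\Omega(T)=\{(d+1-T(x,y),\,y-x+T(x,y)):(x,y)\in[\mu]\}$. Your injectivity-plus-cardinality route via Theorem~\ref{thm: num ED vs OOF} would also work for bijectivity, but the direct inverse is cleaner.
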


\begin{proof}
We need to prove that $\Psi$ is a well-defined map between $\ED^{\nearrow}(\lm)$ and $\mathcal{OOT}(\lm)$. For $D$ in $\ED^{\nearrow}(\lm)$, the tableau $T:=\Psi(D)$ of shape $\mu$ is semistandard by a similar argument as the one in the proof of \cite[Prop. 3.6]{MPP1} (induction on the number of excited moves). Also by construction, the entries in $T$ are bounded by $d$ and so $T\in \SSYT(\mu,d)$. Since $\lm$ is slim, by the argument in the proof of Corollary~\ref{cor: prod formula when ED=OOT} we have that $\mathcal{OOT}(\lm)=\SSYT(\mu,d)$. Thus $\Psi$ is well defined. 

Next, we show that $\Psi$ is bijective by building its inverse. Given $T$ in $\mathcal{OOT}(\lm)$, let $D:=\Omega(T)$ be the set $\{(d+1-T(x,y), y-x+T(x,y)) \mid (x,y) \in [\mu]\}$. This map is well-defined by an argument similar to the one in the proof of  \cite[Prop. 3.6]{MPP1}. By construction, we have that $\Omega=\Psi^{-1}$, as desired.

Finally, we show that $\Psi$ is weight preserving. If $\Psi(D)=T$ with $T(x,y)=d+1-i_x$ where $(i_x,j_y)$ is the cell in $D$ corresponding to the cell $(d+1-x,y)$ in $[\widehat{\mu}]$, then $i_x=d+1-x-c$, and $j_y=y+c$ for some nonnegative integer $c$ (how many times the cell $(d+1-x,y)$ was excited to obtain $(i_x,j_y)$). Since $\lm$ is slim, we have that $\lambda'_{j_x}=d$.  Thus the hooklength of cell $(i_x,j_y)$ equals 
\[
h(i_x,j_y)=\lambda_{i_x}-i_x+\lambda'_{j_y}-j_y+1= \lambda_{d+1-T(x,y)} +x-y,
\]
as desired.
\end{proof}

\begin{example} \label{ex: weight preserving bijection for slim shapes}
Figure~\ref{fig: bijection excited rev excited slim shapes} illustrates an excited diagram $D$ in $\ED^{\nearrow}(\lm)$ for the slim shape $\lm=775/21$, its corresponding tableau $T=\Psi(D)$ in $\mathcal{OOT}(\lm)$, and  the corresponding reverse excited diagram $D'$ from $T$. The weight of the hook-lengths of $D$ is $8\cdot 6\cdot 5$ which matches the weight of the broken diagonals of $D'$. 
\end{example}

\begin{corollary}
For a slim skew shape $\lm$ of size $n$ we have that the variation of the NHLF in \eqref{eq:NE_NHLF_slim} is term-by-term equal to \eqref{eq:OO}.
\end{corollary}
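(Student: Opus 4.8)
The plan is to deduce this directly from Proposition~\ref{prop:weight preserving bijection between NEED and OOT}, since essentially all the work has already been done there; the only thing left is to match the two formulas summand by summand. First I would observe that both \eqref{eq:NE_NHLF_slim} and \eqref{eq:OO} have exactly the same global prefactor $\frac{n!}{\prod_{u\in[\lambda]}h(u)}$, so it suffices to exhibit a bijection between the indexing sets $\ED^{\nearrow}(\lm)$ and $\mathcal{OOT}(\lm)$ that identifies the summand $\prod_{(i,j)\in D} h(i,j)$ with the summand $\prod_{(i,j)\in[\mu]}(\lambda_{d+1-T(i,j)}+i-j)$.

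Next I would invoke Proposition~\ref{prop:weight preserving bijection between NEED and OOT}: for a slim skew shape $\lm$, the map $\Psi\colon \ED^{\nearrow}(\lm)\to \mathcal{OOT}(\lm)$ is a bijection, and it is weight preserving in precisely the sense that if $\Psi(D)=T$ then $\prod_{(i,j)\in D}h(i,j)=\prod_{(i,j)\in[\mu]}(\lambda_{d+1-T(i,j)}+i-j)$. Composing these two observations, the bijection $\Psi$ matches the $D$-summand of \eqref{eq:NE_NHLF_slim} with the $\Psi(D)$-summand of \eqref{eq:OO}, which is exactly the assertion that the two formulas are term-by-term equal (under the identification of terms given by $\Psi$).

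I do not expect any real obstacle here: the substantive content—well-definedness of $\Psi$, its bijectivity via the explicit inverse $\Omega$, and the hook-length computation $h(i_x,j_y)=\lambda_{d+1-T(x,y)}+x-y$—is all contained in Proposition~\ref{prop:weight preserving bijection between NEED and OOT}. The only point worth stating carefully is that "term-by-term equal" is to be read as "the summand of \eqref{eq:NE_NHLF_slim} indexed by $D$ equals the summand of \eqref{eq:OO} indexed by $\Psi(D)$", together with the trivial check that the prefactors $\frac{n!}{\prod_{u\in[\lambda]}h(u)}$ coincide; so the corollary is immediate.
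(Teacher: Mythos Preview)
Your proposal is correct and mirrors the paper's own proof: both simply invoke the weight-preserving bijection $\Psi$ from Proposition~\ref{prop:weight preserving bijection between NEED and OOT} to match each summand of \eqref{eq:NE_NHLF_slim} with the corresponding summand of \eqref{eq:OO}, noting the common prefactor $\frac{n!}{\prod_{u\in[\lambda]}h(u)}$.
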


\begin{proof}
    This follows immediately by applying the bijection $\Psi$, which is weight preserving by Proposition~\ref{prop:weight preserving bijection between NEED and OOT}, to rewrite \eqref{eq:NE_NHLF_slim} in terms of Okounkov--Olshanski tableaux to obtain \eqref{eq:OO}.  
\end{proof}

\begin{figure}
\begin{center}
\includegraphics[scale=0.8]{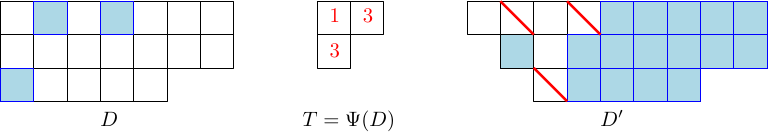}
    \caption{Example of the weight preserving bijection  between an excited diagrams in $\ED^{\nearrow}(\lm)$ (left) and a reverse excited diagrams in $\mathcal{RED}(\lm)$ (right) for a slim shape via tableaux in $\mathcal{OOT}(\lm)$ (middle).}
\label{fig: bijection excited rev excited slim shapes}
\end{center}
\end{figure}


\section{Hillman-Grassl bijection on minimal skew tableaux}\label{sec: HG bijection on min SSYT}

In this section we show that the Hillman--Grassl bijection restricted to the minimal SSYT of shape $\lm$ is equivalent to the inverse of bijection $\Phi$ from Section~\ref{sec: minimal SSYT}. Recall the definition of the array $A_D$ for an excited diagram $D$ in $\ED(\lm)$ and of the arrays $\mathcal{A}^*(\lm)$ from Section~\ref{sec: HG background}.

\subsection{Equivalence of Hillman--Grassl map and \texorpdfstring{$\Phi^{-1}$}{Phi inverse} on minimal SSYT}

We state the main result of this section.

\PhivsHG

To prove this result we will need the following two lemmas that use the known connection between the Hillman--Grassl correspondence and the RSK bijection (see Theorem~\ref{thm: Greene's theorem for HG}). Let $A({\gamma_i(D)})$ be the a subarray of $A_D$ of shape $\lambda \setminus \bigcup_{j=1}^{i-1} \gamma_j(D)$ with support on the broken diagonal of $\gamma_i(D)$.

\begin{lemma} \label{lemma: Greene's theorem for A_Ds}
For $D\in \ED(\lm)$ and $k$  an integer $1-\ell(\lambda) \leq k \leq \lambda_1-1$, let $T:=\HG^{-1}(A_D)$ and $\nu=\nu^{(k)}$ be the partition whose parts are the entries in the $k$-diagonal of $T$. Then $\nu_i=|A({\gamma_i(D)})_k|$, that is $\nu_i$ is the number of broken diagonals in $\gamma_i(D)\cap \square^{\lambda}_k$.
\end{lemma}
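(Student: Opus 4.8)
\textbf{Proof plan for Lemma~\ref{lemma: Greene's theorem for A_Ds}.}

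The plan is to combine the Greene-type theorem for the Hillman--Grassl correspondence (Theorem~\ref{thm: Greene's theorem for HG}(i)) with the structure of the array $A_D$ coming from the Kreiman decomposition of $D$. By Theorem~\ref{thm: HG skew SSYT}, $T := \HG^{-1}(A_D)$ is a genuine skew SSYT of shape $\lm$, so the $k$-diagonal of $T$ is a partition $\nu = \nu^{(k)}$, and by Theorem~\ref{thm: Greene's theorem for HG}(i) we have $\nu_1 + \cdots + \nu_t = ac_t((A_D)_k)$ for all $t$, where $ac_t$ is the maximal combined weight of $t$ non-intersecting NE lattice paths inside the restriction of $A_D$ to the rectangle $\square^{\lambda}_k$. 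Since the entries of $A_D$ are $0$-$1$ and supported exactly on the broken diagonals $\Br(D)$, the quantity $ac_t((A_D)_k)$ counts the maximum number of cells of $\Br(D) \cap \square^{\lambda}_k$ that can be covered by $t$ non-intersecting NE paths. So it suffices to show that this maximum is achieved by taking the paths to follow the first $t$ Kreiman paths $\gamma_1(D), \ldots, \gamma_t(D)$ (restricted to $\square^\lambda_k$), and that along $\gamma_i(D)$ the broken diagonals are precisely its up-steps/outside corners — which is exactly Proposition~\ref{prop:broken diagonals in gamma}.

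First I would record the two structural facts I need: (a) by Proposition~\ref{prop:broken diagonals in gamma}, $\Br(D) = \bigsqcup_{i=1}^k u(\gamma_i(D))$, so the broken diagonals are distributed one bundle per Kreiman path, sitting on the vertical steps (and right-up corners) of $\gamma_i(D)$; and (b) the Kreiman paths $\gamma_1(D), \ldots, \gamma_k(D)$ are themselves non-intersecting NE lattice paths in $[\lambda]$, ordered so that $\gamma_1(D)$ is the topmost/rightmost. Intersecting each $\gamma_i(D)$ with the Durfee-type rectangle $\square^{\lambda}_k$ gives $t$ non-intersecting NE paths whose total weight in $(A_D)_k$ is $\sum_{i=1}^t |A(\gamma_i(D))_k|$ — note the $k$-th diagonal and the subarray decomposition are compatible because $A(\gamma_i(D))$ is by definition the part of $A_D$ on the broken diagonal of $\gamma_i(D)$ inside $\lambda \setminus \bigcup_{j<i}\gamma_j(D)$, and the $\gamma_j(D)$ for $j < i$ lie strictly above-right of $\gamma_i(D)$. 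This shows $ac_t((A_D)_k) \geq \sum_{i=1}^t |A(\gamma_i(D))_k|$.

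For the reverse inequality — that no collection of $t$ non-intersecting NE paths can cover more weight — I would argue that any NE path in $\square^\lambda_k$ meets the support $\Br(D)$ in a set of cells that form a "sub-up-step-sequence" of some single Kreiman path, using the fact that broken diagonals on distinct Kreiman paths are separated (each diagonal $\dd_t(D)$ contains exactly one broken diagonal cell, by the recursive definition in Definition~\ref{def:broken diagonals}), so a monotone NE path can pick up at most one broken-diagonal cell per diagonal, and consecutive choices are forced to track a Kreiman path. Then a non-intersecting family of $t$ such paths can collectively touch at most the broken diagonals of $t$ Kreiman paths, and since the Kreiman paths are linearly ordered by the "above-right" relation, the optimal choice is the top $t$ of them; this is a standard exchange/Lindström--Gessel--Viennot-style argument. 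Combining the two inequalities gives $ac_t((A_D)_k) = \sum_{i=1}^t |A(\gamma_i(D))_k|$ for all $t$, hence $\nu_t = |A(\gamma_t(D))_k|$ by telescoping, which is the claim; the final sentence "that is, $\nu_i$ is the number of broken diagonals in $\gamma_i(D) \cap \square^\lambda_k$" is then immediate from Proposition~\ref{prop:broken diagonals in gamma} and the $0$-$1$ nature of $A_D$.

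\textbf{Main obstacle.} The delicate point is the upper bound $ac_t((A_D)_k) \leq \sum_{i=1}^t |A(\gamma_i(D))_k|$: one must verify that an \emph{arbitrary} (not Kreiman-shaped) non-intersecting NE family cannot do better by "jumping" between the broken-diagonal bundles of different Kreiman paths. I expect this to reduce cleanly to the one-broken-diagonal-cell-per-diagonal property together with the fact that the broken diagonals of $\gamma_i(D)$ lie on its up-steps, so that an NE path that collects $m$ broken-diagonal cells must weakly contain an up-segment of some $\gamma_i(D)$ of length $m$; but making this rigorous — especially handling the right-up ("outside") corners and the boundary of $\square^\lambda_k$ — is where the real work lies, and it may be cleanest to phrase it directly in terms of the non-intersecting lattice path picture of $\NIP(\lm)$ and Kreiman's characterization rather than manipulating the array.
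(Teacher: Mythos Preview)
Your overall strategy matches the paper's proof: reduce via Theorem~\ref{thm: Greene's theorem for HG}(i) to showing $ac_t((A_D)_k) = a_1 + \cdots + a_t$ where $a_i := |A(\gamma_i(D))_k|$; obtain the lower bound from the Kreiman paths themselves; and for the upper bound exploit that each broken diagonal $d_i \subset \Br(D)$ is a strictly SE set (no two of its cells share a row or column), so any single NE path $\alpha$ satisfies $|d_i \cap \alpha| \leq 1$. You correctly isolate the one nontrivial step.

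The gap is in your proposed resolution of that step. The claim that the broken-diagonal cells picked up by an NE path must lie on a \emph{single} Kreiman path is false: an NE path can collect up-steps from several $\gamma_j(D)$'s by moving from an inner path to an outer one, so no ``forced tracking'' argument is available. The paper avoids any path-tracking with a direct double count. From $|d_i \cap \alpha_j| \leq 1$ for each of the $t$ paths $\alpha_1,\ldots,\alpha_t$ one gets
\[
ac_t((A_D)_k) \;\leq\; \sum_i \min\bigl(|d_i \cap \square^{\lambda}_k|,\; t\bigr).
\]
Now the cells of a fixed broken diagonal $d_i$ lie one on each Kreiman path it meets, and because the $\gamma_j(D)$ are nested with the first paths containing the NW-most cells of each $d_i$, the cells of $d_i$ inside $\square^{\lambda}_k$ sit precisely on $\gamma_1(D),\ldots,\gamma_m(D)$ with $m = |d_i \cap \square^{\lambda}_k|$. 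Hence $\min(|d_i \cap \square^{\lambda}_k|,t)$ counts exactly the $d_i$-cells on $\gamma_1(D),\ldots,\gamma_t(D)$, and summing over all $d_i$ gives $a_1 + \cdots + a_t$. This closes the upper bound without ever constraining the shape of the $\alpha_j$, and is exactly the clean reformulation in terms of $\NIP(\lm)$ you gesture at in your last sentence.

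One further point the paper checks that you omit: one must verify $a_1 > a_2 > \cdots$ so that the telescoped values really are the parts of $\nu$ in order. The paper observes that inside $\square^{\lambda}_k$ the relevant $\gamma_i(D)$ start along the bottom edge and end along the right edge, nested inside one another, so the number of up-steps (which is $a_i$) is strictly decreasing in $i$.
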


\begin{proof}
Since $A_D=\HG(T)$, by Theorem~\ref{thm: Greene's theorem for HG} it suffices to show that $ac_t(A_k) = a_1+a_2+\cdots + a_t$, where $a_i = |A({\gamma_i(D)})_k|$. We consider only $\gamma_i(D)$'s which intersect $\square^\lambda_k$ and index them in order from the NW-most to the SE-most within $\square^{\lambda}_k$ (we can order them, because every such path will intersect the diagonal $j-i=k$). 

First, we note that $a_1 >a_2 >\cdots$: this is because $a_i$ is equal to the height of $\gamma_i(D)$ minus 1, and the paths within $\square^{\lambda}_k$ start along the lower horizontal border at points $P_i$ and end on the right vertical border at points $Q_i$, so they end below each other. 

Next, note that $ac_t(A_k) \geq a_1+\cdots +a_t$, because the 1s in each $\gamma_i(D)$ form a NE path (i.e. increasing sequence).

To show the opposite inequality, we consider the broken diagonals in $\square^{\lambda}_k$, that is $\Br(D)\cap \square^{\lambda}_k$. Let $\alpha_1 \sqcup \cdots \sqcup \alpha_t$ be a disjoint collection of maximal nonoverlapping NE paths, so $ac_t(A_k) = |\alpha_1|+\cdots+|\alpha_t|$. We can assume that these paths are nonintersecting also, as we can relabel their parts at  intersections. The diagonal steps in each broken diagonal form a SE monotone path, i.e. no squares in the same row or column. Thus for every broken diagonal $d_i$, we must have $|d_i \cap \alpha_j| \leq 1$.  Thus, 
$$ac_t(A_k) \leq \sum_i \max\{ |d_i \cap \square^{\lambda}_k |,t\}.$$
We now show that this upper bound is exactly the contribution from the $\gamma(D)$'s. We have that $d_i \cap \square^{\lambda}_k$ consists of the vertical steps of the $\gamma(D)$ paths which intersect that broken diagonal. As the $\gamma$ paths are greedily selected from the inner shape outwards, the first paths will contain the first (NW-most) points from the corresponding $d_i$'s. Thus
$$|\{ d_i \cap \square^{\lambda}_k \}| = \max\{ j: \gamma_j(D) \cap d_i \cap \square^{\lambda}_k\}, $$
and since we are looking at the maximum $t$ many, we naturally limit ourselves to $j \leq t$. Thus
 we have
$$ac_t(A_k) \leq \sum_i \max\{ |d_i \cap \square^{\lambda}_k |,t\} = |\gamma_1(D) \cap \square^{\lambda}_k|+\cdots+|\gamma_t(D) \cap \square^{\lambda}_k| =a_1+\cdots+a_t,$$
and the proof is done.

\end{proof}

The next lemma confirms that counting broken diagonals in the rectangular subarray $(A_D)_k$ is another way of describing the bijection $\Phi$.

\begin{lemma} \label{lemma: Greene's theorem is Phi}
For $D\in \ED(\lm)$ and  $k$ an integer $1-\ell(\lambda) \leq k \leq \lambda_1-1$, let $T':=\Phi(D) \in \SSYT_{\min}(\lm)$ and and $\nu=\nu^{(k)}$ be the partition whose parts are the entries in the $k$-diagonal of $T'$. Then $\nu_i=|A({\gamma_i(D)})_k|$, that is $\nu_i$ is the number of broken diagonals in $\gamma_i(D)\cap \square^{\lambda}_k$.
\end{lemma}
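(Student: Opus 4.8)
The plan is to compute the $k$-diagonal of $T' = \Phi(D)$ directly from the construction of $\Phi$ in Definition~\ref{def: phi* bijection}, and to identify each entry on that diagonal with a count of broken diagonals in an initial segment of a Kreiman path restricted to $\square_k^\lambda$. First I would fix $k$ and consider only those Kreiman paths $\gamma_i(D)$ that meet $\square_k^\lambda$, indexing them NW-to-SE exactly as in the proof of Lemma~\ref{lemma: Greene's theorem for A_Ds}; call the corresponding Lascoux--Pragacz strips $\theta_i$. By Proposition~\ref{prop: cutting strip distance} and Proposition~\ref{prop: corr gammas and thetas}, the strips $\theta_i$ meeting the $k$-diagonal are precisely those whose $\gamma_i(D)$ meet $\square_k^\lambda$, and each such $\theta_i$ contributes exactly one cell to the $k$-diagonal of $[\lambda/\mu]$ (the unique cell of $\theta_i$ with content $k$). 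So $\nu^{(k)}$ has as many parts as there are such strips, matching the number of nonzero $|A(\gamma_i(D))_k|$.

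Next I would pin down the value $T'(c_i)$ where $c_i = (i_r, j_r)$ is the cell of $\theta_i$ with content $k$. By step (3) of Definition~\ref{def: phi* bijection}, the entries of $T'$ along a column segment of $\theta_i$ are consecutive, and by steps (1)--(2) the bottom entry of the $n$th column of $\theta_i$ is $\sum_{m\le n} b_m$, where $b_m$ is the number of broken diagonals on the $m$th column of $\gamma_i(D)$. Walking up/down $\theta_i$ from that bottom cell to the cell $c_i$ of content $k$, the consecutive-difference rule together with the geometry of border strips shows that $T'(c_i)$ equals the total number of broken diagonals lying on the portion of $\gamma_i(D)$ that is weakly NW of the diagonal of content $k$ — and by Proposition~\ref{prop:broken diagonals in gamma} these broken diagonals are exactly the up-steps of $\gamma_i(D)$, so this count is precisely the number of vertical steps of $\gamma_i(D)$ inside $\square_k^\lambda$, i.e.\ $|A(\gamma_i(D))_k|$. (Here I would use Lemma~\ref{lemma: gamma-theta} to align the column indices of $\theta_i$ and $\gamma_i(D)$, shifted by $\epsilon_i$, just as in the proof of Lemma~\ref{lemma: beta and delta}.) I would also check that $T'(c_1) > T'(c_2) > \cdots$, which holds because the $\gamma_i(D)$ restricted to $\square_k^\lambda$ have strictly decreasing heights, exactly as argued for the $a_i$ in Lemma~\ref{lemma: Greene's theorem for A_Ds}; this guarantees that reading the $k$-diagonal of $T'$ from top to bottom already gives a partition, so $\nu_i = T'(c_i) = |A(\gamma_i(D))_k|$.

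The main obstacle I anticipate is the bookkeeping in the middle step: turning "consecutive entries along each column segment of $\theta_i$, with the bottom entry a prefix sum of the $b_m$" into the clean statement "$T'(c_i)$ counts up-steps of $\gamma_i(D)$ in $\square_k^\lambda$." One has to be careful that a column of $\theta_i$ may straddle the content-$k$ diagonal, that border strips can go up or to the right between consecutive columns, and that the cell of content $k$ in $\theta_i$ need not be at the bottom of its column segment; the cancellation that makes the count come out to exactly the vertical steps weakly NW of the diagonal is where the structure of border strips (no $2\times 2$ box) and the column-difference-$1$ rule are both essential. Once that identity is established, the conclusion is immediate since $|A(\gamma_i(D))_k|$ was, by definition, the number of broken diagonals in $\gamma_i(D) \cap \square_k^\lambda$. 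Comparing with Lemma~\ref{lemma: Greene's theorem for A_Ds}, both $\HG^{-1}(A_D)$ and $\Phi(D)$ have the same $k$-diagonal partition for every $k$, which is what Theorem~\ref{thm: phi vs hg} will exploit.
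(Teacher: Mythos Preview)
Your proposal is correct and follows essentially the same approach as the paper's proof. The paper carries out precisely the bookkeeping you identify as the main obstacle: it uses Lemma~\ref{lemma: gamma-theta} to align the column indices of $\theta_i$ and $\gamma_i(D)$ (shifted by $\epsilon_i$), writes $\nu_i$ as the prefix sum of broken diagonals up to the relevant column of $\gamma_i(D)$ minus the vertical offset $s_i-u$, and then checks via the same $\epsilon_i$-shift that this equals the number of broken diagonals of $\gamma_i(D)$ lying in $\square_k^\lambda$.
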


\begin{proof}
First, recall that $T'$ is decomposed into Lascoux--Pragacz paths $\theta_i,\dots, \theta_m$, where $\theta_1$ is the of $\lm$ wrapping around $\la$. Then for the $k$-diagonal of $T'$ with values $\nu_1,\dots, \nu_t$, the cell with the value $\nu_i$ is contained in $\theta_{i'}$ for some $i'\geq i$, where $\epsilon_{i'} = i-1$ and the cell $(u_1,v_1)\in \theta_1$ with value $\nu_1$ is always the most outer element of the $k$-diagonal. Note that the cell $(u_1,v_1)$ is also the most outer corner of the corresponding $\square^{\lambda}_k$ in $A_D$. Also, as stated in Definition~\ref{def: phi* bijection},  the $j$th column of $\gamma_{i'}(D)$ corresponds to the $j$th column of $\theta_{i'}$, starting from the left-most column. Denote $t_i$ and $s_i$ the starting column and row of $\theta_{i'}$ respectively. Let $(u,v)$ be the cell with value $\nu_i$, which is on the $j$th column of $\theta_{i'}$, i.e. column $t_i + j-1$. Then by the construction of $\Phi$ in Definition~\ref{def: phi* bijection} (1) and (2), we know that $\nu_i= T'(u,v)$ is the number of accumulated broken diagonals of $\gamma_i(D)$ until the $j$th column of this path and by Definition~\ref{def: phi* bijection}~(3) we need to subtract the number $s_i-u$ of north steps from the initial cell, thus 
\begin{equation} \label{eq: formula nui}
\nu_i = |A(\gamma_i(D))|_j-s_i+u,
\end{equation}
where $|A(\gamma_i'(D))|_j$ is the number of broken diagonals of $\gamma_{i'}(D)$ until the $jth$ column.
For the Kreiman paths in $A_D$, denote by $g_i$ and $f_i$ the starting column and row of $\gamma_{i'}(D)$, respectively. The $j$th column of $\gamma_{i'}(D)$ is then $g_i + j -1$. By Lemma~\ref{lemma: gamma-theta} we know that $g_i-t_i = \epsilon_{i'}$, the diagonal distance from $\theta_1$ to $\theta_i$ (and similarly, also $f_i-s_i = \epsilon_{i'}$). So the $j$th column of $\gamma_i(D)$ is $t_i+j-1+\epsilon_i$. Recall that the cell with value $\nu_i$ of the $k$-diagonal is on column $t_i+j-1$. Then the column $t_i+j-1+\epsilon_{i'}$, the $j$th column of $\gamma_i(D)$, is the last column in $\square^{\lambda}_k$. Also the rectangle $\square_k^{\lambda}$  does not include the broken diagonals of $\gamma_i(D)$ that lie below the position $(u_1,v_1)$, so we need to subtract $f_i-u_1$ many broken diagonals from $|A(\gamma_i(D))|$ to obtain $|A(\gamma_i(D))_k|$. That is, we have that 
\begin{equation} \label{eq formula A(gammai)_k}
    |A(\gamma_i(D))_k| = |A(\gamma_i(D))|_j - f_i-u_1.
\end{equation}
Note that the diagonal distance between the cell with the value $\nu_i$ and $\nu_1$ is $\epsilon_i$. Then $u_1 - u= \epsilon_i$. Then by Lemma~\ref{lemma: gamma-theta}, $f_i- u_1 = s_i + \epsilon_i - (u - \epsilon_i) =s_i - u$. Combining this with \eqref{eq: formula nui} and \eqref{eq formula A(gammai)_k} we obtain that $\nu_i=|A({\gamma_i(D)})_k|$, as desired.  
\end{proof}

We are now ready to prove the main result of this section.

\begin{proof}[Proof of Theorem~\ref{thm: phi vs hg}]
For an excited diagram $D\in \ED(\lm)$, let $T:=\HG^{-1}(A_D)$ and $T'=\Phi(D)$. By Lemma~\ref{lemma: Greene's theorem for A_Ds} and Lemma~\ref{lemma: Greene's theorem is Phi}, the tableaux $T$ and $T'$ agree on each $k$-diagonal for $1-\ell(\lambda) \leq k \leq \lambda_1-1$. Thus $T=T'$ as desired.
\end{proof}

\begin{example} \label{ex:GreeneHGPhi}
Figure~\ref{fig:exGreenesHGPhi} shows an excited diagram $D$ for the shape $\lm=55552/21$ with its broken diagonals and Kreiman paths $\gamma_1,\gamma_2,\gamma_3$ and the tableau $T=\HG^{-1}(A_D)=\Phi(D)$. The parts of the partition $\nu=\nu^{(1)}=321$ obtained from the $1$-diagonal of $T$ can be read from the number of broken diagonals in $\gamma_i(D)$ inside the rectangle $\square_1^{\lambda}$ for $i=1,2,3$, respectively.

\end{example}

\begin{remark}
Lemma~\ref{lemma: Greene's theorem for A_Ds} does not hold for general excited arrays with nonnegative entries other than $0/1$. This is because the longest ascending chain might involve entries from distinct $\gamma_i$. See Example~\ref{ex:counterexGreenesHGPhi}.
\end{remark}

\begin{example}  \label{ex:counterexGreenesHGPhi}
Figure~\ref{fig:counterexGreenesHGPhi} shows an excited array $A'_D$ for the same excited diagram as Example~\ref{ex:GreeneHGPhi} and the tableau $T'=\HG^{-1}(A'_D)$. The first part $5$ of the partition $\nu=\nu^{(1)}=521$ obtained from the $1$-diagonal of $T'$ corresponds to an ascending chain involving entries from two paths $\gamma_1$ and $\gamma_2$.
\end{example}

\begin{figure}
 \begin{subfigure}[normal]{0.4\textwidth}
     \centering
    \includegraphics[scale=0.7]{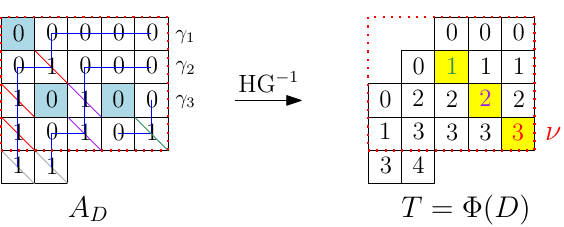}
    \caption{}
    \label{fig:exGreenesHGPhi}
\end{subfigure}
\quad 
\begin{subfigure}[normal]{0.4\textwidth}
\centering
\raisebox{13pt}{
    \includegraphics[scale=0.7]{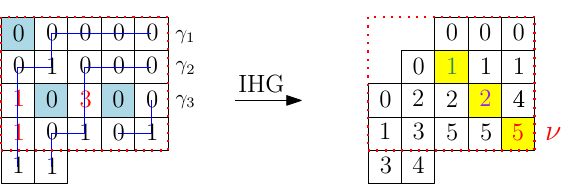}}
    \caption{}
    \label{fig:counterexGreenesHGPhi}
\end{subfigure}    
    \caption{(a) Illustration of the relation between Greene's theorem and the map $\Phi$. The parts of $\nu$ encode the number of broken diagonals in each $\gamma_i(D)$ inside the rectangle $\square^{\lambda}_k$. (b) Counterexample of why this relation does not hold if the entries of the array are other than $0/1$.}
    \label{fig:exGreenes}
\end{figure}

\subsection{Leading terms of the SSYT \texorpdfstring{$q$}{q}-analogue of Naruse's formula} \label{sec:qnumED}

In this section we give an expression for the leading terms of \eqref{eq:qnhlf}.

\leadtermsqNHLF

\begin{proof}
From Theorem~\ref{thm: phi vs hg} and the fact that the Hillman--Grassl bijection is weight preserving \eqref{eq: HG hook weight} we have that 
\[
\sum_{T \in \SSYT_{\min}(\lm)} q^{|T|} \,=\, \sum_{D \in \ED(\lm)} q^{\sum_{u \in A_D} h(u)}.
\]
By \cite[Prop. 7.16]{MPP1} we have that $\sum_{u \in A_D} h(u) = \sum_{(i,j) \in \lambda\setminus D} (\lambda'_j-i)$ and the result follows.
\end{proof}

This result motivates calling the polynomial in \eqref{eq:lead term qnhlf} a $q$-analogue of the number of excited diagrams of shape $\lm$ and denoting it by $\mathcal{E}_q(\lm)$. Next, we give some examples of this polynomial. It would be of interest to study $\mathcal{E}_q(\lm)$ further.

\begin{example}
For the shape $\lm=55332/22$ whose excited diagrams and minimal tableaux are illustrated in Figures~\ref{fig: min SSYT example},\ref{fig:excited diagrams} we have that 
\[
\mathcal{E}_q(\lm) \,=\, q^{14}(q^2+q^3+q^4+q^5+q^6).
\]
\end{example}

\begin{example}
Let $m$ and $n$ be nonnegative integers. For the reverse hook $\lambda/\mu = (m+1)^{n+1}/m^n$, the number of excited diagrams is given by $\binom{n+m}{n}$ (Proposition~\ref{prop:binomial case}) and these diagrams are in bijection with lattice paths $\gamma$ from $(m+1,1)$ to $(1,n+1)$ inside the $(m+1)\times(n+1)$ rectangle (see \cite[Ex. 3.2]{MPP1}).  A direct consequence of \cite[Ex. 3.2]{MPP1} and \cite[Prop. 1.7.3]{EC1} is the following
\[
\mathcal{E}_q((m+1)^{n+1}/m^n)  \,=\, q^{\binom{n+1}{2}} \sum_{\gamma:(m+1,1)\to(1,n+1)}  q^{\textup{area}(\gamma)} \,=\, q^{\binom{n+1}{2}} \qbinom{n+m}{n}, 
\]
where $\textup{area}(\gamma)$ is the number of cells in the rectangle $(m+1)\times (n+1)$ South East of $\gamma$, and $\qbinom{a}{b}$ denotes the {\em $q$-binomial coefficient}.
\end{example}

\begin{example}
For the reverse hook $\lm=\delta_{n+2}/\delta_n$, the number of excited diagrams is given by the Catalan number $C_n$ (Proposition~\ref{prop:Catalan case}) and these diagrams are in bijection with Dyck paths (see \cite[\S 8]{MPP2}).  Let $\textup{Dyck}(n)$ be the set of Dyck paths $\mathsf{p}$ of size $n$, a direct consequence of \cite[Cor. 1.7]{MPP2}  is the following
\begin{align*}
\mathcal{E}_q(\delta_{n+2}/\delta_n)  \,=\, \sum_{\mathsf{p} \in \textup{Dyck}(n)} q^{\sum_{(a,b)\in \mathsf{p}} b} 
\,=\, q^n \sum_{\mathsf{p} \in \textup{Dyck}(n)} q^{2\cdot \textup{area}(\mathsf{p})} \,=\, q^n C_n(q^2),
\end{align*}
where $\textup{area}(\mathsf{p})$ denotes the area of the Dyck path $\mathsf{p}$ and $C_n(q)$ denotes the {\em $q$-Catalan number} (see \cite[Ch. 1]{Hagqtcat}).
\end{example}

\subsection{Additivity consequences}

In this section as a consequence of Theorem~\ref{thm: phi vs hg}, we derive some linearity properties of the Hillman--Grassl correspondence on minimal SSYT. 

Given $T$ in $\SSYT(\lm)$, let $T_{\theta_i}$ be the tableau of shape $\lambda \setminus \bigcup_{j=1}^{i-1} \theta_j$ with entries of $T$  in $\theta_i$. We  define the sum  $A_{\gamma_i(D)}+A_{\gamma_{i+1}(D)}$ as an array of the same shape as $A_{\gamma_i(D)}$ with supports on the broken diagonals of $\gamma_i(D)$ and $\gamma_{i+1}(D)$. In other words the array value of $(u,v)\in  A_{\gamma_i} + A_{\gamma_{i+1}}$ is $A_{\gamma_{i}}(u,v) + A_{\gamma_{i+1}}(u-t, v-t)$ where $t = \epsilon_{i+1} - \epsilon_{i}$.

\begin{corollary}\label{cor: union everything}
Let $D \in \ED(\lm)$ and $T=\Phi(D) \in \SSYT_{\min}(\lm)$. Then for $i=1,\ldots,k$, we have that
\[\sum_{i=1}^k HG(T_{\theta_i}) = HG(T).\]
\end{corollary}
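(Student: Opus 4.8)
The claim is an additivity statement for the Hillman--Grassl map restricted to the Lascoux--Pragacz strips of a minimal SSYT. The natural strategy is to leverage Theorem~\ref{thm: phi vs hg}, which identifies $\HG^{-1}$ with $\Phi$ on $\SSYT_{\min}(\lm)$, together with the diagonal-wise characterization provided by Lemmas~\ref{lemma: Greene's theorem for A_Ds} and~\ref{lemma: Greene's theorem is Phi}. First I would unwind the statement: since $\Phi(D)=T=\HG^{-1}(A_D)$, we have $\HG(T)=A_D$, and the array $A_D$ is by construction the disjoint (up to the diagonal shifts $t=\epsilon_{i+1}-\epsilon_i$) union of the subarrays $A_{\gamma_i(D)}$ supported on the broken diagonals of the Kreiman paths $\gamma_i(D)$. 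So the right-hand side $\HG(T)$ decomposes cleanly into pieces indexed by $i$, and the content of the corollary is that the $i$-th piece $A_{\gamma_i(D)}$ equals $\HG(T_{\theta_i})$, where $T_{\theta_i}$ is the restriction of $T$ to the strip $\theta_i$ inside the shape $\lambda\setminus\bigcup_{j<i}\theta_j$.

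The key step is therefore to prove, for each $i$, that $\HG(T_{\theta_i}) = A_{\gamma_i(D)}$ as arrays of shape $\eta^{(i)}:=\lambda\setminus\bigcup_{j<i}\theta_j$. I would do this by induction on $i$. The base case $i=1$ is essentially the full statement of Theorem~\ref{thm: phi vs hg} applied to the border strip $\theta_1$ together with the fact that the outermost Kreiman path carries exactly the broken diagonals of the outer rim; more precisely one should observe that $T_{\theta_1}$ is itself a minimal skew SSYT of the border-strip shape $\theta_1$ (viewed inside $\lambda$), its excited diagram under $\Phi^{-1}$ is the restriction of $D$ recorded by $\gamma_1(D)$, and $\HG$ on a single border strip can be computed directly because each HG lattice path in Definition~\ref{def: HG} either stays within the strip or has a prescribed shape. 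For the inductive step, the point is that after the first strip is removed, the remaining tableau $T\setminus T_{\theta_1}$ is again a minimal skew SSYT of shape $\eta^{(2)}/\mu$ with Lascoux--Pragacz decomposition $(\theta_2,\dots,\theta_k)$ and Kreiman decomposition obtained by the diagonal slide described in \cite[Lemma 3.8]{MPP2}; applying the induction hypothesis to $\eta^{(2)}/\mu$ then finishes the count.

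To make the inductive step rigorous I would use the diagonal-wise description: by Lemma~\ref{lemma: Greene's theorem for A_Ds}, for each content $k$ the $k$-diagonal of $\HG^{-1}(A_D)$ has parts $\nu_i = |A(\gamma_i(D))_k|$, i.e.\ the number of broken diagonals of $\gamma_i(D)$ inside $\square_k^\lambda$; and by Lemma~\ref{lemma: Greene's theorem is Phi} the same parts are read off from $T=\Phi(D)$. The crucial compatibility is that removing the strip $\theta_1$ from $T$ removes exactly the part $\nu_1$ from each $k$-diagonal, leaving the $k$-diagonal of $T_{\theta_2,\dots,\theta_k}$ with parts $\nu_2,\nu_3,\dots$; simultaneously, removing $\gamma_1(D)$ from $A_D$ removes the corresponding layer of broken diagonals, so that $(A_D)$ restricted to $\eta^{(2)}$ is precisely the array $A_{D'}$ for the excited diagram $D'$ of $\eta^{(2)}/\mu$. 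Then Theorem~\ref{thm: phi vs hg} applied on $\eta^{(2)}/\mu$ gives $\HG(T\setminus T_{\theta_1}) = A_{D'} = \sum_{i\ge 2} A_{\gamma_i(D)}$, and combined with the $i=1$ case and the telescoping of the diagonal shifts $t$ this yields $\sum_{i=1}^k \HG(T_{\theta_i}) = \HG(T)$.

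\textbf{Main obstacle.} The subtle point I expect to fight with is showing that $\HG$ "commutes with peeling off the outer strip": that the HG lattice paths used to empty $T_{\theta_1}$ (inside $\lambda$, treating the rest of $T$ as present) do not interact with the interior, so that $\HG(T)$ genuinely restricts to $\HG(T_{\theta_1})$ on the cells of the outer rim and to $\HG(T\setminus T_{\theta_1})$ on $\eta^{(2)}$. Naively HG is \emph{not} additive for general plane partitions (the paper itself notes additivity fails for skew shapes with the minimal tableau replaced), so one cannot argue at the level of the HG algorithm directly; the safe route is to bypass the algorithm entirely and argue purely through the diagonal statistics of Theorem~\ref{thm: Greene's theorem for HG}, Lemma~\ref{lemma: Greene's theorem for A_Ds}, and Lemma~\ref{lemma: Greene's theorem is Phi}, which pin down $\HG^{-1}(A_D)$ uniquely from its $k$-diagonals. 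The care needed is purely bookkeeping of the diagonal distances $\epsilon_i$ and the shifts $t=\epsilon_{i+1}-\epsilon_i$ that relate the nested subshapes $\eta^{(i)}$, which is exactly the content already set up in Lemma~\ref{lemma: gamma-theta} and Proposition~\ref{prop: cutting strip distance}.
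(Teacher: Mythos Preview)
Your plan is correct, but it is more elaborate than needed and takes a longer route than the paper. The paper's proof is almost immediate from two observations. First, by Definition~\ref{def: phi* bijection} the map $\Phi$ is constructed strip by strip: the entries of $T$ on $\theta_i$ are filled using only the broken-diagonal data of $\gamma_i(D)$, so $T_{\theta_i}$ is itself the image under $\Phi$ (for the border-strip skew shape $\theta_i$ inside the ambient straight shape $\eta^{(i)}$) of the single-path excited datum recorded by $\gamma_i(D)$. Second, Theorem~\ref{thm: phi vs hg} applies to \emph{any} skew shape, in particular to each border strip $\theta_i$, giving $\HG(T_{\theta_i})=A_{\gamma_i(D)}$ directly for every $i$, with no induction. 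Summing over $i$ and using $A_D=\sum_i A_{\gamma_i(D)}$ together with $\HG(T)=A_D$ finishes the proof.

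Your induction on $i$, with the diagonal-statistics Lemmas~\ref{lemma: Greene's theorem for A_Ds} and~\ref{lemma: Greene's theorem is Phi} as the workhorse, would also succeed, and your diagnosis of the obstacle (HG is not additive for general tableaux, so one should argue via the $k$-diagonal invariants rather than by tracking HG paths) is exactly right. But the induction is superfluous: what you call the ``base case'' (Theorem~\ref{thm: phi vs hg} on a single border strip) already works for \emph{each} $\theta_i$ in its own ambient shape $\eta^{(i)}$, not just for $\theta_1$ inside $\lambda$. The path-by-path structure of Definition~\ref{def: phi* bijection} is precisely what buys this, and it sidesteps the additivity concern entirely since no HG computation ever mixes strips.
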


\begin{proof}
As mentioned above, we have that $\sum_{i=1}^k A_{\gamma_i(D)}  = A_D$ and that the tableau $T$ decomposes as a disjoint union of $\bigcup_i T_{\theta_i}$ of tableaux $T_{\theta_i}$ in each border strip. By Theorem~\ref{thm: phi vs hg} we have that  $\HG(T)=A_D$. By construction we have that $T=\bigcup_i T_{\theta_i}$. Since by Definition~\ref{def: phi* bijection}, the bijection $\Phi$ builds $T_{\theta_i}$ just from information in $\gamma_i(D)$ then $\Phi(A_{\gamma_i(D)})=T_{\theta_i}$. By Theorem~\ref{thm: phi vs hg} applied to the border strip $\theta_i$ we then have that $\HG(T_{\theta_i})=A_{\gamma_i(D)}$ and the result follows.
\end{proof}

\begin{example}
Consider the skew shape $\lm=55552/21$ with Lascoux--Pragacz decomposition $\theta_1,\theta_2,\theta_3$ and the excited diagram $D$ in $\ED(\lm)$ with Kreiman decomposition $\gamma_1,\gamma_2, \gamma_3$ at the bottom of Figure~\ref{fig:HG theta}. For $T=\Phi(D)$ we have that $\HG(T_{\theta_1})+\HG(T_{\theta_2})+\HG(T_{\theta_3})=\HG(T)$ as illustrated in the figure. 
\end{example}

We can also view Corollary~\ref{cor: union everything} as a statement about a rearrangement of the lattice paths involved in the inverse Hillman--Grassl map for each broken diagonal in $A_{\gamma_i(D)}$ and each broken diagonal in $A_D$.  To state this result, we introduce some notation. For each broken diagonal support $b\in A_{\gamma_i(D)}$ ($b\in A_{D}$ respectively), denote the collection of cells in the lattice path built for its inverse Hillman-Grassl step by $L(b;\theta_i)$ ($L(b)$ respectively).

\begin{corollary}
\label{lem: additivity of k paths}
Let $D \in \ED(\lm)$ be an excited diagram with Kreiman decomposition $(\gamma_1,\ldots,\gamma_k)$, then as  multisets we have that 
\begin{equation}\label{eq:equality sets lattice paths}
    \bigcup_{i=1}^k \Biggl(\bigcup_{b\in \Br(\gamma_i)} L(b;\theta_i) \Biggr)   = \bigcup_{b\in \Br(D)}  L(b).
\end{equation}

\end{corollary}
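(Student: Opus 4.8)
The plan is to deduce this corollary directly from Corollary~\ref{cor: union everything} by unwinding what the Hillman--Grassl map does on each piece. Recall that for any reverse plane partition $\pi$ of shape $\lambda$, the map $\HG$ is built from a sequence of lattice-path removals (Definition~\ref{def: HG}), and each such lattice path contributes one nonzero cell to the output array $\HG(\pi)$. So if $\HG(T_{\theta_i}) = A_{\gamma_i(D)}$ (which holds by Corollary~\ref{cor: union everything}, or rather by the identification $\HG(T_{\theta_i}) = A_{\gamma_i(D)}$ established in its proof), then the lattice paths produced while running $\HG$ on $T_{\theta_i}$ are, in bijection with the nonzero cells of $A_{\gamma_i(D)}$, exactly the collections $L(b;\theta_i)$ for $b \in \Br(\gamma_i)$. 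Since every nonzero entry of $A_{\gamma_i(D)}$ is a $1$ (it is the $0/1$ array supported on the broken diagonals), there is exactly one path per broken diagonal and the multiset of all cells visited is $\bigcup_{b\in\Br(\gamma_i)} L(b;\theta_i)$. Similarly the right-hand side $\bigcup_{b\in\Br(D)} L(b)$ is the multiset of all cells visited while running $\HG$ on $T$, since $\HG(T) = A_D$ by Theorem~\ref{thm: phi vs hg}.

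First I would make precise the bookkeeping statement: if $\HG(\pi) = A$ and $A$ has all entries in $\{0,1\}$, then the multiset union over the HG steps of the cells on each removal path equals $\sum_{u}\,|\pi_{\text{visited at }u}|$ — but more usefully, by \eqref{eq: HG hook weight} and the construction, the total multiset of visited cells is determined by which cells receive a $+1$ in $A$ together with the order of removals, and for $0/1$ arrays each supported cell is the endpoint-record of exactly one lattice path. Thus the multiset on each side of \eqref{eq:equality sets lattice paths} is intrinsic to the RPP being processed: the left side is $\bigsqcup_i (\text{cells visited running }\HG\text{ on }T_{\theta_i})$ and the right side is (cells visited running $\HG$ on $T$).

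Next I would invoke the key structural fact from the proof of Corollary~\ref{cor: union everything}: the tableau $T = \Phi(D)$ decomposes as the disjoint union $\bigcup_i T_{\theta_i}$ along the Lascoux--Pragacz strips, and running $\HG$ on $T$ processes these strips one at a time from the outside in — this is because the HG algorithm always starts from the most southwest nonzero cell, and the strips $\theta_1,\theta_2,\dots$ are nested, so all removals affecting $\theta_1$ happen first, then $\theta_2$, and so on, each within the residual shape $\lambda \setminus \bigcup_{j<i}\theta_j$. Consequently the sequence of lattice paths produced when running $\HG$ on $T$ is literally the concatenation of the sequences produced when running $\HG$ on each $T_{\theta_i}$ in turn. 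Taking multiset unions of the visited cells on both descriptions gives \eqref{eq:equality sets lattice paths}.

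The main obstacle is justifying that the HG algorithm on $T$ respects the Lascoux--Pragacz decomposition in the strong sense above — i.e., that no removal path for $T$ ever crosses from one strip into another, so that the run genuinely factors as a concatenation of the runs on the $T_{\theta_i}$. This requires knowing that after all of $\theta_1$'s entries have been zeroed out, the residual nonzero support lies in $\lambda \setminus \theta_1$ and the HG steps there coincide with the HG steps of $T_{\theta_{\geq 2}}$ viewed in its own shape; here one uses that $T_{\theta_i}$ are themselves minimal skew tableaux of the residual shapes (as in the proof of Lemma~\ref{lemma: base case} and Corollary~\ref{cor: union everything}) together with $\HG(T_{\theta_i}) = A_{\gamma_i(D)}$ having support exactly on the broken diagonals of $\gamma_i(D)$, which are disjoint across $i$ inside their respective residual shapes. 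Once that is in place the corollary is immediate.
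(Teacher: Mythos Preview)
Your approach attempts to prove something strictly stronger than the corollary requires: you want the sequence of Hillman--Grassl removal paths on $T$ to be literally the concatenation of the sequences on $T_{\theta_1}, T_{\theta_2}, \ldots$ in that order, which would force every individual path $L(b)$ to coincide with some $L(b;\theta_i)$. The corollary only asks for equality of \emph{multisets} of cells, and the paper's own Figure~\ref{fig:HG combined and separate} (and the example preceding it) is phrased precisely to illustrate that the individual paths differ while the multisets agree.

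The stronger claim fails for a concrete reason. The cells $u_1,u_2,\ldots$ recorded by forward $\HG$ come in the well-known order ``columns left to right, within a column rows bottom to top''. But the broken diagonals of $\gamma_1(D)$ and $\gamma_2(D)$ are interleaved in this order: $\gamma_2$ can have up-steps in a column strictly to the left of some up-step of $\gamma_1$, and in a shared column $\gamma_2$'s up-step lies below $\gamma_1$'s. Hence the $\HG$ steps on $T$ do \emph{not} exhaust $\Br(\gamma_1)$ before touching $\Br(\gamma_2)$, and a removal path on $T$ corresponding to some $b\in\Br(\gamma_2)$ is computed after partial modification of $\theta_1$; there is no reason it should equal $L(b;\theta_2)$. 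Your ``main obstacle'' paragraph gestures at this but does not (and cannot) close the gap.

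The paper's proof sidesteps the issue entirely with a one-line multiplicity count: since inverse $\HG$ builds a tableau by adding $1$ along each path, the multiplicity of any cell $(r,s)$ in $\bigcup_{b\in\Br(D)} L(b)$ is exactly $T(r,s)$, and the multiplicity of $(r,s)\in\theta_i$ in the left-hand side is $T_{\theta_i}(r,s)=T(r,s)$ (and $0$ from the other pieces). No claim about the order or shape of individual paths is needed. This is both simpler and avoids the false intermediate step.
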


\begin{figure}
    \centering
    \includegraphics[scale=0.8]{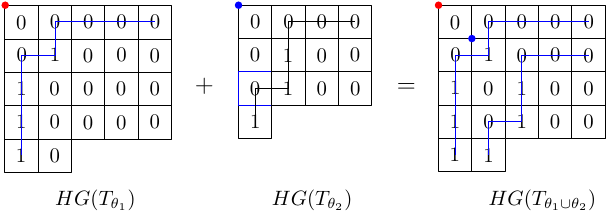}
    \caption{Convention for adding arrays $A_{\gamma_1(D)}$ and $A_{\gamma_2(D)}$.}
    \label{fig:hg additive}
\end{figure}

\begin{proof}
By Theorem~\ref{thm: phi vs hg} we have that $HG^{-1}(A_D) = T$ and as explained in the proof of Corollary~\ref{cor: union everything}, from Theorem~\ref{thm: phi vs hg} and Definition~\ref{def: phi* bijection} we also have that $HG^{-1}(A_{\gamma_i(D)})=T_{\theta_i} $. Then for each $(r,s)\in \theta_i$, the entry $T(r,s)$ is the multiplicity of $(r,s)$ in the multiset in the RHS of \eqref{eq:equality sets lattice paths} and this entry $T(r,s)=T_{\theta_i}(r,s)$ is also the multiplicity of $(r,s)$ in the multiset $\bigcup_{b \in \Br(\gamma_i)} L(b;\theta_i)$ in the LHS of \eqref{eq:equality sets lattice paths}. The equality of this multisets follows.
\end{proof}

\begin{example}
See Figure~\ref{fig:HG combined and separate} for an example of the equality of multisets in \eqref{eq:equality sets lattice paths}. For instance, the entries in the paths in the first column highlighted in red equal the entries in the second and third column highlighted in blue.
\end{example}

\begin{figure}
    \centering
    \includegraphics[scale=0.8]{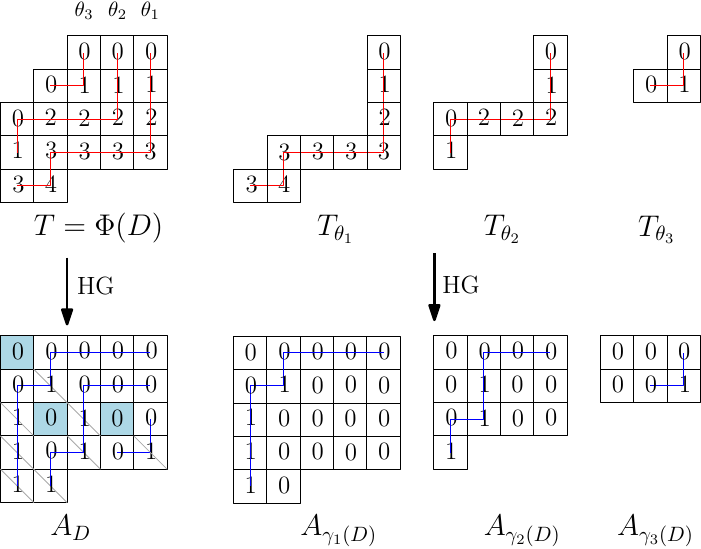}
    \caption{The Hillman-Grassl map of $T$, and each of $T_{\theta_1}$, $T_{\theta_2}$, and $T_{\theta_3}$ separately}
    \label{fig:HG theta}
\end{figure}

\section{Bijective proof of Theorem~\ref{thm: HG skew SSYT} for SSYT of border strips}\label{sec: bijective proof for borderstrips}

In the previous section we showed that $\Phi^{-1}$ corresponds to the Hillman--Grassl map restricted to minimal SSYT of shape $\lm$, giving a bijective proof of a special case of Theorem~\ref{thm: HG skew SSYT}. In this section we prove bijectively another case of this theorem for SSYT whose shapes are border strips.

\begin{theorem} \label{thm: bijection border strips}
Let $\lambda/\mu$ be a border strip and $T$ be a SSYT of shape $\lambda/\mu$. Then $HG(T) \in \mathcal{A}^*_D$ for some excited diagram $D$ in $\mathcal{E}(\lambda/\mu)$.
\end{theorem}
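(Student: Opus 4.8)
The plan is to analyze the Hillman--Grassl algorithm directly on a border-strip SSYT and show that the resulting array has support confined to the complement of an excited diagram, with the correct forced nonzero entries on broken diagonals. Since $\lambda/\mu$ is a border strip, its Lascoux--Pragacz decomposition has a single strip $\theta_1=\lambda/\mu$ itself, and the Kreiman decomposition is a single path $\gamma_1$ wrapping around $\mu$. The excited diagrams of a border strip are particularly simple: each is obtained from $[\mu]$ by sliding some prefix of cells down the diagonal, so there is essentially a one-parameter family of choices in each ``corner'' of $\mu$, and the complements are single lattice paths $\gamma_1(D)$ inside $[\lambda]$. Thus the statement amounts to: for every SSYT $T$ of border-strip shape, $\HG(T)$ is supported on $[\lambda]\setminus D$ for one such $D$, and is strictly positive exactly on the cells of $u(\gamma_1(D))$ (the up-steps/outside corners), by Proposition~\ref{prop:broken diagonals in gamma}.

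First I would set up the combinatorics of a single Hillman--Grassl step on a border-strip-shaped plane partition (viewing $T$ as a plane partition of shape $\lambda$ with zeros on $[\mu]$). Because the nonzero region is a thin strip, the lattice path traced by the HG step (start at the most southwest nonzero cell, move north when the entry equals the one above, east otherwise) is highly constrained: it must travel along the strip, and its recording cell $u_i=(r,c)$ lands on a specific diagonal determined by where the path enters and exits the strip. I would show by induction on the number of HG steps that after each step the array $A_i$ has support inside $[\lambda]\setminus [\mu]$ shifted appropriately — more precisely, that the accumulated support always lies on the vertical steps of some lattice path $\gamma$ tracing around a sub-partition $\mu^{(i)}\subseteq\mu$, and that the residual plane partition $\pi_i$ is still an SSYT of border-strip shape $\lambda/\mu^{(i)}$ with $\mu^{(i)}\subseteq\mu^{(i+1)}$ growing toward $\mu$. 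The key local fact is that a single HG step on a border strip removes one unit from a ``staircase'' of entries and increments exactly one array cell, which either lies on a current up-step of $\gamma$ or extends $\gamma$ by one up-step in an adjacent column — mirroring the way an excited move changes the supporting path (cf. the proof of Proposition~\ref{prop:broken diagonals in gamma}).

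Second, I would verify the two defining properties of membership in $\mathcal{A}^*_D$: (a) $\supp(\HG(T))\subseteq [\lambda]\setminus D$ for the excited diagram $D$ whose Kreiman path $\gamma_1(D)$ is the final path $\gamma$ produced above; and (b) $\HG(T)_{i,j}>0$ for every $(i,j)\in\Br(D)$. Property (a) follows from the invariant maintained in the induction together with the fact that for a border strip the complement of any lattice path from the SW corner of $\lambda$ to the NE corner, cut out by $\mu$-shaped sub-partitions, is exactly an excited diagram. Property (b) is where one must be careful: I would argue that every up-step of the final $\gamma_1(D)$ does receive a strictly positive entry, because the HG algorithm cannot terminate while a nonzero entry remains and each ``pass'' of the algorithm along the strip deposits a unit on a fresh up-step — formally, I would track the first moment each up-step of $\gamma_1(D)$ is created and show it is incremented at least once. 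This can be organized by an induction on $|\mu|$ matching the recursive structure of excited diagrams for border strips.

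The main obstacle will be pinning down precisely how the traced lattice path interacts with the columns of the strip at ``turning points'' — i.e., verifying that the recording cell of each HG step lands exactly where Proposition~\ref{prop:broken diagonals in gamma} predicts a broken diagonal should be, and that no step ever deposits a unit \emph{inside} $D$. Equivalently, the delicate point is the bookkeeping that identifies the growing sub-partitions $\mu^{(i)}$ and shows the support invariant is exactly preserved (not merely contained) after each step; once that invariant is established, both (a) and (b) follow. An alternative route, which may be cleaner, is to use the trace/Greene's-theorem machinery (Theorem~\ref{thm: Greene's theorem for HG}, Proposition~\ref{prop:trace and HG}) to compute the $k$-diagonal traces of $T$ and deduce the shape of $\HG(T)$ on each rectangle $\square_k^\lambda$, forcing the array to coincide with one in $\bigcup_D \mathcal{A}^*_D$; I would pursue the direct algorithmic argument first and fall back on the trace argument for the positivity claim (b) if the direct bookkeeping becomes unwieldy.
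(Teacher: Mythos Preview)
Your primary approach---a direct induction on Hillman--Grassl steps, maintaining the invariant that the residual $\pi_i$ is an SSYT of border-strip shape $\lambda/\mu^{(i)}$---has a genuine gap: the invariant is false. Already for a single column with entries $0,1,2$, the first HG step removes one from the bottom cell only (since $2\neq 1$ the path goes east immediately and terminates), leaving $0,1,1$, which is an RPP but not an SSYT. The intermediate $\pi_i$ are reverse plane partitions, not semistandard tableaux, and their nonzero support need not even be a skew shape, so there is no growing family $\mu^{(i)}$ carrying the induction. Repairing this would require a different invariant on RPPs, and it is not clear what would replace column-strictness to force the recording cells onto a single NE lattice path.

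The paper instead takes your alternative route from the outset. Lemma~\ref{lem:border-lattice path} uses Theorem~\ref{thm: Greene's theorem for HG}(ii): since each diagonal of $T$ has one entry, $dc_1(A_k)=1$ for every $k$, so the nonzero entries of $A_k$ lie on a NE path, and gluing across overlapping rectangles yields a single global lattice path. This handles your condition~(a) without tracking individual HG steps. For condition~(b), rather than arguing that each up-step ``receives a unit,'' the paper writes down an explicit recursive piecewise-linear map $C$ (peeling off the last column of $T$, computing the array entries in that column from consecutive differences of the $a_i$'s and the attachment value $b_k$) and verifies $\HG(T)=C(T)$ by checking diagonal sums via Theorem~\ref{thm: Greene's theorem for HG}(i). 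Strict positivity on up-steps then falls out from the column-strict inequalities $a_i>a_{i-1}$ in $T$. So the Greene-theorem route you flagged as a fallback is in fact the main engine, and the missing ingredient in your proposal is the explicit description of $\HG(T)$ that makes positivity transparent.
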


The first step is to show that the support of the resulting array $A=HG(T)$ is in $\mathcal{A}_D$ for some excited diagram $D$ in $\mathcal{E}(\lambda/\mu)$ corresponding to some lattice path $\gamma$ from $(\lambda'_1,1)$ to $(1,
\lambda_1)$.

\begin{lemma}\label{lem:border-lattice path}
Let $\lambda/\mu$ be a border strip and $T$ be a SSYT of shape $\lambda/\mu$. Then $HG(T) \in \mathcal{A}^*_D$ for some excited diagram $D$ in $\mathcal{E}(\lambda/\mu)$, and in particular the nonzero entries of the array are contained in a SW to NE lattice path.
\end{lemma}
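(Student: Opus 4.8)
The plan is to run the Hillman--Grassl algorithm directly on $T$ and track how the diagram and the recorded cells evolve, exploiting that a border strip meets each content in exactly one cell. Write the cells of $[\lambda/\mu]$ from SW to NE as $b_1,\dots,b_n$ (where $n=|\lambda/\mu|$), so the contents of the $b_m$ run through a window $[k_-,k_+]$ of $n$ consecutive integers. Since the Lascoux--Pragacz decomposition of a border strip is the single strip $\lambda/\mu$, Kreiman's description (Proposition~\ref{prop:broken diagonals in gamma} and the surrounding discussion) says the complements $[\lambda]\setminus D$, $D\in\E(\lambda/\mu)$, are exactly the cell sets of the monotone N/E lattice paths in $[\lambda]$ from $b_1$ to $b_n$, each carrying one cell per content in $[k_-,k_+]$. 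View $T$ as an RPP of shape $\lambda$ vanishing on $[\mu]$ and set $A:=\HG(T)$.

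First I would show, by induction on Hillman--Grassl steps, that every intermediate array $\pi_i$ is an RPP supported on a contiguous sub-strip $\{b_{p_i},\dots,b_{q_i}\}$ of $\lambda/\mu$, with $p_i$ weakly increasing. The point is that the path of an $HG$ step starts at the SW endpoint of the current sub-strip and, because an RPP never has $\pi_{a,b}<\pi_{a-1,b}$, it can only move north while following equal entries \emph{inside} the sub-strip and must turn east at the top of each column run; in particular it never enters the interior of $[\mu]$, so the decrement only peels cells off the SW (and, occasionally, the NE) end of the sub-strip or relocates surplus along it. Consequently the recorded hook cell $u_i=(r_i,c_i)$ has its column $c_i$ equal to the SW-column of the current sub-strip, the sequence $c_1\le c_2\le\cdots$ is weakly increasing, $r_1\ge r_2\ge\cdots$ is weakly decreasing, and each $u_i$ lies on the fixed border strip. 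Thus the recorded cells form a strict-SE antichain on contents inside $[k_-,k_+]$ --- equivalently, the support of $A$ meets each content at most once and lies on one monotone N/E lattice path. (For the strict-SE-freeness alone one can also argue abstractly: here $\nu^{(k)}$ is a single part, so $dc_1(A_k)\le1$ by Theorem~\ref{thm: Greene's theorem for HG}(ii); and if $(i_1,j_1),(i_2,j_2)$ were nonzero cells of $A$ with $i_1<i_2$, $j_1<j_2$, both would lie in $\square_{j_2-i_2}^{\lambda}$ --- from $(i_2,j_2)\in[\lambda]$ the side $a$ of $\square_{j_2-i_2}^{\lambda}$ satisfies $a\ge i_2>i_1$ and $j_1<j_2\le a+(j_2-i_2)$ --- contradicting $dc_1(A_{j_2-i_2})\le1$; while Proposition~\ref{prop:trace and HG} gives $|A_k|=\tr_k(T)$, forcing every nonzero content of $A$ into $[k_-,k_+]$.)

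Next I would thread the support onto a path. The SW end of the sub-strip never slides ``past'' $b_1$ in content, so if $\operatorname{supp}(A)$ has a cell of content $k_-$ it must be $b_1$, and symmetrically at the top; filling the content-gaps greedily with N/E steps inside $[\lambda]$ --- always possible since each step raises the content by one --- realizes $\operatorname{supp}(A)$ inside a single monotone path from $b_1$ to $b_n$, which by the first paragraph is $[\lambda]\setminus D$ for some $D\in\E(\lambda/\mu)$. Hence $\operatorname{supp}(A)\subseteq[\lambda]\setminus D$, i.e. $A\in\mathcal A_D$ and the nonzero entries of $A$ lie on a SW-to-NE lattice path. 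Finally, to sharpen $A\in\mathcal A_D$ to $A\in\mathcal A^{*}_D$ one must fix $D$ so that $\Br(D)\subseteq\operatorname{supp}(A)$; I would take $D$ to be the excited diagram whose path is innermost (closest to $[\mu]$) among those containing $\operatorname{supp}(A)$, and check that its broken-diagonal cells carry positive entries using $|A_k|=\tr_k(T)=T(b_{k-k_-+1})$ together with Corollary~\ref{cor: rel HG and RSK} (the single-row shape of $RSK(A_k^{\updownarrow})$), which records exactly how much weight of $A$ sits on each diagonal inside each rectangle $\square_k^{\lambda}$.

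The main obstacle is the inductive claim that each $\pi_i$ stays supported on a sub-strip and, relatedly, that the $HG$ paths never wander off the near-diagonal band of the border strip into the interior of $[\mu]$; once that control is in place, the monotonicity of the recorded cells and the identification of the correct $D$ (including the broken-diagonal condition) are bookkeeping.
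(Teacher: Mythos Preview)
Your parenthetical alternative --- using $dc_1(A_k)\le 1$ from Theorem~\ref{thm: Greene's theorem for HG}(ii) and then patching the overlapping rectangles $\square^\lambda_k$ into one NE path --- is precisely the paper's proof of this lemma, and it is correct as stated.

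Your primary approach (tracking the HG steps directly) contains two incorrect intermediate claims.  First, the support of $\pi_i$ need \emph{not} be a contiguous sub-strip even for $i=0$: for $\lambda/\mu=(3,3,1)/(2)$ the SSYT with $T(3,1)=1$, $T(2,1)=T(2,2)=T(1,3)=0$, $T(2,3)=1$ has nonzero cells only at $b_1=(3,1)$ and $b_4=(2,3)$.  Second, and more seriously, the recorded cells $u_i$ need not lie on the border strip $[\lambda/\mu]$: for $\lambda/\mu=(2,2,2)/(1,1)$ with $T(3,1)=T(3,2)=2$, $T(2,2)=1$, $T(1,2)=0$, the second HG step traverses $(3,1),(3,2),(2,2)$ and records $u_2=(2,1)\in[\mu]$.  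So the assertion ``each $u_i$ lies on the fixed border strip'' fails, and with it the deduction that the $u_i$ sit on $[\lambda/\mu]$.

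The direct approach can be repaired, but with different invariants: what one should track is that each HG \emph{path} (not the support of $\pi_i$) is a contiguous segment $b_{p_i},\ldots,b_{q_i}$ of the strip --- this holds because the path stays on strictly positive cells, hence inside $[\lambda/\mu]$, and a NE path inside a ribbon is a sub-ribbon --- and that both $p_i$ and $q_i$ are weakly increasing in $i$ (the latter since decrementing along $b_{p_i},\ldots,b_{q_i}$ cannot create a new northward stopping point strictly before $b_{q_i}$).  This yields $c_i$ weakly increasing and $r_i$ weakly decreasing, hence the $u_i$ form a NE chain in $[\lambda]$, without any claim that they lie in $[\lambda/\mu]$.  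The paper sidesteps all of this with the one-line Greene-theorem argument; the full $\mathcal{A}^*_D$ membership (positivity on broken diagonals) is then handled by the explicit description of $C(T)$ in the subsequent proposition rather than in the lemma itself.
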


\begin{proof}
Since the shape of $T$ is a border strip, for an integer $k$ with  $1-\ell(\lambda) \leq k \leq \lambda_1-1$,  by Theorem~\ref{thm: Greene's theorem for HG}~(ii) we have that $dc_1(A_k)=1$. Then the nonzero entries in each rectangle $A_k$ form a NE lattice path for each $k$. Naturally, if two rectangles $A_k$ corresponding to different $k$'s overlap, then paths in each should also coincide at the intersection of the two rectangles, and so for each $k$ we get portions of one large NE lattice path.  
\end{proof}

 We define a piece-wise linear map $C$ from border strip SSYT of shape $\lambda/\mu$ to lattice paths in $[\lambda]$ with nonnegative integer entries as follows.
 Let $T$ be a border strip SSYT of shape $\lambda/\mu$, such that  $T= T' \cup T_1 \cup T_2$, where $T_2 = [a_1,\ldots,a_k]^{\top}$ is the last column of $T$, indexed $j$,  and $T_1=[b_k]$ is attached to $T_2$ horizontally on the left. Let $r=\max \{ i \,\mid\, a_k-b_k> a_{i-1}\}$ where we assume $a_0=0$. Set $p_i:= a_i - a_{i-1}$ for $i=1,\ldots,r-1$, where $a_0=0$ and set $p_r:=a_k-b_k-a_{r-1}$. 
 
 Let $R:=T'\cup [b_r,\ldots,b_k]^{\top}$, i.e the border strip SSYT comprised of $T'$ and the column attached on its right, where
 $b_i := a_i - (a_k -b_k)$ for $i=r,\ldots,k$. 
 
 Then define $C(T) := C(R) \cup [p_1,\ldots,p_r]^{\top}$ recursively as a path in $[\lambda]$ ending at $(1,j)$, where  $C(R)$ is a path in $[\lambda]$ ending at $(r,j-1)$ and the entries at $(i,j)$ for $i\geq r$ of $C(T)$ are  equal to $p_i$. See Figure~\ref{fig: alternate bijection border strips} for an example of $C(T)$.

\begin{figure}
\centering
\includegraphics[scale=0.7]{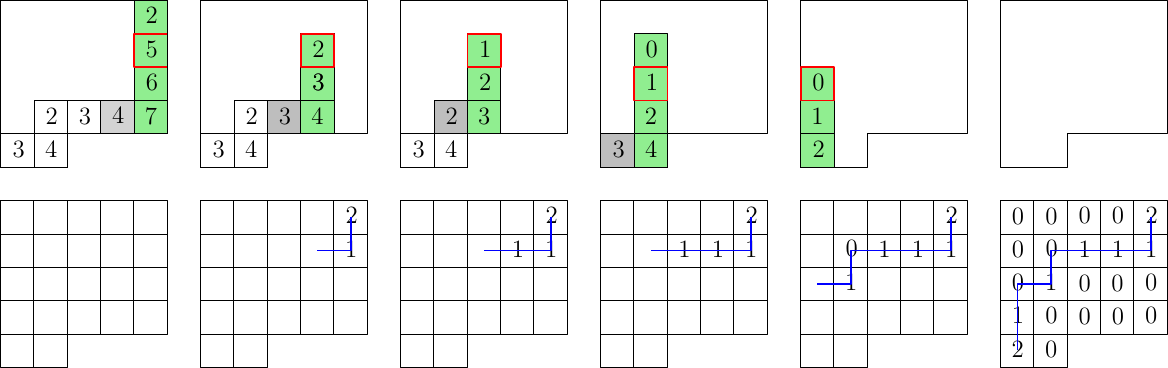}
\caption{Illustration of the map $C(T)$ from border strip SSYT to lattice paths in $[\lm]$ with nonnegative integer entries that coincides with $\HG(T)$. The cells of $T'$ are filled in green, the cell of $T_1$ is filled in {gray}, and the cell $a_r$ has a {red} border.}
\label{fig: alternate bijection border strips}
\end{figure}

\begin{proposition}
Let $\lambda/\mu$ be a border strip and $T$ be a SSYT of shape $\lambda/\mu$. Then $HG(T)$ has support in a SW to NE lattice path, such that all up steps are strictly positive entries and the entries are piece--wise linear functions of the entries of $T$. Moreover, $HG(T)=C(T)$ for $C(\cdot)$ as defined above. 
\end{proposition}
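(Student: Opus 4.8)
The plan is to prove $\HG(T)=C(T)$ by induction on the number of columns of $\lambda/\mu$, mirroring the recursion that defines $C$. Granting this, the rest of the statement is immediate: that $\HG(T)$ is supported on a single SW--NE lattice path with strictly positive entries on its up--steps is exactly Lemma~\ref{lem:border-lattice path} (one can also read it off from the $dc_1$ part of Theorem~\ref{thm: Greene's theorem for HG}, since a diagonal meets a border strip in at most one cell), and $C(T)$ is by construction a piecewise--linear function of the entries of $T$. The base case is a single column: if $T=[a_1,\dots,a_k]^{\top}$ with $a_1<\cdots<a_k$, then unwinding Definition~\ref{def: HG} shows that $\HG$ peels the column from the bottom, recording $a_k-a_{k-1}$ at the bottom cell, then $a_{k-1}-a_{k-2}$, and so on, so $\HG(T)=[a_1,a_2-a_1,\dots,a_k-a_{k-1}]^{\top}$, which agrees with $C(T)$ in this case (with $b_k$ absent, $r=k$, $p_i=a_i-a_{i-1}$).

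For the inductive step let $j$ be the last column of $\lambda/\mu$, write $T=T'\cup T_1\cup T_2$ as in the definition of $C$ with $T_2=[a_1,\dots,a_k]^{\top}$ and $T_1=[b_k]$, and let $K$ denote the row of the bottom cell of column $j$. Because $\lambda/\mu$ is a border strip, columns $j-1$ and $j$ share only the row $K$, so in the $\HG$ algorithm any path reaching column $j$ must enter it via $(K,j-1)\to(K,j)$ and then run straight up column $j$ along the maximal run of equal entries at its bottom; and since $(K,j-1)$ lies strictly south--west of every cell of column $j$, no path can start in column $j$ until $(K,j-1)$ has been zeroed. Thus the algorithm splits into a first phase (before columns $1,\dots,j-1$ are exhausted) and a second phase consisting of the paths starting in column $j$. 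In the first phase $(K,j-1)$, of initial value $b_k$, is decremented once by each path through it, and every such path proceeds into column $j$; so exactly $b_k$ paths reach column $j$, each lowering the bottom equal--run of column $j$ by $1$. Applying $b_k$ such operations to the strictly increasing word $[a_1,\dots,a_k]$ produces
\[
[a_1,\dots,a_{r-1},\,\underbrace{a_k-b_k,\dots,a_k-b_k}_{k-r+1}]^{\top},
\]
where $r$ is exactly the index in the definition of $C$. The second phase is the single--column computation of the base case applied to this residual, which contributes precisely $[p_1,\dots,p_r,0,\dots,0]^{\top}$ to column $j$; hence $\HG(T)$ restricted to column $j$ equals the added column $[p_1,\dots,p_r]^{\top}$ of $C(T)$.

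It remains to identify $\HG(T)$ on columns $1,\dots,j-1$ with $\HG(R)$, where $R$ is the border strip SSYT obtained from $T$ by removing column $j$ and extending column $j-1$ upward with the entries $b_i=a_i-(a_k-b_k)$, $i=r,\dots,k$; note $R$ has one fewer column than $T$. I would run $\HG$ on $T$ and on $R$ simultaneously, maintaining the invariant that $\pi^{T}$ and $\pi^{R}$ coincide on all columns strictly left of $j-1$, coincide on the portion of column $j-1$ common to both shapes, and differ by the constant $a_k-b_k$ between column $j$ of $T$ and the extension of column $j-1$ of $R$ (paired by the column--$j$ row index). This holds at the start and is preserved step by step: a first--phase path of $T$ that turns at $(K,j-1)$ into column $j$ corresponds to the path of $R$ that continues up the extension of column $j-1$ instead --- the north/east tests matching because of the constant shift --- while a path of $T$ that never meets column $j$ corresponds to the same path in $R$. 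Under this correspondence a first--phase step of $\HG$ on $T$ records in the same cell, necessarily in a column $<j$, as the matched step of $\HG$ on $R$, and the matched steps exhaust all of $\HG$ on $R$, since the extension cells of $R$ empty out exactly when the corresponding column--$j$ entries of $T$ do. Hence $\HG(T)$ restricted to columns $1,\dots,j-1$ equals $\HG(R)=C(R)$ by induction, and together with the previous paragraph this gives $\HG(T)=C(R)\cup[p_1,\dots,p_r]^{\top}=C(T)$.

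The main obstacle is making the coupling of the two $\HG$ runs in the last step rigorous: one must check that every path decision (north versus east, and the cell where a path terminates) is preserved under the correspondence, handle the bookkeeping at the shared row $K$ carefully, and verify that $R$ is genuinely a border strip SSYT of one fewer column. By contrast the first--phase decrement analysis of column $j$ and the single--column base case, though they are the combinatorial core of the formula for $C$, become routine once one has the structural fact that paths can enter column $j$ only through $(K,j)$.
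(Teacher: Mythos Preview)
Your approach is correct and genuinely different from the paper's. You run the Hillman--Grassl algorithm directly, peeling off the last column and coupling the run on $T$ with the run on the one--column--shorter tableau $R$; the paper instead never traces a single $\HG$ path but appeals to the Greene--theorem machinery (Proposition~\ref{prop:trace and HG} and Theorem~\ref{thm: Greene's theorem for HG}). Since every diagonal of a border strip has a single entry, the partition $\nu^{(p)}$ has one part, so $|A_p|=ac_1(A_p)$ equals that entry of $T$; the paper then reconstructs the lattice--path array from these rectangle sums column by column and checks it matches $C(T)$. Your argument is more elementary---it avoids Theorem~\ref{thm: Greene's theorem for HG} entirely---at the cost of the coupling bookkeeping you flag. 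That coupling does go through: the key invariant is that throughout the first phase $\pi^{T}(i,j)-\pi^{R}(i,j-1)=a_k-b_k$ for $i$ in the extension range, together with $\pi^{T}\equiv\pi^{R}$ elsewhere on columns $\leq j-1$; this forces the north/east decisions after $(K,j-1)$ to match (because $a_{s+1}-a_s=b_{s+1}-b_s$), and the formula $c_i=\min(a_i,a_k-m)$ you implicitly use shows the equal--run in column $j$ never climbs above row $K-(k-r)$ during the first phase, so the pairing never runs out. One small point to tidy: the edge case $a_k=b_k$ (so $r$ is undefined) should be handled separately, with column $j$ contributing only zeros and $R=T$ minus column $j$.
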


\begin{proof}
By Lemma~\ref{lem:border-lattice path} the nonzero entries of $HG(T)=A$ form a SW to NE lattice path, so we need to show the shape and entries match $C(T)$. 
Let $T$ be a border strip SSYT  as above. Let also entry $a_k$ be in the $t$-th diagonal of the shape. By Theorem~\ref{thm: Greene's theorem for HG} we have that $|A_t| =a_k$ and $|A_{t-1}| = b_k$ and $|A_{t+i}|=a_{k-i}$. Moreover, since each diagonal of $T$ is a partition of one part, say $\nu=(\nu_1)$ at diagonal $p$, then $|A_p|=ac_1(A_p) = \nu_1$. Thus, since $HG$ is a bijection, to show that $HG(T)=C(T)$, which both have supports on lattice paths, it is enough to show that the sum of entries in $A_p$ and $C(T)_p$ coincide for all diagonals $p$. 
This is now straightforward to check by the explicit form of $C(T)$ and induction on the number of columns. 

Finally, we note that in the last column of $C(T)$, the up steps of the lattice path have entries $p_i>0$ because $a_i>a_{i-1}$. Also, $R$ is a border strip SSYT since $b_i -b_{i-1} = a_i-a_{i-1}>0$ with nonzero entries up to row $r+1$, and by induction $C(R)$ is a lattice path ending in $(r,j)$ with nonzero entries on the up steps.

Thus the sum of entries in the last column of $A_t$ is equal to $a_k-b$. Similarly, the sum of entries in $A_{t+i}$ is $a_{k-i}$. Since the support of $A$ is a SW to NE lattice path, let the support in the last column extend to row $r$ and then the path turns horizontally, i.e. going west. Let the entries along the path be $p_1,\ldots,p_r, q_1,\ldots$ so that the entries in the last column are $p_1,\ldots,p_r$ with $p_r>0$. Then for $i<r$ we have
$p_1+\cdots+p_i = |A_{t+k-i}|=a_i$, so $p_i=a_i-a_{i-1}>0$. Next, the path takes a turn at $i=r$, so we have $p_1+\cdots+p_r = |A_{t}|-|A_{t-1}|=a_k-b$ and $p_r = a_k-b -a_{r-1}>0$ by the choice of $r$. The remaining path is $HG(T')$, where $T'$ is a border strip SSYT which coincides with $T$ until the last two columns. Note that by the choice of $r$ we have that $b=|A_{t-1}| \geq a_k - a_r$ since the nonzero entries in $A_t \setminus A_r$ are in $A_{t-1}$. Thus  we have that  $b\geq a_k-a_r > a_{k-1}-p_r>\ldots>a_r-p_r$  which is again a border strip SSYT and by induction we recover the other entries of $A$.
\end{proof}

\section{Additivity of Hillman--Grassl for straight shapes}\label{sec: additivity}

In this section we show the following result.

\additivityHGstraight

Before we proceed with the proof of this fact we will show a useful property of the ascending chains.

\begin{proposition}\label{prop:ac paths}
    Let $A$ be a rectangular array on $[a]\times [b]$ with nonnegative entries and $ac_t(A)$ be the maximal weight in the disjoint union of $t$ ascending chains in $A$. Then there exists a collection of disjoint connected border-strip paths of boxes $\rho_1,\ldots,\rho_t$ in $A$, such that $\rho_i$  starts at $(1,i)$ and ends at $(a,b-i+1)$, and
    $$\sum_i \sum_{p \in \rho_i} A_p = ac_t(A),$$
    i.e. the total weight of entries in this collection is maximal. 
\end{proposition}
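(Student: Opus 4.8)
\textbf{Proof plan for Proposition~\ref{prop:ac paths}.}

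The plan is to prove this by induction on $t$, peeling off one ascending chain at a time, and at each stage replacing a weight-optimal chain by a monotone \emph{border-strip} path (no two boxes in the same row or column) without losing weight, so that the $t$ strips can be taken pairwise disjoint with the prescribed endpoints. First I would observe that any ascending chain (a NE-monotone sequence of boxes, weakly increasing in both coordinates) can be replaced, at no cost in weight, by a \emph{connected} border-strip path from $(1, c)$ to $(a, c')$ for appropriate columns $c \ge c'$: whenever a chain skips boxes, one extends it along the boundary of the region it already covers, and since all entries are nonnegative this only increases $\sum_p A_p$; extending up to row $1$ at the start and down to row $a$ at the end is free for the same reason. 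So without loss of generality a single optimal ascending chain is such a path, and $ac_1(A)$ is achieved by some border-strip path $\rho_1$ from $(1,i_1)$ to $(a, j_1)$.

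Next I would set up the induction step. Suppose $ac_t(A)$ is attained by a disjoint union of $t$ ascending chains; by a standard exchange/uncrossing argument (the same one used implicitly in the statement of Theorem~\ref{thm: Greene's theorem for HG} and in Greene's theorem, where intersecting chains are relabelled at crossings) these can be taken pairwise \emph{non-crossing}, hence linearly ordered, and then by the previous paragraph each can be taken to be a connected border-strip path running from the top row to the bottom row. Non-crossing border-strip paths from row $1$ to row $a$ in an $[a]\times[b]$ array that start at distinct boxes of row $1$ and end at distinct boxes of row $a$ must start at $(1, i), (1, i+1), \ldots$ and end at $(a, \cdot), (a, \cdot-1), \ldots$ consecutively; to match the \emph{exact} endpoints $(1,i)$ and $(a, b-i+1)$ demanded in the statement I would push the leftmost start all the way to column $1$ and the rightmost end all the way to column $b$ (again free, by nonnegativity) and then argue the intermediate strips are forced to occupy columns $2, \ldots, t$ at the top and $b-1, \ldots, b-t+1$ at the bottom because they are disjoint and non-crossing and sandwiched between the outer ones. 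This gives the required normal form $\rho_1, \ldots, \rho_t$ with $\sum_i \sum_{p\in\rho_i} A_p = ac_t(A)$.

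The main obstacle, and the step I would spend the most care on, is the \emph{simultaneous} normalization: individually straightening each chain into a top-to-bottom border strip is easy, but doing so while preserving disjointness and the non-crossing order — so that the final endpoints are exactly $(1,i)$ and $(a, b-i+1)$ — requires a careful induction in which one fixes the outermost strip first, then restricts to the sub-board to its right (or the complement region) and applies the inductive hypothesis there. One has to check that the "free" extensions along boundaries never force a collision with an already-fixed strip; this works because a non-crossing family can always be realized with each strip hugging the boundary of the union of the strips weakly to one side of it. A clean way to package this is to phrase it as: the optimal $t$-family can be chosen to be the lexicographically-leftmost-then-highest such family, and then show directly that this extremal family automatically consists of boundary-hugging border strips with the stated endpoints. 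Once the normal form is established the weight equality is immediate from the optimality of the family we started with, together with the fact that every modification we performed was weight-nondecreasing and the original value $ac_t(A)$ is by definition maximal, so no step could have strictly increased it.
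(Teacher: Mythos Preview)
Your two foundational moves---the uncrossing argument (relabel at intersections so that the chains are pairwise non-crossing) and the observation that, by nonnegativity, any ascending chain can be freely completed to a connected border-strip path from row $1$ to row $a$---are exactly what the paper uses. Where your plan breaks down is the step that you yourself flag: getting the \emph{exact} endpoints $(1,i)$ and $(a,b-i+1)$ simultaneously for all $i$.

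Two concrete problems. First, the claim that non-crossing border-strip paths from row $1$ to row $a$ ``must start at $(1,i),(1,i+1),\ldots$ consecutively'' is false: non-crossing only forces the starting columns to be \emph{ordered}, not consecutive. Likewise, once you push the outermost start to column $1$ and the outermost end to column $b$, the intermediate strips are in no way forced to columns $2,\ldots,t$; sandwiching gives a range, not a value. Second, the proposed induction (``fix the outermost strip, restrict to the complement, apply the inductive hypothesis'') does not go through as stated: the complement of a border strip in a rectangle is not a rectangle, so the Proposition cannot be invoked there; and the remaining $t-1$ chains need not realise $ac_{t-1}$ of that region anyway, so even a reformulated hypothesis would require a separate statement.

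What the paper does instead is a direct box-reassignment on the union $D$ of the (already non-crossing, already extended-to-the-boundary) chains. Take $\rho_1$ to be the bottom-most cells of $D$ and extend it to contain $(1,1)$. If $\rho_1$ now occupies $(1,1),\ldots,(1,i)$ in row $1$, hand the boxes $(1,2),\ldots,(1,i)$ over to $\rho_2$ and reroute $\rho_1$ through $(2,1),(2,2),\ldots,(2,i)$ instead; then iterate, pushing $\rho_2$'s row-$1$ tail to $\rho_3$, and so on. The same procedure mirrored at the $(a,b)$ corner fixes the other endpoints. This ``pull the paths south-east one layer at a time'' manoeuvre is the missing ingredient; your ``hugging the boundary of the union'' remark is pointing at it, but the actual mechanism is reassignment of boxes between adjacent paths, not mere extension of a single path.
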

This is saying that the maximal ascending chains can be made to start and end at the sides of the rectangle.
\begin{proof}
    First, we observe that if there are two disjoint ascending chains they can be made non-crossing (i.e. one is above the other). To see this, form monotone piecewise linear paths $\tau^1, \tau^2$ from the centers of the squares of these chains, with starting and ending points $P_1,Q_1$ and $P_2,Q_2$, respectively. Suppose that these paths cross at points  $X_1, X_2,\ldots, X_k$. We can then assign the piecewise linear paths $\tau':= \tau^1(P_1 \to X_1)\tau^2(X_1 \to X_2)\tau^1(X_2 \to X_3)\cdots$ and $\tau'':=\tau^2(P_2 \to X_1)\tau^1(X_1 \to X_2) \cdots $. These paths cover the same set of squares, and are monotonous (i.e. ascending), but now do not cross over. 

    We can now assume that the paths do not cross. We now proceed with a greedy algorithm to relabel the paths to satisfy the condition in the statement. Let $D$ be the union of squares giving a maximal $ac_t(A)$. Since the weights in $A$ are nonnegative, adding boxes to the paths in $D$ will only increase the total weight. Thus, we can assume that $D$ extends to the borders of $A$. Let $\rho_1$ be the bottom-most squares of $D$. If it does not reach the lowest row in $A$, we can add/reassign those boxes to $\rho_1$, so it contains $(1,1)$. Suppose that $(1,1),(1,2),\ldots,(1,i) \in \rho_1$ for some $i$. As the other paths are above $\rho_1$, we can extend one, say $\rho_2$ to contain $(1,i+1)$. We then assign boxes $(1,2),\ldots,(1,i)$ from $\rho_1$ to $\rho_2$, keeping it ascending and preserving the weight, and add $(1,1),(2,1),(2,2),\ldots,(2,i)$ to $\rho_1$.  If $t>2$, we can then assign boxes $(1,3),(1,4),\ldots$ to $\rho_3$, assign $(1,2),(2,2),(2,3),\ldots$ from $\rho_1$ to $\rho_2$ and add $(3,1),(3,2),\ldots$ to $\rho_1$. 
    Similarly, we can perform the same procedure on the other side of $A$ by relabeling paths and possibly assigning extra boxes to them. This procedure, greedily pulling the paths SE from the border of $D$, by possibly adding boxes, will make and keep the paths connected. 
\end{proof}

\begin{proof}[Proof of Theorem~\ref{thm:additivity straight shape}]
Let $A = HG(T)$, $M=HG(T_{min})$. The statement will follow directly from the RSK connection with HG (Theorem~\ref{thm: Greene's theorem for HG}) if we show that for every $k$ and $t$ we have
$$ac_t(A_k + M_k)=ac_t(A_k)+ac_t(M_k).$$
We have that $M_k$ consists of $1$ below the main diagonal, and 0s above. For simplicity, we will identify $M_k$ as the collection of boxes with values 1, i.e. its support.  

We now apply Proposition~\ref{prop:ac paths} to $A_k+M_k$ and $A_k$. Assume that the rectangles are $[a]\times[b]$ (so $b-a=k$) and that $a \leq b$, otherwise we can reflect the application of the Proposition, so that the paths' endpoints be $(i,1)$ and $(a-i+1,b)$. Let $D$ be the collection of $t$ ascending chains of maximal weight for $A_k$, according to Proposition~\ref{prop:ac paths} we can choose them to start at $(1,i)$ and end at $(a,b-i+1)$ and be continuous border strips. Let $D'$ be the corresponding collection of $t$ ascending chains of maximal weight for $A_k+M_k$ with the same properties. If $\rho_i'$ is a continuous monotonous path from $(1,i)$ to $(a,b-i+1)$ then $|\rho_i' \cap M_k| = b-i$ (the grid distance between $(1,i)$ and the main diagonal). Thus
$\rho'_i(A_k+M_k)=\rho'_i(A_k) + b-i$ and so
$$ac_t(A_k+M_k) = D'(A_k) + (b-1)+\cdots+(b-t),$$
where $D'(A_k)$ is the total weight of the boxes $D'$ on $A_k$. Thus, the above weight is maximal iff the total weight $D'(A_k)$ is maximal. Since $D'$ is a collection of $t$ disjoint ascending chains, then 
$$\max_{D'} D'(A_k) =ac_t(A_k).$$
Since $ac_t(M_k) = (b-1)+\cdots +(b-t)$, we thus get that
$$ac_t(A_k+M_k) = ac_t(A_k)+ac_t(M_k)$$
for every $t$ and $k$ and the claim follows.
\end{proof}

The following example shows that Theorem~\ref{thm:additivity straight shape} does not hold for skew shapes. 

 \begin{figure}
    \includegraphics[scale=0.8]{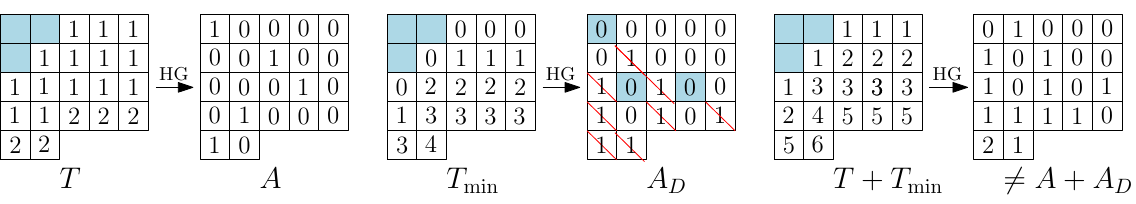}
    \caption{Counterexample for additivity of Hillman--Grassl for skew shapes.}
    \label{fig: counterex additivity skew shape}
 \end{figure}

\begin{example}
Let $T$ be the skew SSYT of shape $\lm= 55552/21$ in Figure~\ref{fig: counterex additivity skew shape} and $T_{\min}:=\Phi(D)$ be the minimal SSYT corresponding to the excited diagram $D=\{(1,1), (3,2), (3,4)\}$. As illustrated in the figure, $\HG(T) + \HG(T_{\min})\neq \HG(T+T_{\min})$.
\end{example}

\begin{figure}
    \centering
    \includegraphics[scale=0.8]{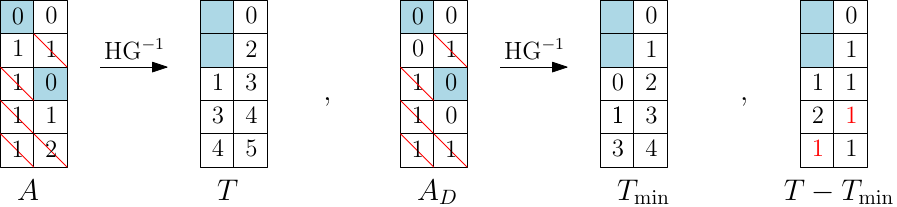}
    \caption{Counterexample for additivity of Hillman--Grassl for skew shapes.}
    \label{fig: counterex difference is rpp skew shape}
\end{figure}

For straight shapes, given a SSYT $T$ of shape $\lambda$ and $T_{\min}=T_{\min}(\lambda)$, then $T-T_{\min}(\lambda)$ is a reverse plane partition. This does no hold for skew shapes as the next example shows.

\begin{example} \label{ex: counterex difference}
Let $T$ be the skew RPP of shape $\lm= 22222/11$ in Figure~\ref{fig: counterex difference is rpp skew shape} that corresponds to the excited diagram $D=\{(1,1),(3,1)\}$. If $T_{\min}=\Phi(D)$ then $T-T_{\min}$ is not a reverse plane partition.
\end{example}

\section{Final Remarks}\label{sec: final remarks}

\subsection{A bijective proof of Theorem~\ref{thm:qNHLF skew}}

This article was in part motivated towards giving an entirely combinatorial proof of Theorem~\ref{thm:qNHLF skew} from \cite{MPP1}. From Theorem~\ref{thm: phi vs hg} and Corollary~\ref{cor:identity leading terms}, we understand combinatorially the leading terms of each summand on the right-hand-side of~\eqref{eq:qnhlf} in terms of the minimal SSYT of shape $\lm$. Also, in Section~\ref{sec: bijective proof for borderstrips} we have a bijective proof for border strips $\lm$. It would be interesting to understand combinatorially other cases like shapes with Lascoux--Pragacz decomposition $(\theta_1,\theta_2)$ or {\em thick zig-zags} $\delta_{n+k}/\delta_n$. 

\subsection{Comparison with the formula for skew tableaux involving Littlewood--Richardson coefficients}

In Section~\ref{sec: comparison OOF} we compared the number of terms of the positive formulas \eqref{eq:nhlf} and \eqref{eq:OO} for $f^{\lm}$ and showed that $\nED(\lm)\leq \nOOT(\lm)$.
Another positive formula for $f^{\la/\mu}$ is given using the {\em Littlewood-Richardson coefficients} $c^{\la}_{\mu,\nu}$, namely
\begin{equation} \label{eq:LR}
f^{\la/\mu} \,=\, \sum_{\nu \vdash |\lm|} c^{\la}_{\mu\nu} f^\nu. 
\end{equation}
The terms $f^{\nu}$ are readily computed by the classical hook-length formula \eqref{eq:hlf}. It is natural to compare this formula to \eqref{eq:nhlf} and \eqref{eq:OO} and see which one is more ``efficient'', in the sense of number of summands. Since the $c^{\la}_{\mu\nu}$ do not have explicit product formulas we will regard them as multiplicities and thus the question is to compare
$$ \nED(\la/\mu), \, \nOOT(\lm) \qquad \text{vs} \qquad \LR(\la/\mu):= \sum_{\nu \vdash n} c^{\la}_{\mu\nu}.$$

From one of the rules to compute $c^{\lambda}_{\mu,\nu}$ (see \cite[Ch. 7, A1.3]{EC2}), $\LR(\lm)$ counts the number of {\em Littlewood--Richardson tableaux} : which are SSYT $T$ of skew shape $\lm$ with positive entries whose {\em reverse reading word} (reading the entries of $T$ row by row right-to-left and bottom-to-top)  is a {\em lattice permutation}, i.e. in every initial factor of the word the number of $i$s is at least as many as the number of $(i+1)$s, for each $i$. Denote the set of such tableaux by $\mathcal{LR}(\lm)$. From this interpretation of $\LR(\lm)=|\mathcal{LR}(\lm)|$ we obtain the following inequality.

\begin{proposition}
For a skew shape $\lm$, we have that $\LR(\lm)\leq \nOOT(\lm)$.    
\end{proposition}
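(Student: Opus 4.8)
The plan is to exhibit an injection $\mathcal{LR}(\lambda/\mu)\hookrightarrow \mathcal{OOT}(\lambda/\mu)$, since $\LR(\lambda/\mu)=|\mathcal{LR}(\lambda/\mu)|$ and $\nOOT(\lambda/\mu)=|\mathcal{OOT}(\lambda/\mu)|$. Recall that a tableau in $\mathcal{OOT}(\lambda/\mu)$ is an $\SSYT$ $T$ of shape $\mu$ with entries in $[d]$, $d=\ell(\lambda)$, subject to the content condition $c(u)<\lambda_{d+1-T(u)}$ for all $u\in[\mu]$, while a tableau in $\mathcal{LR}(\lambda/\mu)$ is an $\SSYT$ of skew shape $\lambda/\mu$ with positive entries whose reverse reading word is a lattice permutation. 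The natural bridge is the classical fact (Littlewood--Richardson / the RSK-type correspondence, \cite[Ch.~7, A1.3]{EC2}) that a Littlewood--Richardson skew tableau of shape $\lambda/\mu$ and content $\nu$ is equivalent data to the pair consisting of its content $\nu$ (which must satisfy $c^\lambda_{\mu\nu}>0$, forcing $\nu\subseteq\lambda$ appropriately) together with the extra combinatorial data recording \emph{where} in $\lambda$ the skew boxes sit.

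First I would reformulate the left side: rather than using LR skew tableaux directly, I would pass to a more rigid model. A Littlewood--Richardson tableau $T\in\mathcal{LR}(\lambda/\mu)$ with entry $T(i,j)$ in each skew cell $(i,j)\in[\lambda/\mu]$ has the key property that, because of the lattice (Yamanouchi) condition together with semistandardness, the value $T(i,j)$ is at most the row index among the cells of content $j-i$ already filled — concretely, one has $T(i,j)\le i$ for every $(i,j)\in[\lambda/\mu]$, and moreover the entries are strictly increasing down columns and weakly increasing along rows. Thus $\mathcal{LR}(\lambda/\mu)\subseteq \mathcal{SF}(\lambda/\mu)$, the set of skew flagged tableaux of shape $\lambda/\mu$ with entries in row $i$ at most $i$. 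This inclusion is the crux and should be checked carefully: the row bound $T(i,j)\le i$ follows because in a Yamanouchi word the symbol appearing in a cell of row $i$ cannot exceed the number of rows that have been started above or at it, combined with the column-strictness forcing those earlier rows to carry the smaller symbols $1,2,\dots$ in order.

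Then I would invoke Proposition~\ref{bij:OOT and flagged skew tableaux}: the map $\vartheta^*$ is a bijection between $\mathcal{OOT}(\lambda/\mu)$ and $\mathcal{SF}(\lambda/\mu)$. Composing the inclusion $\mathcal{LR}(\lambda/\mu)\hookrightarrow\mathcal{SF}(\lambda/\mu)$ with $(\vartheta^*)^{-1}$ gives an injection $\mathcal{LR}(\lambda/\mu)\hookrightarrow\mathcal{OOT}(\lambda/\mu)$, whence $\LR(\lambda/\mu)=|\mathcal{LR}(\lambda/\mu)|\le|\mathcal{OOT}(\lambda/\mu)|=\nOOT(\lambda/\mu)$, as claimed. (One could equivalently note that $|\mathcal{SF}(\lambda/\mu)|=\nOOT(\lambda/\mu)$ directly from Corollary~\ref{cor:flag-form} or Proposition~\ref{bij:OOT and flagged skew tableaux} without building the explicit bijection, since only the cardinality is needed.)

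The main obstacle is precisely establishing the inclusion $\mathcal{LR}(\lambda/\mu)\subseteq\mathcal{SF}(\lambda/\mu)$, i.e. the flag bound $T(i,j)\le i$ for LR skew tableaux. The clean way to see it: suppose some cell of row $i$ carries a value $\ge i+1$; by column-strictness the cell directly above it in rows $1,\dots,i$ of the same column carry $i+1$ distinct smaller values, but a lattice word reading bottom-to-top, right-to-left can never have introduced $i+1$ distinct symbols $\le$ that value before a symbol of value $i+1$ appears strictly below row $i+1$ — this contradicts the Yamanouchi property, since the count of $i$'s would have dropped below the count of $(i+1)$'s in the relevant initial factor. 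I would write this contradiction argument out in full, as it is the only nonformal step; everything else is a citation to Proposition~\ref{bij:OOT and flagged skew tableaux} (or Corollary~\ref{cor:flag-form}) plus the elementary observation that an injection yields the desired inequality on cardinalities.
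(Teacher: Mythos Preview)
Your approach is exactly the paper's: show $\mathcal{LR}(\lambda/\mu)\subseteq\mathcal{SF}(\lambda/\mu)$ and then invoke Proposition~\ref{bij:OOT and flagged skew tableaux}. One caution on your sketch of the flag bound: the column-strictness step ``the cells directly above it in rows $1,\dots,i$ of the same column carry $i+1$ distinct smaller values'' does not go through for a \emph{skew} shape, since rows $1,\dots,\mu'_j$ of column $j$ lie in $[\mu]$ and carry no entries at all---so you cannot manufacture $i$ distinct smaller values this way. The paper's argument sidesteps this with a direct row-by-row induction on the reading word: row~$1$ must be all $1$'s; and if rows $1,\dots,i$ contain only values $\le i$, then when the reading word reaches the rightmost entry of row $i+1$ no symbol $\ge i+2$ has yet appeared, so by the lattice condition that entry is $\le i+1$, hence (by weak row-increase) all of row $i+1$ is $\le i+1$.
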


\begin{proof}
We show that $\mathcal{LR}(\lm)\subseteq \mathcal{SF}(\lm)$ which implies the result since by Proposition~\ref{bij:OOT and flagged skew tableaux}, we have that $|\mathcal{SF}(\lm)|=\nOOT(\lm)$.

Given a tableau $T$ in $\mathcal{LR}(\lm)$, by the lattice condition of its reverse reading word $w$, the entries in the first row of $T$ are all $1$s. Iteratively, if the entries of $T$ in row $i$ are at most $i$, then the word $w$ up until that point has entries at most $i$. Then the last entry of row $i+1$, which comes next in $w$ is at most $i$ by the lattice condition. Thus the entries of $T$ in row $i+1$ are at most $i$ and so $T\in \mathcal{SF}(\lm)$, as desired.
\end{proof}

\begin{example}
A quick computation for $\la/\mu =4321/21$ shows that $\nED(\la/\mu) = 5$, $\LR(\la/\mu) = 11$, and $\nOOT(\lm)=17$. On the other hand when $\la/\mu = 43/2$ we have $\nED(\la/\mu) =\nOOT(\lm)=3$ and $\LR(\la/\mu)=2$. Also when $\lm=8642/21$ we have $\nED(\lm)=8$ and $\LR(\lm)=\nOOT(\lm)=20$.    
\end{example} 

From these examples, the question arises for which shapes $\lm$ are: 
\[
\LR \leq \nED\leq \nOOT, \quad \text{ or } \quad   \nED\leq \LR\leq \nOOT?\]
In the first example above we see that the minimal SSYT tableaux are actually Littlewood--Richardson tableaux. This also raises the question of which shapes $\lm$ give equalities $\nED(\la/\mu)=\LR(\la/\mu)$ and $\LR(\lm)=\nOOT(\lm)$, respectively?

\subsection{Multivariate identity from the Hillman--Grassl bijection}

In \cite[Thm. 1.2]{NO}, Naurse--Okada gave a multivariate identity for skew RPP. Because the Hillman--Grassl bijection is behind the $q$-analogue \eqref{eq:qnhlf}, we can derive an analogous multivariate identity of skew SSYT. We need some notation. The variables are $\z = (z_i)_{i\in I}$, where $I\subset \mathbb{Z}$ is the set of contents of $\lam$, that is the set of integers $\{1-\ell(\lam),\dots, \lam_1-1 \}$. For $\pi \in \RPP(\lm)$, we let 
\[
\z^{\pi}:= \prod_{u \in [\lm] }z_{c(u)}^{\pi(u)}, \qquad 
\z[H_{\lam}(u)] := \prod_{v \in H_{\lambda}(u)}z_{c(v)},
\]
where $H_{\lambda}(u)$ are the cells of $[\lm]$ in the hook of cell $u\in [\lm]$. Combining Theorem~\ref{thm: HG skew SSYT} and Proposition~\ref{prop:trace and HG} gives the following identity.

\begin{theorem}
Let $\lm$ be a skew shape. We have the following multivariate identity:
\begin{equation*}
 \sum_{T \in \SSYT(\lambda/\mu)}  \z^{T} \,=\, \sum_{D \in \E(\lm)} \frac{\prod_{u \in \Br(D)}\z[H_{\lam}(u)]}{\prod_{u\in D} (1-\z[H_{\lam}(u)])}.
\end{equation*}
\end{theorem}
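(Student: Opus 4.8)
The plan is to push the monomial weight $\z^{(\cdot)}$ through the Hillman--Grassl bijection of Theorem~\ref{thm: HG skew SSYT}, and then evaluate the resulting generating function over each piece $\mathcal{A}^*_D$ by a cell‑by‑cell geometric series.

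First I would reduce everything to tracking traces. For $T\in\SSYT(\lm)$, viewed as a reverse plane partition $\pi$ of shape $\lambda$ with zero entries on $[\mu]$, the monomial $\z^{T}=\z^{\pi}$ depends only on the traces of $\pi$, namely $\z^{\pi}=\prod_{k}z_k^{\,\tr_k(\pi)}$ over $1-\ell(\lambda)\le k\le\lambda_1-1$. Setting $A:=\HG(\pi)$, Proposition~\ref{prop:trace and HG} gives $\tr_k(\pi)=|A_k|$, so $\z^{T}=\prod_{k}z_k^{|A_k|}$. The key (elementary) combinatorial input is then the identity $\{\,k:u\in\square_k^{\lambda}\,\}=\{\,c(v):v\in H_{\lambda}(u)\,\}$ for every $u=(i,j)\in[\lambda]$: both sides equal the integer interval $[\,j-\lambda'_j,\ \lambda_i-i\,]$, the first because $u\in\square_k^{\lambda}\iff j-\lambda'_j\le k\le\lambda_i-i$, the second because the arm of $u$ realizes contents $j-i,\dots,\lambda_i-i$ and the leg realizes $j-\lambda'_j,\dots,j-i$. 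Rearranging $\prod_{k}z_k^{|A_k|}=\prod_{k}\prod_{u\in\square_k^{\lambda}}z_k^{A_u}$ by cells yields
\[
\z^{T}\;=\;\prod_{u\in[\lambda]}\bigl(\z[H_{\lambda}(u)]\bigr)^{A_u},\qquad A=\HG(T).
\]

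Next I would sum over $T\in\SSYT(\lm)$. By Theorem~\ref{thm: HG skew SSYT}, $\HG$ identifies $\SSYT(\lm)$ with the disjoint union $\bigsqcup_{D\in\E(\lm)}\mathcal{A}^*_D$, so
\[
\sum_{T\in\SSYT(\lm)}\z^{T}\;=\;\sum_{D\in\E(\lm)}\ \sum_{A\in\mathcal{A}^*_D}\ \prod_{u\in[\lambda]}\bigl(\z[H_{\lambda}(u)]\bigr)^{A_u}.
\]
For fixed $D$, an array in $\mathcal{A}^*_D$ is supported on $[\lambda]\setminus D$ with $A_u\ge1$ for $u\in\Br(D)$ and $A_u\ge0$ for the remaining cells of $[\lambda]\setminus D$, so the inner sum factors over cells: each cell of $D$ contributes $1$; each $u\in\Br(D)$ contributes $\sum_{a\ge1}(\z[H_{\lambda}(u)])^{a}=\z[H_{\lambda}(u)]/(1-\z[H_{\lambda}(u)])$; each remaining cell of $[\lambda]\setminus D$ contributes $\sum_{a\ge0}(\z[H_{\lambda}(u)])^{a}=1/(1-\z[H_{\lambda}(u)])$. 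Collecting the factors gives the summand
\[
\frac{\prod_{u\in\Br(D)}\z[H_{\lambda}(u)]}{\prod_{u\in[\lambda]\setminus D}\bigl(1-\z[H_{\lambda}(u)]\bigr)},
\]
which is the asserted identity; specializing $z_k\mapsto q$ turns $\z[H_{\lambda}(u)]$ into $q^{h(u)}$ and $\z^{T}$ into $q^{|T|}$, recovering Theorem~\ref{thm:qNHLF skew} via the hook identity $\sum_{u\in\Br(D)}h(u)=\sum_{(i,j)\in[\lambda]\setminus D}(\lambda'_j-i)$ of \cite[Prop.~7.16]{MPP1}.

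Everything here is routine except two bookkeeping points, which is where I expect the only real care to be needed. One is the $\square_k^{\lambda}$--hook identity of Step~1 (short, but it must be stated and checked cleanly). The other is justifying the manipulations as identities of formal power series in the $z_k$: one checks that each fixed monomial $\z^{\alpha}$ receives only finitely many contributions on each side — because $|A_k|$ fixes the total $z_k$‑degree, which bounds the entries of $A$ — and that the sets $\mathcal{A}^*_D$ are pairwise disjoint, so the double sum over $D$ and over $A\in\mathcal{A}^*_D$ is legitimate; both are contained in (or read off from) Theorem~\ref{thm: HG skew SSYT}. With those in hand the proof is essentially the displayed chain of equalities.
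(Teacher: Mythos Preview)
Your approach is exactly the paper's: the paper merely states that the identity follows by ``combining Theorem~\ref{thm: HG skew SSYT} and Proposition~\ref{prop:trace and HG},'' and you have correctly spelled out that combination, including the cell--hook content identity $\{k:u\in\square_k^{\lambda}\}=\{c(v):v\in H_\lambda(u)\}$ needed to turn trace-preservation into the hook monomial weight.

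One point to flag: the summand you derive has denominator $\prod_{u\in[\lambda]\setminus D}(1-\z[H_\lambda(u)])$, whereas the theorem as printed has the product over $u\in D$; your version is the correct one (it specializes to the $q$-analogue of Theorem~\ref{thm:qNHLF skew}, and is forced by the support condition in the definition of $\mathcal{A}^*_D$), so this is a typo in the statement rather than a gap in your argument.
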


We end with a multivariate analogue of Corollary~\ref{cor:identity leading terms} obtained by combining Theorem~\ref{thm: phi vs hg} and Proposition~\ref{prop:trace and HG}.

\begin{corollary}
  Let $\lm$ be a skew shape. We have the following multivariate identity:
\begin{equation*}
 \sum_{T \in \SSYT_{\min}(\lambda/\mu)}  \z^{T} \,=\, \sum_{D \in \E(\lm)} \prod_{u \in \Br(D)}\z[H_{\lam}(u)].
\end{equation*}
  
\end{corollary}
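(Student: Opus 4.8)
The plan is to run the argument of Corollary~\ref{cor:identity leading terms} essentially verbatim, but with the single variable $q$ replaced by the family $\z$, using the multivariate refinement of Hillman--Grassl weight preservation (Proposition~\ref{prop:trace and HG}) in place of the hook-weight identity~\eqref{eq: HG hook weight}.

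First I would record a multivariate hook-weight identity for $\HG$: for every $\pi\in\RPP(\lambda)$ with $A=\HG(\pi)$,
\[
\z^{\pi}\;=\;\prod_{u\in[\lambda]}\z[H_{\lambda}(u)]^{\,A_u}.
\]
To prove this, group the cells of $\lambda$ by content to write $\z^{\pi}=\prod_{k}z_k^{\tr_k(\pi)}$, insert $\tr_k(\pi)=|A_k|=\sum_{u\in\square_k^{\lambda}}A_u$ from Proposition~\ref{prop:trace and HG}, and interchange the two products to obtain $\z^{\pi}=\prod_{u\in[\lambda]}\bigl(\prod_{k\,:\,u\in\square_k^{\lambda}}z_k\bigr)^{A_u}$. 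It then remains to recognize the inner factor as $\z[H_{\lambda}(u)]$, i.e.\ that for $u=(a,b)$ the set of integers $k$ with $u\in\square_k^{\lambda}$ is exactly the set of contents of the cells of the hook $H_{\lambda}(u)$, namely the interval $\{b-\lambda'_b,\dots,\lambda_a-a\}$. This is the content-refinement of the familiar identity $h(u)=\#\{k:u\in\square_k^{\lambda}\}$, and it is exactly the bookkeeping already carried out in the proof of the preceding multivariate Theorem, so I would simply quote it from there.

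With that in hand, I would specialize to $\pi=\Phi(D)$ for $D\in\E(\lm)$, regarded as an RPP of shape $\lambda$ with zero entries on $[\mu]$. By Theorem~\ref{thm: phi vs hg}, $\HG(\Phi(D))=A_D$, the $0$--$1$ array with support exactly on the broken diagonals $\Br(D)$; plugging $(A_D)_u=1$ for $u\in\Br(D)$ and $(A_D)_u=0$ otherwise into the identity above yields $\z^{\Phi(D)}=\prod_{u\in\Br(D)}\z[H_{\lambda}(u)]$. Since $\Phi\colon\E(\lm)\to\SSYT_{\min}(\lm)$ is a bijection (Theorem~\ref{thm: bijection ED and SSYTmin}), summing over $D\in\E(\lm)$ is the same as summing over $T=\Phi(D)\in\SSYT_{\min}(\lm)$, and the identity of the corollary drops out.

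The only step carrying any real content is the content-refinement of the Durfee-square/hook statement used to identify $\prod_{k\,:\,u\in\square_k^{\lambda}}z_k$ with $\z[H_{\lambda}(u)]$; everything else is formal. Indeed, once that refinement is granted, the corollary can also be read off directly from the preceding multivariate Theorem: Theorem~\ref{thm: phi vs hg} identifies $\SSYT_{\min}(\lm)$ with the fibre of $\HG$ over the arrays $\{A_D\mid D\in\E(\lm)\}$, which are precisely the arrays contributing the numerator (``leading'') term of each $D$-summand on the right-hand side of that theorem, so restricting to minimal tableaux annihilates all the denominators $\prod_{u\in D}(1-\z[H_{\lambda}(u)])$ and leaves $\sum_{D}\prod_{u\in\Br(D)}\z[H_{\lambda}(u)]$.
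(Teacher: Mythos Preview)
Your proposal is correct and follows exactly the paper's approach: the paper's proof is the one-line statement ``obtained by combining Theorem~\ref{thm: phi vs hg} and Proposition~\ref{prop:trace and HG},'' and you have simply unpacked what that combination means, namely the multivariate hook-weight identity $\z^{\pi}=\prod_{u}\z[H_{\lambda}(u)]^{A_u}$ (from Proposition~\ref{prop:trace and HG} together with the hook/Durfee-rectangle content computation) specialized to $\pi=\Phi(D)$ via Theorem~\ref{thm: phi vs hg}. The extra detail you supply---identifying $\{k:u\in\square_k^{\lambda}\}$ with the content set of $H_{\lambda}(u)$---is the only nontrivial bookkeeping, and it is indeed the same step implicitly used in the preceding multivariate theorem of the paper.
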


\noindent {\bf Acknowledgements:} We thank Igor Pak for having introduced the first two named authors with Naruse's formula and for the ensuing collaboration studying this formula. We also thank Peter Cassels, Ang\`ele Foley, Swee Hong Chan, Soichi Okada, and Daniel Zhu for helpful conversations. This work was facilitated by computer experiments using Sage \cite{sagemath}, its algebraic combinatorics features developed by the Sage-Combinat community \cite{Sage-Combinat},

\begin{figure}
    \centering
\includegraphics[width=0.9\textwidth]{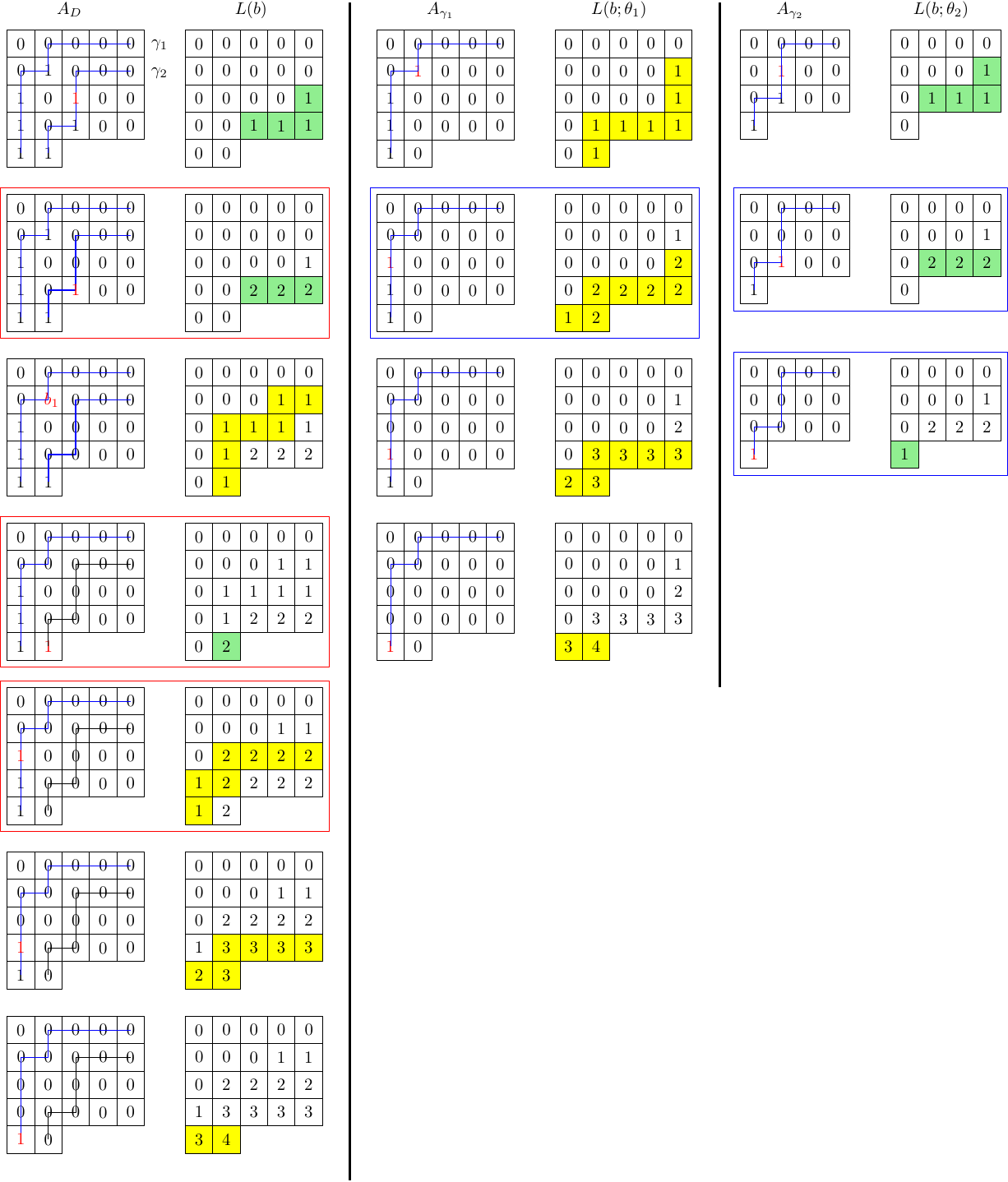}
    \caption{
    The first two columns show the inverse 
    Hillman-Grassl paths $L(b)$ on $A_{D}$ for a diagram $D$ with Kreiman decomposition $(\gamma_1,\gamma_2)$. The next columns show the paths $L(b;\theta_1)$ on $A_{\gamma_1}$ and the paths $L(b;\theta_2)$ on $A_{\gamma_2}$ respectively. By Corollary~\ref{lem: additivity of k paths}, the multiset $\bigcup_{b\in \Br(D)} L(b)$ equals the union of the multisets $\bigcup_{b\in \Br(\gamma_i)} L(b;\theta_i)$ for $i=1,2$.}
    \label{fig:HG combined and separate}
\end{figure}

\end{document}